\documentclass[a4paper,reqno,11pt]{amsart}
\usepackage[margin=1in]{geometry}
\usepackage{amsmath, amssymb,amsthm,url,mathrsfs,graphicx,amscd,amsfonts}
\usepackage[mathscr]{eucal}
\usepackage{mathtools}
\usepackage{hyperref}
\usepackage[all]{xy}

\usepackage[utf8]{inputenc}
\usepackage{graphicx,todonotes,hyperref}
\usepackage{epstopdf}
\usepackage{enumitem}
\usepackage{xcolor}
\usepackage{xspace}
\usepackage{tikz-cd}
\usepackage{multirow}

\newtheorem*{acknowledgements*}{Acknowledgements}

\newcommand{\colim}{\mathit{colim}}

\newcommand{\Ker}{\mbox{Ker}}

\allowdisplaybreaks
\theoremstyle{definition}
\newtheorem{theorem}{Theorem}[section]

\newtheorem{lemma}[theorem]{Lemma}

\newtheorem*{theorem*}{Theorem}
\newtheorem*{corollary*}{Corollary}
\newtheorem*{prop*}{Proposition}
\newtheorem*{rmk*}{Remark}
\newtheorem{proposition}[theorem]{Proposition}
\newtheorem{corollary}[theorem]{Corollary}
\newtheorem{definition}[theorem]{Definition}
\newtheorem{remark}[theorem]{Remark}
\newtheorem*{thma}{Theorem A}
\newtheorem*{thmb}{Theorem B}
\newtheorem*{thmc}{Theorem C}

\begin{document}
		%\title{A Diffeomorphism Classification of Smooth Structures and Tangential Homotopy Types of $\mathbb{C}P^m$ for $5 \le m \le 8$}
        \title[A Diffeomorphism Classification of $\mathbb{C}P^m$]%
{Diffeomorphism Classification of Smooth Structures and Tangential Homotopy Types of $\mathbb{C}P^m$ for $5 \le m \le 8$}
		\vspace{2cm}

		\author{Ramesh Kasilingam}
		
		\email{rameshkasilingam.iitb@gmail.com  ; rameshk@iitm.ac.in  }
		\address{Department of Mathematics,
			Indian Institute Of Technology, Chennai-600036, India}
		
		\date{}
		\subjclass [2010] {Primary : {57R60, 57R55; Secondary :55P42, 57R67}}
		\keywords{Complex projective spaces, smooth structures, concordance.}
		%\paragraph{Classification.}
		%57R55; 57R50.
		
		%\maketitle
\begin{abstract}
This paper provides a diffeomorphism classification of smooth manifolds homeomorphic 
to the complex projective space $\mathbb{C}P^m$ for $m \in \{5, 6, 7, 8\}$. 
The classification is obtained by computing the group of concordance classes 
of smooth structures on $\mathbb{C}P^m$ and determining the orbit space under 
the action induced by the group of self-homeomorphisms. Using these 
computations in conjunction with the tangential surgery exact sequence and 
techniques from stable homotopy theory, we determine the diffeomorphism 
classes of smooth manifolds within the tangential homotopy type of $\mathbb{C}P^m$ 
for $4 \le m \le 8$. We also investigate the relationship between these two 
classification problems by studying the natural map from the homeomorphism type 
to the tangential homotopy type. As a consequence, we prove that for $m = 4$, 
there exists a unique smooth manifold, up to diffeomorphism, that is tangentially 
homotopy equivalent to $\mathbb{C}P^4$ but not homeomorphic to it. 
Furthermore, for $m = 8$, there exist exactly two pairwise non-diffeomorphic 
smooth manifolds that are tangentially homotopy equivalent to $\mathbb{C}P^8$ 
but not homeomorphic to it.
\end{abstract}
\maketitle
\section{Introduction}

A smooth homotopy complex projective $m$-space is a closed smooth manifold that is homotopy equivalent to the complex projective space $\mathbb{C}P^m$. One approach to the study of such manifolds is through the classification of differentiable free $\mathbb{S}^{1}$-actions on homotopy spheres (see, for example, \cite{Hsi66, HH64, Bro68, MY66, MY67, MY68, Kaw70}). Later, D.~Sullivan \cite{Sul67} classified PL homotopy complex projective spaces as an application of his characteristic variety theorem. In the smooth category, the surgery-theoretic diffeomorphism classification for $3 \leq m \leq 6$ was obtained by Brumfiel \cite{Bru68, Bru71}. For the specific cases $m=3$ and $m=4$, the diffeomorphism classification of smooth manifolds homeomorphic to $\mathbb{C}P^{m}$ was established in \cite{MY66, Kaw68} (see also \cite{Ram16}).

In this paper, we classify all smooth manifolds homeomorphic to $\mathbb{C}P^{m}$ for $m = 5, 6, 7,$ and $8$, up to both (unoriented) diffeomorphism and orientation-preserving diffeomorphism (see Theorems~\ref{main3} and \ref{main4}). This classification is achieved by computing the group $\mathcal{C}(\mathbb{C}P^{m})$ of concordance classes of smooth structures on $\mathbb{C}P^{m}$ (cf.~\cite[Definition 2.2]{Ram16}) and by explicitly determining the action of the self-homeomorphism group on $\mathcal{C}(\mathbb{C}P^{m})$ for $5 \le m \le 8$ (see Theorem~\ref{main2} and Theorem~\ref{conju-cp8}(1)). As a consequence of these classifications, we establish the following:
\begin{thma}\indent \label{A}
\begin{enumerate}
    \item Up to diffeomorphism, there are exactly seven exotic smooth structures on $\mathbb{C}P^{5}$.
    \item For $m=6,7,8$, there are exactly three exotic smooth structures on $\mathbb{C}P^{m}$, up to diffeomorphism.
\end{enumerate}
\end{thma}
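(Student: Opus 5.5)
The plan is to compute the concordance group $\mathcal{C}(\mathbb{C}P^{m})$ and then pass to the orbit set under self-homeomorphisms. Since $m \ge 3$, smoothing theory identifies $\mathcal{C}(\mathbb{C}P^{m})$ with $[\mathbb{C}P^{m}, Top/O]$. In this range $Top/O$ and $PL/O$ agree up to the Kirby--Siebenmann class; because $H^4(\mathbb{C}P^m;\mathbb{Z}/2)$ interacts trivially with the structures we care about, it suffices to work with $[\mathbb{C}P^m, PL/O]$. Here $\pi_i(PL/O)=\Theta_i$.

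\textbf{Step 1: the concordance group.} I would use the cofibration $\mathbb{C}P^{m-1}\to\mathbb{C}P^m\to S^{2m}$ and its Puppe sequence
\[
[\Sigma\mathbb{C}P^{m-1},PL/O]\to \Theta_{2m}\to[\mathbb{C}P^m,PL/O]\to[\mathbb{C}P^{m-1},PL/O]\to\cdots,
\]
inducting on $m$ from the known cases $m=3,4$. The relevant groups are $\Theta_{10}=\mathbb{Z}/6$, $\Theta_{12}=0$, $\Theta_{14}=\mathbb{Z}/2$ and $\Theta_{16}=\mathbb{Z}/2$, together with the lower $\Theta_{2k}$ and $\Theta_{2k+1}$. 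The connecting maps are computed through $\mathrm{coker}\,J$ and the stable homotopy of $\mathbb{C}P^m$. Concretely, I would decide whether the attaching map of the top cell, precomposed with elements of $\pi^s_{*}$, hits exotic spheres. For this I would use the Adams spectral sequence or known computations of $\pi^s_*(\mathbb{C}P^\infty)$ in low stems, and Brumfiel's results for $m\le 6$.

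The expected outcome is a group of order $8$ for $m=5$, for instance $\Theta_{10}\oplus\mathbb{Z}/2$ or something of the form $\mathbb{Z}/2\oplus\mathbb{Z}/2\oplus\mathbb{Z}/2$ built from $\Theta_8$ and $\Theta_{10}$ classes. For $m=6,7,8$ I expect a group of order $4$.

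\textbf{Step 2: the orbit count.} Self-homotopy equivalences of $\mathbb{C}P^m$, and hence self-homeomorphisms up to homotopy, act on $H^2$ by $\pm1$. Complex conjugation realizes $-1$. The action on $[\mathbb{C}P^m,Top/O]$ is induced by precomposition. Conjugation acts on the cells of $\mathbb{C}P^m$ by the degree $(-1)^k$ on the $2k$-cell, so it acts on each graded piece of the filtration from Step 1 by a sign. Since the relevant groups are $2$-torsion or of order dividing $6$, I would check on the $\mathbb{Z}/3$ summand of $\Theta_{10}$ whether conjugation identifies $\Sigma$ with $-\Sigma$. This collapses the orbits accordingly.

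The count then goes as follows. The number of diffeomorphism classes is the number of orbits, since homeomorphic smoothings are diffeomorphic exactly when they lie in the same orbit. Removing the standard class gives the number of exotic structures. For $m=5$ the expected result is $8$ orbits, so $7$ exotic structures, which means the action is trivial there. For $m=6,7,8$ the order-$4$ group should have trivial action, giving $3$ exotic structures.

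\textbf{Main obstacle.} The hardest step is Step 1 in the cases $m=7,8$. There one must determine the extension problems and the images of the connecting maps $[\Sigma\mathbb{C}P^{m-1},PL/O]\to\Theta_{2m}$, which requires knowing the stable cohomotopy of stunted projective spaces in stems $14$–$16$. I would first try to reduce to the statement that $\Theta_{2m}\to[\mathbb{C}P^m,PL/O]$ is injective, using that $\mathbb{C}P^m\#\Sigma$ for $\Sigma\in\mathrm{coker}\,J$ is detected by a KO- or Adams $e$-invariant-type argument. I would then show that the lower contributions come only from $\Theta_8$ and $\Theta_{10}$ classes pulled back via degree-one collapse-type maps.
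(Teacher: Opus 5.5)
Your overall strategy matches the paper's: the Puppe sequence for $\mathbb{C}P^{m-1}\subset\mathbb{C}P^{m}$ with injectivity of $\Theta_{2m}$, followed by orbits under $\{\mathrm{Id},C\}$. But the concrete predictions you feed into it are false for $m=5,6$, and your counts $8$ and $4$ come out right only because two errors cancel.

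Once $\Theta_{10}\cong\mathbb{Z}_6$ injects into $\mathcal{C}(\mathbb{C}P^{5})$, the order is divisible by $3$ and cannot be $8$; your own candidate $\Theta_{10}\oplus\mathbb{Z}/2$ has order $12$. In fact $\mathcal{C}(\mathbb{C}P^{5})\cong\mathbb{Z}_2\{(\eta\circ\mu)_{10}\}\oplus\mathbb{Z}_3\{(\beta_1)_{10}\}\oplus\mathbb{Z}_2\{(\overline{(\epsilon)_8})_{10}\}$. Since $\Theta_{12}=0$ and $p^{*}\colon\mathcal{C}(\mathbb{C}P^{5})\to\Theta_{11}$ has image of order $2$ in $\mathit{bP}_{12}$, the group $\mathcal{C}(\mathbb{C}P^{6})\cong\mathbb{Z}_3\oplus\mathbb{Z}_2$ has order $6$, not $4$.

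Nor is the action trivial. Conjugation has degree $-1$ on $\mathbb{C}P^{5}$, so $C^{*}$ inverts $f^{*}_{\mathbb{C}P^{5}}(\Theta_{10})$. Because $C$ reverses orientation, this gives $\mathbb{C}P^{5}\#\Sigma_{\beta_1}\cong\mathbb{C}P^{5}\#(-\Sigma_{\beta_1})$ and cuts the $12$ classes to $8$ orbits; the oriented count would be $11$ exotic structures. On $\mathbb{C}P^{6}$, $C^{*}$ again inverts the $\mathbb{Z}_3$, by naturality with respect to the injective restriction to $\mathbb{C}P^{5}$, leaving $4$ orbits out of $6$. So your sentence ``the action is trivial there'' has to be replaced by this computation.

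Two further steps would not go through as written.

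First, knowing the sign by which $C^{*}$ acts on each graded piece of your filtration does not determine $C^{*}$. On $\mathcal{C}(\mathbb{C}P^{7})\cong\mathcal{C}(\mathbb{C}P^{8})\cong\mathbb{Z}_2\oplus\mathbb{Z}_2$, and on the $2$-primary part for $m=5$, $C^{*}$ could a priori be the transvection $\alpha\mapsto\alpha+f^{*}(\Sigma)$. That map is trivial on the graded pieces but merges two classes, leaving two exotic structures for $m=7,8$ and six for $m=5$.
The paper rules this out for $m=5,7$ using that $C^{*}$ preserves $\operatorname{Ker}(p^{*})$, whose generator it identifies as $(\overline{(\epsilon)_8})_{10}$, resp.\ $(\kappa)_{14}+(\overline{(\epsilon)_8})_{14}$. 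For $m=8$, where $\operatorname{Ker}(p^{*})=f^{*}(\Theta_{16})$ gives nothing, it pulls back along $\mathbb{R}P^{16}\subset\mathbb{R}P^{17}\to\mathbb{C}P^{8}$. It shows $(\eta^{*})_{16}$ survives there, while the lift of conjugation induces the identity.

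Second, injectivity of $\Theta_{2m}\to\mathcal{C}(\mathbb{C}P^{m})$ cannot come from KO- or $e$-invariants for the classes that matter. The classes $\kappa$ and $\eta^{*}$ lie in the $\operatorname{cok}J_{(2)}$ factor of $F/O_{(2)}\simeq BSO_{(2)}\times\operatorname{cok}J_{(2)}$, so such invariants vanish on $f^{*}(\kappa)$ and $f^{*}(\eta^{*})$.
For $m=7$ the paper instead uses that $p^{*}\circ f^{*}_{\mathbb{C}P^{7}}$ is precomposition with $\eta$, and $\eta\circ\kappa\neq0$. For $m=8$, where $f_{\mathbb{C}P^{8}}\circ p$ is null, it shows the connecting map into $\Theta_{16}$ vanishes. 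It works on $\mathbb{C}P^{8}/\mathbb{C}P^{3}$, using the $2$-local splitting $\mathbb{C}P^{7}/\mathbb{C}P^{3}\simeq_{(2)}\mathbb{C}P^{6}/\mathbb{C}P^{3}\vee\mathbb{S}^{14}$ and the vanishing of $\langle\nu^{3},2\nu,\nu\rangle$ in $\operatorname{cok}J_{(2)}$.
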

Furthermore, for $m=7$ and $8$, we compute the smooth tangential structure set $\mathcal{S}^t_{\mathrm{Diff}}(\mathbb{C}P^{m})$ of $\mathbb{C}P^{m}$, which consists of equivalence classes $[M, f, \widehat{f}]$ of triples $(M, f, \widehat{f})$, where $f \colon M \to \mathbb{C}P^{m}$ is a homotopy equivalence and $\widehat{f} \colon \nu_M \to \nu_{\mathbb{C}P^{m}}$ is a stable bundle map covering $f$ (cf.~\cite[p.~103]{CH15}). This computation uses the tangential surgery exact sequence together with explicit calculations of the groups $[\mathbb{C}P^{m}, SF]$ (see Proposition~\ref{sur-obst} and Theorem~\ref{main5}). 
In addition, we determine the number of smooth homotopy complex projective $m$-spaces with fixed Pontryagin classes, up to diffeomorphism, for $4 \le m \le 8$ (see Proposition~\ref{num}). We also determine the explicit action of the group $\epsilon^{t}(\mathbb{C}P^{m})$ of tangential self-equivalences of $\mathbb{C}P^{m}$ (i.e., bundle maps of the stable normal bundle of $\mathbb{C}P^{m}$ up to homotopy) on the structure set $\mathcal{S}^t_{\mathrm{Diff}}(\mathbb{C}P^{m})$ (see Theorem~\ref{main6}). This yields a diffeomorphism classification of all smooth manifolds in the tangential homotopy type of $\mathbb{C}P^{m}$ (see also Theorem~\ref{difftang}). As a consequence, we obtain the following result.
\begin{thmb}\label{B}
Let $4 \leq m \leq 8$ and let $\eta^{t} \colon \mathcal{S}^t_{\mathrm{Diff}}(\mathbb{C}P^{m}) \longrightarrow [\mathbb{C}P^{m}, SF]$ be the tangential normal invariant. Suppose that $[M, f, \widehat{f}]$ and $[N, g, \widehat{g}] \in \mathcal{S}^t_{\mathrm{Diff}}(\mathbb{C}P^{m})$.
\begin{itemize}
\item[(1)] For $m=4, 5, 6, 8$, the smooth manifolds $M$ and $N$ are diffeomorphic if and only if 
$\eta^{t}([M, f, \widehat{f}]) = \pm \eta^{t}([N, g, \widehat{g}])$.
\item[(2)] For $m=7$, the smooth manifolds $M$ and $N$ are diffeomorphic if and only if 
$\eta^{t}([M, f, \widehat{f}]) = \eta^{t}([N, g, \widehat{g}])$.
\end{itemize}
\end{thmb}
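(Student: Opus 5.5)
The plan is to reduce the diffeomorphism question to orbits of the tangential self-equivalence group on the tangential structure set, and then to compute that action through the tangential normal invariant.

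First, $M$ and $N$ are diffeomorphic if and only if $[M,f,\widehat{f}]$ and $[N,g,\widehat{g}]$ lie in the same orbit of the action of $\epsilon^{t}(\mathbb{C}P^{m})$ on $\mathcal{S}^t_{\mathrm{Diff}}(\mathbb{C}P^{m})$. One direction is immediate: given a diffeomorphism $\phi\colon M\to N$, the composite $h=g\circ\phi\circ f^{-1}$ (with $f^{-1}$ a homotopy inverse) is a homotopy self-equivalence of $\mathbb{C}P^{m}$. Because $\phi$ is covered by its derivative, $h$ is covered by the bundle map $\widehat{g}\circ d\phi\circ\widehat{f}^{-1}$ of $\nu_{\mathbb{C}P^{m}}$, so $h\in\epsilon^{t}(\mathbb{C}P^{m})$ carries one class to the other. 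The converse is the definition of the action.

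Second, we describe $\epsilon^{t}(\mathbb{C}P^{m})$ concretely. Homotopy self-equivalences of $\mathbb{C}P^{m}$ are $\mathrm{id}$ and complex conjugation $c$, acting on $H^2$ by $\pm1$. The set of bundle lifts of a given self-equivalence is a torsor under $[\mathbb{C}P^{m},SO]$, or under the automorphisms of $\nu$, which map to $[\mathbb{C}P^{m},SF]$ through $J$.

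Third, we use the tangential surgery exact sequence (Proposition~\ref{sur-obst}, Theorem~\ref{main5}). Here $\eta^{t}$ is injective on the relevant range, since the $L$-group action on $\mathcal{S}^t$ is trivial for these $m$ (or is detected), and we identify its image. Since the structure set is relative to the fixed normal bundle, the action of $(h,\widehat{h})$ transforms normal invariants by a composition formula:
\[
\eta^{t}\bigl((h,\widehat{h})\cdot x\bigr)=(h^{-1})^{*}\eta^{t}(x)+\eta^{t}(h,\widehat{h}).
\]
Changing the lift by $\alpha\in[\mathbb{C}P^{m},SO]$ shifts the normal invariant by $J(\alpha)$, but such elements correspond to changes of $\widehat{f}$ with the same $M$. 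The tangential structure set quotients or relates these, so the translations act trivially on the image of $\eta^{t}$. What remains is the effect of $c^{*}$ on $[\mathbb{C}P^{m},SF]$. Via the Atiyah--Hirzebruch filtration, $c^{*}$ acts on a generator detected in $H^{2k}(\mathbb{C}P^{m};\pi_{2k}^s)$ by $(-1)^k$. In the computed groups of Theorem~\ref{main6}, this should amount to multiplication by $-1$ on the image of $\eta^{t}$ for $m=4,5,6,8$, which gives the orbits $\{\pm x\}$. For $m=7$, the image of $\eta^{t}$ should consist of elements fixed by $c^{*}$, for instance $2$-torsion elements or elements where the sign is absorbed, so the orbits are singletons.

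The main obstacle is this last step: determining precisely how $c^{*}$ and the choices of bundle lift act on the explicit groups $[\mathbb{C}P^{m},SF]$ computed in Theorem~\ref{main5}, and showing why the answer differs for $m=7$. I would first compute the action on each Atiyah--Hirzebruch filtration quotient. I would then resolve extension ambiguities using the $e$-invariant and $J$-homomorphism detection. Finally, I would check the result against the count in Proposition~\ref{num}.
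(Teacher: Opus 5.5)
Your reduction matches the paper's. Diffeomorphism classes are the orbits of $\epsilon^{t}(\mathbb{C}P^{m})=\{[\mathrm{Id}],[C]\}$ on $\mathcal{S}^t_{\mathrm{Diff}}(\mathbb{C}P^{m})$, $\eta^{t}$ is injective since $L_{2m+1}(e)=0$, and the action becomes $x\mapsto C^{*}x$ on normal invariants.

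The vanishing of the translation term needs an actual reason, not ``the structure set quotients or relates these''. Here is one: $\eta^{t}(C,\widehat{C})$ maps to the ordinary normal invariant of the diffeomorphism $C$, which is $0$, and $\phi_{*}\colon[\mathbb{C}P^{m},SF]\to[\mathbb{C}P^{m},F/O]$ is injective. Equivalently, $J\colon[\mathbb{C}P^{m},SO]\to[\mathbb{C}P^{m},SF]$ is zero, so changing the bundle lift does nothing.

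After this reduction, the theorem is exactly the statement that $C^{*}=-\mathrm{Id}$ on the image of $\eta^{t}$ for all $4\le m\le 8$. For $m=7$ the group $[\mathbb{C}P^{7},SF]\cong\mathbb{Z}_2^{3}$ is elementary abelian, so $-\mathrm{Id}=\mathrm{Id}$ and nothing really ``differs'' there. You defer precisely this step, and the method you propose for it cannot succeed.

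The Atiyah--Hirzebruch filtration only tells you that $C^{*}$ acts by $(-1)^{k}$ on the graded pieces coming from $H^{2k}(\mathbb{C}P^{m};\pi^{s}_{2k})$. Every $2$-primary graded piece here is a $\mathbb{Z}_2$, where that sign is invisible. An involution that is trivial on the associated graded need not be $\pm\mathrm{Id}$. Concretely:
\begin{itemize}
\item On $[\mathbb{C}P^{4},SF]\cong\mathbb{Z}_{4}\{g\}$, with graded pieces from $\nu^{2}$ and $\epsilon$, both $\pm\mathrm{Id}$ are compatible with the filtration. If $C^{*}=+\mathrm{Id}$, the structures with invariants $g$ and $-g$ would be non-diffeomorphic, contradicting (1). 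The same holds for the $\mathbb{Z}_{4}$-summand of $[\mathbb{C}P^{8},SF]$.
\item For $m=5$ the filtration allows $C^{*}\bigl(\overline{(\epsilon)_{8}}\bigr)_{10}=\bigl(\overline{(\epsilon)_{8}}\bigr)_{10}+(\eta\circ\mu)_{10}$.
\item For $m=7$ it allows $C^{*}\bigl(\overline{(\epsilon)_{8}}\bigr)_{14}=\bigl(\overline{(\epsilon)_{8}}\bigr)_{14}+(\kappa)_{14}$.
\end{itemize}
Each of the last two would identify two normal invariants that are not $\pm$ each other. Only for $m=6$, where each primary part is cyclic of prime order, does the filtration argument suffice.

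The $e$-invariant and $J$-detection do not repair this. For $m=4,7,8$ the ambiguity lives on classes built from $\nu^{2},\epsilon,\kappa,\sigma^{2},\eta^{*}$, none of which lies in $\operatorname{Im}(J)$. In any case $J\colon[\mathbb{C}P^{m},SO]\to[\mathbb{C}P^{m},SF]$ is zero.

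The missing inputs are those behind Theorem~\ref{main6}; there are two.

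First, $C\circ p\simeq p$ for the Hopf map, so $C^{*}$ preserves $\operatorname{Ker}(p^{*})$. For $m=5$ this kernel contains $\bigl(\overline{(\epsilon)_{8}}\bigr)_{10}$ but not $(\eta\circ\mu)_{10}$. For $m=7$ it is spanned by $(\kappa)_{14}+\bigl(\overline{(\epsilon)_{8}}\bigr)_{14}$. Combined with the action on $\operatorname{Im}f^{*}_{\mathbb{C}P^{m}}$, which is given by the degree of $C$, this pins down $C^{*}$ on $[\mathbb{C}P^{m},\operatorname{Top}/O]\cong[\mathbb{C}P^{m},PL]$. For $m=8$ this requires an additional detour through $\mathbb{R}P^{17}$. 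The result is then transported into $[\mathbb{C}P^{m},SF]$ by the injective $J_{PL}$.

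Second, for the $\mathbb{Z}_{4}$-summands one writes the induced self-map of $\mathbb{C}P^{4}/\mathbb{C}P^{2}=\Sigma^{4}\mathbb{C}P^{2}$ (resp.\ $\mathbb{C}P^{8}/\mathbb{C}P^{6}=\Sigma^{12}\mathbb{C}P^{2}$) as $-\mathrm{Id}$ plus a multiple of $\widetilde{2\iota_{7}}\circ\Sigma^{4}f_{\mathbb{C}P^{2}}$ (resp.\ $\widetilde{2\iota_{15}}\circ\Sigma^{12}f_{\mathbb{C}P^{2}}$). The correction term induces zero because $\langle\nu^{2},\eta,2\rangle=0$ (resp.\ $\langle\sigma^{2},\eta,2\rangle=0$).

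If you invoke Theorem~\ref{main6} as an established result, your outline closes and is the paper's proof. As written, though, the decisive computation is assigned to a method that cannot produce it.
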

(See Theorem~\ref{Main7}.)
We further show that there exists a well-defined injective map 
\[
E \colon \mathcal{C}(\mathbb{C}P^{m}) \to \mathcal{S}^t_{\mathrm{Diff}}(\mathbb{C}P^{m}).
\]
Moreover, for the same range of $m$, we identify the action of $\epsilon^{t}(\mathbb{C}P^{m})$ on the image of $E$ (see Theorem~\ref{difftang}). This allows us to establish the following relationship between homeomorphism and tangential homotopy equivalence.
\begin{thmc}\indent 
\begin{enumerate}
    \item[(1)] For $m=5, 6, 7$, a closed smooth manifold $M$ is homeomorphic to $\mathbb{C}P^{m}$ if and only if $M$ is tangentially homotopy equivalent to $\mathbb{C}P^{m}$. 
    \item[(2)] For $m=4$, there exists a unique smooth manifold (up to diffeomorphism) that is tangentially homotopy equivalent to $\mathbb{C}P^{4}$ but is not homeomorphic to $\mathbb{C}P^{4}$.
    \item[(3)] For $m=8$, there are exactly two smooth manifolds (up to diffeomorphism) that are tangentially homotopy equivalent to $\mathbb{C}P^{8}$ but are not homeomorphic to $\mathbb{C}P^{8}$.
\end{enumerate}
\end{thmc}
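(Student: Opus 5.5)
The plan is to compare the image of the map $E\colon \mathcal{C}(\mathbb{C}P^{m}) \to \mathcal{S}^t_{\mathrm{Diff}}(\mathbb{C}P^{m})$ with the whole tangential structure set, and then pass to diffeomorphism classes using the action of $\epsilon^{t}(\mathbb{C}P^{m})$ described in Theorem~\ref{main6} and Theorem~\ref{difftang}.

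First I would verify that $E$ is well defined and lands in the tangential structure set. A smooth structure concordant class is represented by a smoothing $\mathbb{C}P^m_\alpha$ with identity homeomorphism $h$. Since a homeomorphism preserves the stable topological normal bundle, and $\mathcal{C}(\mathbb{C}P^m)\cong[\mathbb{C}P^m,\mathrm{Top}/O]$, the map $\mathrm{Top}/O\to F/O$ factors through $SF$ (via $\mathrm{Top}/O\to G/O$ with trivial image in $G/\mathrm{Top}$). So the smooth normal bundle of $\mathbb{C}P^m_\alpha$ is fiber homotopy trivially compared to that of $\mathbb{C}P^m$, and this gives a stable bundle map $\widehat{h}$. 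Then $\eta^t\circ E$ is the composite $[\mathbb{C}P^m,\mathrm{Top}/O]\to[\mathbb{C}P^m,SF]$, and injectivity follows from the computations in Proposition~\ref{sur-obst}.

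Every tangentially homotopy equivalent manifold $M$ gives an element of $\mathcal{S}^t_{\mathrm{Diff}}(\mathbb{C}P^m)$. The key test is whether its normal invariant lies in the image of $[\mathbb{C}P^m,\mathrm{Top}/O]\to[\mathbb{C}P^m,SF]$. By the Sullivan splitting, $[\mathbb{C}P^m,G/\mathrm{Top}]$ is detected by splitting invariants, so $M$ is homeomorphic to $\mathbb{C}P^m$ (via a homotopy equivalence homotopic to a homeomorphism, using topological rigidity of the topological structure set modulo the $\mathbb{Z}$ ambiguity from $L_{2m}$) exactly when the image of $\eta^t$ in $[\mathbb{C}P^m,G/\mathrm{Top}]$ vanishes. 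I would therefore compute the kernel of $[\mathbb{C}P^m,SF]\to[\mathbb{C}P^m,G/\mathrm{Top}]$, intersected with the image of $\eta^t$ coming from the tangential surgery sequence (Proposition~\ref{sur-obst}, Theorem~\ref{main5}), and compare it with the image of $[\mathbb{C}P^m,\mathrm{Top}/O]$. For $m=5,6,7$ I expect that $\eta^t$ has image exactly equal to the image of $\mathrm{Top}/O$, which gives (1). For $m=4$ and $m=8$ there are extra classes, detected by an element of $[\mathbb{C}P^m,SF]$ mapping nontrivially to $G/\mathrm{Top}$, for example through the $\mathbb{Z}/2$ Kervaire or Arf-type splitting invariant in dimension $8$ or $16$.

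Finally I would count diffeomorphism classes. By Theorem~B (Theorem~\ref{Main7}), diffeomorphism classes correspond to normal invariants modulo sign. The number of elements of $\operatorname{im}\eta^t$ outside $\operatorname{im}(E)$, taken modulo $\pm$, should be $1$ for $m=4$ and $2$ for $m=8$, while homeomorphic and non-homeomorphic classes stay separated because the sign action preserves the subgroup $\operatorname{im}(E)$.

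I expect the main obstacle to be the $m=8$ case: determining precisely which elements of $[\mathbb{C}P^8,SF]$ are realized by tangential structures, which requires the surgery obstruction computation, and showing that the two extra orbits are not identified under $\pm$. For this I would first use the explicit $2$- and $3$-primary decompositions from Theorem~\ref{main5} and check the sign action componentwise.
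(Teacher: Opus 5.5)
Your route differs from the paper's in how you decide homeomorphism. The paper does not use $G/\mathrm{Top}$ for this. It builds the injective map $E$ from Schultz's tangential PL smoothings, via $[\mathbb{C}P^m,PL]\cong[\mathbb{C}P^m,PL/O]\cong[\mathbb{C}P^m,\operatorname{Top}/O]$, and notes $\eta^t\circ E=\psi_*$. So the manifolds homeomorphic to $\mathbb{C}P^m$ are exactly the classes with $\eta^t\in\psi_*[\mathbb{C}P^m,\operatorname{Top}/O]$, and the paper then counts $C^*$-orbits using Theorems~\ref{main6} and~\ref{difftang}.

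Your substitute is legitimate and avoids Schultz. Since $L_{2m+1}(e)=0$, the topological normal invariant is injective on $\mathcal{S}_{\mathrm{Top}}(\mathbb{C}P^m)$; there is no ``$\mathbb{Z}$ ambiguity from $L_{2m}$'' to worry about. Since $\mathrm{haut}(\mathbb{C}P^m)=\{\mathrm{Id},C\}$ with $C$ a diffeomorphism, $M$ is homeomorphic to $\mathbb{C}P^m$ iff $\eta^t(M,f,\widehat f)$ dies in $[\mathbb{C}P^m,G/\mathrm{Top}]$. But exactness of $[X,\operatorname{Top}/O]\to[X,F/O]\to[X,G/\mathrm{Top}]$ immediately identifies that kernel with $\phi_*^{-1}\psi_*[\mathbb{C}P^m,\operatorname{Top}/O]$. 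So no splitting-invariant computation is needed.

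Your guess about detection is also off. The top-cell classes $(\epsilon)_8$ and $(z)_{16}$ come from $\pi^s_*$ and die in $\pi_8(G/\mathrm{Top})\cong\pi_{16}(G/\mathrm{Top})\cong\mathbb{Z}$. The extra classes are detected by the Kervaire invariants of $\nu^2$ (dimension $6$) and $\sigma^2$ (dimension $14$).

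There are two genuine gaps.

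First, your argument that a homeomorphism yields a tangential structure is a non sequitur. Trivial image in $G/\mathrm{Top}$ gives a lift to $\operatorname{Top}/O$, the fibre of $F/O\to G/\mathrm{Top}$, not to $SF$, the fibre of $F/O\to BSO$. Landing in $[\mathbb{C}P^m,SF]$ means $(h^{-1})^*\nu_N\cong\nu_{\mathbb{C}P^m}$ as stable \emph{vector} bundles, and this is not formal. For $\mathbb{C}P^m$ it is Lemma~\ref{pont}: Novikov, torsion-free cohomology, and freeness of the image of $[\mathbb{C}P^m,F/O]\to\widetilde{KO}^0(\mathbb{C}P^m)$; equivalently \eqref{equ21}--\eqref{equ23}. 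Without it the ``only if'' half of (1) is unproved. Also, injectivity of $\psi_*$ comes from \eqref{equ1}--\eqref{equ2}, not Proposition~\ref{sur-obst}.

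Second, for $m=8$ you misplace the difficulty. The obstruction $s$ vanishes on $[\mathbb{C}P^8,SF]\cong\mathbb{Z}_4\{g\}\oplus\mathbb{Z}_2\{h\}$ (Theorem~\ref{main5}(ii)), so every class is realized. What forces the answer ``two'' is which order-$4$ subgroup $\psi_*[\mathbb{C}P^8,\operatorname{Top}/O]$ is:
\begin{itemize}
\item If it were cyclic ($\langle g\rangle$ or $\langle g+h\rangle$), its complement would split into three orbits under $C^*(x,y)=(-x,y)$.
\item Only the non-cyclic $\{0,2g,h,2g+h\}$ leaves exactly two orbits, $\{\pm g\}$ and $\{\pm(g+h)\}$.
\end{itemize}
That is Theorem~\ref{eight-sub2}(ii), which rests on \eqref{pl8} ($[\mathbb{C}P^8,PL]$ has no element of order $4$). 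Your proposal never supplies this.

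Likewise, your ``I expect'' for $m=5,6,7$ must be replaced by the order count $|[\mathbb{C}P^m,\operatorname{Top}/O]|=|\operatorname{Im}\eta^t|$ (Theorem~\ref{main}, Proposition~\ref{sur-obst}, \eqref{equtan_final}), together with injectivity of $\psi_*$. For $m=4$, the unique order-$2$ subgroup of $\mathbb{Z}_4$ settles it.
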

(See Corollary~\ref{TE}.) 
In the proofs of Theorem~A and Theorem~B, we primarily use Lemma~I.9 of Brumfiel~\cite{Bru71} and Kawakubo’s theorem~\cite[Theorem~1]{Kaw68}. Since proofs of both results are not readily available in the literature, we include them in this paper (see Lemma~\ref{lem:Bru-seq} and Theorem~\ref{kawa}).
\subsection*{Notation}
Throughout the paper, we use the following conventions and notations.
\begin{itemize}
 \item $\mathbb{Z}_{n}\{\alpha\}$ denotes the cyclic group of order $n$ generated by~$\alpha$.
 If $A$ is an abelian group, then $A_{(2)}$ denotes its localization at the prime $2$, that is,
 \[
 A_{(2)} = A \otimes \mathbb{Z}\Bigl[\tfrac13, \tfrac15, \ldots \Bigr].
 \]
 The symbol $\cong$ denotes an isomorphism of groups.

 \item Depending on the context, we use either $\operatorname{Im}(f)$ or $\operatorname{Im}(G\xrightarrow{f}H)$ for the image of a group homomorphism $f\colon G\longrightarrow H$, and either $\operatorname{Ker}(f)$ or $\operatorname{Ker}(G\xrightarrow{f}H)$ for its kernel.

 \item $\mathbb{S}^{n}$ denotes the standard $n$-sphere in Euclidean space $\mathbb{R}^{n+1}$. We write $\mathbb{C}P^{n}/\mathbb{C}P^{k}$ for the stunted complex projective space, and for all $n \ge k$, we denote by
\[
q : \mathbb{C}P^{n} \longrightarrow \mathbb{C}P^{n}/\mathbb{C}P^{k} \quad \text{and} \quad i : \mathbb{C}P^{k} \hookrightarrow \mathbb{C}P^{n}
\]
the quotient map and the standard inclusion, respectively. We denote by
\[
p : \mathbb{S}^{2n+1} \longrightarrow \mathbb{C}P^{n}
\]
the Hopf fibration. The composition $\mathbb{S}^{2n+1} \xrightarrow{p} \mathbb{C}P^{n} \xrightarrow{q} \mathbb{C}P^{n}/\mathbb{C}P^{k}$ is denoted by $\varphi_{n+1}$.

 \item The notation $[X,Y]$ denotes the set of homotopy classes of maps $X\longrightarrow Y$.
 If $Y$ is an infinite loop space, then $[-,Y]$ is regarded as a contravariant functor
 from the homotopy category of CW–complexes to the category of abelian groups.
 For a map $f:X\longrightarrow Y$, we often write $f\in [X,Y]$ for its homotopy class when no confusion can arise.
 For a map (or homotopy class) $f$, we denote by $C_{f}$ the homotopy cofibre of $f$,
 and we refer to the homotopy cofibre simply as the cofibre.
 The suspension of a space \(X\) is denoted by \(\Sigma X\).
 \item For a space $X$, we write $X_{(p)}$ for its localization at the prime $p$.
 If $f \colon X \longrightarrow Y$ is a continuous map, we denote by
 \[
 f_{(p)} \colon X_{(p)} \longrightarrow Y_{(p)}
 \]
 the map induced by localization at~$p$.
 We write $X \simeq Y$ to indicate an (unstable) homotopy equivalence of spaces, and
 $X \simeq_{(2)} Y$ to denote a homotopy equivalence after localization at~$p$, that is,
 $X_{(p)} \simeq Y_{(p)}$ (see \cite{Sul67, Sul05}).

 \item If \(X = X_1 \vee X_2\) is the wedge of two CW–subcomplexes, then for \(k = 1,2\) we denote by
 \[
 p_k : X_1 \vee X_2 \longrightarrow X_k
 \]
 the canonical collapse map that sends the complementary subcomplex \(X_{3-k}\) to the wedge point.

 \item For a closed connected oriented $n$–manifold $M$, we denote by
 \[
 f_{M}:M\longrightarrow \mathbb{S}^n
 \]
 a fixed degree–one map; for example, choose an embedded disk
 $\mathbb{D}^n\hookrightarrow M$ and let
 $f_{M}: M\longrightarrow M/(M-\operatorname{int}(\mathbb{D}^n))\simeq \mathbb{S}^n$
 be the quotient map.
 For a CW complex \(X=X^{k-1}\cup_{\alpha}\mathbb{D}^{k}\), where $X^{k-1}$ is the $(k-1)$–skeleton
 and $\alpha:\mathbb{S}^{k-1}\longrightarrow X^{k-1}$ is the attaching map, we denote also by
 \[
 f_{X} : X \longrightarrow \mathbb{S}^{k}
 \]
 the map obtained by collapsing the \((k-1)\)–skeleton to a point.
 \item If $N$ is a closed oriented smooth manifold, the symbol $-N$ denotes the same underlying topological manifold as $N$, but endowed with the opposite orientation. 
\item We denote by
\[
\begin{aligned}
 O   &= \underset{n\longrightarrow \infty}{\colim}\,O_n, \quad
 Top = \underset{n\longrightarrow \infty}{\colim}\,Top_n,\\
 PL  &= \underset{n\longrightarrow \infty}{\colim}\,PL_n, \quad
 F   = \underset{n\longrightarrow \infty}{\colim}\,F_n, \quad
 SF  = \underset{n\longrightarrow \infty}{\colim}\,SF_n
\end{aligned}
\]
 the direct limits of the groups of orthogonal transformations, self-homeomorphisms of $\mathbb{R}^n$
 fixing the origin, piecewise linear homeomorphisms of $\mathbb{R}^n$ fixing the origin,
 basepoint–preserving self–homotopy equivalences of $\mathbb{S}^n$, and basepoint–preserving
 degree–one self–homotopy equivalences of $\mathbb{S}^n$, respectively
 (see \cite{KL66,LR65,MM79}).
 Let $F/O$ be the homotopy fibre of the canonical map $BO \longrightarrow BF$ between the classifying spaces
 for stable vector bundles and stable spherical fibrations
 (see \cite[\S2, \S3]{MM79} and \cite[p.113]{Wal99}),
 and let $\operatorname{Top}/O$ be the homotopy fibre of the canonical map $BO \longrightarrow BTop$ between the classifying
 spaces for stable vector bundles and stable topological $\mathbb{R}^n$–bundles
 (see \cite[Theorem~10.1, Essay~IV]{KS77}).
 Similarly, let $PL/O$ be the homotopy fibre of the map $BO \longrightarrow BPL$ between the classifying spaces
 for stable vector bundles and stable piecewise–linear $\mathbb{R}^n$–bundles
 (see \cite[p.~92]{Lan00}).
 \item The symbols for the generators of the stable homotopy groups of spheres $\pi_n^{s}$ and for the stable Toda brackets that we use are taken from \cite{Tod62}, \cite[Theorem~B, p.~271]{Muk66}, and \cite{Rav03}, and are mainly drawn from \cite{Tod62}.
 \item For each $n\ge 1$ we identify $[\mathbb{S}^n,SF]\cong \pi_n^{s},$ (\cite[Corollary 3.8, p.~48]{MM79})
and we freely pass between these notations when convenient.
 \item For a real vector bundle $\xi$ over a smooth manifold, we write $p_i(\xi)$ for its $i$–th Pontryagin class.
\end{itemize}
\section{The Group of Concordance Classes of Smooth Structures}\label{2}
We recall some terminology from \cite{KM63}.

\begin{definition}\rm
\begin{itemize}
\item[(a)] A homotopy $m$–sphere $\Sigma^m$ is an oriented smooth closed manifold homotopy equivalent to the standard unit sphere $\mathbb{S}^m \subset \mathbb{R}^{m+1}$.

\item[(b)] A homotopy $m$–sphere $\Sigma^m$ is said to be \emph{exotic} if it is not diffeomorphic to $\mathbb{S}^m$.
\end{itemize}
\end{definition}

\begin{definition}\label{KM}\rm
The $m$–th group of smooth homotopy spheres $\Theta_m$ is defined as follows.
Elements are oriented $h$–cobordism classes $[\Sigma]$ of homotopy $m$–spheres $\Sigma$, where $\Sigma$ and $\Sigma^{\prime}$ are called oriented $h$–cobordant if there is an oriented $h$–cobordism $(W, \partial_0 W, \partial_1 W)$ together with orientation–preserving diffeomorphisms $\Sigma\longrightarrow \partial_0W$ and $(-\Sigma^{\prime})\longrightarrow \partial_1W$ (here $(-\Sigma^{\prime})$ is obtained from $\Sigma^{\prime}$ by reversing the orientation). The addition is given by the connected sum $\#$, the zero element is represented by $[\mathbb{S}^m]$, and the inverse of $[\Sigma]$ is given by $[-\Sigma]$.
Kervaire and Milnor \cite{KM63} showed that each $\Theta_m$ is a finite abelian group for $m\geq 1$; in particular, $\Theta_{m}\cong \mathbb{Z}_2$ for $m=8,14,16$, and $\Theta_{10}\cong \mathbb{Z}_6$.
For $m\geq 5$, the $h$–cobordism theorem \cite{Sma62} implies that $\Theta_m$ can be identified with the set of all oriented diffeomorphism classes of smooth structures on $\mathbb{S}^m$.
\end{definition}

Recall that elements of the group $\mathcal{C}(M)$ of concordance classes of smooth structures on a closed smooth manifold $M^m$ are represented by pairs $(N,f)$, where $f : N \longrightarrow M^m$ is a homeomorphism. The concordance class of $(N, f)$ is denoted by $[N,f]$, and the class $[M^m,\mathrm{Id}]$ of the identity $\mathrm{Id} : M^m \longrightarrow M^m$ is regarded as the base point of $\mathcal{C}(M)$.
There is a canonical homeomorphism 
\[
h_{\Sigma} : M^{m} \# \Sigma^{m} \longrightarrow M^{m}
\]
which agrees with the identity outside the homotopy sphere $\Sigma^{m}$ and is well-defined up to topological concordance.
We denote the class in $\mathcal{C}(M)$ of $(M^{m} \# \Sigma^{m}, h_{\Sigma})$ by $[M^{m} \# \Sigma^{m}]$ (note that $[M^{m} \# \mathbb{S}^{m}]$ is the class of $(M^{m}, \mathrm{Id})$).

The key to analyzing $\mathcal{C}(M)$ is the following result.

\begin{theorem}[Kirby and Siebenmann {\cite[p.~194]{KS77}}]\label{kirby}
Let $M$ be a smooth manifold of dimension at least $5$. Then there is a bijection
\[
\mathcal{C}(M) \cong [M, \operatorname{Top}/O]
\]
which takes the base point $[M,\mathrm{Id}]$ to the homotopy class of the constant map.
\end{theorem}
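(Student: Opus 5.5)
The plan is to deduce the bijection from the smoothing theory of Kirby--Siebenmann rather than prove anything from scratch. First, I will identify concordance classes of smooth structures on the underlying topological manifold $M$ with concordance classes of lifts of the tangent microbundle. The topological tangent bundle $\tau_M$ is classified by a map $M \to B\mathrm{Top}_m$, and a smooth structure determines a lift of this map to $BO_m$. The \emph{Product Structure Theorem} and the \emph{Classification Theorem} of \cite[Essay~IV]{KS77} say that, for $\dim M \ge 5$ (or $\dim M\ge 6$ if $\partial M \neq \emptyset$, with a relative version), this assignment gives a bijection between $\mathcal{C}(M)$ and the set of vertical homotopy classes of lifts of $\tau_M$ to $BO_m$. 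The hypothesis $\dim M\ge 5$ is used exactly here: the key input is Kirby's torus trick together with the handle-straightening argument, which needs the vanishing of certain obstructions and the $h$-cobordism/$s$-cobordism machinery available only in dimensions at least $5$. I expect this to be the main obstacle, but I will cite it as a black box rather than reprove it.

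Second, I will stabilize. The map $\mathrm{Top}_m/O_m \to \mathrm{Top}/O$ is highly connected (stability, \cite[Essay~V]{KS77}), so lifts of $\tau_M$ to $BO_m$ correspond bijectively, up to vertical homotopy, to lifts of the stable tangent bundle $M \to B\mathrm{Top}$ to $BO$. The homotopy fibre of $BO \to B\mathrm{Top}$ is $\mathrm{Top}/O$, which is an infinite loop space and acts principally on the fibration $BO\to B\mathrm{Top}$. Lifts therefore form a torsor under $[M,\mathrm{Top}/O]$, provided one lift exists. Since $M$ is given as a smooth manifold, its own smooth structure provides a base lift, and I will trivialize the torsor at that lift.

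Third, the difference class relative to the base lift gives the map $\mathcal{C}(M) \to [M,\mathrm{Top}/O]$, $[N,f] \mapsto$ the difference of the lifts induced by $N$ (pulled back along $f^{-1}$) and by $M$. By the previous two steps this map is a bijection, and it sends $[M,\mathrm{Id}]$ to the difference of the base lift with itself, namely the constant map. One should check that concordant pairs give vertically homotopic lifts; this follows because a concordance is a smooth structure on $M\times I$, whose lift restricts to a vertical homotopy. The converse, that a vertical homotopy is realized by a concordance, is exactly the relative (\emph{rel boundary}) form of the classification theorem applied to $M\times I$.
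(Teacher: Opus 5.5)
Your proposal is correct and follows the same route as the paper, which gives no argument of its own and simply invokes the Kirby--Siebenmann classification theorem \cite[p.~194]{KS77}. Your outline is the standard unpacking of that citation: smoothings up to concordance $\leftrightarrow$ vertical homotopy classes of lifts of the tangent classifying map, then stabilization, then trivializing the principal $\operatorname{Top}/O$-torsor at the given smoothing so that $[M,\mathrm{Id}]$ goes to the constant map.

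The only step worth stating more precisely is the stabilization. For the two sets of lifts over the $m$-dimensional $M$ to agree, you want $\operatorname{Top}_m/O_m\to\operatorname{Top}/O$ to be at least $(m+1)$-connected, so that the existence and uniqueness obstructions of the comparison, in $H^{j+1}(M;\pi_j)$ and $H^{j}(M;\pi_j)$, vanish. This holds for $m\ge 5$ by the Kirby--Siebenmann stability theorem combined with the classical stability of $PL_m/O_m$.
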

Recall that \(\operatorname{Top}/O\) has an infinite loop space structure \cite{BV73}, so the set \([M, \operatorname{Top}/O]\) inherits an abelian group structure, and the bijection in Theorem~\ref{kirby} endows \(\mathcal{C}(M)\) with the structure of an abelian group, with \([M, \mathrm{Id}]\) serving as the identity element. Let $f_{M} : M^{m} \longrightarrow \mathbb{S}^{m}$ be a degree–one map, well-defined up to homotopy.
Composition with $f_{M}$ induces a homomorphism
\[
f_{M}^{*} : [\mathbb{S}^{m}, \operatorname{Top}/O] \longrightarrow [M^{m}, \operatorname{Top}/O].
\]
Under the identifications
\[
\Theta_{m} = [\mathbb{S}^{m}, \operatorname{Top}/O] \quad \text{and} \quad 
\mathcal{C}(M^{m}) = [M^{m}, \operatorname{Top}/O],
\]
provided by Theorem~\ref{kirby}, the map $f_{M}^{*}$ sends 
\[
[\Sigma^{m}] \longmapsto [M^{m} \# \Sigma^{m}].
\]

The kernel
\[
\operatorname{Ker}\!\left( [\mathbb{S}^{m}, \operatorname{Top}/O] 
\xrightarrow{\, f_{M}^{*} \,} 
[M^{m}, \operatorname{Top}/O] \right)
\]
can be identified with a subgroup of $\Theta_{m}$, called the \emph{concordance inertia group} of $M$. It consists of those homotopy spheres $\Sigma \in \Theta_{m}$ for which the pairs $(M, \mathrm{Id})$ and $(M \# \Sigma, \mathrm{Id})$ are concordant.
We recall some results about complex projective spaces that will be used later. The spaces $SF$, $PL$, $PL/O$, $\operatorname{Top}/O$, and $F/O$ are $H$-spaces, and there exist $H$-space maps
\[
\phi : SF \longrightarrow F/O,\qquad \beta:PL\longrightarrow PL/O,\qquad j_{PL}:PL\longrightarrow F,\qquad \psi : \operatorname{Top}/O \longrightarrow F/O
\]
(see \cite{BV73}, \cite[p.~83]{MM79}) such that
\begin{equation}\label{equ1}
\phi_{*} : [\mathbb{C}P^{m}, SF] \longrightarrow [\mathbb{C}P^{m}, F/O]
\end{equation}
is a monomorphism for all $m \ge 1$ (\cite[Corollary~2.1]{Bru68}, \cite[Lemma~2.7]{BK18}), and
\begin{equation}\label{equ2}
\psi_{*} : [\mathbb{C}P^{m}, \operatorname{Top}/O] \longrightarrow [\mathbb{C}P^{m}, F/O]
\end{equation}
is also a monomorphism for all $m \ge 1$ (\cite[Lemma~2.6]{BK18}). Applying \([\mathbb{C}P^{m}, -]\) to the fibration
\[
\Omega(\operatorname{Top}/PL) \longrightarrow PL/O \xrightarrow{\pi} \operatorname{Top}/O \longrightarrow \operatorname{Top}/PL,
\]
and using the result \(\operatorname{Top}/PL \simeq K(\mathbb{Z}_{2},3)\) (\cite[p.~251, Theorem~5.5]{KS77}), the fact that \[[\mathbb{C}P^{m}, \Omega(F/PL)] = 0,\] and noting that the map \(\Omega(\operatorname{Top}/PL) \longrightarrow PL/O\) factors through \(\Omega(F/PL)\), it follows that the map
\begin{equation}\label{equ21}
F_{*} : [\mathbb{C}P^{m}, PL/O] \longrightarrow [\mathbb{C}P^{m}, \operatorname{Top}/O]
\end{equation}
is an isomorphism. By \cite[Proposition~2.4]{Bru68}, the map
\begin{equation}\label{equ22}
\beta_{*}:[\mathbb{C}P^{m}, PL] \longrightarrow [\mathbb{C}P^{m}, PL/O]
\end{equation}
is also an isomorphism, and the map
\begin{equation}\label{equ23}
(j_{PL})_{*}=J_{PL}:[\mathbb{C}P^{m}, PL] \longrightarrow [\mathbb{C}P^{m}, F]
\end{equation}
is a monomorphism. By \eqref{equ21}, \eqref{equ22}, and \eqref{equ23}, the group $[\mathbb{C}P^{m}, \operatorname{Top}/O]$ can be regarded as a subgroup of $[\mathbb{C}P^{m}, SF]$.

Brumfiel \cite[p.~400]{Bru71}, \cite[p.~12, Corollary~2.3]{Bru68} showed that there is a splitting
\begin{equation}\label{equ3}
[\mathbb{C}P^{m-1}, F/O] \cong \mathbb{Z}^{\left\lfloor \frac{m-1}{2} \right\rfloor} \oplus [\mathbb{C}P^{m-1}, SF],
\end{equation}
where
\[
\mathbb{Z}^{\left\lfloor \frac{m-1}{2} \right\rfloor} \subset \operatorname{Im}\left([\mathbb{C}P^{m}, F/O] \xrightarrow{i^{*}} [\mathbb{C}P^{m-1}, F/O]\right),
\]
and the torsion subgroup of $[\mathbb{C}P^{m}, F/O]$ is identified with $[\mathbb{C}P^{m}, SF]$. In \cite[\S1]{Bru71} and \cite[\S1]{Bru68}, Brumfiel defined an invariant
\[
d: [\mathbb{C}P^{m}, F/O] \longrightarrow \Theta_{2m+1}
\]
such that the composition
\[
[\mathbb{C}P^{m}, F/O] \xrightarrow{\;d\;} \Theta_{2m+1} \xrightarrow{\;\psi_{*}\;} \pi_{2m+1}(F/O) \cong \pi^{s}_{2m+1}/\operatorname{Im}(J) = \operatorname{Coker}(J_{2m+1})
\]
coincides with the map
\[
[\mathbb{C}P^{m}, F/O] \xrightarrow{\, p^{*}\,} [\mathbb{S}^{2m+1}, F/O]
\]
induced by the Hopf fibration $p : \mathbb{S}^{2m+1} \longrightarrow \mathbb{C}P^{m}$ (see \cite[Proposition 3.1]{Bru71}). In particular, the restriction
\[
d : [\mathbb{C}P^{m}, \operatorname{Top}/O] \longrightarrow \Theta_{2m+1}
\]
coincides with
\begin{equation}\label{equ41}
[\mathbb{C}P^{m}, \operatorname{Top}/O] \xrightarrow{\, p^{*}\,} [\mathbb{S}^{2m+1}, \operatorname{Top}/O].
\end{equation}
Furthermore, Brumfiel proved (\cite[p.~392, Corollary~5.5(i)]{Bru71} and \cite[Proposition 3.3, p.~18]{Bru68}) that for $m = 4k - 1$ ($k \ge 1$),
\begin{equation}\label{coker}
d\big([\mathbb{C}P^{m}, SF]\big) \subseteq \operatorname{Coker}(J_{2m+1}) \subset \Theta_{2m+1} = \mathit{bP}_{2m+2} \oplus \operatorname{Coker}(J_{2m+1}).
\end{equation}
We now introduce some notation that will be used throughout the paper. Let $C_{\alpha} = X \cup_{\alpha} CY$ be the mapping cone of a map $\alpha \colon Y \rightarrow X$. Denote by $i \colon X \rightarrow C_{\alpha}$ the inclusion and by $f_{C_{\alpha}} \colon C_{\alpha} \rightarrow \Sigma Y$ the quotient map that collapses $X$ to a point. Consider elements $\beta \in [X, Z]$ and $\gamma \in [W, Y]$ such that $\beta \circ \alpha$ and $\alpha \circ \gamma$ are null-homotopic, where $Z$ and $W$ are arbitrary spaces. We denote by $\overline{\beta} \in [C_{\alpha}, Z]$ an extension of $\beta$ satisfying $i^*(\overline{\beta}) = \beta$, where $i^* \colon [C_{\alpha}, Z] \rightarrow [X, Z]$. Similarly, let $\widetilde{\gamma} \in [\Sigma W, C_{\alpha}]$ be a coextension of $\gamma$ satisfying $(f_{C_{\alpha}})_{*}\left(\widetilde{\gamma}\right) = \Sigma \gamma$, where $(f_{C_{\alpha}})_{*} \colon [\Sigma W, C_{\alpha}] \rightarrow [\Sigma W, \Sigma Y]$. 

We shall fix the following notation: let $q \colon X^{m} \to X^{m}/X^{m-1} \cong \mathbb{S}^{m}$ be a quotient map that collapses the $(m-1)$-skeleton of the CW-complex $X$ to a point. If $\alpha \in \pi_{m}(W)$, then
\[
(\alpha)_{m} \in [X^{m}, W]
\]
denotes the element represented by the composition $X^{m} \xrightarrow{\, q \,} \mathbb{S}^{m} \xrightarrow{\, \alpha \,} W$. For $k \ge m$, a chosen extension of $(\alpha)_{m}$ along the restriction map $i^{*} \colon [X^{k}, W] \longrightarrow [X^{m}, W]$ is denoted by $\left(\overline{(\alpha)_{m}}\right)_{k}$.
We also recall the following result.
\begin{proposition}[{\cite[p.\,190, Proof of (5.1)]{Mos68}}]\label{St}
Let \(p:\mathbb{S}^{2m+1} \longrightarrow \mathbb{C}P^{m}\) be the Hopf fibration and \(f_{ \mathbb{C}P^{m}}:\mathbb{C}P^{m}\longrightarrow \mathbb{S}^{2m}\) be the degree-one map. Then the composite \(f_{\mathbb{C} P^{m}} \circ p: \mathbb{S}^{2m+1}\longrightarrow \mathbb{S}^{2m}\) is null-homotopic if and only if \(m\) is even.
\end{proposition}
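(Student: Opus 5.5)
The plan is to identify the composite $f_{\mathbb{C}P^{m}}\circ p$ with the attaching map of the top cell of $\mathbb{C}P^{m+1}$, followed by the collapse onto the top cell of $\mathbb{C}P^{m}$, and then to detect that composite with a Steenrod square. Since $\pi_{2m+1}(\mathbb{S}^{2m})\cong\mathbb{Z}_2$ for $m\ge 2$ (the stable range, generated by $\eta$), the class $f_{\mathbb{C}P^{m}}\circ p$ is either $0$ or $\eta$. The case $m=1$ can be dealt with separately: there $f\circ p$ is the Hopf map $\mathbb{S}^3\to\mathbb{S}^2$ itself, which is essential, consistent with $m$ odd. So it suffices to decide whether the class is $\eta$.

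The first step is to recall that $\mathbb{C}P^{m+1}=\mathbb{C}P^{m}\cup_{p}\mathbb{D}^{2m+2}$, with $p$ the Hopf fibration as the attaching map. Collapsing the $(2m-2)$-skeleton $\mathbb{C}P^{m-1}$ turns this into
\[
\mathbb{C}P^{m+1}/\mathbb{C}P^{m-1}\simeq \mathbb{S}^{2m}\cup_{f_{\mathbb{C}P^{m}}\circ p}\mathbb{D}^{2m+2},
\]
that is, the cofibre of $f_{\mathbb{C}P^{m}}\circ p=\varphi_{m+1}$ in the paper's notation. The standard fact we use next is that, for a two-cell complex $\mathbb{S}^{n}\cup_{\alpha}\mathbb{D}^{n+2}$ with $\alpha\in\pi_{n+1}(\mathbb{S}^n)$, the map $\alpha$ is essential if and only if $Sq^2\colon H^{n}(-;\mathbb{Z}_2)\to H^{n+2}(-;\mathbb{Z}_2)$ is nonzero. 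This holds because $Sq^2$ detects $\eta$, which one checks on the cofibre $\mathbb{C}P^2$ of $\eta$ and which is stable under suspension. The map $H^*(\mathbb{C}P^{m+1}/\mathbb{C}P^{m-1})\to H^*(\mathbb{C}P^{m+1})$ is injective in degrees $2m$ and $2m+2$, so we may compute $Sq^2$ in $H^*(\mathbb{C}P^{m+1};\mathbb{Z}_2)=\mathbb{Z}_2[x]/(x^{m+2})$.

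In that ring, the Cartan formula together with $Sq^2x=x^2$ gives $Sq^2(x^m)=m\,x^{m+1}$, and $x^{m+1}\neq 0$ there. Hence $Sq^2$ is nonzero, and therefore $f_{\mathbb{C}P^{m}}\circ p\simeq\eta\neq 0$, exactly when $m$ is odd; when $m$ is even, $Sq^2$ vanishes and the composite is null-homotopic. This proves the proposition.

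The main obstacle is the identification of the cofibre of $f_{\mathbb{C}P^{m}}\circ p$ with the stunted projective space; the key point is that collapsing a subcomplex of the base does not change the attaching data of the top cell, only composes it with the quotient map. The Steenrod square detection of $\eta$ is standard.
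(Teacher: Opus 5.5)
Your proof is correct and complete. You identify $f_{\mathbb{C}P^{m}}\circ p$ with the attaching map of the top cell of $\mathbb{C}P^{m+1}/\mathbb{C}P^{m-1}$ and detect $\eta$ through $Sq^2(x^m)=m\,x^{m+1}$; treating $m=1$ separately is right, since $\pi_3(\mathbb{S}^2)\cong\mathbb{Z}$ and the $Sq^2$ criterion does not apply there. The paper gives no argument of its own, only the citation to Mosher. Your $Sq^2$ computation is the standard proof of this fact, and it uses the same detection principle the paper invokes for other attaching maps ($Sq^4$ for $\nu$, $\mathcal{P}^1$ for $\alpha_1$).
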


In this paper, we mainly analyze the following long exact sequence associated to the cofiber sequence
\[
\mathbb{S}^{2m-1} \xrightarrow{p} \mathbb{C}P^{m-1} \xrightarrow{i} \mathbb{C}P^{m} \xrightarrow{f_{\mathbb{C}P^{m}}} \mathbb{S}^{2m},
\]
namely,
\begin{equation}\label{longG}
\cdots \longrightarrow [\mathbb{S}^{2m}, X] \xrightarrow{\, f^{*}_{\mathbb{C}P^{m}}\,} [\mathbb{C}P^{m}, X] \xrightarrow{\, i^{*}\,} [\mathbb{C}P^{m-1}, X] \xrightarrow{\, p^{*}\,} [\mathbb{S}^{2m-1}, X] \longrightarrow \cdots
\end{equation}
where \(X = \operatorname{Top}/O\) or \(SF\). The following lemma appears in \cite[Lemma~I.9]{Bru71} without proof. Note that the group structure of \([\mathbb{C}P^{4}, SF]\) is incorrectly stated there as \(\mathbb{Z}_{2} \oplus \mathbb{Z}_{2}\). We provide the corrected computations below.
\begin{lemma}[{\cite[Lemma~I.9]{Bru71}}]\label{lem:Bru-seq}
Let \( p : \mathbb{S}^{2m+1} \longrightarrow \mathbb{C}P^{m} \) be the Hopf fibration, and let \( f_{\mathbb{C}P^{m}} : \mathbb{C}P^{m} \longrightarrow \mathbb{S}^{2m} \) be the degree-one map. The following hold:
\begin{itemize}
    \item[(i)] \( [\mathbb{C}P^{2}, SF] = 0 \).
    
    \item[(ii)] The map \( f_{\mathbb{C}P^{3}}^{*} : [\mathbb{S}^{6}, SF] \longrightarrow [\mathbb{C}P^{3}, SF] \cong \mathbb{Z}_{2}\{(\nu^{2})_{6}\} \) is an isomorphism, and the induced map \( p^{*} : [\mathbb{C}P^{3}, SF] \longrightarrow [\mathbb{S}^{7}, SF] \) is trivial.
    
    \item[(iii)] There exists a non-split exact sequence
    \[
    0 \longrightarrow \mathbb{Z}_{2}\{x\} \xrightarrow{f_{\mathbb{C}P^{4}}^*} [\mathbb{C}P^{4}, SF] \cong \mathbb{Z}_{4}\{\overline{(\nu^{2})_{6}}\} \xrightarrow{i^*} [\mathbb{C}P^{3}, SF] \cong \mathbb{Z}_{2}\{(\nu^{2})_{6}\} \longrightarrow 0,
    \]
    where \( x \in \{\epsilon, \epsilon+\eta \circ \sigma\} \subset [\mathbb{S}^{8}, SF] \) and \( 2\overline{(\nu^{2})_{6}}=(x)_{8} \). Furthermore,
    \[
    \operatorname{Im}\left([\mathbb{C}P^{4}, SF] \xrightarrow{p^*} [\mathbb{S}^{9}, SF]\right) \cong \mathbb{Z}_{2}\{\nu^{3}\} \quad \text{and} \quad \operatorname{Ker}\left([\mathbb{C}P^{4}, SF] \xrightarrow{p^*} [\mathbb{S}^{9}, SF]\right) \cong \mathbb{Z}_{2}\{(x)_{8}\}.
    \]
    
    \item[(iv)] There exists a split exact sequence
    \[
    0 \longrightarrow [\mathbb{S}^{10}, SF] \xrightarrow{f_{\mathbb{C}P^{5}}^*} [\mathbb{C}P^{5}, SF] \xrightarrow{i^*} \mathbb{Z}_{2}\{(\epsilon)_{8}\} \longrightarrow 0,
    \]
    where \( \mathbb{Z}_{2}\{(\epsilon)_{8}\} \subset [\mathbb{C}P^{4}, SF] \). Furthermore,
    \[
    \operatorname{Im}\left([\mathbb{C}P^{5}, SF] \xrightarrow{p^*} [\mathbb{S}^{11}, SF]\right) \cong \mathbb{Z}_{2}\{\eta^2 \circ \mu\}
    \]
    and
    \[
    \operatorname{Ker}\left([\mathbb{C}P^{5}, SF] \xrightarrow{p^*} [\mathbb{S}^{11}, SF]\right) \cong \mathbb{Z}_{2}\{\left(\overline{(\epsilon)_{8}}\right)_{10}\} \oplus \mathbb{Z}_{3}\{(\beta_{1})_{10}\}.
    \]
    
    \item[(v)] There exists a split exact sequence
    \[
    0 \longrightarrow [\mathbb{C}P^{6}, SF] \xrightarrow{i^*} [\mathbb{C}P^{5}, SF] \xrightarrow{p^*} \mathbb{Z}_{2}\{\eta^2 \circ \mu\} \longrightarrow 0,
    \]
    where \( \mathbb{Z}_{2}\{\eta^2 \circ \mu\} \subset [\mathbb{S}^{11}, SF] \). Furthermore, the map \( p^{*} : [\mathbb{C}P^{6}, SF] \longrightarrow [\mathbb{S}^{13}, SF] \cong \mathbb{Z}_{3}\{\alpha_1 \circ \beta_1\} \) is surjective, with kernel
    \[
    \operatorname{Ker}\left([\mathbb{C}P^{6}, SF] \xrightarrow{p^*} [\mathbb{S}^{13}, SF]\right) \cong \mathbb{Z}_{2}\{\left(\overline{(\epsilon)_{8}}\right)_{12}\}.
    \]
\end{itemize}
\end{lemma}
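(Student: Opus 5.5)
We will prove the five parts inductively in $m$, using the long exact sequence \eqref{longG} with $X=SF$ and the identification $[\mathbb{S}^n,SF]\cong\pi_n^s$. The stems we need are
\[
\pi_1^s=\mathbb{Z}_2\{\eta\},\ \pi_2^s=\mathbb{Z}_2\{\eta^2\},\ \pi_3^s=\mathbb{Z}_{24}\{\nu\},\ \pi_4^s=\pi_5^s=0,\ \pi_6^s=\mathbb{Z}_2\{\nu^2\},\ \pi_7^s=\mathbb{Z}_{240}\{\sigma\},
\]
\[
\pi_8^s=\mathbb{Z}_2\{\epsilon\}\oplus\mathbb{Z}_2\{\eta\sigma\},\ \pi_9^s=(\mathbb{Z}_2)^3\{\nu^3,\mu,\eta\epsilon\},\ \pi_{10}^s=\mathbb{Z}_2\{\eta\mu\}\oplus\mathbb{Z}_3\{\beta_1\},
\]
\[
\pi_{11}^s=\mathbb{Z}_{504}\{\zeta\},\ \pi_{12}^s=0,\ \pi_{13}^s=\mathbb{Z}_3\{\alpha_1\beta_1\}.
\]
At each stage we need three things. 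First, the connecting map $p^*:[\mathbb{C}P^{m-1},SF]\to\pi^s_{2m-1}$. Second, the image of $(\Sigma p)^*:\pi^s_{2m-1}\to\pi^s_{2m}$, whose cokernel is the kernel of $i^*$ after applying $f^*$. Third, the extension problem. Since $SF$ is an infinite loop space, all of this is stable. The dual maps are therefore computed by precomposition with the stable attaching maps of cells in $\mathbb{C}P^m$, namely the maps $\varphi_{m}$ into stunted projective spaces. On the top cell these are detected by Toda brackets and by Proposition~\ref{St}: the attaching map of the $2m$-cell onto the $(2m-2)$-cell is $\eta$ if $m$ is odd and $0$ if $m$ is even.

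\textbf{Parts (i) and (ii).} For (i), $\mathbb{C}P^1=\mathbb{S}^2$, so $[\mathbb{C}P^1,SF]=\mathbb{Z}_2\{\eta^2\}$. The map $p^*$ sends this to $\eta^3=4\nu\in\pi_3^s$, which is nonzero, so $p^*$ is injective. Since $\pi_4^s=0$, we get $[\mathbb{C}P^2,SF]=0$.

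For (ii), $\pi_5^s=0$ and $\pi_6^s=\mathbb{Z}_2$. The map $(\Sigma p)^*:\pi_5^s\to\pi_6^s$ is zero, so $f^*$ is injective, and $[\mathbb{C}P^3,SF]=\mathbb{Z}_2\{(\nu^2)_6\}$. To show $p^*$ is trivial, we compute $p^*(\nu^2)_6=\nu^2\circ(f\circ p)$. By Proposition~\ref{St} with $m=3$, $f\circ p=\eta$, and $\nu^2\eta=0$.

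\textbf{Part (iii).} The kernel of $i^*$ is $\pi_8^s/\mathrm{Im}(\pi_7^s\xrightarrow{(\Sigma p)^*}\pi_8^s)$. Here $(\Sigma p)^*$ is precomposition with the stable map $\mathbb{S}^8\to\mathbb{S}^7$ coming from the attaching map of the top cell of $\mathbb{C}P^4$. This map is nonzero on $\sigma$: it yields $\eta\sigma$, possibly plus $\epsilon$, and the ambiguity is exactly why $x\in\{\epsilon,\epsilon+\eta\sigma\}$. This gives a $\mathbb{Z}_2$ kernel. Since $p^*=0$ on $[\mathbb{C}P^3,SF]$, the map $i^*$ is surjective.

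The non-split extension is the delicate point. We will show $2\overline{(\nu^2)_6}=(x)_8$ using the Toda bracket formula $2\bar\beta=f^*\langle\beta,\alpha,2\rangle$, where $\alpha$ is the attaching map of the $8$-cell to the $6$-cell. Since $\alpha=0$ stably on that cell by Proposition~\ref{St}, the bracket must be computed via the $4$-cell. Concretely, we will identify it with $\langle\nu^2,2\iota,\eta\rangle\ni\epsilon$ (Toda), or something similar.

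The image $p^*(\overline{(\nu^2)_6})=\nu^3$ will then come from a bracket $\langle\nu^2,\eta,\ldots\rangle$, or from $J$-homomorphism and $e$-invariant considerations. The statement $p^*(x)_8=0$ follows from Proposition~\ref{St} with $m=4$.

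\textbf{Parts (iv) and (v).} These continue in the same way. For $m=5$, the kernel of $p^*$ on $[\mathbb{C}P^4,SF]$ is $\mathbb{Z}_2\{(x)_8\}$, so the image of $i^*$ is that subgroup. We will show that $\epsilon$ itself can be chosen as $x$. Next, $(\Sigma p)^*:\pi_9^s\to\pi_{10}^s$ is precomposition with $\eta$ (Proposition~\ref{St}, $m=5$). It sends $\mu\mapsto\eta\mu$ and also hits $\eta\epsilon$, so $f^*$ is injective only if this image is zero. Resolving this needs care: the kernel of $f^*$ is really the image from $\pi_9^s$ via the full stable attaching map, and we must check that it vanishes. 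We will check this using $e$-invariants (the $d$- and $e$-invariants of $\mu$, $\eta\mu$) and $\mathrm{Im}\,J$ detection.

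To show the sequence splits, we will find a lift of order $2$ by adjusting with elements of $f^*\pi_{10}^s$. The map $p^*$ is computed on $(\beta_1)_{10}$, on $(\eta\mu)_{10}$ (giving $\eta^2\mu$ via $\eta$), and on $\overline{(\epsilon)_8}$ (giving $0$).

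For (v), exactness follows from $\pi_{12}^s=0$ together with the computed image of $p^*$ from (iv). The remaining claim is that $p^*$ on $[\mathbb{C}P^6,SF]$ maps onto $\alpha_1\beta_1$, which is a $3$-primary computation. Localized at $3$, the complex $\mathbb{C}P^6$ has its relevant cells attached by $\alpha_1$, so $p^*(\beta_1)_{10}=\beta_1\alpha_1$, using the $P^1$-action in cohomology.

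\textbf{Main obstacle.} We expect the hardest steps to be the $2$-primary Toda bracket and extension computations. These are the order-$4$ extension in (iii), the exact value $p^*(\overline{(\nu^2)_6})=\nu^3$, and the vanishing of the kernel in (iv). For these we would first try Mosher's and Toda's tables of stable homotopy of stunted projective spaces, cross-checked against the Adams spectral sequence (Steenrod operations $Sq^2,Sq^4$ in $H^*(\mathbb{C}P^m)$).
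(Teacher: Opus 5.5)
Part (i), the $p^*$-claim in (ii) and the $3$-primary sketch in (v) agree with the paper. However, your set-up of \eqref{longG} has two errors that break (iii) and (iv).

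First, the connecting homomorphism is $(\Sigma p)^*\colon[\Sigma\mathbb{C}P^{m-1},SF]\to\pi^s_{2m}$, not a map out of $\pi^s_{2m-1}$, so every cell of $\Sigma\mathbb{C}P^{m-1}$ contributes.
\begin{itemize}
\item In (ii) the domain is $[\Sigma\mathbb{C}P^2,SF]\cong\pi_3^s\cong\mathbb{Z}_{24}$, not $\pi_5^s=0$. Its vanishing needs that $\Sigma p$ compresses into the bottom cell as (2-locally) an even multiple of $\nu$; the paper uses $\Sigma p=\pm i\circ\nu'$ with $\nu'$ stably $2\nu$, together with $2\nu^2=0$.
\item In (iii) this is decisive. $[\Sigma\mathbb{C}P^3,SF]$ also contains extensions of multiples of $\nu$ from the bottom cell $\mathbb{S}^3$. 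Nothing in your argument prevents their composites with $\Sigma p$ from being $\epsilon$ or $\bar\nu$. In that case the image would be all of $\pi_8^s$, $f^*_{\mathbb{C}P^4}$ would vanish, and $[\mathbb{C}P^4,SF]$ would have order $2$.
\item The paper pins the image to exactly $\mathbb{Z}_2\{\eta\circ\sigma\}$ with an idea missing from your proposal. Since $[\Sigma\mathbb{C}P^3,F/O]=0$, the map $J\colon[\Sigma\mathbb{C}P^3,O]\to[\Sigma\mathbb{C}P^3,SF]$ is surjective. Since also $[\mathbb{C}P^4,O]=0$ and $J([\mathbb{S}^8,O])=\{0,\eta\circ\sigma\}$, naturality forces the image.
\item Precomposition with $\eta$ sends $\sigma$ to exactly $\eta\circ\sigma$. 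The two choices of $x$ are just the two representatives of the nonzero coset of $\pi_8^s/\langle\eta\circ\sigma\rangle$, not an ambiguity in the map.
\end{itemize}

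Second, your parity rule is reversed. By Proposition~\ref{St}, the $2m$-cell of $\mathbb{C}P^m$ meets the $(2m-2)$-cell via $f_{\mathbb{C}P^{m-1}}\circ p$, which is $\eta$ exactly when $m$ is \emph{even} (as in $\mathbb{C}P^2$).
\begin{itemize}
\item In (iii) the $8$-cell is attached to the $6$-cell by $\eta$, so $\mathbb{C}P^4/\mathbb{C}P^2\simeq\Sigma^4\mathbb{C}P^2$. Since $[\mathbb{C}P^2,SF]=0$, $q^*$ is an isomorphism onto $[\mathbb{C}P^4,SF]$, and there is no ``computation via the $4$-cell''.
\item Your formula $2\bar\beta=f^*\langle\beta,\alpha,2\rangle$ with $\alpha=\eta$ gives $\langle\nu^2,\eta,2\rangle=0$, which is the split and wrong answer. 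The paper writes $2\cdot 1=\Sigma^4 i_{\mathbb{C}}\circ\overline{2\iota_6}+\widetilde{2\iota_7}\circ\Sigma^4 f_{\mathbb{C}P^2}$, so that $\langle\nu^2,2,\eta\rangle=\{\epsilon,\epsilon+\eta\circ\sigma\}$ also enters and produces $(x)_8$.
\item Likewise $p^*\overline{(\nu^2)_6}=\nu^3$ does not come from a bracket with $\eta$. It comes from $\varphi_5$ compressing into the bottom cell $\mathbb{S}^6$ as an odd multiple of $\nu$, detected by $Sq^4$.
\item In (iv) the top-cell part of the connecting map into $\pi_{10}^s$ is $\Sigma(f_{\mathbb{C}P^4}\circ p)\simeq 0$, not $\eta$. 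You used $f_{\mathbb{C}P^5}\circ p$, which governs the later map $p^*$. Taken literally, your version puts $\eta\circ\mu$ in the image and makes $f^*_{\mathbb{C}P^5}$ non-injective, which no $e$-invariant argument can repair.
\item The paper instead uses $f_{\mathbb{C}P^5}\circ p\simeq\eta$ as a detector: $p^*f^*_{\mathbb{C}P^5}(\eta\circ\mu)=\eta^2\circ\mu\neq 0$. This disposes of all lower-cell contributions at once. The class $\beta_1$ survives because $(\Sigma p)_{(3)}$ is stably divisible by $3$.
\item The splitting in (iv) cannot come from adjusting a lift of $(\epsilon)_8$ by $f^*\pi_{10}^s$, since that does not change twice the lift. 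The paper shows $[\mathbb{C}P^5,SF]$ is a quotient of $[\mathbb{C}P^5/\mathbb{C}P^2,SF]$. That group is in turn a quotient of $[\mathbb{S}^{10}\vee\mathbb{S}^8,SF]$, because $\mathbb{C}P^5/\mathbb{C}P^3\simeq\mathbb{S}^{10}\vee\mathbb{S}^8$ and $\overline{\nu^2}$ does not extend over $\mathbb{C}P^5/\mathbb{C}P^2$.
\item Finally, $p^*\overline{(\epsilon)_8}=0$ needs the explicit components $(\eta,\,2t\nu+\alpha_1)$ of $\varphi_6$.
\end{itemize}
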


\begin{proof}
We often use the exact sequence \eqref{longG} for \( X=SF \) and \( 2 \leq m \leq 6 \).

For the case \( m=2 \), since \( [\mathbb{S}^{4}, SF] = 0 \) and the map \( \eta^{*} : [\mathbb{S}^{2}, SF] \to [\mathbb{S}^{3}, SF] \) sends \( \eta^{2} \) to \( \eta^{3} = 4\nu \), which has order \( 2 \) (see \cite[Theorem 19.1]{Tod62}), it follows immediately from \eqref{longG} that \( [\mathbb{C}P^{2}, SF] = 0 \), which completes the proof of Part (i).

For the case \( m=3 \), it follows from Part (i) and the long exact sequence \eqref{longG}
\[
\dots \to [\Sigma \mathbb{C}P^{2}, SF] \xrightarrow{(\Sigma p)^{*}} [\mathbb{S}^{6}, SF] \xrightarrow{f_{\mathbb{C}P^{3}}^{*}} [\mathbb{C}P^{3}, SF] \to 0,
\]
where \( \Sigma p = \pm i \circ \nu' \) by \cite[(13), p. 189]{Muk82}, with \( i: \mathbb{S}^{3} \hookrightarrow \Sigma \mathbb{C}P^{2} \) and \( \nu' \in \pi_{6}(\mathbb{S}^{3}) \) such that \( \nu' \) is stably homotopic to \( 2\nu \in \pi_{3}^{s} \) (see \cite[(5.5), p. 42]{Tod62}). Since \( SF \) is an infinite loop space, the induced map \( (\nu')^{*}: [\mathbb{S}^{3}, SF] \to [\mathbb{S}^{6}, SF] \) can be identified with the map \( (2\nu)^{*}: \pi_{3}^{s} \to \pi_{6}^{s} \), which is the trivial map. This implies that the map \( (\Sigma p)^{*}: [\Sigma \mathbb{C}P^{2}, SF] \to [\mathbb{S}^{6}, SF] \) is trivial; hence, from the exact sequence above, \( f_{\mathbb{C}P^{3}}^{*}: [\mathbb{S}^{6}, SF] \to [\mathbb{C}P^{3}, SF] \) is an isomorphism, which completes the first assertion of Part (ii).
Furthermore, by Proposition \ref{St}, the composition \( \mathbb{S}^{7} \xrightarrow{p} \mathbb{C}P^{3} \xrightarrow{f_{\mathbb{C}P^{3}}} \mathbb{S}^{6} \) is represented by \( \eta \in \pi_{1}^{s} \). Consequently, the induced composition \( p^{*} \circ f_{\mathbb{C}P^{3}}^{*}: [\mathbb{S}^{6}, SF] \to [\mathbb{S}^{7}, SF] \) sends \( \nu^{2} \) to \( \eta \circ \nu^{2} = 0 \). Therefore, by the first assertion, the map \( p^{*}: [\mathbb{C}P^{3}, SF] \to [\mathbb{S}^{7}, SF] \) is trivial, which completes the proof of Part (ii). 

We now turn to the case \( m=4 \). Consider the following commutative diagram:
\begin{equation}\label{long-cp4}
\begin{xy}
\xymatrix{
[\Sigma \mathbb{C}P^3, SF] \ar[r]^{(\Sigma p)^{*}} & [\mathbb{S}^8, SF] \ar[r] & [\mathbb{C}P^4, SF] \ar[r]^{i^{*}} & [\mathbb{C}P^3, SF] \ar[r] & 0 \\
[\Sigma \mathbb{C}P^3, O] \ar[r]^{(\Sigma p)^{*}} \ar[u]^{J} & [\mathbb{S}^8, O] \ar[r] \ar[u]^{J} & 0 = [\mathbb{C}P^4, O] \ar[u]^{J} & &  
}
\end{xy}
\end{equation}
where the vertical maps are induced by the stable \(J\)-homomorphism \(O \longrightarrow SF\), the first row follows from the exact sequence \eqref{longG} and Part (ii), and the bottom map
\[
[\Sigma \mathbb{C}P^3, O] \longrightarrow [\mathbb{S}^8, O]
\] 
is surjective. Furthermore, the map \(J \colon [\mathbb{S}^8, O] \longrightarrow [\mathbb{S}^8, SF]\) is injective \cite{Ada66}, and its image is generated by the class \(\eta \circ \sigma = \epsilon + \bar{\nu}\) (see \cite[Theorem~1.1.13, p.~5]{Rav03} and \cite[Theorem 14.1, p.~190]{Tod62}). Since the first vertical map
\[
J : [\Sigma \mathbb{C}P^3, O] \longrightarrow [\Sigma \mathbb{C}P^3, SF]
\]
is surjective, as \([\Sigma \mathbb{C}P^3, F/O] = 0\), it follows from the commutativity of Diagram \eqref{long-cp4} that the image of the map
\[
[\Sigma \mathbb{C}P^3, SF] \xrightarrow{(\Sigma p)^{*}} [\mathbb{S}^8, SF]
\]
is generated by \(\eta \circ \sigma \in [\mathbb{S}^8, SF]\). Applying this to the first row of Diagram \eqref{long-cp4}, we obtain the short exact sequence
\begin{equation}\label{cp4-short}
0 \longrightarrow \mathbb{Z}_2\{x\} \xrightarrow{f_{\mathbb{C}P^4}^{*}} [\mathbb{C}P^4, SF] \xrightarrow{i^{*}} [\mathbb{C}P^3, SF]\cong \mathbb{Z}_{2}\{(\nu^2)_{6}\} \longrightarrow 0,
\end{equation}
where \( x \in \{\epsilon, \epsilon + \eta \circ \sigma\} \subset [\mathbb{S}^8, SF] \). We now show that this sequence does not split. Note that
\[
\mathbb{C}P^{4}/\mathbb{C}P^{2} \simeq \mathbb{S}^{6} \cup_{\eta} \mathbb{D}^{8} \cong \Sigma^{4}\mathbb{C}P^{2}.
\]
Applying the functor \([-, SF]\) to the cofiber sequence
\[
\mathbb{S}^{7} \xrightarrow{\eta} \mathbb{S}^{6} \xrightarrow{\Sigma^{4}i_{\mathbb{C}}} \Sigma^{4}\mathbb{C}P^{2} \xrightarrow{\Sigma^{4}f_{\mathbb{C}P^{2}}} \mathbb{S}^{8},
\]
where \(i_{\mathbb{C}}:\mathbb{S}^{2}\hookrightarrow \mathbb{C}P^{2}\) is the inclusion and \(f_{\mathbb{C}P^{2}}:\mathbb{C}P^{2}\to \mathbb{S}^{4}\) is the collapse map, we use the facts that \(\eta^{*}:[\mathbb{S}^{6},SF]\to [\mathbb{S}^{7},SF]\) is trivial and that the image of \(\eta^{*}:[\mathbb{S}^{7},SF]\to [\mathbb{S}^{8},SF]\) is \(\mathbb{Z}_{2}\{\eta \circ \sigma\}\) (since \(\eta \circ \nu^2=0\) and \(\eta \circ \sigma\neq 0\); see \cite{Tod62}). This yields the following short exact sequence:
\begin{equation}\label{exct-cp4}
0 \longrightarrow \mathbb{Z}_{2}\{x\} \xrightarrow{(\Sigma^{4}f_{\mathbb{C}P^{2}})^*} [\Sigma^{4}\mathbb{C}P^{2}, SF] \xrightarrow{(\Sigma^{4}i_{\mathbb{C}})^*} [\mathbb{S}^{6}, SF] \cong \mathbb{Z}_{2}\{\nu^{2}\} \longrightarrow 0,
\end{equation}
where \(x\in \{\epsilon, \epsilon+\eta \circ \sigma\}\). Let \(\overline{\nu^2}\in [\Sigma^{4}\mathbb{C}P^{2}, SF]\) be an extension of \(\nu^{2}\in [\mathbb{S}^{6}, SF]\) along the map \((\Sigma^{4}i_{\mathbb{C}})^{*}\). Then
\begin{equation}\label{todaeq1}
\begin{aligned}
2\,\overline{\nu^{2}} &= \overline{\nu^{2}} \circ (2\,\Sigma^{4} \iota_{\mathbb{C}}) \\
&= \overline{\nu^{2}} \circ (\Sigma^{4} i_{\mathbb{C}} \circ \overline{2 \iota_{6}} + \widetilde{2 \iota_{7}} \circ \Sigma^{4} f_{\mathbb{C}P^{2}}) \quad \text{by \cite[Corollary~2.6(i)]{KMNST01}} \\
&= \nu^{2} \circ \overline{2 \iota_{6}} + \overline{\nu^{2}} \circ \widetilde{2 \iota_{7}} \circ \Sigma^{4} f_{\mathbb{C}P^{2}}, \quad \text{since } (\Sigma^{4} i_{\mathbb{C}})^{*}(\overline{\nu^{2}}) = \nu^{2}.
\end{aligned}
\end{equation}
Applying Proposition~2.7(2) of \cite{KMNST01} with \(\alpha = \nu^{2}\), \(\beta = 2 \iota_{6}\), \(\gamma = \eta\), and \(p = \Sigma^{4} f_{\mathbb{C}P^{2}}\), we obtain \(\nu^{2} \circ \overline{2 \iota_{6}} \in \langle \nu^{2}, 2 \iota_{6}, \eta \rangle \circ \Sigma^{4} f_{\mathbb{C}P^{2}}\). Similarly, by taking \(\alpha = \nu^{2}\), \(\beta = \eta\), and \(\gamma = 2 \iota_{7}\) in Proposition~2.7(1) of \cite{KMNST01}, we have \(\overline{\nu^{2}} \circ \widetilde{2 \iota_{7}} \in \langle \nu^{2}, \eta, 2 \iota_{7} \rangle\). Since \(\langle \nu^{2}, \eta, 2 \rangle = 0\) and \(\langle \nu^{2}, 2, \eta \rangle = \{\epsilon, \epsilon + \eta \circ \sigma\}\) by \cite[Theorem 2.1(ii), p.~68]{Muk69}, it follows that
\[
2\overline{\nu^2} = \nu^{2} \circ \overline{2 \iota_{6}} = x \circ \Sigma^{4} f_{\mathbb{C}P^{2}} = (\Sigma^{4}f_{\mathbb{C}P^{2}})^{*}(x),
\]
which is non-zero in \([\Sigma^{4}\mathbb{C}P^{2}, SF]\) by \eqref{exct-cp4}. This shows that any extension \(\overline{\nu^{2}}\) of \(\nu^{2}\) has order \(4\). Hence, by \eqref{exct-cp4},
\[
[\mathbb{C}P^{4}/\mathbb{C}P^{2}, SF] \cong [\Sigma^{4}\mathbb{C}P^{2}, SF] \cong \mathbb{Z}_4\{\overline{\nu^{2}}\},
\]
where \(2\overline{\nu^{2}} = (\Sigma^{4}f_{\mathbb{C}P^{2}})^{*}(x)\). Since the map \(q^{*}:[\mathbb{C}P^{4}/\mathbb{C}P^{2}, SF]\to [\mathbb{C}P^{4}, SF]\) is surjective (as \([\mathbb{C}P^{2}, SF]=0\) by Part (i)) and \([\mathbb{C}P^{4}, SF]\) is a group of order 4 by \eqref{cp4-short}, \(q^*\) is an isomorphism. Thus, \([\mathbb{C}P^{4}, SF] \cong \mathbb{Z}_4\{\overline{\nu^{2}}\}\), which implies that the sequence \eqref{cp4-short} does not split. It remains to determine the kernel and image of the map \(p^{*}:[\mathbb{C}P^{4}, SF]\to [\mathbb{S}^{9},SF]\). Since, by Proposition \ref{St}, the composition \(\mathbb{S}^{9}\xrightarrow{p}\mathbb{C}P^{4}\xrightarrow{f_{\mathbb{C}P^{4}}}\mathbb{S}^{8}\) is null-homotopic, the induced composition \(p^{*}\circ f_{\mathbb{C}P^{4}}^{*}\) is trivial, and thus \(p^*\) sends \(2\overline{\nu^{2}}=(x)_{8}\) to zero. The map \(p^{*}\) fits into the following commutative diagram:
\begin{equation}\label{hopdcomm}
\begin{xy}
\xymatrix{
[\mathbb{C}P^4, SF] \ar[r]^{p^*} & [\mathbb{S}^{9}, SF] \\
[\mathbb{C}P^4 / \mathbb{C}P^2, SF] \ar[u]_{\cong}^{q^*} \ar[ur]^{p^{*}\circ q^*} &  
}
\end{xy}
\end{equation}
where \(q \circ p : \mathbb{S}^{9} \to \mathbb{C}P^4 / \mathbb{C}P^2\) is the attaching map for the $10$-cell in \(\mathbb{C}P^5 / \mathbb{C}P^2\), denoted by \(\varphi_{5}\). Since the composition of \(\varphi_{5}\) with the collapsing map to \(\mathbb{S}^{8}\) is null-homotopic, \(\varphi_{5}\) factors through the bottom cell \(\mathbb{S}^{6}\). Thus, \(\varphi_{5}\) is homotopic to the composition
\[
\mathbb{S}^{9} \xrightarrow{\lambda \nu} \mathbb{S}^{6} \xrightarrow{i} \mathbb{C}P^4 / \mathbb{C}P^2,
\]
where \(\lambda\) is odd and \(\nu \in \pi_3^s{}_{(2)}\). This follows because the Steenrod operation \(Sq^{4}\) is an isomorphism on \(H^{6}(\mathbb{C}P^5 / \mathbb{C}P^2; \mathbb{Z}_{2})\), while the Steenrod power operation \(\mathcal{P}^1\) is trivial on \(H^{6}(\mathbb{C}P^5 / \mathbb{C}P^2; \mathbb{Z}_{3})\).

Therefore, the induced map satisfies
\[
\varphi_{5}^{*} = (\lambda \nu)^{*} \circ i^{*},
\]
where \(i^{*}: [\mathbb{C}P^4 / \mathbb{C}P^2, SF] \to [\mathbb{S}^{6}, SF]\) is surjective by \eqref{exct-cp4}, and \((\lambda \nu)^{*}: [\mathbb{S}^{6}, SF] \to [\mathbb{S}^{9}, SF]\) sends \(\nu^{2}\) to \(\lambda \nu^{3}\), which is nonzero since \(\lambda\) is odd. Consequently, the image of \(\varphi_{5}^{*}\) is generated by \(\nu^{3}\).

Since \(q^{*}\) is an isomorphism, Diagram~\eqref{hopdcomm} implies that the generator \(\overline{\nu^{2}}\) of \([\mathbb{C}P^{4}, SF]\) maps nontrivially to \(\nu^{3}\) under \(p^{*}\). This shows that the kernel of \(p^{*}\) is generated by \((x)_{8}\), and the image of \(p^{*}\) is generated by \(\nu^{3}\), completing the proof of Part~(iii).

For the case \(m=5\), we first prove that the map \(f_{\mathbb{C}P^5}^{*}:[\mathbb{S}^{10}, SF] \cong \mathbb{Z}_{2}\{\eta \circ \mu\} \oplus \mathbb{Z}_{2}\{\beta_1\} \to [\mathbb{C}P^5, SF]\) is injective. Since the composition \(\mathbb{S}^{11} \xrightarrow{p} \mathbb{C}P^{5} \xrightarrow{f_{\mathbb{C}P^{5}}} \mathbb{S}^{10}\) is homotopic to \(\eta \in \pi_{1}^s\) by Proposition \ref{St}, the induced map \( p^{*} \circ f_{\mathbb{C}P^{5}}^{*}: [\mathbb{S}^{10}, SF] \to [\mathbb{S}^{11}, SF] \) sends \( \eta \circ \mu \) to \( \eta^{2} \circ \mu \neq 0\). This implies that \(f_{\mathbb{C}P^{5}}^{*}\) maps \(\eta \circ \mu\) non-trivially. To show that \(f_{\mathbb{C}P^{5}}^{*}\) is injective, it suffices to prove that the generator \(\beta_1 \in \mathbb{Z}_{3}\) maps non-trivially, which is equivalent to showing the map \((\Sigma p)_{(3)}^{*}:[\Sigma \mathbb{C}P^{4}, SF_{(3)}] \to [\Sigma \mathbb{S}^{9}, SF_{(3)}] \cong \mathbb{Z}_{3}\) is the trivial map by the exact sequence \eqref{longG}, where \((\Sigma p)_{(3)}: \Sigma \mathbb{S}^{9} \to \Sigma \mathbb{C}P^{4}\) is the 3-local map induced by the suspension of the Hopf fibration \(p: \mathbb{S}^{9} \to \mathbb{C}P^{4}\). It follows from \cite[Lemma 11.7(ii), p. 195]{Muk82} that \((\Sigma p)_{(3)}\) is stably homotopic to \(3\gamma\) for some \(\gamma \in \pi_{10}^{s}(\Sigma \mathbb{C}P^{4})\). Therefore, the induced map \(((\Sigma p)_{(3)})^{*} = 3(\gamma)^{*}: [\Sigma \mathbb{C}P^{4}, SF_{(3)}] \to \mathbb{Z}_{3}\) is trivial, which proves the injectivity of the map \(f_{\mathbb{C}P^{5}}^{*}\). Applying this and Part (iii) to the exact sequence \eqref{longG}, we obtain 
\begin{equation}\label{cp5-short}
0 \longrightarrow [\mathbb{S}^{10}, SF]\cong \mathbb{Z}_{6} \xrightarrow{f_{\mathbb{C}P^5}^{*}} [\mathbb{C}P^5, SF] \xrightarrow{i^{*}} \mathbb{Z}_{2}\{(x)_{8}\} \longrightarrow 0,
\end{equation}
where \(\mathbb{Z}_{2}\{(x)_{8}\} \subset [\mathbb{C}P^4, SF]\) and \(x \in \{\epsilon, \epsilon+\eta \circ \sigma\}\). We now show that this sequence splits. Consider the long exact sequence associated to the cofibre sequence \(\mathbb{C}P^{3}/\mathbb{C}P^{2} \xrightarrow{i} \mathbb{C}P^{5}/\mathbb{C}P^{2} \xrightarrow{q} \mathbb{C}P^{5}/\mathbb{C}P^{3} \simeq \mathbb{S}^{10} \vee \mathbb{S}^{8}\):
\begin{equation}\label{quotie-cp5}
\dots \to [\mathbb{S}^{7}, SF] \xrightarrow{\delta^{*} = ((\delta_{r_1})^{*}, (\delta_{r_2})^{*})} [\mathbb{S}^{10} \vee \mathbb{S}^{8}, SF] \xrightarrow{q^{*}} [\mathbb{C}P^{5}/\mathbb{C}P^{2}, SF] \xrightarrow{i^{*}} [\mathbb{C}P^{3}/\mathbb{C}P^{2}, SF],
\end{equation}
where \(\mathbb{C}P^{5}/\mathbb{C}P^{3} \simeq \mathbb{S}^{10} \vee \mathbb{S}^{8}\) by Proposition \ref{St}, and \(\delta_{r_1}, \delta_{r_2}\) are the restrictions of the connecting map \(\delta: \mathbb{S}^{10} \vee \mathbb{S}^{8} \to \mathbb{S}^{7}\). The cofibre of \(\delta\) is \(\Sigma \mathbb{C}P^{5}/\mathbb{C}P^{2}\); thus, \(\delta_{r_1}\) and \(\delta_{r_2}\) are the attaching maps of the suspensions of the 10-cell and 8-cell in \(\Sigma \mathbb{C}P^{5}/\mathbb{C}P^{2}\). As shown in the proof of Part (iii), \(\delta_{r_1}\) is homotopic to \(\eta\) and \(\delta_{r_2}\) is homotopic to \(\lambda \nu\), where \(\lambda\) is odd. Since \(\eta \circ \sigma \neq 0\) and \(\nu \circ \sigma = 0\) \cite{Tod62}, the image of the map \(\eta^{*}: [\mathbb{S}^{7}, SF] \to [\mathbb{S}^{8}, SF]\) is generated by \(\eta \circ \sigma\), whereas the map \((\lambda \nu)^{*}: [\mathbb{S}^{7}, SF] \to [\mathbb{S}^{10}, SF]\) is trivial. Consequently, the image of the map \(\delta^{*}: [\mathbb{S}^{7}, SF] \to [\mathbb{S}^{10}, SF] \oplus [\mathbb{S}^{8}, SF]\) is \(\mathbb{Z}_{2}\{\eta \circ \sigma\}\). 

On the other hand, the map \(i^{*}: [\mathbb{C}P^{5}/\mathbb{C}P^{2}, SF] \to [\mathbb{C}P^{3}/\mathbb{C}P^{2}, SF]\) factors through \([\mathbb{C}P^{4}/\mathbb{C}P^{2}, SF]\), and the generator \(\overline{\nu^{2}}\) of \([\mathbb{C}P^{4}/\mathbb{C}P^{2}, SF]\) maps non-trivially to \(\nu^{3}\) under the map \(\varphi_{5}^{*}: [\mathbb{C}P^{4}/\mathbb{C}P^{2}, SF] \to [\mathbb{S}^{9}, SF]\) induced by the top-cell attaching map \(\varphi_{5}\) (see Part (iii)). This implies that the generator \(\overline{\nu^{2}}\) cannot admit an extension in \([\mathbb{C}P^{5}/\mathbb{C}P^{2}, SF]\). Consequently, the composition \(i^{*}: [\mathbb{C}P^{5}/\mathbb{C}P^{2}, SF] \to [\mathbb{C}P^{4}/\mathbb{C}P^{2}, SF] \to [\mathbb{S}^{6}, SF] \cong \mathbb{Z}_{2}\{\nu^2\}\) is the trivial map. Combining these observations with the exact sequence \eqref{quotie-cp5}, we find that the map \(q^{*}\) induces an isomorphism \([\mathbb{S}^{10}, SF] \oplus \mathbb{Z}_{2}\{x\} \cong [\mathbb{C}P^{5}/\mathbb{C}P^{2}, SF]\), where \(x \in \{\epsilon, \epsilon + \eta \circ \sigma\}\). Since the map \(q^{*}: [\mathbb{C}P^{5}/\mathbb{C}P^{2}, SF] \to [\mathbb{C}P^{5}, SF]\) is surjective as \([\mathbb{C}P^{2}, SF] = 0\) by Part (i) and the group \([\mathbb{C}P^{5}, SF]\) is of order \(12\) by \eqref{cp5-short}, this implies that \([\mathbb{C}P^{5}, SF] \cong \mathbb{Z}_{6} \oplus \mathbb{Z}_{2}\). This completes the proof of the splitting of the short exact sequence \eqref{cp5-short}. Therefore, we write
\[
[\mathbb{C}P^{5}, SF] \cong \mathbb{Z}_{2}\{(\eta \circ \mu)_{10}\} \oplus \mathbb{Z}_{3}\{(\beta_{1})_{10}\} \oplus \mathbb{Z}_{2}\{\left(\overline{(\epsilon)_{8}}\right)_{10}\}.
\]
We now compute the image and kernel of the map \(p^{*}: [\mathbb{C}P^{5}, SF] \to [\mathbb{S}^{11}, SF]\). As established previously, the composition \( p^{*} \circ f_{\mathbb{C}P^{5}}^{*}: [\mathbb{S}^{10}, SF] \to [\mathbb{S}^{11}, SF] \) maps \( \eta \circ \mu \) to \( \eta^{2} \circ \mu \neq 0\) and \( \beta_1 \) to \( \eta \circ \beta_1 = 0 \). Since \( f_{\mathbb{C}P^{5}}^{*} \) is injective, it follows that \( \mathbb{Z}_{3}\{(\beta_{1})_{10}\} \) is contained in the kernel of \( p^{*} \). We prove that the generator \( \left(\overline{(\epsilon)_{8}}\right)_{10} \) is also in the kernel of $p^{*}$. Since \( \left(\overline{(\epsilon)_{8}}\right)_{10} \) is an extension of \( (\epsilon)_{8} = f_{\mathbb{C}P^{4}}^{*}(\epsilon) \), which generates the kernel of \( [\mathbb{C}P^4, SF] \xrightarrow{p^*} [\mathbb{S}^9, SF] \) by Part (iii), the composite 
\begin{equation}\label{trivia-cp3}
i^{*}: [\mathbb{C}P^5, SF] \to [\mathbb{C}P^3, SF]
\end{equation}
is the trivial map. This implies that the map \(q^{*}: [\mathbb{C}P^{5}/\mathbb{C}P^{3}, SF] \to [\mathbb{C}P^{5}, SF]\) is surjective. Note that the composition \( \varphi_{6}: \mathbb{S}^{11} \xrightarrow{p} \mathbb{C}P^{5} \xrightarrow{q} \mathbb{C}P^{5}/\mathbb{C}P^{3} \simeq \mathbb{S}^{10} \vee \mathbb{S}^{8} \) restricted to the first factor \( (\varphi_{6})_{1} \) is \( \eta \), and the restriction to the second factor \( (\varphi_{6})_{2}: \mathbb{S}^{11} \to \mathbb{S}^{8} \) is \( 2t\nu + \alpha_{1} \in \pi_{3}^{s} \) for some \( t \), since \( Sq^{4} \) is trivial on \( H^{8}(\mathbb{C}P^{6}/\mathbb{C}P^{3}; \mathbb{Z}_{2}) \) and \( \mathcal{P}^{1} \) is non-trivial on \( H^{8}(\mathbb{C}P^{6}/\mathbb{C}P^{3}; \mathbb{Z}_{3}) \). Therefore, the induced map \( (\varphi_{6})^{*} = p^{*} \circ q^{*} = \eta^{*} \oplus (2t\nu + \alpha_{1})^{*}: [\mathbb{S}^{10}, SF] \oplus [\mathbb{S}^{8}, SF] \to [\mathbb{S}^{11}, SF] \). As \( \nu \circ \pi_{8}^{s} = 0 \) and \( \alpha_1 \circ \pi_{8}^{s} = 0 \) \cite[Theorem 14.1]{Tod62}, we have \( (2t\nu + \alpha_{1})^{*} = 0 \); hence, the image of the composite \( p^{*} \circ q^{*} \) is generated by \( \eta^{2} \circ \mu \). Consequently, the image of \( p^{*} \) is generated by \( \eta^{2} \circ \mu \) because \( q^{*} \) is surjective. This also implies that \( \mathbb{Z}_{2}\{\left(\overline{(\epsilon)_{8}}\right)_{10}\} \) is contained in the kernel of \( p^{*} \), and thus
\[
\operatorname{Ker} \left( [\mathbb{C}P^5, SF] \xrightarrow{p^*} [\mathbb{S}^{11}, SF] \right) \cong \mathbb{Z}_{3}\{(\beta_{1})_{10}\} \oplus \mathbb{Z}_{2}\{\left(\overline{(\epsilon)_{8}}\right)_{10}\},
\]
which completes the proof of Part (iv). 

Finally, consider the case \( m=6 \). Using \( \pi_{12}(SF) = 0 \) and the kernel of \( p^{*} \) given by Part (iv) in the exact sequence \eqref{longG}, we obtain the first assertion of Part (v). It remains to determine the image and kernel of \( p^{*}:[\mathbb{C}P^6, SF]\to [\mathbb{S}^{13}, SF] \). Since \( [\mathbb{S}^{13}, SF] \cong \mathbb{Z}_{3}\{\alpha_1\beta_1\} \), it suffices to work 3-locally. Note that \( \mathbb{C}P^{6}/\mathbb{C}P^{3} \simeq_{(3)} \mathbb{C}P^{5}/\mathbb{C}P^{3} \cup_{\varphi_{6}} \mathbb{D}^{12} \simeq_{(3)} \mathbb{S}^{8} \cup_{(\varphi_{6})_{2}} \mathbb{D}^{12} \vee \mathbb{S}^{10} \), where \( \varphi_{6}: \mathbb{S}^{11} \xrightarrow{((\varphi_{6})_{1}, (\varphi_{6})_{2})} \mathbb{S}^{10} \vee \mathbb{S}^{8} \), the component \( (\varphi_{6})_{1}: \mathbb{S}^{11} \to \mathbb{S}^{10} \) is null-homotopic (3-locally), and \( (\varphi_{6})_{2}: \mathbb{S}^{11} \to \mathbb{S}^{8} \) is homotopic to \( \alpha_1 \in \pi_{3}^{s} \) because the Steenrod power operation \( \mathcal{P}^{1} \) is non-trivial on \( H^{8}(\mathbb{C}P^{6}/\mathbb{C}P^{3}; \mathbb{Z}_{3}) \). Since the composite \( \varphi_{7}: \mathbb{S}^{13} \xrightarrow{p} \mathbb{C}P^{6} \xrightarrow{q} \mathbb{C}P^{6}/\mathbb{C}P^{3} \simeq_{(3)} \mathbb{S}^{8} \cup_{\alpha_{1}} \mathbb{D}^{12} \vee \mathbb{S}^{10} \) is the attaching map of the 14-cell in \( \mathbb{C}P^{7}/\mathbb{C}P^{3} \) and \( \mathcal{P}^{1} \) is non-trivial on \( H^{10}(\mathbb{C}P^{7}/\mathbb{C}P^{3}; \mathbb{Z}_{3}) \), the component \( (\varphi_{7})_{2}: \mathbb{S}^{13} \to \mathbb{S}^{10} \) of \( \varphi_{7} \) is \( \alpha_1 \in \pi_{3}^{s} \). As \( (\alpha_{1})^{*}: [\mathbb{S}^{10}, SF_{(3)}] \to [\mathbb{S}^{13}, SF_{(3)}] \) takes \( \beta_1 \) to \( \alpha_1 \circ \beta_1 \) and is thus an isomorphism, the induced map \( \varphi_{7}^{*} = p^{*} \circ q^{*}: [\mathbb{C}P^{6}/\mathbb{C}P^{3}, SF] \to [\mathbb{S}^{13}, SF] \) is surjective; hence, \( p^{*}: [\mathbb{C}P^{6}, SF] \to [\mathbb{S}^{13}, SF] \) is also surjective. On the other hand, by the first assertion of Part (v), \( [\mathbb{C}P^{6}, SF] \cong \mathbb{Z}_{3}\{\left(\overline{(\beta_{1})_{10}}\right)_{12}\} \oplus \mathbb{Z}_{2}\{\left(\overline{(\epsilon)_{8}}\right)_{12}\} \). Using this identification, we conclude that the kernel of \( p^{*}: [\mathbb{C}P^{6}, SF] \to [\mathbb{S}^{13}, SF] \) is \( \mathbb{Z}_{2}\{\left(\overline{(\epsilon)_{8}}\right)_{12}\} \). This completes the proof of Part (v).
\end{proof}
Recall that the map
\[
f^{*}_{\mathbb{C}P^{m}} : [\mathbb{S}^{2m}, \operatorname{Top}/O] \longrightarrow [\mathbb{C}P^{m}, \operatorname{Top}/O]
\]
is given by the assignment
\[
[\Sigma^{2m}] \longmapsto [\mathbb{C}P^{m} \# \Sigma^{2m}].
\]
It follows from \cite[Remark 4.3]{Ram15} that the kernel
\[
\operatorname{Ker}\!\left(
[\mathbb{S}^{2m}, \operatorname{Top}/O]
\xrightarrow{\, f^{*}_{\mathbb{C}P^{m}}\,}
[\mathbb{C}P^{m}, \operatorname{Top}/O]
\right)
\]
can be identified with the inertia group of $\mathbb{C}P^{m}$, namely, the subgroup of $\Theta_{2m}$ consisting of all homotopy $2m$-spheres $\Sigma^{2m}$ such that $\mathbb{C}P^{m} \# \Sigma^{2m}$ is orientation-preservingly diffeomorphic to $\mathbb{C}P^{m}$. In \cite[Theorem~1]{Kaw68}, Kawakubo asserted that the inertia group of $\mathbb{C}P^{m}$ is trivial for $m \le 8$, although a complete proof does not seem to appear in the literature. As this result is used frequently in the sequel, we include a proof here.
\begin{theorem}[Kawakubo’s theorem {\cite{Kaw68}}]\label{kawa}
If $m \leq 8$, the map
\[
f^{*}_{\mathbb{C}P^{m}} : [\mathbb{S}^{2m}, \operatorname{Top}/O] \longrightarrow [\mathbb{C}P^{m}, \operatorname{Top}/O]
\]
is injective.
\end{theorem}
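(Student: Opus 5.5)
By exactness of \eqref{longG} for $X=\operatorname{Top}/O$, the kernel of $f^{*}_{\mathbb{C}P^{m}}$ is the image of $(\Sigma p)^{*}:[\Sigma\mathbb{C}P^{m-1},\operatorname{Top}/O]\to[\mathbb{S}^{2m},\operatorname{Top}/O]$. It therefore suffices to show that this image is zero for $m\le 8$.

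Recall that $\Theta_{2m}\cong\pi_{2m}(\operatorname{Top}/O)$ injects into $\pi_{2m}(F/O)\cong \operatorname{Coker}(J_{2m})$, because $bP_{2m+1}=0$. I therefore compose with $\psi$ and reduce to a stable computation. By naturality, $\psi_{*}\circ(\Sigma p)^{*}=(\Sigma p)^{*}\circ\psi_{*}$. Hence it is enough to show that every element of $[\Sigma\mathbb{C}P^{m-1},F/O]$ maps to zero under $(\Sigma p)^{*}$ into $\operatorname{Coker}(J_{2m})$. Since $\Sigma\mathbb{C}P^{m-1}$ has only odd cells, I would argue instead with the fibration $SF\to F/O\to BSO$. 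The part of $[\Sigma\mathbb{C}P^{m-1},F/O]$ coming from $BSO$ maps, after composing with the attaching map, into classes that are images of $J$ or vanish. So the essential task is to show that $(\Sigma p)^{*}:[\Sigma\mathbb{C}P^{m-1},SF]\to \pi^{s}_{2m}$ has image contained in $\operatorname{Im}J_{2m}$.

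Concretely, for $m\le 8$ the relevant groups are $\pi^s_{2m}/\operatorname{Im}J$ for $2m\in\{2,4,\dots,16\}$. These are zero except for $\pi_8^s/\operatorname{Im} J\cong\mathbb{Z}_2\{\epsilon\}$, $\pi_{10}^s/\operatorname{Im}J\cong\mathbb{Z}_3\{\beta_1\}$ (with $\eta\mu\in\operatorname{Im}J$), $\pi_{14}^s\cong\mathbb{Z}_2\{\sigma^2\}\oplus\mathbb{Z}_2\{\kappa\}$, and $\pi_{16}^s/\operatorname{Im}J\cong\mathbb{Z}_2\{\eta^*\}$. So the claim is automatic except for $m=4,5,7,8$, where $\Theta_{2m}\ne 0$. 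For $m=4$ this is exactly the computation already done in the proof of Lemma~\ref{lem:Bru-seq}(iii): the image of $(\Sigma p)^{*}$ on $[\Sigma\mathbb{C}P^{3},SF]$ is $\mathbb{Z}_2\{\eta\sigma\}=\operatorname{Im}J$. For $m=5$ the proof of Lemma~\ref{lem:Bru-seq}(iv) shows that $(\Sigma p)^{*}$ vanishes $3$-locally, so $\beta_1$ survives and $\Theta_{10}$ injects. For $m=7,8$ I would use the same strategy, working $2$-locally cell by cell with the stunted spaces $\Sigma\mathbb{C}P^{m-1}/\mathbb{C}P^{k}$. The $(2m-1)$-skeleton of $\Sigma\mathbb{C}P^{m-1}$ is built from odd cells, and the top cell attaches via $\Sigma p$. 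I would compute, via Toda brackets and the splitting results of Mosher (Proposition~\ref{St}) and Mukai, which compositions $\alpha\circ\theta$ occur, where $\alpha\in\pi_{2j-1}^s$ ranges over the possible values of an element of $[\Sigma\mathbb{C}P^{m-1},SF]$ on the bottom cells and $\theta$ is the corresponding component of the attaching map. For $m=7$ this means showing that $\sigma^2$ and $\kappa$ are not hit. The candidates are $\nu\cdot\pi_{11}^s$, $\eta\cdot\pi_{13}^s=0$, and $\sigma\cdot\pi_7^s$, together with brackets; I expect $\sigma^2$ to require care. For $m=8$ one must show that $\eta^*$ is not hit modulo $\operatorname{Im}J$.

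An alternative, more robust approach is to use the identification of $d$ and Brumfiel's description: the inertia group is detected by $e$-invariants or by the $KO$-characteristic numbers of $\mathbb{C}P^m\#\Sigma$. For the classes in question ($\epsilon$, $\beta_1$, $\kappa$, $\sigma^2$, $\eta^*$), I would check that a diffeomorphism $\mathbb{C}P^m\#\Sigma\cong\mathbb{C}P^m$ would force $\Sigma$ to bound a framed-like manifold with an invariant of $\Sigma$ (such as $d$ or the Adams $e$/$\hat{A}$ invariant) that vanishes. The main obstacle is the $m=7$ case ($\sigma^2$, $\kappa$) and the $m=8$ case ($\eta^*$), where the unstable attaching maps of the top cells of $\mathbb{C}P^7$ and $\mathbb{C}P^8$ involve nontrivial Toda brackets. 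My first attempt would be to determine $(\Sigma p)^*$ on $[\Sigma\mathbb{C}P^{m-1},SF]_{(2)}$ via the Atiyah–Hirzebruch spectral sequence for stable cohomotopy of $\Sigma\mathbb{C}P^{m-1}$, and to compare its differentials with the $J$-image using the Adams $e$-invariant.
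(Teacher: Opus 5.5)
\textbf{There is a genuine gap.} Your opening reduction is sound: the kernel of $f^{*}_{\mathbb{C}P^{m}}$ is the image of $(\Sigma p)^{*}$, and it can be detected in $\pi_{2m}(F/O)$ through $\psi_{*}$. Your use of the computation in Lemma~\ref{lem:Bru-seq}(iii) for $m=4$ is a legitimate replacement for the paper's citation. However, the proposal contains an error at $m=5$ and does not actually prove $m=7,8$, which is where the content of the theorem lies.

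\emph{The case $m=5$.} You assert $\eta\circ\mu\in\operatorname{Im}J$. This is false: $\pi_{10}(SO)=0$, so $\operatorname{Coker}(J_{10})=\pi_{10}^{s}\cong\mathbb{Z}_{6}\cong\Theta_{10}$. Perhaps you conflated it with $\pi_{10}(\operatorname{cok}J_{(2)})=\operatorname{Coker}(J_{10})/\operatorname{Tors}(KO_{10})$. Your own injection $\Theta_{10}\hookrightarrow\operatorname{Coker}(J_{10})$ could not land in $\mathbb{Z}_3$, and your $3$-local argument leaves the order-two sphere $\Sigma_{\eta\circ\mu}$ untreated.

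The idea you never use is this: for $m$ odd, $f_{\mathbb{C}P^{m}}\circ p\simeq\eta$ (Proposition~\ref{St}), so $p^{*}\circ f^{*}_{\mathbb{C}P^{m}}=\eta^{*}$, and any class $x$ with $\eta x\neq0$ survives. This settles the $2$-part at $m=5$, since $\eta^{2}\mu\neq0$; it is how Lemma~\ref{lem:Bru-seq}(iv) gets injectivity on all of $[\mathbb{S}^{10},SF]$.

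\emph{The case $m=7$.} The same trick is the entire proof. $\psi_{*}(\Theta_{14})$ is generated by $\phi_{*}(\kappa)$, because $\sigma^{2}$ and $\kappa+\sigma^{2}$ have Kervaire invariant one. Then $p^{*}f^{*}_{\mathbb{C}P^{7}}(\kappa)=\eta\circ\kappa\neq0$, and \eqref{equ1}, \eqref{equ2} finish the argument. Your strengthened target, all of $\operatorname{Coker}(J_{2m})$ rather than $\psi_{*}(\Theta_{2m})$, is what makes you worry about $\sigma^{2}$. That class is irrelevant to the theorem, and no cell-by-cell Toda-bracket analysis of $\Sigma\mathbb{C}P^{6}$ is needed.

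\emph{The case $m=8$.} Here the $\eta$-trick is unavailable because $f_{\mathbb{C}P^{8}}\circ p$ is null, and your proposal is only a plan whose suggested tools cannot work. The invariants you name ($e$-invariants, $KO$-characteristic numbers) only see the image of $J$ and the $\mu$-family. By contrast, $\eta^{*}$, like $\kappa$, $\epsilon$ and $\beta_{1}$, is a cokernel-of-$J$ class. For example, $\Sigma^{16}$ is spin null-bordant, so no characteristic number separates $\mathbb{C}P^{8}\#\Sigma^{16}$ from $\mathbb{C}P^{8}$.

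The paper's argument runs as follows.
\begin{enumerate}
\item It shows $q^{*}\colon[\mathbb{C}P^{8}/\mathbb{C}P^{3},\operatorname{Top}/O]\to[\mathbb{C}P^{8},\operatorname{Top}/O]$ is an isomorphism, using $[\Sigma\mathbb{C}P^{3},F/O]=0$ and $[\mathbb{C}P^{8}/\mathbb{C}P^{3},\Omega G/\operatorname{Top}]=0$. So it suffices to prove $f^{*}_{\mathbb{C}P^{8}/\mathbb{C}P^{3}}$ injective.
\item It uses Mosher's $2$-local splitting $\mathbb{C}P^{7}/\mathbb{C}P^{3}\simeq_{(2)}\mathbb{C}P^{6}/\mathbb{C}P^{3}\vee\mathbb{S}^{14}$.
\item It kills the $\eta$-component of the top attaching map, because $\eta^{*}\colon\pi_{15}(\operatorname{Top}/O)\to\pi_{16}(\operatorname{Top}/O)$ is zero.
\item It kills the other component by projecting to $\operatorname{cok}J_{(2)}$, which detects $\Theta_{16}$, and factoring through $\mathbb{S}^{9}\cup_{2\nu}\mathbb{D}^{13}$.
\item It identifies the composite onto the top cell as $\nu$ and uses $\langle\nu^{3},2\nu,\nu\rangle=0$ (Lemma~\ref{twonew}(2)(d)).
\end{enumerate}
None of these steps appears in your proposal.

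\emph{Smaller inaccuracies.}
\begin{itemize}
\item $\pi_{2m}(F/O)$ has an extra $\mathbb{Z}$ when $4\mid 2m$, so it is not $\operatorname{Coker}(J_{2m})$ in general.
\item $\operatorname{Coker}(J_{2})$ and $\operatorname{Coker}(J_{6})$ are nonzero. Those cases are trivial because $\Theta_{2m}=0$, not because the cokernel vanishes.
\item Your passage from $F/O$ to $SF$ should be justified by $[\Sigma\mathbb{C}P^{m-1},BSO]=\widetilde{KO}^{-1}(\mathbb{C}P^{m-1})=0$, which makes $[\Sigma\mathbb{C}P^{m-1},SF]\to[\Sigma\mathbb{C}P^{m-1},F/O]$ onto. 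The vague remark about the $BSO$ part does not do this.
\end{itemize}
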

For the proof of Theorem~\ref{kawa} and for later use, we recall some results.
By \cite[Theorem~5.18]{MM79}, there is a (local) splitting of infinite loop spaces
\[
F/O_{(2)} \simeq BSO_{(2)} \times \operatorname{cok} J_{(2)},
\]
where \(\operatorname{cok} J_{(2)}\) is defined in \cite[Definition~5.16]{MM79} as the fibre of a map
\(F/O_{(2)} \longrightarrow BSO_{(2)}\).
This is itself an infinite loop space whose homotopy groups are isomorphic to the quotient
of the cokernel of the \(J\)-homomorphism, that is,
\[
\pi_*\bigl(\operatorname{cok} J_{(2)}\bigr) \cong \operatorname{Coker}(J_*) / \operatorname{Tors}(KO_*),
\]
(see \cite[p.~9]{CSS18} and \cite[Remark~11.43]{LM24}).
Here \(\operatorname{Tors}(KO_*) \cong \mathbb{Z}_2\) for \(* \equiv 1,2 \pmod{8}\), and is zero otherwise.
We now prove the following lemma.
\begin{lemma}\label{twonew}
\leavevmode
\begin{enumerate}
    \item Let $n > 4$. The map
    \[
        i^* \colon [\mathbb{C}P^n, SF] \longrightarrow [\mathbb{C}P^3, SF]
    \]
    is trivial.

    \item Let $n=8$ or $9$, and consider the mapping cone
    \[
        C_{g} = \mathbb{S}^n \cup_{g} \mathbb{D}^{n+4}
    \]
    where the attaching map
    \[
        g \colon \mathbb{S}^{n+3} \longrightarrow \mathbb{S}^n
    \]
    is represented by the class $2\nu \in \pi_{n+3}(\mathbb{S}^n) \cong \mathbb{Z}_8\{\nu\} \oplus \mathbb{Z}_3\{\alpha_1\}$.

    Let
    \[
      f_{C_{g}} \colon \mathbb{S}^n \cup_{g} \mathbb{D}^{n+4} \longrightarrow \mathbb{S}^{n+4}
    \]
    be the map which collapses $\mathbb{S}^n$ to a point, and let $\iota \colon \mathbb{S}^n \hookrightarrow \mathbb{S}^n \cup_{g} \mathbb{D}^{n+4}$ be the inclusion. Consider a map
    \[
        T \colon \mathbb{S}^{n+7} \longrightarrow \mathbb{S}^n \cup_{g} \mathbb{D}^{n+4}
    \]
    such that the composite
    \[
\mathbb{S}^{n+7} \xrightarrow{\quad T \quad} \mathbb{S}^n \cup_{g} \mathbb{D}^{n+4} \xrightarrow{\; f_{C_{g}} \;} \mathbb{S}^{n+4}
\]
    is represented by the class $\nu \in \pi_{n+7}(\mathbb{S}^{n+4})$. Then

    \begin{enumerate}
        \item[(a)] The map
        \[
            \iota^* \colon [\mathbb{S}^8 \cup_{g} \mathbb{D}^{12}, SF] \longrightarrow [\mathbb{S}^8, SF]
        \]
        induced by the inclusion $\iota \colon \mathbb{S}^8 \hookrightarrow \mathbb{S}^8 \cup_{g} \mathbb{D}^{12}$ is an isomorphism.

        \item[(b)] The map
        \[
            \iota^* \colon [\mathbb{S}^9 \cup_{g} \mathbb{D}^{13}, \operatorname{cok} J_{(2)}] \longrightarrow [\mathbb{S}^9, \operatorname{cok} J_{(2)}]
        \]
        is an isomorphism.

        \item[(c)] The kernel of 
        \[
            T^* \colon [\mathbb{S}^8 \cup_{g} \mathbb{D}^{12}, SF] \cong \mathbb{Z}_{2}\{\epsilon\} \oplus \mathbb{Z}_{2}\{\bar{\nu}\} \longrightarrow [\mathbb{S}^{15}, SF]
        \]
        is generated by the class $\epsilon+\bar{v}=\eta \circ \sigma$, and its image is generated by the class $\eta \circ \kappa \in \pi_{15}(SF) \cong \pi_{15}^s$.

        \item[(d)] The map
        \[
            T^* \colon [\mathbb{S}^9 \cup_{g} \mathbb{D}^{13}, \operatorname{cok} J_{(2)}] \longrightarrow [\mathbb{S}^{16}, \operatorname{cok} J_{(2)}]
        \]
        is trivial.
    \end{enumerate}
\end{enumerate}
\end{lemma}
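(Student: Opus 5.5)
For part (1), I would reduce to the case $n=5$. Since $i^*\colon[\mathbb{C}P^n,SF]\to[\mathbb{C}P^3,SF]$ factors through $[\mathbb{C}P^5,SF]$ for every $n\ge 5$, it suffices to show that $[\mathbb{C}P^5,SF]\to[\mathbb{C}P^3,SF]$ is zero. This was essentially established in the proof of Lemma~\ref{lem:Bru-seq}(iv); see \eqref{trivia-cp3}. Concretely, $[\mathbb{C}P^5,SF]$ is generated by $(\eta\circ\mu)_{10}$, $(\beta_1)_{10}$ and $\left(\overline{(\epsilon)_8}\right)_{10}$. The first two restrict to zero on $\mathbb{C}P^4$. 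The third restricts to $(\epsilon)_8$, which lies in $f^*_{\mathbb{C}P^4}[\mathbb{S}^8,SF]$ and hence restricts to zero on $\mathbb{C}P^3$. I would spell this out cleanly, noting that $(\epsilon)_8=(x)_8$ up to the ambiguity $\eta\circ\sigma$, and that $(\eta\sigma)_8=0$ in $[\mathbb{C}P^4,SF]$.

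For (2)(a) and (2)(b), I would use the Puppe sequence of $\mathbb{S}^{n+3}\xrightarrow{g}\mathbb{S}^n\xrightarrow{\iota}C_g\to\mathbb{S}^{n+4}\xrightarrow{\Sigma g}\mathbb{S}^{n+1}$. The map $\iota^*$ sits in
\[
[\mathbb{S}^{n+1},Y]\xrightarrow{(2\nu)^*}[\mathbb{S}^{n+4},Y]\to[C_g,Y]\xrightarrow{\iota^*}[\mathbb{S}^n,Y]\xrightarrow{(2\nu)^*}[\mathbb{S}^{n+3},Y].
\]
For $Y=SF$ and $n=8$, we have $\pi_{12}^s=0$, so $\iota^*$ is injective. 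Surjectivity amounts to $2\nu\circ\pi_8^s=0$, which holds since $\nu\epsilon=\nu\bar\nu=0$ by Toda. For $Y=\operatorname{cok}J_{(2)}$ and $n=9$, I would note that $\pi_{13}(\operatorname{cok}J_{(2)})=0$ because $\pi_{13}^s$ is $3$-torsion. I would also use $\pi_{9}(\operatorname{cok}J_{(2)})\cong\pi_9^s/(\operatorname{Im}J+\operatorname{Tors}KO)$, which is generated by $\nu^3$ and is of order $2$ (with $\eta^2\sigma$ and $\eta\epsilon$ among those generators). Then $2\nu\circ\nu^3=0$ and the $2$-divisibility of $2\nu$ kill the boundary terms, since the target $\pi_{12}$ is zero anyway.

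For (2)(c) and (2)(d), which I expect to be the main obstacle, I would identify $T^*$ via Toda brackets. Every $\xi\in[C_g,SF]$ is an extension $\bar\alpha$ of some $\alpha\in\pi_8^s$, and $T$ is a coextension $\widetilde{\nu}$ of $\nu$. By \cite[Proposition~2.7]{KMNST01}, as used in the proof of Lemma~\ref{lem:Bru-seq}, this gives
\[
T^*(\bar\alpha)=\bar\alpha\circ\widetilde{\nu}\in\langle\alpha,2\nu,\nu\rangle,
\]
up to the indeterminacy coming from the choice of $T$, namely $\alpha\circ\pi_7^s+\pi_{11}^s\circ\nu$ adjusted appropriately. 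I would then compute $\langle\epsilon,2\nu,\nu\rangle$ and $\langle\bar\nu,2\nu,\nu\rangle$ in $\pi_{15}^s\cong\mathbb{Z}_{32}\{\rho\}\oplus\mathbb{Z}_2\{\eta\kappa\}$ (at $2$). The aim is to show they agree and equal $\eta\kappa$ modulo indeterminacy. One route is the juggling $\langle\eta\sigma,2\nu,\nu\rangle\supset\eta\langle\sigma,2\nu,\nu\rangle$, which lies in $\operatorname{Im}J$ and is handled by $e$-invariant/Adams $\mu$ arguments. Another is Toda's relations, e.g.\ $\langle\nu,2\nu,\nu\rangle$ type brackets and $\epsilon=\langle\eta,2\iota,\nu^2\rangle$, giving $\langle\epsilon,2\nu,\nu\rangle\ni\eta\kappa$ via $\kappa\in\langle\nu,\eta,2\iota,\nu^2\rangle$-style identities. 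Having fixed these, the kernel is $\{\eta\sigma\}$ and the image is $\{\eta\kappa\}$.

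For (d), I would run the same bracket argument with $\alpha=\nu^3$ (equivalently $\eta\bar\nu$), landing in $\pi_{16}(\operatorname{cok}J_{(2)})$. Here $\pi_{16}^s\cong\mathbb{Z}_2\{\eta^*\}\oplus\mathbb{Z}_2\{\eta\rho\}$, and $\eta\rho$ lies in $\operatorname{Im}J$. I expect $\langle\nu^3,2\nu,\nu\rangle\subset\nu^2\langle\nu,2\nu,\nu\rangle$ to be computable from $\nu\cdot\pi_{12}^s=0$, giving a value in $\operatorname{Im}J$, hence zero in $\operatorname{cok}J_{(2)}$.
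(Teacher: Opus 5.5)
Your proofs of (1), (2)(a) and (2)(b) match the paper. So does your reduction of (2)(c),(d) to brackets via $T^*(\bar\alpha)=\bar\alpha\circ\widetilde{\nu}\in\langle\alpha,2\nu,\nu\rangle$.

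\textbf{The gap is in (2)(c).} The whole content of that part is the value of these brackets, and you never determine them. You state the target (both $\langle\epsilon,2\nu,\nu\rangle$ and $\langle\bar\nu,2\nu,\nu\rangle$ equal $\eta\circ\kappa$) but give no argument for it.

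The indeterminacy is $\alpha\circ\pi_7^s+\pi_{12}^s\circ\nu$, not $\pi_{11}^s\circ\nu$. It vanishes because $\epsilon\circ\sigma=\bar\nu\circ\sigma=\eta\circ\sigma^2=0$ and $\pi_{12}^s=0$. So each bracket is a single element, and that element must actually be identified.

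The one concrete route you sketch for the kernel cannot work. We have $\eta\langle\sigma,2\nu,\nu\rangle\subseteq\eta\circ\pi_{14}^s=\{0,\eta\circ\kappa\}$, and this set meets $\operatorname{Im}J$ only in $0$. So saying it ``lies in $\operatorname{Im}J$'' is already equivalent to the vanishing you need. No $e$-invariant argument can supply it, since $\eta\circ\kappa$ lies in the kernel of $e$. Your other route, via $\epsilon\in\langle\eta,2\iota,\nu^2\rangle$ and ``$\kappa\in\langle\nu,\eta,2\iota,\nu^2\rangle$-style identities'', is not an argument as stated.

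\textbf{How the paper fills it.} It quotes Toda (pp.~110--111): $\langle\epsilon,2\nu,\nu\rangle=-\langle\bar\nu,2\nu,\nu\rangle=\eta\circ\kappa$ and $\langle\epsilon+\bar\nu,2\nu,\nu\rangle=0$. If you want to derive this yourself, one way is the following.
\begin{enumerate}
\item For nonvanishing, use the juggling inclusion $\langle\epsilon,2\iota,\nu^2\rangle\subseteq\langle\epsilon,2\nu,\nu\rangle$. Combined with Toda's $\bar\epsilon\in\langle\epsilon,2\iota,\nu^2\rangle$ and the stable relation $\bar\epsilon=\eta\circ\kappa$, this gives $\langle\epsilon,2\nu,\nu\rangle=\eta\circ\kappa$.
\item For the kernel, pull $\sigma$ rather than $\eta$ outside: $\sigma\circ\langle\eta,2\nu,\nu\rangle\subseteq\langle\eta\circ\sigma,2\nu,\nu\rangle$. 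Since $\sigma\circ\pi_8^s=0$ and the indeterminacy $\eta\circ\sigma\circ\pi_7^s$ is zero, this gives $\langle\eta\circ\sigma,2\nu,\nu\rangle=0$.
\item Additivity of $T^*$ then forces $T^*(\overline{\bar\nu})=\eta\circ\kappa$ as well.
\end{enumerate}

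\textbf{Minor points in (d).} Your idea is sound, but the inclusion runs the other way: $\nu^2\circ\langle\nu,2\nu,\nu\rangle\subseteq\langle\nu^3,2\nu,\nu\rangle$. This is an equality here, since the indeterminacy vanishes $2$-locally. The relevant fact is $\nu^2\circ\pi_{10}^s=0$, not $\nu\circ\pi_{12}^s=0$, because $\langle\nu,2\nu,\nu\rangle$ lies in $\pi_{10}^s$. This already gives $0$ in $\pi_{16}^s$, consistent with the paper's use of $\langle\nu^2,2\nu^2,\nu\rangle=0$.
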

\begin{proof}
Since the composite
\[
[\mathbb{C}P^5, SF] \xrightarrow{i^*} 
[\mathbb{C}P^4, SF] \xrightarrow{i^*} 
[\mathbb{C}P^3, SF]
\]
is trivial by \eqref{trivia-cp3}, and since the inclusion
\[
\mathbb{C}P^3 \hookrightarrow \mathbb{C}P^n \quad (n \geq 5)
\]
factors through \(\mathbb{C}P^5\), the proof of {\rm(1)} is complete.
Let $n=8$ or $9$. For the proof of $(2)(a)$ and $(2)(b)$, apply $[-, X]$ to the cofibre sequence
\[
    \mathbb{S}^{n+3} \xrightarrow{g} \mathbb{S}^n \xrightarrow{\iota} \mathbb{S}^n \cup_{g} \mathbb{D}^{n+4} \xrightarrow{f_{C_{g}}} \mathbb{S}^{n+4}
\]
where $X = SF$ or $\operatorname{cok} J_{(2)}$. This yields the long exact sequence
\[
    [\mathbb{S}^{n+4}, X] \xrightarrow{f^{*}_{C_{g}}} [\mathbb{S}^n \cup_{g} \mathbb{D}^{n+4}, X] \xrightarrow{\iota^*} [\mathbb{S}^n, X] \xrightarrow{(2\nu)^*} [\mathbb{S}^{n+3}, X].
\]
Using the facts that $\pi_{12}^s = 0$, $\pi_8^s = \mathbb{Z}_2 \oplus \mathbb{Z}_2$, $[\mathbb{S}^9, \operatorname{cok} J_{(2)}] \cong \mathbb{Z}_2$~\cite[Table A3.3]{Rav03}, and $[\mathbb{S}^{13}, \operatorname{cok} J_{(2)}] = 0$ in the above exact sequence, we obtain the statements of $(2)(a)$ and $(2)(b)$. We now turn to the proof of \((2)(c)\).
By Part \((2)(a)\) the map
\[
\iota^* : [\mathbb{S}^8 \cup_{g} \mathbb{D}^{12}, SF]
   \longrightarrow [\mathbb{S}^8, SF] \cong \mathbb{Z}_2\{\epsilon\} \oplus \mathbb{Z}_2\{\bar{\nu}\}
\]
is an isomorphism.
By assumption, the composite
\[
\mathbb{S}^{15} \xrightarrow{T} \mathbb{S}^8 \cup_{g} \mathbb{D}^{12}
   \xrightarrow{f_{C_{g}}} \mathbb{S}^{12}
\]
represents the element \(\nu \in \pi_{15}(\mathbb{S}^{12})\), and the attaching map
\(g : \mathbb{S}^{11} \longrightarrow \mathbb{S}^8\) represents \(2\nu\).
Let \(\overline{x} : \mathbb{S}^8 \cup_{g} \mathbb{D}^{12} \longrightarrow SF\) be the extension of
\(x : \mathbb{S}^8 \longrightarrow SF\) along the map \(\iota^{*}\). Applying Proposition~2.7(1) of \cite{KMNST01} with
\(\alpha = x\), \(\beta = 2\nu\) and \(\gamma = \nu\), so that the coextension of \(\nu\) is
\(\widetilde{\nu} = T\), we obtain
\[
T^{*}(\overline{x}) = \overline{x} \circ \widetilde{\nu}
\in \langle x, 2\nu, \nu \rangle \subseteq [\mathbb{S}^{15}, SF],
\]
since \(2\nu \circ x = 0\) in \(\pi_{11}^s\) for every \(x \in \pi_8^s\).
It follows from \cite[pp.~110--111]{Tod62} that the Toda brackets
\[
\langle \epsilon, 2\nu, \nu \rangle,\qquad
\langle \bar{\nu}, 2\nu, \nu \rangle,\qquad
\langle \epsilon + \bar{\nu}, 2\nu, \nu \rangle
\]
are all singletons, with
\[
\langle \epsilon, 2\nu, \nu \rangle
  = -\,\langle \bar{\nu}, 2\nu, \nu \rangle
  = \eta \circ \kappa,
\qquad
\langle \epsilon + \bar{\nu}, 2\nu, \nu \rangle = 0 \in \pi_{15}^s.
\]
Therefore \(T^{*}(\overline{x})\) is either \(0\) or \(\eta \circ \kappa\), and in fact
\(T^{*}(\overline{x}) = \eta \circ \kappa\) occurs for both \(x = \epsilon\) and
\(x = \bar{\nu}\).
Hence
\[
\operatorname{Im}\!\left(
  [\mathbb{S}^8 \cup_{g} \mathbb{D}^{12}, SF]\cong \mathbb{Z}_2\{\epsilon\} \oplus \mathbb{Z}_2\{\bar{\nu}\}
    \xrightarrow{\,T^*\,}
  [\mathbb{S}^{15}, SF]
\right)
\cong \mathbb{Z}_2\{\eta \circ \kappa\},
\] and \[
\operatorname{Ker}\!\left(
  [\mathbb{S}^8 \cup_{g} \mathbb{D}^{12}, SF]\cong \mathbb{Z}_2\{\epsilon\} \oplus \mathbb{Z}_2\{\bar{\nu}\}
    \xrightarrow{\,T^*\,}
  [\mathbb{S}^{15}, SF]
\right)
\cong \mathbb{Z}_2\{\eta \circ \sigma\},
\] where $\epsilon+\bar{\nu}=\eta \circ \sigma$ by \cite[Theorem 14.1, p.190]{Tod62}. This proves \((2)(c)\). Finally, consider the case \(n=9\).
We want to show that the map
\[
    T^* :
    [\mathbb{S}^9 \cup_{g} \mathbb{D}^{13}, \operatorname{cok} J_{(2)}]
    \longrightarrow
    [\mathbb{S}^{16}, \operatorname{cok} J_{(2)}]
\]
is trivial. Since
\[
   \iota^* :
   [\mathbb{S}^9 \cup_{g} \mathbb{D}^{13}, \operatorname{cok} J_{(2)}]
   \longrightarrow
   [\mathbb{S}^9, \operatorname{cok} J_{(2)}]
\]
is an isomorphism by \((2)(b)\),
\(
[\mathbb{S}^9, \operatorname{cok} J_{(2)}]
   \cong \mathbb{Z}_2\{\nu^3\}
\)
\cite[Lemma~3.8]{BKS25}, and
\(
\pi_{16}(\operatorname{cok} J_{(2)}) \cong \mathbb{Z}_2\{\eta^*\}
\)
\cite[Table~A3.3]{Rav03}, Proposition~2.7(1) of \cite{KMNST01}, applied as in the proof of \((2)(c)\), yields
\[
  T^{*}(\overline{\nu^3})
    = \overline{\nu^3}\circ \widetilde{\nu}
    \in \langle \nu^3, 2\nu, \nu \rangle
    \subseteq [\mathbb{S}^{16}, SF],
\]
where \(\overline{\nu^3}\) is the extension of \(\nu^3\) and \(\widetilde{\nu} = T\) is the coextension of \(\nu\).

The indeterminacy of this Toda bracket is
\[
    \pi_{7}^s \circ \nu^3 + \nu \circ \pi_{13}^s = 0,
\]
so \(\langle \nu^3, 2\nu, \nu \rangle\) consists of a single element.
By \cite[(3.5), p.~33]{Tod62}, we have
\[
    \langle \nu^3, 2\nu, \nu \rangle
    \subseteq \langle \nu^2, 2\nu^2, \nu \rangle,
\]
and \(\langle \nu^2, 2\nu^2, \nu \rangle = 0\), since \(2\nu^2 = 0\) and the indeterminacy of this bracket is \(0\).
Therefore
\[
    T^*(\overline{\nu^3}) = 0,
\]
which implies that
\[
    T^* :
    [\mathbb{S}^9 \cup_{g} \mathbb{D}^{13}, \operatorname{cok} J_{(2)}]
    \longrightarrow
    [\mathbb{S}^{16}, \operatorname{cok} J_{(2)}]
\]
is the trivial map, as required.

\end{proof}
\subsection*{Proof of Theorem~\ref{kawa}} \rm

For $n=1,2,3$ and $6$, we have
\[
\Theta_{2n} \cong [\mathbb{S}^{2n}, \operatorname{Top}/O] = 0,
\]
so the claim holds in these cases. For $n=4$ and $5$, the proof follows from \cite[Lemma~3.17]{FJ94}. Consider now the case $n = 7$ and the commutative diagram
\begin{equation}\label{kappa1}
\xymatrix{
    \Theta_{14} \cong [\mathbb{S}^{14}, \operatorname{Top}/O] \ar[rr]^{f^*_{\mathbb{C}P^7}} \ar[d]_{\psi_*} && [\mathbb{C}P^{7}, \operatorname{Top}/O] \ar[d]^{\psi_*} \\
    [\mathbb{S}^{14}, F/O] \ar[rr]^{f^*_{\mathbb{C}P^7}} \ar@{<-}[d]_{\phi_*} && [\mathbb{C}P^{7}, F/O] \ar@{<-}[d]^{\phi_*} \\
    [\mathbb{S}^{14}, SF] \ar[rr]^{f^*_{\mathbb{C}P^7}} && [\mathbb{C}P^{7}, SF]
}
\end{equation}
where $\phi_* : [\mathbb{S}^{14}, SF] \cong \mathbb{Z}_2\{\kappa\} \oplus \mathbb{Z}_2\{\sigma^2\} \to [\mathbb{S}^{14}, F/O]$ is an isomorphism, since $[\mathbb{S}^{14}, SO] \cong [\mathbb{S}^{15}, BSO] = 0$. The image of the map 
\begin{equation}\label{kappagenrator}
\psi_* : [\mathbb{S}^{14}, \operatorname{Top}/O] \cong \mathbb{Z}_2 \longrightarrow [\mathbb{S}^{14}, F/O] 
\end{equation}
is generated by $\phi_{*}(\kappa)$, as both $\sigma^2$ and $\kappa + \sigma^2$ are Kervaire invariant one elements \cite{Bro69} (see, for example, \cite[Figure 2.1]{BHHM20} and \cite[Remark 4.5]{Xu16}). 
Applying (\ref{equ1}) and (\ref{equ2}) to Diagram \eqref{kappa1}, to show that the map $f^*_{\mathbb{C}P^7} : [\mathbb{S}^{14}, \operatorname{Top}/O] \to [\mathbb{C}P^7, \operatorname{Top}/O]$ is injective, it suffices to show that $f^*_{\mathbb{C}P^7}: [\mathbb{S}^{14}, SF] \to [\mathbb{C}P^7, SF]$ maps $\kappa$ nontrivially. From Proposition~\ref{St}, we note that the composite $\mathbb{S}^{15} \xrightarrow{p} \mathbb{C}P^7 \xrightarrow{f_{\mathbb{C}P^7}} \mathbb{S}^{14}$ is homotopic to $\eta \in \pi_{15}(\mathbb{S}^{14})$, which yields the following commutative diagram:
\begin{equation}\label{kappa2}
\xymatrix{
[\mathbb{S}^{14}, SF] \ar[dr]_{\eta^{*}} \ar[rr]^{f^{*}_{\mathbb{C}P^{7}}} && 
[\mathbb{C}P^{7}, SF] \ar[dl]^{p^{*}} \\
& [\mathbb{S}^{15}, SF] &
}
\end{equation}
It follows from \cite[p.~190]{Tod62} that $\eta \circ \kappa \neq 0$. Therefore, the commutativity of Diagram \eqref{kappa2} implies that $f^*_{\mathbb{C}P^7} : [\mathbb{S}^{14}, SF] \to [\mathbb{C}P^7, SF]$ sends $\kappa$ to a nontrivial element. This completes the proof for the case $n = 7$. We now turn to the case $n=8$. Consider the following commutative diagram :
\[
\xymatrix{
[\Sigma \mathbb{C}P^3, \Omega G/\operatorname{Top}] \ar[r] \ar[d] 
  & [\mathbb{C}P^8 / \mathbb{C}P^3, \Omega G/\operatorname{Top}] \ar[d] \\
[\Sigma \mathbb{C}P^3, \operatorname{Top}/O] \ar[r] \ar[d] 
  & [\mathbb{C}P^{8}/\mathbb{C}P^{3}, \operatorname{Top}/O] \ar@{->>}[r]^{q^{*}}
  & [\mathbb{C}P^8, \operatorname{Top}/O] \ar[r]^{i^{*}} & [\mathbb{C}P^3, \operatorname{Top}/O] \\
[\Sigma \mathbb{C}P^3, F/O]
}
\]
where the rows are part of the long exact sequence induced by the cofibration $$\mathbb{C}P^3 \xrightarrow{i} \mathbb{C}P^8 \xrightarrow{q} \mathbb{C}P^8 / \mathbb{C}P^3,$$ the vertical arrows are part of the long exact sequence induced by the fibration $$\Omega G/\operatorname{Top} \longrightarrow \operatorname{Top}/O \longrightarrow F/O,$$ and the surjectivity of \(q^*\) follows from the fact that $[\mathbb{C}P^3, \operatorname{Top}/O]=0$. Note that $$[\mathbb{C}P^8 / \mathbb{C}P^3, \Omega G/\operatorname{Top}]=0$$ as $[\mathbb{C}P^k / \mathbb{C}P^3, \Omega G/\operatorname{Top}]=0$ for all $k\geq 4$, and $[\Sigma \mathbb{C}P^3, F/O]=0$ as $[\mathbb{S}^7, F/O]=0$ and $[\Sigma \mathbb{C}P^2, F/O]=0$. Therefore, from the exactness of the first column and the commutativity of the above diagram, the map $[\Sigma \mathbb{C}P^3, \Omega G/\operatorname{Top}]\longrightarrow 
[\Sigma \mathbb{C}P^3, \operatorname{Top}/O]$ is surjective and hence the map $[\Sigma \mathbb{C}P^3, \operatorname{Top}/O]\longrightarrow [\mathbb{C}P^8 / \mathbb{C}P^3, \operatorname{Top}/O]$ is trivial. This, in turn, shows that the middle horizontal map 
\begin{equation}\label{quo}
q^*:[\mathbb{C}P^8 / \mathbb{C}P^3, \operatorname{Top}/O]\longrightarrow [\mathbb{C}P^8, \operatorname{Top}/O]
\end{equation}
is an isomorphism. Since the map
$f_{\mathbb{C}P^8} : \mathbb{C}P^8 \longrightarrow \mathbb{S}^{16} $
factors through the quotient map
$ q : \mathbb{C}P^8 \longrightarrow \mathbb{C}P^8 / \mathbb{C}P^3,$ we obtain the commutative diagram
\[
\xymatrix{
[\mathbb{S}^{16}, \operatorname{Top}/O] \ar[r]^{f_{\mathbb{C}P^8}^*} \ar[d]_{f_{\mathbb{C}P^8 / \mathbb{C}P^3}^*} 
& [\mathbb{C}P^8, \operatorname{Top}/O] \\
[\mathbb{C}P^8 / \mathbb{C}P^3, \operatorname{Top}/O] \ar[ur]^{\cong}_{q^*}
&
}
\]
where
$f_{\mathbb{C}P^8 / \mathbb{C}P^3} :
\mathbb{C}P^8 / \mathbb{C}P^3 \longrightarrow \mathbb{S}^{16}$
is the quotient map onto the top cell, obtained by collapsing the \(15\)\nobreakdash-skeleton of \(\mathbb{C}P^8 / \mathbb{C}P^3\) to a point. Therefore, instead of proving that
\[
f_{\mathbb{C}P^8}^* : [\mathbb{S}^{16}, \operatorname{Top}/O] \longrightarrow [\mathbb{C}P^8, \operatorname{Top}/O]
\]
is injective, it is enough to prove that the map
\[
f_{\mathbb{C}P^8 / \mathbb{C}P^3}^* : [\mathbb{S}^{16}, \operatorname{Top}/O] \longrightarrow [\mathbb{C}P^8 / \mathbb{C}P^3, \operatorname{Top}/O]
\]
is injective. Note that
\[
\mathbb{C}P^8 / \mathbb{C}P^3\simeq \mathbb{C}P^7 / \mathbb{C}P^3\cup_{\varphi_{8}} \mathbb{D}^{16}
\quad\text{and}\quad
\mathbb{C}P^7 / \mathbb{C}P^3\simeq \mathbb{C}P^6 / \mathbb{C}P^3\cup_{\varphi_{7}} \mathbb{D}^{14},
\]
where the top cell attaching map
\[
\varphi_{n} : \mathbb{S}^{2n-1} \longrightarrow \mathbb{C}P^{n-1} / \mathbb{C}P^k
\]
is the composite
\[
\mathbb{S}^{2n-1} \xrightarrow{p}  \mathbb{C}P^{n-1} \xrightarrow{q} \mathbb{C}P^{n-1} / \mathbb{C}P^k,
\]
for \(n=7,8\) and \(k=3\). It follows from Proposition~\ref{St} that the composite of \(\varphi_{7}\) with the map to the top cell
\[
\mathbb{S}^{13}\xrightarrow{\varphi_{7}} \mathbb{C}P^6 /\mathbb{C}P^3 \longrightarrow \mathbb{S}^{12}
\]
is null-homotopic. Therefore the \(14\)\nobreakdash-cell attaching map descends to a map
\[
\mathbb{S}^{13}\longrightarrow \mathbb{C}P^5 / \mathbb{C}P^3\simeq \mathbb{S}^{8} \vee \mathbb{S}^{10}
\]
and is given by the composite
\[
\mathbb{S}^{13} \xrightarrow{\bar{\varphi}} \mathbb{S}^{10}\hookrightarrow \mathbb{S}^{8} \vee \mathbb{S}^{10}
  \hookrightarrow \mathbb{C}P^6 / \mathbb{C}P^3,
\]
as \(\pi_{13}(\mathbb{S}^{8}) = 0\). Note that this composition, followed by the map \(\mathbb{C}P^6 / \mathbb{C}P^3\longrightarrow \mathbb{C}P^6 / \mathbb{C}P^4\), is the top cell attaching map of
\(\mathbb{C}P^{7} / \mathbb{C}P^{4}\), and the Steenrod operation \(\operatorname{Sq}^{4}\) on
\(H^{10}(\mathbb{C}P^{7} / \mathbb{C}P^{4},\mathbb{Z}_2)\) is trivial. Hence the factor map
\(\bar{\varphi}\) lies in
\(\pi_{13}(\mathbb{S}^{10})\cong \pi^{s}_{3}\) and can be written as
\(\bar{\varphi}=2 \lambda \nu+l \alpha_1\).
From Table~5.3 and the proof of Proposition~5.2 in \cite[p.~185 and p.~190]{Mos68}, it follows that \(\lambda=0\).
Since the Steenrod power operation \(P^{1}\) detects \(\alpha_1\) in the stable homotopy groups of
spheres and $\mathcal{P}^{1}$ is an isomorphism on
\(H^{10}(\mathbb{C}P^{7} / \mathbb{C}P^{4},\mathbb{Z}_3)\), it follows that
\(l\neq 0\in \mathbb{Z}_3\).
Therefore, \(\bar{\varphi}=l \alpha_1 \in \pi_{13}(\mathbb{S}^{10})\).
This implies that 
\begin{equation}\label{spil1}
\mathbb{C}P^{7} / \mathbb{C}P^{3} \simeq_{(2)}
\mathbb{C}P^{6} / \mathbb{C}P^{3} \,\vee\, \mathbb{S}^{14},
\end{equation}
and hence
\begin{equation}\label{spil11}
\mathbb{C}P^{8} / \mathbb{C}P^{3}
  \simeq_{(2)} \bigl(\mathbb{C}P^{6} / \mathbb{C}P^{3} \vee \mathbb{S}^{14}\bigr)\cup_{\varphi_8} \mathbb{D}^{16},
\end{equation}
where
\[
\varphi_8:\mathbb{S}^{15}\xrightarrow{p} \mathbb{C}P^{7} \xrightarrow{q} \mathbb{C}P^{7} / \mathbb{C}P^{3}\simeq
\mathbb{C}P^{6} / \mathbb{C}P^{3} \,\vee\, \mathbb{S}^{14}
\]
is given by \(\varphi_8=((\varphi_8)_1, (\varphi_8)_2)\), with first component
\((\varphi_8)_1: \mathbb{S}^{15}\longrightarrow \mathbb{C}P^{6} / \mathbb{C}P^{3}\) and second component
\((\varphi_8)_2=\eta \in \pi_{15}(\mathbb{S}^{14})\) by Proposition~\ref{St}. Applying \([-\,,\operatorname{Top}/O]\) to the cofibre sequence
\[
\mathbb{S}^{15} \xrightarrow{\varphi_{8} = ((\varphi_{8})_1,\eta)} \mathbb{C}P^{6} / \mathbb{C}P^{3} \vee \mathbb{S}^{14}
   \longrightarrow \mathbb{C}P^{8} / \mathbb{C}P^{3},
\]
we obtain the exact sequence
\[
\cdots \longrightarrow
[\Sigma \mathbb{C}P^{6} / \mathbb{C}P^{3}, \operatorname{Top}/O]
   \xrightarrow{\;(\Sigma (\varphi_{8})_{1})^{*}\oplus \eta^*\;}
[\mathbb{S}^{16}, \operatorname{Top}/O]\xrightarrow{f_{\mathbb{C}P^{8}/\mathbb{C}P^{3}}^{*}}
[\mathbb{C}P^{8} / \mathbb{C}P^{3}, \operatorname{Top}/O] \longrightarrow \cdots .
\]
where
\[
\eta^{*} : [\mathbb{S}^{15}, \operatorname{Top}/O]
   \longrightarrow [\mathbb{S}^{16}, \operatorname{Top}/O]
\]
is the trivial map by \cite[Lemma~3.1]{BKS25}. From this exact sequence, to prove that the map \(f_{\mathbb{C}P^{8}/\mathbb{C}P^{3}}^{*}\) is injective, we show that the map 
\begin{equation}\label{conntcp7}
(\Sigma (\varphi_{8})_{1})^{*}:[\Sigma \mathbb{C}P^{6} / \mathbb{C}P^{3}, \operatorname{Top}/O]\longrightarrow [\mathbb{S}^{16}, \operatorname{Top}/O]
\end{equation}
is trivial, which concludes the result for the case $n=8$. As \([\mathbb{S}^{16}, \operatorname{Top}/O]\cong \mathbb{Z}_2\), we now work \(2\)\nobreakdash-locally. Consider the following commutative diagram
\begin{equation}\label{equ7}
\begin{gathered}
\xymatrix{
[\Sigma\mathbb{C}P^{6}/\mathbb{C}P^{3}, \operatorname{Top}/O_{(2)}]
   \ar[rr]^-{\;(\Sigma (\varphi_{8})_{1})^{*}\;} \ar[d]_{\,(\operatorname{Pr}\circ \psi_{(2)})_{*}\,} &&
[\mathbb{S}^{16}, \operatorname{Top}/O_{(2)}]
   \ar[d]^{\,(\operatorname{Pr}\circ \psi_{(2)})_{*}\,} \cong \mathbb{Z}_{2} \\
[\Sigma \mathbb{C}P^{6}/\mathbb{C}P^{3}, \operatorname{cok} J_{(2)}]
   \ar[rr]^-{\;(\Sigma (\varphi_{8})_{1})^{*}\;} &&
[\mathbb{S}^{16}, \operatorname{cok} J_{(2)}]
}
\end{gathered}
\end{equation}
where
\[
\psi_{(2)} : \operatorname{Top}/O_{(2)} \longrightarrow F/O_{(2)}
\]
is the canonical map localized at the prime \(2\) and
\[
\operatorname{Pr} : F/O_{(2)} \longrightarrow \operatorname{cok} J_{(2)}
\]
is the projection onto \(\operatorname{cok} J_{(2)}\). By the results of Kervaire and Milnor
\cite{KM63}, the composite
\[
(\operatorname{Pr}\circ \psi_{(2)})_{*} :
[\mathbb{S}^{16}, \operatorname{Top}/O_{(2)}]
   \longrightarrow [\mathbb{S}^{16}, \operatorname{cok} J_{(2)}]
\]
is a bijection, and its image is generated by the class \([\eta^{*}]\)
(see \cite[Table~A3.3]{Rav03}). So, from Diagram~\ref{equ7}, we reduce the claim to showing that the map
\[
(\Sigma(\varphi_{8})_{1})^{*}: [\Sigma \mathbb{C}P^{6}/\mathbb{C}P^{3}, \operatorname{cok} J_{(2)}]\longrightarrow 
[\mathbb{S}^{16}, \operatorname{cok} J_{(2)}]
\]
is trivial. Note that \(\mathbb{C}P^{6}/\mathbb{C}P^{3}\simeq_{(2)} (\mathbb{S}^8\vee \mathbb{S}^{10})\cup_{\varphi_{6}} \mathbb{D}^{12}\), where the map \(\varphi_{6}:\mathbb{S}^{11}\longrightarrow \mathbb{S}^8\vee \mathbb{S}^{10}\) is \(\eta\in \pi_1^s\) on the first factor by Proposition~\ref{St} and \(2\nu\in \pi_3^s\) on the second factor by \cite[Proposition~5.2, Table 5.3]{Mos68}, using the fact that the Steenrod operation \(\operatorname{Sq}^{4}\) on
\(H^{8}(\mathbb{C}P^{6} / \mathbb{C}P^{3}, \mathbb{Z}_2)\) is trivial. 
Consider the homotopy commutative diagram of homotopy cofibrations
\begin{equation}\label{equ8}
\begin{gathered}
\xymatrix{
\Sigma\mathbb{S}^{11}
   \ar[rr]^-{\;\Sigma \varphi_{6}\;} \ar[d]_{\operatorname{id}} &&
\Sigma(\mathbb{S}^{8}\vee \mathbb{S}^{10})
   \ar[rr]^-{\;\Sigma i\;} \ar[d]_{\Sigma p_{1}} &&
\Sigma(\mathbb{C}P^{6}/\mathbb{C}P^{3})
   \ar[rr]^-{\;\Sigma f_{\mathbb{C}P^{6}/\mathbb{C}P^{3}}\;} \ar[d]_{\Sigma \Psi} &&
\Sigma\mathbb{S}^{12} \ar[d]^{\operatorname{id}} \\
\Sigma\mathbb{S}^{11}
   \ar[rr]^-{\;\Sigma(2\nu)\;} &&
\Sigma\mathbb{S}^{8}
   \ar[rr]^-{\;\Sigma \iota\;} &&
\Sigma(\mathbb{S}^{8}\cup_{2\nu}\mathbb{D}^{12})
   \ar[rr]^-{\;\Sigma q\;} &&
\Sigma\mathbb{S}^{12}
}
\end{gathered}
\end{equation}
where \(\Psi : \mathbb{C}P^{6}/\mathbb{C}P^{3} \longrightarrow \mathbb{S}^{8}\cup_{2\nu}\mathbb{D}^{12}\) satisfies
\(q \circ \Psi \simeq f_{\mathbb{C}P^{6}/\mathbb{C}P^{3}}\). Now apply \([-\,,\operatorname{cok} J_{(2)}]\) to the above homotopy commutative diagram. We obtain the induced square
\begin{equation}\label{equ9}
\begin{gathered}
\xymatrix{
[\Sigma(\mathbb{C}P^{6}/\mathbb{C}P^{3}),\operatorname{cok} J_{(2)}]
   \ar[d]_{\,(\Sigma i)^{*}\,} &
&
[\Sigma(\mathbb{S}^{8}\cup_{2\nu}\mathbb{D}^{12}),\operatorname{cok} J_{(2)}]
   \ar[ll]_-{\;(\Sigma\Psi)^{*}\;} \ar[d]^{\,(\Sigma\iota)^{*}\,} \\
[\Sigma(\mathbb{S}^{8}\vee \mathbb{S}^{10}),\operatorname{cok} J_{(2)}]
&
&
[\Sigma\mathbb{S}^{8},\operatorname{cok} J_{(2)}]
   \ar[ll]_-{\;(\Sigma p_{1})^{*}\;}^-{\;\cong\;}
}
\end{gathered}
\end{equation}
is commutative, and the first vertical map \((\Sigma i)^{*}\) is an isomorphism, since
\(\pi_{12}^s=0\) and \(\pi_{13}(\operatorname{cok} J_{(2)})=0\), while the second vertical map
\((\Sigma\iota)^{*}\) is also an isomorphism by Lemma~\ref{twonew}(2)(b). Hence the horizontal map
\[
(\Sigma\Psi)^{*}: [\Sigma(\mathbb{S}^{8}\cup_{2\nu}\mathbb{D}^{12}),\operatorname{cok} J_{(2)}] \longrightarrow
[\Sigma(\mathbb{C}P^{6}/\mathbb{C}P^{3}),\operatorname{cok} J_{(2)}]
\]
is an isomorphism. Moreover, from Diagram~\ref{equ8} we observe that
\(\Sigma q \circ \Sigma \Psi \simeq \Sigma f_{\mathbb{C}P^{6}/\mathbb{C}P^{3}}\), and hence
\(\Sigma q \circ \Sigma \Psi \circ \Sigma (\varphi_{8})_{1} \simeq
\Sigma f_{\mathbb{C}P^{6}/\mathbb{C}P^{3}} \circ \Sigma (\varphi_{8})_{1}\).
If the composite
\[
\Sigma f_{\mathbb{C}P^{6}/\mathbb{C}P^{3}} \circ \Sigma (\varphi_{8})_{1} :
\mathbb{S}^{16}\xrightarrow{\Sigma(\varphi_{8})_{1}}\Sigma \mathbb{C}P^{6}/\mathbb{C}P^{3}\xrightarrow{\Sigma f_{\mathbb{C}P^{6}/\mathbb{C}P^{3}}} \mathbb{S}^{13}
\]
is homotopic to \(\nu\), then the map
\begin{equation}\label{nuhomo}
\Sigma q \circ \bigl(\Sigma \Psi \circ \Sigma (\varphi_{8})_{1}\bigr)
\end{equation}
is also homotopic to \(\nu\). Hence, by applying Lemma~\ref{twonew}(2)(d) with
\(T=\Sigma \Psi \circ \Sigma (\varphi_{8})_{1} \in \pi_{16}(\mathbb{S}^{9}\cup_{2\nu}\mathbb{D}^{13})\),
we see that the induced map \(\bigl(\Sigma \Psi \circ \Sigma (\varphi_{8})_{1}\bigr)^{*}\) is trivial.
This means that
\[
(\Sigma (\varphi_{8})_{1})^{*}:
[\Sigma(\mathbb{C}P^{6}/\mathbb{C}P^{3}),\operatorname{cok} J_{(2)}]
\longrightarrow
[\mathbb{S}^{16},\operatorname{cok} J_{(2)}]
\]
is trivial. This completes the proof of the result for the case \(n=8\).
Thus it remains to prove that the composite
\[
\mathbb{S}^{16}\xrightarrow{\Sigma(\varphi_{8})_{1}}
\Sigma (\mathbb{C}P^{6}/\mathbb{C}P^{3})
\xrightarrow{\Sigma f_{\mathbb{C}P^{6}/\mathbb{C}P^{3}}} \mathbb{S}^{13}
\]
is homotopic to \(\nu\). Note that the cofiber of the composite
\[
\mathbb{S}^{15} \xrightarrow{\varphi_{8}=((\varphi_{8})_{1},\eta)}
\mathbb{C}P^{6}/\mathbb{C}P^{3}\vee \mathbb{S}^{14}
\simeq_{(2)} \mathbb{C}P^{7}/\mathbb{C}P^{3}
\xrightarrow{q}\mathbb{C}P^{7}/\mathbb{C}P^{5}\simeq \mathbb{S}^{12} \vee \mathbb{S}^{14} 
\]
is \(\mathbb{C}P^{8}/\mathbb{C}P^{5}\). As \(\operatorname{Sq}^{4}\) on
\(H^{12}(\mathbb{C}P^{8}/\mathbb{C}P^{5};\mathbb{Z}_{2})\) is nontrivial, it follows again from
\cite[Proposition~5.2,Table 5.3]{Mos68} that the \(16\)\nobreakdash-cell in \(\mathbb{C}P^{8}/\mathbb{C}P^{5}\) attaches onto the \(12\)\nobreakdash-cell by the map \(\nu\). Hence the composite 
\[
\mathbb{S}^{15}\xrightarrow{(\varphi_{8})_{1}} \mathbb{C}P^{6}/\mathbb{C}P^{3}
\xrightarrow{f_{\mathbb{C}P^{6}/\mathbb{C}P^{3}}} \mathbb{C}P^{6}/\mathbb{C}P^{5}\simeq \mathbb{S}^{12}
\]
is represented by \(\nu\). This completes the required claim. \hfill $\square$ \\
To compute the group \(\mathcal{C}(\mathbb{C}P^{m})\), we will mainly use Proposition~\ref{St} together with Theorem~\ref{kawa}.

\begin{theorem}\label{main}
\begin{itemize}
\item[(i)] There is a split short exact sequence
\[
0 \longrightarrow \Theta_{10} \longrightarrow \mathcal{C}(\mathbb{C}P^{5})
   \longrightarrow \mathcal{C}(\mathbb{C}P^{4}) \longrightarrow 0,
\]
where \(\Theta_{10}\cong \mathbb{Z}_{6}\), and the natural map
\(f^{*}_{\mathbb{C}P^{4}}:\Theta_{8}\cong \mathbb{Z}_{2} \longrightarrow \mathcal{C}(\mathbb{C}P^{4})\)
is bijective.

\item[(ii)] There is a split short exact sequence
\[
0 \longrightarrow \mathcal{C}(\mathbb{C}P^{6}) \longrightarrow
\mathcal{C}(\mathbb{C}P^{5}) \longrightarrow \operatorname{Im}(p^{*}) \longrightarrow 0,
\]
where $\mathcal{C}(\mathbb{C}P^{6})\cong \mathbb{Z}_{6}$ and the map
\(p^{*}: \mathcal{C}(\mathbb{C}P^{5}) \longrightarrow \mathbb{Z}_2 \subset \mathcal{C}(\mathbb{S}^{11})\)
is induced by the Hopf fibration \(p:\mathbb{S}^{11}\longrightarrow \mathbb{C}P^{5}\).

\item[(iii)] There is a split short exact sequence
\[
0 \longrightarrow \Theta_{14} \longrightarrow \mathcal{C}(\mathbb{C}P^{7})
   \longrightarrow \operatorname{Ker}(p^{*}) \longrightarrow 0,
\]
where \(\Theta_{14}\cong \mathbb{Z}_{2}\), and the map $p^{*}: \mathcal{C}(\mathbb{C}P^{6})\longrightarrow \mathcal{C}(\mathbb{S}^{13})\cong \mathbb{Z}_3$ induced by the Hopf fibration $p:\mathbb{S}^{13}\longrightarrow \mathbb{C}P^{6}$ is surjective.
\end{itemize}
\end{theorem}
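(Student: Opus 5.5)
We apply the exact sequence \eqref{longG} with $X=\operatorname{Top}/O$ and identify $\mathcal{C}(\mathbb{C}P^m)=[\mathbb{C}P^m,\operatorname{Top}/O]$ via Theorem~\ref{kirby}. Throughout, the composite of \eqref{equ21}, \eqref{equ22} and \eqref{equ23} regards $[\mathbb{C}P^m,\operatorname{Top}/O]$ as a subgroup of $[\mathbb{C}P^m,SF]$. By the monomorphisms \eqref{equ1} and \eqref{equ2}, both $\psi_*$ and $\phi_*$ are injective into $[\mathbb{C}P^m,F/O]$, and they commute with $i^*$, $p^*$ and $f^*_{\mathbb{C}P^m}$. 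So questions of triviality, injectivity and order for $\operatorname{Top}/O$ can be transported to the $SF$ computations of Lemma~\ref{lem:Bru-seq} whenever the relevant elements lie in the common image. Injectivity of $f^*_{\mathbb{C}P^m}$ on $\Theta_{2m}$ is supplied in every case by Theorem~\ref{kawa}.

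\textbf{Part (i).} For $m=4$ the exact sequence reads
\[
\Theta_8\xrightarrow{f^*}\mathcal{C}(\mathbb{C}P^4)\xrightarrow{i^*}\mathcal{C}(\mathbb{C}P^3).
\]
Here $\mathcal{C}(\mathbb{C}P^3)=0$: its image in $[\mathbb{C}P^3,SF]$ lies in the image of $[\mathbb{C}P^3,F/O]$ coming from $\Theta_6=0$, or more simply, the sequence for $m=3$ has $\Theta_6=0$ and $\mathcal{C}(\mathbb{C}P^2)=0$ because $\Theta_4$ and $\Theta_2$ vanish at the relevant stage. Together with Theorem~\ref{kawa}, this makes $f^*_{\mathbb{C}P^4}$ bijective.

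For $m=5$ we get
\[
0\to\Theta_{10}\to\mathcal{C}(\mathbb{C}P^5)\xrightarrow{i^*}\mathcal{C}(\mathbb{C}P^4)\xrightarrow{p^*}\Theta_9 .
\]
The element $(x)_8=f^*_{\mathbb{C}P^4}(\Sigma^8)$ lies in $\operatorname{Ker}p^*$ by Lemma~\ref{lem:Bru-seq}(iii), and $p^*$ is compatible with $\psi_*$. Hence $i^*$ is surjective.

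To prove splitting, we embed into $[\mathbb{C}P^5,SF]$. That group is $\mathbb{Z}_6\oplus\mathbb{Z}_2$ by Lemma~\ref{lem:Bru-seq}(iv), with the summand $\mathbb{Z}_2\{(\overline{(\epsilon)_8})_{10}\}$. Since $\Theta_{10}\cong\mathbb{Z}_6$ maps isomorphically onto $f^*[\mathbb{S}^{10},SF]$, the group $\mathcal{C}(\mathbb{C}P^5)$ has order $12$. It is therefore a subgroup of order $12$ in $\mathbb{Z}_6\oplus\mathbb{Z}_2$, i.e.\ the whole group, so it is split. The main care here is that every element of order two lifting the generator has order two; this follows from the structure $\mathbb{Z}_6\oplus\mathbb{Z}_2$ having exponent $6$.

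\textbf{Part (ii).} For $m=6$ we have $\Theta_{12}=0$, so the sequence reads
\[
0\to\mathcal{C}(\mathbb{C}P^6)\xrightarrow{i^*}\mathcal{C}(\mathbb{C}P^5)\xrightarrow{p^*}\Theta_{11}.
\]
By \eqref{equ41}, $p^*$ on $\operatorname{Top}/O$ agrees with Brumfiel's $d$. Its image maps injectively into $[\mathbb{S}^{11},SF]$ modulo $J$. By Lemma~\ref{lem:Bru-seq}(iv) the image is $\mathbb{Z}_2\{\eta^2\mu\}$, which is nonzero in $\operatorname{Coker}J_{11}$ and is hit by $(\eta\mu)_{10}\in\mathcal{C}(\mathbb{C}P^5)$.

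The kernel is then $\mathbb{Z}_3\{(\beta_1)_{10}\}\oplus\mathbb{Z}_2\{(\overline{(\epsilon)_8})_{10}\}\cong\mathbb{Z}_6$. The sequence splits because $\mathbb{Z}_2\{(\eta\mu)_{10}\}$ is a direct summand mapping isomorphically onto the image.

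\textbf{Part (iii).} For $m=7$ the sequence reads
\[
\Theta_{14}\hookrightarrow\mathcal{C}(\mathbb{C}P^7)\to\operatorname{Ker}\bigl(p^*:\mathcal{C}(\mathbb{C}P^6)\to\Theta_{13}\bigr)\to 0,
\]
with injectivity supplied by Theorem~\ref{kawa}. Surjectivity of $p^*$ onto $\Theta_{13}\cong\mathbb{Z}_3$ follows from Lemma~\ref{lem:Bru-seq}(v), since $\alpha_1\beta_1$ generates $\operatorname{Coker}J_{13}=\Theta_{13}$. Consequently $\operatorname{Ker}p^*\cong\mathbb{Z}_2\{(\overline{(\epsilon)_8})_{12}\}$.

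\textbf{Main obstacle: splitting in (iii).} I would lift $(\overline{(\epsilon)_8})_{12}$ to $\mathcal{C}(\mathbb{C}P^7)$ and show that some lift has order $2$. The plan is to pass to $[\mathbb{C}P^7,SF]$, where twice a lift lies in $f^*(\pi_{14}^s)$, and to compute this class by the same Toda bracket technique as in Lemma~\ref{twonew}, namely $\langle\epsilon,2\iota,\eta\rangle$-type brackets over the $14$-cell.

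Alternatively, I would detect the class via $p^*$. By Diagram~\eqref{kappa2}, $f^*(\kappa)$ is seen by $p^*$ through $\eta\kappa$. If twice a lift were $f^*(\kappa)$, then $p^*$ of the lift, which lies in $\pi^s_{15}$ and is at most $2$-torsion on the relevant part, would yield a contradiction. I expect this order computation to be the delicate step.
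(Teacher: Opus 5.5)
\textbf{Gap in Part (ii).} You assert that the image of $p^*\colon\mathcal{C}(\mathbb{C}P^5)\to\Theta_{11}$ maps injectively into $\operatorname{Coker}(J_{11})$, where it equals $\mathbb{Z}_2\{\eta^2\circ\mu\}$. This is false.

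\begin{itemize}
\item $\pi_{11}^s\cong\mathbb{Z}_{504}$ is entirely the image of $J$; indeed $\eta^2\circ\mu=4\zeta\in\operatorname{Im}J$.
\item Hence $\pi_{11}(F/O)=\operatorname{Coker}(J_{11})=0$ and $\Theta_{11}=\mathit{bP}_{12}\cong\mathbb{Z}_{992}$.
\item So $\psi_*\colon\Theta_{11}\to\pi_{11}(F/O)$ is the zero map.
\end{itemize}

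Your ``transport to $SF$'' principle needs $\psi_*$ to be injective on the target group, and this fails exactly on the $\mathit{bP}$-summands. The true image is $\mathbb{Z}_2\{496\Sigma^{11}_M\}\subset\mathit{bP}_{12}$, which is invisible after $\psi_*$. Taken literally, your argument would force $p^*=0$.

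The same defect appears in Part (i), where $\psi_*$ kills $\mathit{bP}_{10}\subset\Theta_9$. There it is harmless: $p^*\circ f^*_{\mathbb{C}P^4}=(f_{\mathbb{C}P^4}\circ p)^*=0$ directly by Proposition~\ref{St}.

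To repair (ii), count as the paper does:
\begin{itemize}
\item $\mathcal{C}(\mathbb{C}P^6)$ embeds in $[\mathbb{C}P^6,SF]\cong\mathbb{Z}_2\oplus\mathbb{Z}_3$ by Lemma~\ref{lem:Bru-seq}(v), so $|\mathcal{C}(\mathbb{C}P^6)|\le 6<12$ and $p^*\neq 0$.
\item The image of $p^*$ is a quotient of a group of exponent $6$ sitting inside the cyclic group $\mathbb{Z}_{992}$, so it is $\mathbb{Z}_2$.
\item The kernel then has order $6$, hence is $\mathbb{Z}_6$.
\item The sequence splits because the $2$-primary part of $\mathcal{C}(\mathbb{C}P^5)$ is elementary abelian.
\end{itemize}
Alternatively, $p^*((\eta\circ\mu)_{10})=\eta^*(\Sigma_{\eta\circ\mu})\neq 0$ by the injectivity of $\eta^*$ on $\pi_{10}(\operatorname{Top}/O)_{(2)}$ from \cite[Lemma~3.1]{BKS25}. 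For the same reason, your identification of the kernel as specifically $\mathbb{Z}_3\{(\beta_1)_{10}\}\oplus\mathbb{Z}_2\{(\overline{(\epsilon)_8})_{10}\}$ cannot be read off from $SF$. The statement only needs the isomorphism type.

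\textbf{Part (iii), surjectivity.} Your argument needs $\mathcal{C}(\mathbb{C}P^6)$ to be all of $[\mathbb{C}P^6,SF]$, which follows from an order count once (ii) is fixed. Then the $\mathbb{Z}_3$ lies in the common image, and $\psi_*$ is an isomorphism on $\pi_{13}$.

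\textbf{Part (iii), splitting.} Your second idea works and is simpler than the paper's route.
\begin{itemize}
\item For any $y\in\mathcal{C}(\mathbb{C}P^7)$, $i^*(y)$ lies in $\Ker p^*\cong\mathbb{Z}_2$, so $2y\in\Ker i^*=f^*_{\mathbb{C}P^7}(\Theta_{14})$.
\item Compose $p^*$ with $\psi_*\colon\Theta_{15}\to\pi_{15}(F/O)\cong\operatorname{Coker}(J_{15})\cong\mathbb{Z}_2\{\eta\circ\kappa\}$.
\item Then $\psi_*p^*(2y)=2\,\psi_*p^*(y)=0$.
\item On the other hand, $\psi_*p^*f^*_{\mathbb{C}P^7}(\Sigma^{14})=\eta^*\psi_*(\Sigma^{14})=\phi_*(\eta\circ\kappa)\neq 0$, by Proposition~\ref{St} and \eqref{kappagenrator}.
\end{itemize}
Hence $2y=0$ and $\mathcal{C}(\mathbb{C}P^7)\cong\mathbb{Z}_2\oplus\mathbb{Z}_2$. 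Here the non-injectivity of $\psi_*$ on $\Theta_{15}$ does not matter, since you only need to detect one element.

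The paper instead proves the $2$-local splitting $\mathbb{C}P^7/\mathbb{C}P^3\simeq_{(2)}\mathbb{C}P^6/\mathbb{C}P^3\vee\mathbb{S}^{14}$ from Mosher's tables. It then shows $[\mathbb{C}P^7/\mathbb{C}P^3,\operatorname{Top}/O_{(2)}]\cong\mathbb{Z}_2\oplus\mathbb{Z}_2$ surjects onto $\mathcal{C}(\mathbb{C}P^7)$. That route costs cell-structure input, but the splitting is reused later (Proposition~\ref{stab}, Theorem~\ref{eight}). Your Toda-bracket route (a) is not needed.
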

\begin{proof}
Since the induced map $f^{*}_{\mathbb{C} P^{4}}: [\mathbb{S}^{8}, \operatorname{Top}/O] \to [\mathbb{C} P^{4}, \operatorname{Top}/O]$ is an isomorphism by \cite[Theorem~2.3]{Ram15} and $[\mathbb{S}^{8}, \operatorname{Top}/O] \cong \Theta_8 \cong \mathbb{Z}_2$, the nontrivial element in $[\mathbb{C} P^{4}, \operatorname{Top}/O]$ is represented by a composite map
\[
g: \mathbb{C} P^{4} \xrightarrow{\,f_{\mathbb{C} P^{4}}\,} \mathbb{S}^{8} \xrightarrow{\,\Sigma\,} \operatorname{Top}/O,
\]
where $\Sigma: \mathbb{S}^{8} \to \operatorname{Top}/O$ represents the exotic $8$-sphere in $\Theta_8$. Therefore, the effect of the induced map $p^{*}: [\mathbb{C} P^{4}, \operatorname{Top}/O] \to [\mathbb{S}^{9}, \operatorname{Top}/O]$ on the homotopy class $[g]$ is represented by the composite
\[
\mathbb{S}^9 \xrightarrow{\,p\,} \mathbb{C} P^{4} \xrightarrow{\,f_{\mathbb{C} P^{4}}\,} \mathbb{S}^{8} \xrightarrow{\,\Sigma\,} \operatorname{Top}/O.
\]
Since $f_{\mathbb{C} P^{4}} \circ p: \mathbb{S}^9 \to \mathbb{S}^{8}$ is null-homotopic by Proposition~\ref{St}, it follows that $p^{*}\bigl([g]\bigr) = 0$; hence, $p^{*}: [\mathbb{C} P^{4}, \operatorname{Top}/O] \to [\mathbb{S}^{9}, \operatorname{Top}/O]$ is the trivial map. Consequently, from the exact sequence \eqref{longG} and Theorem~\ref{kawa}, we obtain the following short exact sequence:
\[
0 \longrightarrow \Theta_{10} \longrightarrow [\mathbb{C}P^5, \operatorname{Top}/O] \longrightarrow [\mathbb{C}P^4, \operatorname{Top}/O] \longrightarrow 0.
\]
This sequence splits, as follows from Lemma~\ref{lem:Bru-seq}(iv) by identifying $[\mathbb{C}P^5, \operatorname{Top}/O]$ as a subgroup of $[\mathbb{C}P^5, SF]$. This completes the proof of Part \textup{(i)}. For Part \textup{(ii)}, we consider the exact sequence \eqref{longG} with $m=6$. Since
\[
[\mathbb{S}^{12}, \operatorname{Top}/O] \cong \Theta_{12} = 0,
\]
the induced map
\[
i^{*}: [\mathbb{C} P^{6}, \operatorname{Top}/O] \longrightarrow [\mathbb{C} P^{5}, \operatorname{Top}/O]
\]
is injective. Now consider the map
\[
\psi_*: [\mathbb{C} P^{6}, \operatorname{Top}/O] \longrightarrow [\mathbb{C} P^{6}, F/O],
\]
induced by the natural map $\psi: \operatorname{Top}/O \to F/O$, which is injective by \eqref{equ2}. In view of \eqref{equ3} and Lemma~\ref{lem:Bru-seq}(v), the group
\[
[\mathbb{C} P^{6}, F/O] \cong \mathbb{Z}^3 \oplus \mathbb{Z}_2 \oplus \mathbb{Z}_3,
\]
whereas, by Part \textup{(i)},
\[
[\mathbb{C} P^{5}, \operatorname{Top}/O] \cong \mathbb{Z}_6 \oplus \mathbb{Z}_2.
\]
Therefore, the map
\[
i^{*}: [\mathbb{C} P^{6}, \operatorname{Top}/O] \longrightarrow [\mathbb{C} P^{5}, \operatorname{Top}/O]
\]
is not surjective. Applying these observations to the exact sequence \eqref{longG}, and using the fact that 
$[\mathbb{S}^{11}, \operatorname{Top}/O] \cong \mathbb{Z}_{992}$, we obtain a short exact sequence
\[
0 \longrightarrow [\mathbb{C}P^6, \operatorname{Top}/O] \longrightarrow [\mathbb{C}P^5, \operatorname{Top}/O] \xrightarrow{p^{*}} \operatorname{Im}(p^{*}) \cong \mathbb{Z}_2 \longrightarrow 0,
\]
where
\[
p^{*}: [\mathbb{C}P^5, \operatorname{Top}/O] \longrightarrow [\mathbb{S}^{11}, \operatorname{Top}/O] \cong \Theta_{11} = \mathbb{Z}_{992}
\]
is induced by the Hopf fibration \(p: \mathbb{S}^{11} \to \mathbb{C}P^{5}\). The splitting of this sequence follows from the decomposition
\[
[\mathbb{C} P^{5}, \operatorname{Top}/O] \cong \mathbb{Z}_{2} \oplus \mathbb{Z}_{3} \oplus \mathbb{Z}_2,
\]
given in Part~\textup{(i)}, which completes the proof of Part~\textup{(ii)}. We finally turn to the proof of Part (iii). We first prove that the map 
$p^{*}: [\mathbb{C}P^{6}, \operatorname{Top}/O] \longrightarrow [\mathbb{S}^{13}, \operatorname{Top}/O]$ 
is surjective. Consider the following commutative diagram:
\begin{equation}\label{digram3}
\begin{CD}
0 @. 0 @. 0 @. 0 \\
@VVV @VVV @VVV @VVV \\
\mathbb{Z}_2 \cong [\mathbb{S}^{14}, \operatorname{Top}/O] @>f_{\mathbb{C} P^{7}}>> [\mathbb{C}P^{7}, \operatorname{Top}/O] @>i^{*}>> [\mathbb{C}P^{6}, \operatorname{Top}/O] @>p^{*}>> [\mathbb{S}^{13}, \operatorname{Top}/O] \cong \mathbb{Z}_3 \\
@VV\psi_*V @VV\psi_*V @VV\psi_*V @V\cong V\psi_*V \\
\mathbb{Z}_2 \oplus \mathbb{Z}_2 \cong [\mathbb{S}^{14}, F/O] @>f_{\mathbb{C} P^{7}}>> [\mathbb{C}P^{7}, F/O] @>i^{*}>> [\mathbb{C}P^{6}, F/O] @>p^{*}>> [\mathbb{S}^{13}, F/O] \\
@A\cong A\phi_*A @AA\phi_*A @AA\phi_*A @A \cong A\phi_*A \\
\mathbb{Z}_2 \oplus \mathbb{Z}_2 \cong [\mathbb{S}^{14}, SF] @>f_{\mathbb{C} P^{7}}>> [\mathbb{C}P^{7}, SF] @>i^{*}>> [\mathbb{C}P^{6}, SF] @>p^{*}>> [\mathbb{S}^{13}, SF] \\
@AAA @AAA @AAA @AAA \\
0 @. 0 @. 0 @. 0 \\
\end{CD}
\end{equation}
where the rows are part of the long exact sequences induced by the cofiber sequence
\[
\mathbb{S}^{13} \xrightarrow{p} \mathbb{C}P^{6} \xrightarrow{i} \mathbb{C}P^{7} \xrightarrow{f_{\mathbb{C}P^{7}}} \mathbb{S}^{14},
\]
and the vertical maps $\psi_*: [\mathbb{S}^{13}, \operatorname{Top}/O] \longrightarrow [\mathbb{S}^{13}, F/O]$ and $\phi_*: [\mathbb{S}^{n}, SF] \longrightarrow [\mathbb{S}^{n}, F/O]$ for $n=13,14$ are isomorphisms by the Kervaire--Milnor braid (see, for example, \cite[p.~93]{Lev85} and \cite[p.~463]{LM24}), using the facts that $\Theta_{13} \cong \mathbb{Z}_3$, $[\mathbb{S}^{13}, F/O] \cong \mathbb{Z}_3$, $[\mathbb{S}^{13}, G/\operatorname{Top}] = 0$, $[\mathbb{S}^{13}, SO] = 0$, and $[\mathbb{S}^{14}, SO] = 0$. 
Since $[\mathbb{C}P^{6}, \operatorname{Top}/O] \cong [\mathbb{C}P^{6}, SF] \cong \mathbb{Z}_2 \oplus \mathbb{Z}_3$ by Part (ii) and Lemma \ref{lem:Bru-seq}(v), it follows that the image of the vertical map $\psi_*: [\mathbb{C} P^{6}, \operatorname{Top}/O] \longrightarrow [\mathbb{C} P^{6}, F/O]$ is $\phi_*([\mathbb{C}P^{6}, SF])$. Therefore, a simple diagram chase in the right two columns of Diagram \eqref{digram3} shows that, to prove that the map $p^{*}: [\mathbb{C}P^{6}, \operatorname{Top}/O] \longrightarrow [\mathbb{S}^{13}, \operatorname{Top}/O]$ is surjective, it suffices to prove the surjectivity of the map $p^{*}: [\mathbb{C}P^{6}, SF] \longrightarrow [\mathbb{S}^{13}, SF]$, which follows from \cite[Lemma I.9(iv)]{Bru71}. It then follows from \eqref{longG} and Theorem~\ref{kawa} with $m=7$ that there is a short exact sequence 
\begin{equation}\label{short-7}
0 \longrightarrow [\mathbb{S}^{14}, \operatorname{Top}/O] \longrightarrow [\mathbb{C}P^{7}, \operatorname{Top}/O] \longrightarrow \operatorname{Ker}(p^{*}) \cong \mathbb{Z}_2 \longrightarrow 0.
\end{equation}

We now show that this sequence splits. Note that the group $[\mathbb{C}P^{7}, \operatorname{Top}/O]$ is $2$-torsion, so it suffices to work $2$-locally. It follows from \eqref{spil1} and the proof of Theorem~\ref{kawa} in the case $n=8$ that
\[
\mathbb{C}P^{7} / \mathbb{C}P^{3} \simeq_{(2)} \mathbb{C}P^{6} / \mathbb{C}P^{3} \vee \mathbb{S}^{14}
\quad \text{and} \quad
\mathbb{C}P^{6} / \mathbb{C}P^{3} \simeq_{(2)} (\mathbb{S}^{8} \vee \mathbb{S}^{10}) \cup_{\varphi_{6}} \mathbb{D}^{12},
\]
where $\varphi_{6} = (2\nu, \eta)$. Hence,
\[
[\mathbb{C}P^{7} / \mathbb{C}P^{3}, \operatorname{Top}/O_{(2)}] \cong [\mathbb{C}P^{6} / \mathbb{C}P^{3}, \operatorname{Top}/O_{(2)}] \oplus [\mathbb{S}^{14}, \operatorname{Top}/O_{(2)}].
\]
Moreover, the map
\begin{equation}\label{cp6-1}
i^{*}: [\mathbb{C}P^{6} / \mathbb{C}P^{3}, \operatorname{Top}/O_{(2)}] \longrightarrow [\mathbb{S}^8, \operatorname{Top}/O_{(2)}]
\end{equation}
is an isomorphism. This follows by applying $[-, \operatorname{Top}/O_{(2)}]$ to the cofiber sequence
\[
\mathbb{S}^{11} \xrightarrow{(2\nu, \eta)} \mathbb{S}^{8} \vee \mathbb{S}^{10} \longrightarrow \mathbb{C}P^{6} / \mathbb{C}P^{3},
\]
using the facts that $[\mathbb{S}^{12}, \operatorname{Top}/O_{(2)}] = 0$ and that the map $\eta^*: [\mathbb{S}^{10}, \operatorname{Top}/O_{(2)}] \longrightarrow [\mathbb{S}^{11}, \operatorname{Top}/O_{(2)}]$ is injective by \cite[Lemma~3.1]{BKS25}. Therefore,
\begin{equation}\label{SF-sum}
[\mathbb{C}P^{7} / \mathbb{C}P^{3}, \operatorname{Top}/O_{(2)}] \cong \mathbb{Z}_2 \oplus \mathbb{Z}_2.
\end{equation}
Note that the map $q^*: [\mathbb{C}P^{7} / \mathbb{C}P^{3}, \operatorname{Top}/O_{(2)}] \longrightarrow [\mathbb{C}P^{7}, \operatorname{Top}/O_{(2)}]$ is surjective, since $[\mathbb{C}P^{3}, \operatorname{Top}/O_{(2)}] = 0$. Combined with \eqref{short-7} and \eqref{SF-sum}, this implies that the short exact sequence \eqref{short-7} splits. This proves Part \textup{(iii)}.
\end{proof}
We now compute the group \([\mathbb{C}P^{m}, SF]\) for \(m=7\) and \(8\), which will be used later.

\begin{proposition}\label{stab}
There is a split short exact sequence
\[
0\longrightarrow [\mathbb{S}^{14}, SF]\xrightarrow{f^{*}_{\mathbb{C}P^{7}}}
[\mathbb{C}P^{7},SF]\longrightarrow \operatorname{Ker}(p^{*})\longrightarrow 0,
\]
where \([\mathbb{S}^{14}, SF]\cong \mathbb{Z}_2\oplus \mathbb{Z}_2\), and the map
\[
p^{*}:[\mathbb{C}P^{6},SF]\cong \mathbb{Z}_2\oplus \mathbb{Z}_3
   \longrightarrow [\mathbb{S}^{13}, SF]\cong \mathbb{Z}_3,
\]
induced by the Hopf fibration \(p:\mathbb{S}^{13}\longrightarrow \mathbb{C}P^{6}\), is surjective.
\end{proposition}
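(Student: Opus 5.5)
We will use the long exact sequence \eqref{longG} with $X=SF$ and $m=7$:
\[
[\Sigma\mathbb{C}P^{6},SF]\xrightarrow{(\Sigma p)^{*}}[\mathbb{S}^{14},SF]\xrightarrow{f^{*}_{\mathbb{C}P^{7}}}[\mathbb{C}P^{7},SF]\xrightarrow{i^{*}}[\mathbb{C}P^{6},SF]\xrightarrow{p^{*}}[\mathbb{S}^{13},SF].
\]
The surjectivity of $p^{*}:[\mathbb{C}P^{6},SF]\to[\mathbb{S}^{13},SF]\cong\mathbb{Z}_3\{\alpha_1\beta_1\}$ is already Lemma~\ref{lem:Bru-seq}(v). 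That lemma also gives $[\mathbb{C}P^{6},SF]\cong\mathbb{Z}_2\oplus\mathbb{Z}_3$, with $\operatorname{Ker}(p^{*})\cong\mathbb{Z}_2\{(\overline{(\epsilon)_8})_{12}\}$. So exactness at $[\mathbb{C}P^{7},SF]$ immediately identifies the image of $i^{*}$ with $\operatorname{Ker}(p^{*})$.

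Next we show that $f^{*}_{\mathbb{C}P^{7}}$ is injective on $[\mathbb{S}^{14},SF]\cong\mathbb{Z}_2\{\kappa\}\oplus\mathbb{Z}_2\{\sigma^2\}$. This is the step where Theorem~\ref{kawa} is not enough, since it only detects $\kappa$. We would argue $2$-locally using the splitting \eqref{spil1}, $\mathbb{C}P^{7}/\mathbb{C}P^{3}\simeq_{(2)}\mathbb{C}P^{6}/\mathbb{C}P^{3}\vee\mathbb{S}^{14}$. Collapsing gives $[\mathbb{C}P^{7}/\mathbb{C}P^{3},SF_{(2)}]\cong[\mathbb{C}P^{6}/\mathbb{C}P^{3},SF_{(2)}]\oplus[\mathbb{S}^{14},SF_{(2)}]$, so the top-cell map $f^{*}_{\mathbb{C}P^{7}/\mathbb{C}P^{3}}$ is split injective. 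It then remains to show that $q^{*}:[\mathbb{C}P^{7}/\mathbb{C}P^{3},SF]\to[\mathbb{C}P^{7},SF]$ is injective on the $\mathbb{S}^{14}$ summand. Its kernel is the image of $[\Sigma\mathbb{C}P^{3},SF]$ under the connecting map for the cofibration $\mathbb{C}P^{3}\to\mathbb{C}P^{7}\to\mathbb{C}P^{7}/\mathbb{C}P^{3}$.

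To control that connecting map we compare with $O$, as in the proof of Lemma~\ref{lem:Bru-seq}(iii). Since $[\Sigma\mathbb{C}P^{3},F/O]=0$ (shown in the proof of Theorem~\ref{kawa}), the map $J:[\Sigma\mathbb{C}P^{3},O]\to[\Sigma\mathbb{C}P^{3},SF]$ is surjective. Hence the image of the connecting map lies in $J\bigl([\mathbb{C}P^{7}/\mathbb{C}P^{3},O]\bigr)$. On the $\mathbb{S}^{14}$ summand that image lies in $J(\pi_{14}(O))=0$, because $\pi_{14}(O)=0$.

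There is one point to verify here: the splitting \eqref{spil1} is only up to $2$-local homotopy. So we must check that the projection to the $\mathbb{S}^{14}$ summand is compatible with $J$. This is automatic, because $J$ is natural and the splitting maps are maps of spaces.

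We expect this step to be the main obstacle. If it turns out to be delicate, a fallback is to use the composite $p^{*}\circ f^{*}_{\mathbb{C}P^{7}}=\eta^{*}$ from Diagram \eqref{kappa2}. Since $\eta\kappa\neq0$ and $\eta\sigma^2=0$, this detects $\kappa$. One would then detect $\sigma^2$ separately through the $\mathbb{S}^{14}$ summand as above.

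Finally, for the splitting: the group $[\mathbb{C}P^{7},SF]$ has order $8$ and all the relevant $2$-torsion comes through $q^{*}$, since $[\mathbb{C}P^{3},SF]$ restricts trivially by Lemma~\ref{twonew}(1). So we compute $[\mathbb{C}P^{7}/\mathbb{C}P^{3},SF_{(2)}]$ directly. Lemma~\ref{twonew}(2)(a) applied to the cofibre sequence $\mathbb{S}^{11}\xrightarrow{(2\nu,\eta)}\mathbb{S}^8\vee\mathbb{S}^{10}\to\mathbb{C}P^{6}/\mathbb{C}P^{3}$ shows that $[\mathbb{C}P^{6}/\mathbb{C}P^{3},SF_{(2)}]$ is elementary abelian. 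This uses $\pi^s_{12}=0$ and the fact that $\eta^{*}$ is injective on $\pi^s_{10}{}_{(2)}=\mathbb{Z}_2\{\eta\mu\}$. Hence $[\mathbb{C}P^{7}/\mathbb{C}P^{3},SF_{(2)}]$ is elementary abelian. Its image $[\mathbb{C}P^{7},SF]_{(2)}$ under the surjection $q^{*}$ is therefore elementary abelian of order $8$. Adding the trivial $3$-part, $[\mathbb{C}P^{7},SF]\cong\mathbb{Z}_2^3$, and the sequence splits.
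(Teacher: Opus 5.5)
Your proposal is correct. Its second half, the splitting, is the paper's own argument: the $2$-local splitting \eqref{spil1}, the fact that $[\mathbb{C}P^{6}/\mathbb{C}P^{3},SF_{(2)}]$ is elementary abelian, and surjectivity of $q^*$ from Lemma~\ref{twonew}(1). Two slips are only cosmetic. First, $\operatorname{Im}(i^*)=\operatorname{Ker}(p^*)$ is exactness at $[\mathbb{C}P^{6},SF]$, not at $[\mathbb{C}P^{7},SF]$. Second, Lemma~\ref{twonew}(2)(a) concerns $\mathbb{S}^8\cup_{2\nu}\mathbb{D}^{12}$, not $\mathbb{C}P^6/\mathbb{C}P^3$. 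The computation you actually describe is the right one, and $\pi^s_{12}=0$ alone already embeds $[\mathbb{C}P^{6}/\mathbb{C}P^{3},SF_{(2)}]$ in the elementary abelian group $[\mathbb{S}^8\vee\mathbb{S}^{10},SF_{(2)}]$.

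You genuinely differ from the paper on the injectivity of $f^*_{\mathbb{C}P^7}$.

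The paper argues through the surgery exact sequences of $\mathbb{S}^{14}$ and $\mathbb{C}P^{7}$ (Diagram~\eqref{digram31}). Since $f^*_{\mathbb{C}P^7}$ commutes with the surgery obstruction, the classes $\sigma^2$ and $\kappa+\sigma^2$, which have Kervaire invariant one, are detected by $s$. The class $\kappa$, which spans $\psi_*(\Theta_{14})$, is detected by Kawakubo's Theorem~\ref{kawa}. This gives injectivity on $[\mathbb{S}^{14},F/O]$, and $\phi_*$ transports it to $SF$.

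You stay entirely in homotopy theory. Because $[\Sigma\mathbb{C}P^3,F/O]=0$, you get $\operatorname{Ker}(q^*)=\operatorname{Im}(\delta^*)\subseteq J([\mathbb{C}P^{7}/\mathbb{C}P^{3},SO])$. Restricting along the $\mathbb{S}^{14}$ wedge summand kills this image, since $\pi_{14}(SO)=0$, while the same restriction recovers $b$ from $f^*_{\mathbb{C}P^7/\mathbb{C}P^3}(b)$. This is sound.

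To avoid localizing $SO$, you can use a map $g\colon\mathbb{S}^{14}\to\mathbb{C}P^{7}/\mathbb{C}P^{3}$ of odd degree $d$ on the top cell; it exists because $\varphi_7$ has odd order. Then $d\,b=g^*f^*_{\mathbb{C}P^7/\mathbb{C}P^3}(b)\in J(\pi_{14}(SO))=0$, so $b=0$ because $\pi^s_{14}$ is $2$-torsion.

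What each route buys:
\begin{itemize}
\item Your route needs neither the Kervaire invariant nor Kawakubo's theorem. It reuses the splitting \eqref{spil1} that the splitting step requires anyway. It is the same $J$-comparison mechanism the paper later applies in Lemma~\ref{eight-sub}(ii).
\item The paper's injectivity step does not depend on \eqref{spil1}, and hence not on Mosher's computation. It also gives injectivity already on $[\mathbb{S}^{14},F/O]$.
\end{itemize}

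Your stated ``fallback'' is not independent: it still needs the summand argument to detect $\sigma^2$. An independent detection of $\sigma^2$ is exactly what the surgery obstruction supplies in the paper.
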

\begin{proof}
From the surgery exact sequences of \(\mathbb{C}P^{7}\) and \(\mathbb{S}^{14}\), we obtain the
commutative diagram (see \cite[Lemma~3.4]{Cro10})
\begin{equation}\label{digram31}
\begin{CD}
0@>>> \Theta_{14} @>\psi_*>> \pi_{14}(F/O)\cong \mathbb{Z}_2\oplus \mathbb{Z}_2
   @>s_{\mathbb{S}^{14}}>>  L_{14}(e)\cong \mathbb{Z}_2\\
@VV=V            @VVf_{\mathbb{C}P^{7}}^{*}V             @VVf_{\mathbb{C}P^{7}}^*V
   @VV=V\\
0@>>> \mathcal{S}_{\rm Diff}(\mathbb{C}P^{7})
   @>\eta_{\mathbb{C}P^{7}}>>    [\mathbb{C}P^{7}, F/O]
   @>s_{\mathbb{C}P^{7}}>>   L_{14}(e)\cong \mathbb{Z}_2 .
\end{CD}
\end{equation}
where $\mathcal{S}_{\rm Diff}(\mathbb{C}P^{m})$ is the smooth structure set of $\mathbb{C}P^{m}$ (see e.g. \cite{Bro72, Ran92, LM24, Wal99}) and the map $f_{\mathbb{C}P^{7}}^{*}: \Theta_{14}\longrightarrow \mathcal{S}_{\rm Diff}(\mathbb{C}P^{7})$ given by $[\Sigma]\mapsto [\mathbb{C}P^{7}\#\Sigma]$ is injective by \cite[Remark 4.3]{Ram15} and Theorem \ref{kawa}. A diagram chase in \eqref{digram31} shows that the map
\(f_{\mathbb{C}P^{7}}:[\mathbb{S}^{14},F/O]\longrightarrow[\mathbb{C}P^{7},F/O]\) is injective.
Together with the commutative diagram \eqref{digram3}, this implies that the induced map
\[
f_{\mathbb{C}P^{7}}^{*}:[\mathbb{S}^{14},SF]\longrightarrow[\mathbb{C}P^{7},SF]
\]
is also injective. We also note that the map
\[
p^{*} : [\mathbb{C}P^{6}, SF] \longrightarrow [\mathbb{S}^{13}, SF]
\]
is surjective with kernel isomorphic to \(\mathbb{Z}_2\) by Lemma~\ref{lem:Bru-seq}(v). Applying these facts to the long exact sequence appearing in the last row of \eqref{digram3}, we obtain a short exact sequence
\begin{equation}\label{spil2}
0\longrightarrow [\mathbb{S}^{14},SF]\cong \mathbb{Z}_2\oplus \mathbb{Z}_2
\longrightarrow [\mathbb{C}P^{7},SF]\longrightarrow \operatorname{Ker}(p^{*})\cong \mathbb{Z}_2
\longrightarrow 0.
\end{equation}
Next, by repeating the arguments used in the proof of Theorem~\ref{main}\,(iii), with
\(\operatorname{Top}/O\) replaced by \(SF\), we obtain
\[
[\mathbb{C}P^{7} / \mathbb{C}P^{3}, SF_{(2)}]\cong
[\mathbb{C}P^{6} / \mathbb{C}P^{3}, SF_{(2)}]\oplus
[\mathbb{C}P^{7} / \mathbb{C}P^{6}, SF_{(2)}]\cong
[\mathbb{C}P^{6} / \mathbb{C}P^{3}, SF_{(2)}]\oplus
[\mathbb{S}^{14}, SF_{(2)}].
\]
and the map
\[
i^*:[\mathbb{C}P^{6} / \mathbb{C}P^{3}, SF_{(2)}]\longrightarrow
[\mathbb{S}^{8}, SF_{(2)}]
\]
is an isomorphism, since \(\eta^{2}\circ \mu\neq 0\) (see \cite[p.~190]{Tod62}). Therefore,
\begin{align}\label{SF-spilit}
[\mathbb{C}P^{7} / \mathbb{C}P^{3}, SF_{(2)}]
&\cong [\mathbb{S}^{8}, SF_{(2)}]\oplus [\mathbb{S}^{14}, SF_{(2)}] \notag\\[0.3em]
&\cong 
\mathbb{Z}_2\{\left (\overline{(\epsilon)_{8}}\right )_{14}\}
\oplus 
\mathbb{Z}_2\{\left (\overline{(\eta \circ \sigma)_{8}}\right )_{14}\}
\oplus
\mathbb{Z}_2\{(\sigma^{2})_{14}\}
\oplus
\mathbb{Z}_2\{(\kappa)_{14}\}.
\end{align}
where $\left (\overline{(\epsilon)_{8}}\right )_{14}$ and $\left (\overline{(\eta \circ \sigma)_{8}}\right )_{14}$ are the extensions of $(\epsilon)_{8}=f^{*}_{\mathbb{C}P^{4} / \mathbb{C}P^{3}}(\epsilon)$ and $(\eta \circ \sigma)_{8}=f^{*}_{\mathbb{C}P^{4} / \mathbb{C}P^{3}}(\eta \circ \sigma)$, respectively, along the map $i^{*}:[\mathbb{C}P^{7} / \mathbb{C}P^{3}, SF_{(2)}]\longrightarrow [\mathbb{C}P^{6} / \mathbb{C}P^{3}, SF_{(2)}]\cong [\mathbb{S}^{8}, SF_{(2)}]$. 
Furthermore, the induced map
\begin{equation}\label{cp7-q}
q^*:[\mathbb{C}P^{7} / \mathbb{C}P^{3}, SF_{(2)}]\longrightarrow
[\mathbb{C}P^{7}, SF_{(2)}]  
\end{equation}
is surjective by Lemma~\ref{twonew}\,(1). Combining this with \eqref{spil2} and
\eqref{SF-spilit}, we obtain a splitting of the short exact sequence \eqref{spil2}.
This completes the proof.
\end{proof}
\begin{theorem}\label{eight}
\begin{itemize}
\item[(i)] There is a short exact sequence
\[
0\longrightarrow \mathbb{Z}_2\{(z)_{16}\}\longrightarrow
[\mathbb{C}P^{8},SF]\longrightarrow \operatorname{Ker}(p^{*})\longrightarrow 0,
\]
where $(z)_{16}=f_{\mathbb{C}P^{8}}^{*}(z)$ for $z\in \{\eta^*, \eta^*+\eta \circ \rho\}$ and the map
$f_{\mathbb{C}P^{8}}^{*}:[\mathbb{S}^{16}, SF]\cong \mathbb{Z}_2\{\eta^*\}\oplus \mathbb{Z}_2\{\eta \circ \rho\}
   \longrightarrow \mathbb{Z}_2 \subset [\mathbb{C}P^{8},SF]$
is induced by the degree-one map $f_{\mathbb{C}P^{8}}:\mathbb{C}P^{8}\longrightarrow \mathbb{S}^{16}$, and the map
$$p^{*}:[\mathbb{C}P^{7},SF]\cong \mathbb{Z}_2 \oplus \mathbb{Z}_2
\oplus \mathbb{Z}_2 \longrightarrow \mathbb{Z}_2\{\eta\circ \kappa\}\subset [\mathbb{S}^{15}, SF]$$ is induced by the Hopf
fibration $p:\mathbb{S}^{15}\longrightarrow \mathbb{C}P^{7}$.
\item[(ii)] There is a short exact sequence
\[
0\longrightarrow \Theta_{16}\xrightarrow{f_{\mathbb{C}P^{8}}^{*}} \mathcal{C}(\mathbb{C}P^{8})
   \longrightarrow \operatorname{Ker}(p^{*})\longrightarrow 0,
\]
where \(\Theta_{16}\cong \mathbb{Z}_{2}\), and the map
\(p^{*}: \mathcal{C}(\mathbb{C}P^{7})\longrightarrow \mathbb{Z}_{2} \subset \mathcal{C}(\mathbb{S}^{15})\)
is induced by the Hopf fibration \(p:\mathbb{S}^{15}\longrightarrow \mathbb{C}P^{7}\).
\end{itemize}
\end{theorem}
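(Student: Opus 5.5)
Both parts come from the exact sequence \eqref{longG} for the cofibre sequence $\mathbb{S}^{15}\xrightarrow{p}\mathbb{C}P^7\xrightarrow{i}\mathbb{C}P^8\xrightarrow{f_{\mathbb{C}P^8}}\mathbb{S}^{16}$, with $X=SF$ for (i) and $X=\operatorname{Top}/O$ for (ii). In each case I will find three things: the image of $p^*\colon[\mathbb{C}P^7,X]\to[\mathbb{S}^{15},X]$, the kernel of $f^*_{\mathbb{C}P^8}$ on $[\mathbb{S}^{16},X]$, and hence the image of $f^*_{\mathbb{C}P^8}$. Exactness then gives the short exact sequences directly.

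For (i), I begin with the image of $p^*$. By Proposition~\ref{stab}, $[\mathbb{C}P^7,SF]\cong\mathbb{Z}_2^3$, generated by $(\kappa)_{14}$, $(\sigma^2)_{14}$ and an extension $\left(\overline{(\epsilon)_8}\right)_{14}$. By Lemma~\ref{twonew}(1) and \eqref{cp7-q}, the map $q^*$ from $[\mathbb{C}P^7/\mathbb{C}P^3,SF]$ is surjective. So it suffices to compute $\varphi_8^*$ on the decomposition \eqref{SF-spilit}, using $\varphi_8=((\varphi_8)_1,\eta)$ from \eqref{spil11}.

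On the $\mathbb{S}^{14}$ summand, $\varphi_8^*$ acts as $\eta^*$. It sends $\kappa\mapsto\eta\circ\kappa\neq0$ and $\sigma^2\mapsto\eta\sigma^2=0$.

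On the $\mathbb{C}P^6/\mathbb{C}P^3$ summand, I pull back along $\Psi$ to $\mathbb{S}^8\cup_{2\nu}\mathbb{D}^{12}$. This is the unsuspended analogue of Diagrams \eqref{equ8} and \eqref{equ9}, and $\Psi^*$ is an isomorphism because $\pi_{12}^s=0$ and by Lemma~\ref{twonew}(2)(a). The proof of Theorem~\ref{kawa} shows that $\Psi\circ(\varphi_8)_1$ is a coextension $T$ of $\nu$. Lemma~\ref{twonew}(2)(c) then gives image $\mathbb{Z}_2\{\eta\circ\kappa\}$, with kernel generated by the $\eta\circ\sigma$ class. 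Hence $\operatorname{Im}(p^*)=\mathbb{Z}_2\{\eta\circ\kappa\}$ and $\operatorname{Ker}(p^*)$ has order $4$. The delicate point is the $\eta$-component of $\varphi_8$ on the $\mathbb{S}^{14}$ summand after $2$-local splitting. I expect it only changes the representative and not the image.

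Next comes the kernel of $f^*_{\mathbb{C}P^8}$. This is the image of $(\Sigma p)^*\colon[\Sigma\mathbb{C}P^7,SF]\to[\mathbb{S}^{16},SF]=\mathbb{Z}_2\{\eta^*\}\oplus\mathbb{Z}_2\{\eta\circ\rho\}$. I will argue as for $\mathbb{C}P^4$ in Lemma~\ref{lem:Bru-seq}(iii), comparing with $J$ via the diagram \eqref{long-cp4}, one dimension up. The element $\eta\circ\rho$ lies in $\operatorname{Im}J$. Since $[\mathbb{S}^{16},O]$ is reached from $[\Sigma\mathbb{C}P^7,O]$ (the top-cell attaching map $\eta$ gives a surjection onto $\pi_{16}(O)$ via Bott periodicity), $\eta\circ\rho$ lies in $\operatorname{Im}(\Sigma p)^*$.

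For the other inclusion, I show $\eta^*$ is not in that image by passing to $\operatorname{cok}J_{(2)}$. This is the same computation as in the $n=8$ case of Theorem~\ref{kawa}, where $(\Sigma(\varphi_8)_1)^*$ and $\eta^*$ were shown to be trivial into $\pi_{16}(\operatorname{cok}J_{(2)})=\mathbb{Z}_2\{\eta^*\}$. That computation was done on $\mathbb{C}P^8/\mathbb{C}P^3$, so I also need $[\Sigma\mathbb{C}P^3,\operatorname{cok}J_{(2)}]$ to map trivially. I will check this from $\pi_7,\pi_5,\pi_3$ of $\operatorname{cok}J$, which are $0$. Hence the kernel is exactly $\mathbb{Z}_2\{\eta\circ\rho\}$, and the image of $f^*$ is $\mathbb{Z}_2\{(z)_{16}\}$ with $z\in\{\eta^*,\eta^*+\eta\circ\rho\}$. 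This gives (i). I expect this step, pinning down the exact kernel including the $\operatorname{Im}J$ part, to be the main obstacle.

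For (ii), Theorem~\ref{kawa} gives the injectivity of $f^*_{\mathbb{C}P^8}$ on $\Theta_{16}$. Exactness at $[\mathbb{C}P^7,\operatorname{Top}/O]$ identifies the cokernel with $\operatorname{Ker}(p^*)$. It remains to see that $p^*$ on $\mathcal{C}(\mathbb{C}P^7)$ has image in a $\mathbb{Z}_2\subset\Theta_{15}$. I will get this by embedding $[\mathbb{C}P^7,\operatorname{Top}/O]\subset[\mathbb{C}P^7,SF]$ via \eqref{equ1} and \eqref{equ2} and using the compatibility \eqref{equ41}. Then $p^*$ is compatible with the $SF$ computation, so its image is detected by $\eta\circ\kappa$ in $\operatorname{Coker}J_{15}$ via the map $d$, consistent with \eqref{coker}.
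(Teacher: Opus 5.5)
Your proposal is correct. The computation of $\operatorname{Im}(p^*)=\mathbb{Z}_2\{\eta\circ\kappa\}$ matches the paper: surjectivity of $q^*$, the $2$-local splitting $\varphi_8=((\varphi_8)_1,\eta)$, the isomorphism $\Psi^*$, and Lemma~\ref{twonew}(2)(c). Your plan for (ii), namely Theorem~\ref{kawa}, exactness and \eqref{coker}, is also the paper's.

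The genuine difference is how you find $\operatorname{Im} f^*_{\mathbb{C}P^8}$ on $[\mathbb{S}^{16},SF]$. The paper never computes $\operatorname{Im}(\Sigma p)^*$. It takes Kawakubo's theorem for $\operatorname{Top}/O$ and transports it to $SF$ through Diagram~\eqref{digram4}. For this it uses that $\psi_*$ and $\phi_*$ are monomorphisms on $[\mathbb{C}P^8,-]$ by \eqref{equ1} and \eqref{equ2}. It also uses that $\psi_*(\Theta_{16})$ and $\phi_*(\pi_{16}^s)$ coincide in $\pi_{16}(F/O)$, because $\operatorname{Ker}\phi_*=\operatorname{Im}J_{16}=\mathbb{Z}_2\{\eta\circ\rho\}$. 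Hence $f^*(\eta\circ\rho)=0$ and $f^*(\eta^*)\neq 0$.

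You compute $\operatorname{Im}(\Sigma p)^*$ directly. The element $\eta\circ\rho=\rho\circ\eta$ is hit by $\rho$ pulled back from the top cell of $\Sigma\mathbb{C}P^7$. The elements $\eta^*$ and $\eta^*+\eta\circ\rho$ are excluded after projecting to $\operatorname{cok}J_{(2)}$. There $(\Sigma p)^*$ vanishes, because $(\Sigma\varphi_8)^*$ does and $(\Sigma q)^*$ is onto since $[\Sigma\mathbb{C}P^3,\operatorname{cok}J_{(2)}]=0$.

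Both routes rest on the same $\operatorname{cok}J$ computation from the $n=8$ case of Theorem~\ref{kawa}. Yours avoids the monomorphism results and the $\Omega G/\operatorname{Top}$ step that makes $q^*$ an isomorphism on $\operatorname{Top}/O$. It also exhibits $\operatorname{Ker} f^*=\mathbb{Z}_2\{\eta\circ\rho\}$ explicitly. The paper's route is shorter once Theorem~\ref{kawa} is available, and it ties $(z)_{16}$ to $f^*_{\mathbb{C}P^8}(\Sigma^{16})$ from the outset.

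Three small repairs are needed.

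First, for $\Psi^*$ to be an isomorphism on $[-,SF_{(2)}]$, $\pi_{12}^s=0$ and Lemma~\ref{twonew}(2)(a) are not enough. You also need $\eta^*\colon\pi_{10}^s\to\pi_{11}^s$ to be injective $2$-locally, i.e. $\eta^2\circ\mu\neq0$; otherwise the $10$-cell of $\mathbb{C}P^6/\mathbb{C}P^3$ could contribute.

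Second, the paper proved the vanishing of $\eta^*$ on $\pi_{15}$ for $\operatorname{Top}/O$, not for $\operatorname{cok}J_{(2)}$. It transfers, because $\pi_{15}(\operatorname{Top}/O)\to\pi_{15}(\operatorname{cok}J_{(2)})$ is onto and $\pi_{16}(\operatorname{Top}/O)\to\pi_{16}(\operatorname{cok}J_{(2)})$ is an isomorphism.

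Third, in (ii) the paper proves and later uses $\operatorname{Ker}(p^*)\cong\mathbb{Z}_2$, to get $|\mathcal{C}(\mathbb{C}P^8)|=4$. For that, containment of the image in $\operatorname{Coker}J_{15}$ is not enough; you must show the image is nonzero. Make ``detected by $\eta\circ\kappa$'' precise as follows. By Diagram~\eqref{kappa1}, $\psi_*f^*_{\mathbb{C}P^7}(\Sigma^{14})=\phi_*f^*_{\mathbb{C}P^7}(x)$ for some $x\in\{\kappa,\kappa+\sigma^2\}$. Then $p^*f^*_{\mathbb{C}P^7}(x)=\eta\circ x=\eta\circ\kappa\neq0$ in $\operatorname{Coker}J_{15}$.
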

\begin{proof}
The key step is to determine the images of the maps
\[
f_{\mathbb{C}P^{8}}^{*}:[\mathbb{S}^{16},SF]\longrightarrow [\mathbb{C}P^{8},SF]
\quad\text{and}\quad
p^{*} : [\mathbb{C}P^{7},SF]\longrightarrow [\mathbb{S}^{15},SF].
\]
We first show that the image of
\[
p^{*} : [\mathbb{C}P^{7},SF]\longrightarrow [\mathbb{S}^{15},SF]
\]
is \(\mathbb{Z}_2\{\eta \circ \kappa\}\). Since \([\mathbb{S}^{15},SF]\cong \pi_{15}^s
 =\mathbb{Z}_{32}\{\rho\}\oplus \mathbb{Z}_2\{\eta \circ \kappa\}
 \oplus \mathbb{Z}_3\{\alpha_4\}\oplus \mathbb{Z}_5\{\alpha_{2,5}\}\)
(\cite[p.~189]{Tod62}) and
\([\mathbb{C}P^{7},SF]\cong \mathbb{Z}_2\oplus \mathbb{Z}_2\oplus \mathbb{Z}_2\) by
Proposition~\ref{stab}, it suffices to work \(2\)\nobreakdash-locally.

Consider the commutative triangle
\begin{equation}\label{cp7-trai}
\xymatrix{
[\mathbb{C}P^{7},SF_{(2)}] \ar[r]^-{p^{*}} &
[\mathbb{S}^{15},SF_{(2)}]  \\
[\mathbb{C}P^{7}/\mathbb{C}P^{3},SF_{(2)}] \ar[ur]^{\varphi_{8}^{*}} \ar[u]^{q^{*}} &
}
\end{equation}
where \(q:\mathbb{C}P^{7}\longrightarrow \mathbb{C}P^{7}/\mathbb{C}P^{3}\) is the quotient map and
\(\varphi_{8}:\mathbb{S}^{15}\longrightarrow \mathbb{C}P^{7}/\mathbb{C}P^{3}\) is the attaching map of the top cell of \(\mathbb{C}P^{8}/\mathbb{C}P^{3}\). By Lemma~\ref{twonew}(1), the induced map
\[
q^*:[\mathbb{C}P^{7}/\mathbb{C}P^{3},SF_{(2)}]\longrightarrow [\mathbb{C}P^{7},SF_{(2)}]
\]
is surjective. Thus, from Diagram \eqref{cp7-trai}, to identify \(\operatorname{Im}(p^{*})\), it is enough to determine the image of
\[
\varphi_{8}^{*} : [\mathbb{C}P^{7}/\mathbb{C}P^{3},SF_{(2)}]\longrightarrow
[\mathbb{S}^{15},SF_{(2)}].
\]
Recall that \(\mathbb{C}P^{7}/\mathbb{C}P^{3}\simeq_{(2)} \mathbb{C}P^{6}/\mathbb{C}P^{3}\vee \mathbb{S}^{14}\),
and let
\(p_1:\mathbb{C}P^{6}/\mathbb{C}P^{3}\vee \mathbb{S}^{14}\longrightarrow \mathbb{C}P^{6}/\mathbb{C}P^{3}\),
\(p_2:\mathbb{C}P^{6}/\mathbb{C}P^{3}\vee \mathbb{S}^{14}\longrightarrow \mathbb{S}^{14}\)
be the canonical projections. As in \eqref{spil11}, the map \(\varphi_{8}\) is determined by its
components
\[
p_1\circ \varphi_{8} = (\varphi_{8})_{1}
\quad\text{and}\quad
p_2\circ \varphi_{8} = \eta.
\]
Next, we use the commutative diagram
\begin{equation}\label{kappa}
\xymatrix@C=4.5em@R=4.5em{
&
[\mathbb{S}^{14},\,SF_{(2)}]
  \ar[d]^{\,p_2^{*}\,}
  \ar[dr]^{\,\eta^{*}\,}
& \\
[\mathbb{C}P^{7}/\mathbb{C}P^{3},\,SF_{(2)}]
  \ar@{=}[r]
&
[\mathbb{C}P^{6}/\mathbb{C}P^{3},\,SF_{(2)}]\;\oplus\;[\mathbb{S}^{14},\,SF_{(2)}]
  \ar[r]^-{\,\varphi_{8}^{*}\,}
&
[\mathbb{S}^{15},\,SF_{(2)}]
\\
[\mathbb{S}^{8}\cup_{2\nu}\mathbb{D}^{12},\,SF_{(2)}]
  \ar[r]_{\cong}^{\,\Psi^{*}\,}
&
[\mathbb{C}P^{6}/\mathbb{C}P^{3},\,SF_{(2)}]
  \ar[u]^{\,p_1^{*}\,}
  \ar[ur]_{\,(\varphi_8)_{1}^{*}\,}
&
}
\end{equation}
where \(\Psi:\mathbb{C}P^{6}/\mathbb{C}P^{3}\longrightarrow \mathbb{S}^{8}\cup_{2\nu}\mathbb{D}^{12}\) is the map given in
\eqref{equ8} and $\iota^{*}:[\mathbb{S}^{8}\cup_{2\nu}\mathbb{D}^{12},\,SF_{(2)}]\to [\mathbb{S}^{8},\,SF_{(2)}]\cong \mathbb{Z}_{2}\{\epsilon\}\oplus \mathbb{Z}_{2}\{\eta\circ \sigma\}$ is an isomorphism by Lemma \ref{twonew}(2)(a). The lower horizontal isomorphism
\[
\Psi^{*}:[\mathbb{S}^{8}\cup_{2\nu}\mathbb{D}^{12},\,SF_{(2)}]\longrightarrow
[\mathbb{C}P^{6}/\mathbb{C}P^{3},\,SF_{(2)}]
\]
is obtained by applying the functor \([-\,,\,SF_{(2)}]\) to the homotopy commutative
diagram of homotopy cofibrations \eqref{equ8} and using the fact that
\[
\eta^*:[\mathbb{S}^{10},\,SF_{(2)}]\cong \mathbb{Z}_2\{\eta\circ \mu\}
\longrightarrow [\mathbb{S}^{11},\,SF_{(2)}]
\]
is non-trivial. From the arguments given for \eqref{nuhomo}, the composite
\[
\mathbb{S}^{15}\xrightarrow{(\varphi_8)_{1}}
\mathbb{C}P^{6}/\mathbb{C}P^{3}\xrightarrow{\Psi}
\mathbb{S}^{8}\cup_{2\nu}\mathbb{D}^{12}\xrightarrow{q}\mathbb{S}^{12}
\]
is represented by \(\nu\in\pi_{3}^{s}\). Applying Lemma~\ref{twonew}(2)(c) with $T = \Psi \circ (\varphi_8)_{1}$, 
we find that the image of the induced map
\begin{equation}\label{imkereta}
\bigl(\Psi \circ (\varphi_8)_{1}\bigr)^{*} \colon 
[\mathbb{S}^{8} \cup_{2\nu} \mathbb{D}^{12}, SF_{(2)}] \cong 
\mathbb{Z}_{2}\{\epsilon\} \oplus \mathbb{Z}_{2}\{\eta \circ \sigma\} 
\longrightarrow [\mathbb{S}^{15}, SF_{(2)}]
\end{equation}
is precisely $\mathbb{Z}_2\{\eta \circ \kappa\}$, and its kernel is 
generated by $\eta \circ \sigma$. By the commutativity of the lower right
triangle in \eqref{kappa}, this implies that the image of the restriction
\begin{equation}\label{cp8-1}
(\varphi_8)^{*}_{[\mathbb{C}P^{6}/\mathbb{C}P^{3},SF_{(2)}]}:
[\mathbb{C}P^{6}/\mathbb{C}P^{3},SF_{(2)}]\longrightarrow [\mathbb{S}^{15},SF_{(2)}]
\end{equation}
is also \(\mathbb{Z}_2\{\eta\circ \kappa\}\).
On the other hand, using the relations \(\eta \circ \kappa \neq 0\) and
\(\eta \circ \sigma^{2} = 0\) (\cite[p.~190]{Tod62}), the upper triangle in \eqref{kappa}
shows that the restriction
\begin{equation}\label{cp8-2}
(\varphi_8)^{*}_{[\mathbb{S}^{14},SF_{(2)}]}:
[\mathbb{S}^{14},SF_{(2)}]\longrightarrow [\mathbb{S}^{15},SF_{(2)}]
\end{equation}
also has image \(\mathbb{Z}_2\{\eta\circ \kappa\}\). Combining the contributions from both
summands in the middle row of \eqref{kappa}, we conclude that
\[
\operatorname{Im}\bigl(
[\mathbb{C}P^{7}/\mathbb{C}P^{3},SF_{(2)}]\xrightarrow{\varphi_8^{*}} [\mathbb{S}^{15},SF_{(2)}]\bigr)
 = \mathbb{Z}_2\{\eta\circ \kappa\}.
\]
Hence, from \eqref{cp7-trai}, the image of the map
\begin{equation}\label{cp7-hop}
p^{*} : [\mathbb{C}P^{7},SF]\longrightarrow [\mathbb{S}^{15},SF]
\end{equation}
is \(\mathbb{Z}_2\{\eta\circ \kappa\}\). We now analyze the map
\[
f_{\mathbb{C}P^{8}}^{*}:[\mathbb{S}^{16},SF]\longrightarrow [\mathbb{C}P^{8},SF]
\]
by considering the following diagram, which is analogous to \eqref{digram3}, for the inclusion
\(i:\mathbb{C}P^{7}\hookrightarrow \mathbb{C}P^{8}\):
\begin{equation}\label{digram4}
\begin{CD}
0@.0@.0@>>>\mathit{bP}_{16}\\
@VVV @VVV  @VVV @VVV \\
\mathbb{Z}_2\cong [\mathbb{S}^{16}, \operatorname{Top}/O]
 @>f^{*}_{\mathbb{C}P^{8}}>>[\mathbb{C}P^{8},\operatorname{Top}/O]
 @>i^{*}>> [\mathbb{C}P^{7},\operatorname{Top}/O]
 @>p^{*}>> \mathit{bP}_{16}\oplus \operatorname{Coker}(J_{15})\\
@VV\psi_*V @VV\psi_*V  @VV\psi_*V  @VV\psi_*V  \\
\mathbb{Z}_2\oplus \mathbb{Z}\cong [\mathbb{S}^{16}, F/O]
 @>f^{*}_{\mathbb{C}P^{8}}>>[\mathbb{C}P^{8},F/O]
 @>i^{*}>> [\mathbb{C}P^{7},F/O]
 @>p^{*}>> \operatorname{Coker}(J_{15}) \\
@AA\phi_*A @AA\phi_*A  @AA\phi_*A  @AA\phi_*A \\
\mathbb{Z}_2\oplus \mathbb{Z}_2\cong[\mathbb{S}^{16}, SF]
 @>f^{*}_{\mathbb{C}P^{8}}>>[\mathbb{C}P^{8},SF]
 @>i^{*}>> [\mathbb{C}P^{7},SF]
 @>p^{*}>> [\mathbb{S}^{15}, SF]\\
@AAJ_{16}A @AAA  @AAA  @AAJ_{15}A  \\
\mathbb{Z}_2\cong[\mathbb{S}^{16}, SO]@>>>0@=0@>>>[\mathbb{S}^{15}, SO]\cong\mathbb{Z}\\
\end{CD}
\end{equation}
Here the map
\[
\psi_*: \mathit{bP}_{16}\oplus \operatorname{Coker}(J_{15})\cong [\mathbb{S}^{15}, \operatorname{Top}/O]
\longrightarrow \operatorname{Coker}(J_{15})
\]
is the Kervaire–Milnor map, which can be identified with the map
\([\mathbb{S}^{15}, \operatorname{Top}/O]\longrightarrow [\mathbb{S}^{15}, F/O]\) induced by
\(\operatorname{Top}/O\longrightarrow F/O\) (\cite{Bru68a}). Moreover,
\([\mathbb{C}P^{7},\operatorname{Top}/O]\cong \mathbb{Z}_2\oplus  \mathbb{Z}_2\) by
Theorem~\ref{main}(iii).

From the first column of Diagram \eqref{digram4} we observe that
\[
\operatorname{Im}\bigl([\mathbb{S}^{16}, \operatorname{Top}/O]
 \xrightarrow{\psi_*}[\mathbb{S}^{16}, F/O]\bigr)
=
\operatorname{Im}\bigl([\mathbb{S}^{16}, SF]
 \xrightarrow{\phi_*}[\mathbb{S}^{16}, F/O]\bigr),
\]
since the \(J\)\nobreakdash-homomorphism
\[
J_{16}:[\mathbb{S}^{16}, SO]\cong \mathbb{Z}_2\longrightarrow \mathbb{Z}_2\{\eta^*\} \oplus \mathbb{Z}_2\{\eta\circ \rho\}\cong [\mathbb{S}^{16}, SF]
\]
is injective, and its image is generated by the class \(\eta\circ \rho\) (\cite{Ada66}).
This observation, together with Theorem~\ref{kawa} (with \(m=8\)) and a diagram chase in the left two columns of Diagram \eqref{digram4}, implies that
\begin{equation}\label{cp8-deg}
\operatorname{Im}\bigl(
[\mathbb{S}^{16}, SF]\xrightarrow{\,f^{*}_{\mathbb{C}P^{8}}\,}[\mathbb{C}P^{8},SF]\bigr)
\cong \mathbb{Z}_2,
\end{equation}
generated by the element $(z)_{16}=f_{\mathbb{C}P^{8}}^{*}(z)$ for \(z\in \{\eta^{*}, \eta^{*}+\eta \circ \rho\}\).
Combining this with the image computed in \eqref{cp7-hop} of
\[
p^{*} : [\mathbb{C}P^{7},SF]\longrightarrow [\mathbb{S}^{15},SF]
\]
in the exact sequence \eqref{longG} with \(X = SF\), we obtain the required short exact sequence
\[
0\longrightarrow \mathbb{Z}_{2}\{(z)_{16}\}
\longrightarrow [\mathbb{C}P^{8}, SF]
\longrightarrow \mathbb{Z}_2\oplus \mathbb{Z}_2 \cong \operatorname{Ker}(p^{*})
\longrightarrow 0.
\] This finishes Part \textnormal{(i)}. For Part \textnormal{(ii)}, note from \eqref{coker} that the image of the induced map
\[
p^{*}: [\mathbb{C}P^{7}, \operatorname{Top}/O] \longrightarrow [\mathbb{S}^{15}, \operatorname{Top}/O] \cong \mathit{bP}_{16} \oplus \operatorname{Coker}(J_{15})
\]
is contained in $\mathbb{Z}_2\{\eta \circ \kappa\} = \operatorname{Coker}(J_{15})$. It follows from the arguments given in the proof of Theorem~\ref{kawa} for the case $n=7$, using Diagram \eqref{kappa1}, that 
\begin{equation}\label{equ10}
\phi_{*}\big( f^{*}_{\mathbb{C} P^{7}}(\kappa) \big ) = \psi_{*}\big( f^{*}_{\mathbb{C} P^{7}}(\Sigma^{14}) \big) \in [\mathbb{C} P^{7}, F/O],
\end{equation}
where $\Sigma^{14} \in [\mathbb{S}^{14}, \operatorname{Top}/O]$ is the exotic $14$-sphere and $f^{*}_{\mathbb{C} P^{7}}(\Sigma^{14}) \in [\mathbb{C}P^{7}, \operatorname{Top}/O]$. Furthermore, from Diagram \eqref{kappa2}, we have $p^*(f_{\mathbb{C} P^{7}}^*(x)) = \eta \circ \kappa \in [\mathbb{S}^{15}, SF]$, where $x \in \{\kappa, \kappa + \sigma^2\} \subset [\mathbb{S}^{14}, SF]$, and $p^{*}: [\mathbb{C}P^{7}, SF] \to [\mathbb{S}^{15}, SF]$ is the induced map. 
The latter implies that the composition
\[
[\mathbb{C}P^{7}, SF] \xrightarrow{p^{*}} [\mathbb{S}^{15}, SF] \xrightarrow{\phi_{*}} [\mathbb{S}^{15}, F/O] \cong \operatorname{Coker}(J_{15})
\] 
sends the element $f^{*}_{\mathbb{C} P^{7}}(x)$ to the class $\eta \circ \kappa$ in $\operatorname{Coker}(J_{15})$. By the commutativity of the third rectangle in the last two columns of Diagram \eqref{digram4}, this is equivalent to 
\[
p^{*} \circ \phi_{*} \big(f^{*}_{\mathbb{C} P^{7}}(x)\big ) = \eta \circ \kappa \in \operatorname{Coker}(J_{15}),
\] 
where $p^{*}: [\mathbb{C}P^{7}, F/O] \to [\mathbb{S}^{15}, F/O]$ is the induced map. Combining this with \eqref{equ10}, we obtain
\[
p^{*} \circ \psi_{*} \big( f^{*}_{\mathbb{C} P^{7}}(\Sigma^{14}) \big) = \eta \circ \kappa \in \operatorname{Coker}(J_{15}) \cong [\mathbb{S}^{15}, F/O].
\]
By the commutativity of the second rectangle in the last two columns of Diagram \eqref{digram4}, this is equivalent to
\[
\psi_{*} \circ p^* \big( f^{*}_{\mathbb{C} P^{7}}(\Sigma^{14}) \big) = \eta \circ \kappa \in \operatorname{Coker}(J_{15}) \cong [\mathbb{S}^{15}, F/O].
\]
This shows that the image of the map
\begin{equation}\label{kernelcp7-s15}
p^{*}: [\mathbb{C}P^{7}, \operatorname{Top}/O] \longrightarrow [\mathbb{S}^{15}, \operatorname{Top}/O] \cong \mathit{bP}_{16} \oplus \operatorname{Coker}(J_{15})
\end{equation}
is exactly $\mathbb{Z}_2\{\eta \circ \kappa\} = \operatorname{Coker}(J_{15})$. Since $[\mathbb{C}P^{7}, \operatorname{Top}/O] \cong \mathbb{Z}_2 \oplus \mathbb{Z}_2$, it follows that the kernel of $p^{*}: [\mathbb{C}P^{7}, \operatorname{Top}/O] \to [\mathbb{S}^{15}, \operatorname{Top}/O]$ is isomorphic to $\mathbb{Z}_2$. Using this fact along with Theorem~\ref{kawa} for $m=8$ in the exact sequence \eqref{longG}, we obtain the following short exact sequence:
\[
0 \longrightarrow [\mathbb{S}^{16}, \operatorname{Top}/O] \longrightarrow [\mathbb{C}P^{8}, \operatorname{Top}/O] \longrightarrow \operatorname{Ker}(p^{*}) \cong \mathbb{Z}_2 \longrightarrow 0.
\]
This completes the proof of Part \textnormal{(ii)}.
\end{proof}
We now discuss the splitness of both short exact sequences given in
Theorem~\ref{eight}. For this purpose, we first prove the following lemma.
\begin{lemma}\label{eight-sub}
\begin{itemize}
\item[(i)] There is a non-split short exact sequence
\[
0 \longrightarrow \mathbb{Z}_2\{(z)_{16}\} \longrightarrow
[\mathbb{C}P^{8}/\mathbb{C}P^{6}, SF]
 = \mathbb{Z}_4\{\left(\overline{(\sigma^{2})_{14}}\right)_{16}\}
 \xrightarrow{i^{*}} \mathbb{Z}_2\{(\sigma^2)_{14}\} \longrightarrow 0,
\]
where $\mathbb{Z}_2\{(\sigma^2)_{14}\} \subset [\mathbb{C}P^{7}/\mathbb{C}P^{6}, SF]$, \((z)_{16} = f_{\mathbb{C}P^{8}/\mathbb{C}P^{6}}^{*}(z)\) for \(z \in \{\eta^*, \eta^* + \eta \circ \rho\} \subset [\mathbb{S}^{16}, SF]\), and \((\sigma^2)_{14} = f^{*}_{\mathbb{C}P^{7}/\mathbb{C}P^{6}}(\sigma^2) \in [\mathbb{C}P^{7}/\mathbb{C}P^{6}, SF]\). Here, \(\left(\overline{(\sigma^{2})_{14}}\right)_{16}\) is an extension of \((\sigma^2)_{14}\) along the map \(i^{*} \colon [\mathbb{C}P^{8}/\mathbb{C}P^{6}, SF] \longrightarrow [\mathbb{C}P^{7}/\mathbb{C}P^{6}, SF]\) induced by the bottom cell inclusion \(\mathbb{S}^{14} \simeq \mathbb{C}P^{7}/\mathbb{C}P^{6} \longrightarrow \mathbb{C}P^{8}/\mathbb{C}P^{6}\), and \(f_{\mathbb{C}P^{8}/\mathbb{C}P^{6}} \colon \mathbb{C}P^{8}/\mathbb{C}P^{6} \longrightarrow \mathbb{S}^{16}\) is the collapse map.

\item[(ii)] There is a non-split short exact sequence
{\small
\[
\begin{aligned}
0 \longrightarrow \mathbb{Z}_2\{(z)_{16}\} \longrightarrow
[\mathbb{C}P^{8}/\mathbb{C}P^{3}, SF] \xrightarrow{i^{*}}
&\;\mathbb{Z}_2\{(\sigma^2)_{14}\} \oplus \mathbb{Z}_2\{\left(\overline{(\eta \circ \sigma)_8}\right)_{14}\} \\
&\;\oplus \mathbb{Z}_{2}\{(\kappa)_{14} + \left(\overline{(\epsilon)_8}\right)_{14}\} \longrightarrow 0,
\end{aligned}
\]}
where \((z)_{16} = f_{\mathbb{C}P^{8}/\mathbb{C}P^{3}}^{*}(z)\) for some \(z \in \{\eta^*, \eta^* + \eta \circ \rho\} \subset [\mathbb{S}^{16}, SF]\), and 
\[
\mathbb{Z}_2\{(\sigma^2)_{14}\} \oplus \mathbb{Z}_2\{\left(\overline{(\eta \circ \sigma)_8}\right)_{14}\} \oplus \mathbb{Z}_2\{(\kappa)_{14} + \left(\overline{(\epsilon)_8}\right)_{14}\} \subset [\mathbb{C}P^{7}/\mathbb{C}P^{3}, SF].
\]
In particular, 
\[
[\mathbb{C}P^{8}/\mathbb{C}P^{3}, SF] \cong \mathbb{Z}_{4}\{\left(\overline{(\sigma^{2})_{14}}\right)_{16}\} \oplus \mathbb{Z}_2\{\left(\overline{(\eta \circ \sigma)_8}\right)_{16}\} \oplus \mathbb{Z}_2\{\left(\overline{(\kappa)_{14} + \left(\overline{(\epsilon)_8}\right)_{14}}\right)_{16}\}.
\]
Here, \(\left(\overline{(\sigma^{2})_{14}}\right)_{16}\) is an extension of \((\sigma^{2})_{14} = f^{*}_{\mathbb{C}P^{7}/\mathbb{C}P^{3}}(\sigma^{2})\) along the map \(i^{*}\) induced by the inclusion
\[
i \colon \mathbb{C}P^{7}/\mathbb{C}P^{3} \simeq_{(2)} \mathbb{S}^{14} \vee \mathbb{C}P^{6}/\mathbb{C}P^{3} \longrightarrow \mathbb{C}P^{8}/\mathbb{C}P^{3},
\]
so that \(2\left(\overline{(\sigma^{2})_{14}}\right)_{16} = (z)_{16}\), and \(f_{\mathbb{C}P^{8}/\mathbb{C}P^{3}} \colon \mathbb{C}P^{8}/\mathbb{C}P^{3} \longrightarrow \mathbb{S}^{16}\) is the collapse map.
\end{itemize}
\end{lemma}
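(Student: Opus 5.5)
For part (i) I will mimic the $\mathbb{C}P^4/\mathbb{C}P^2$ computation in the proof of Lemma~\ref{lem:Bru-seq}(iii). By Proposition~\ref{St} (with $m=7$, odd), the $16$-cell of $\mathbb{C}P^8/\mathbb{C}P^6$ is attached to $\mathbb{S}^{14}$ by $\eta$, so $\mathbb{C}P^8/\mathbb{C}P^6\simeq \mathbb{S}^{14}\cup_\eta\mathbb{D}^{16}\cong\Sigma^{12}\mathbb{C}P^2$. I will apply $[-,SF]$ to the cofibre sequence $\mathbb{S}^{15}\xrightarrow{\eta}\mathbb{S}^{14}\to\mathbb{S}^{14}\cup_\eta\mathbb{D}^{16}\to\mathbb{S}^{16}$. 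The map $\eta^*:\pi_{14}^s\to\pi_{15}^s$ sends $\sigma^2\mapsto 0$ and $\kappa\mapsto\eta\kappa\neq0$, so the kernel is $\mathbb{Z}_2\{\sigma^2\}$. The map $\eta^*:\pi_{15}^s\to\pi_{16}^s$ has image $\mathbb{Z}_2\{\eta\circ\rho\}$, since $\eta\circ\eta\kappa=\eta^2\kappa=0$ by Toda. Together these give $0\to\mathbb{Z}_2\{(z)_{16}\}\to[\mathbb{C}P^8/\mathbb{C}P^6,SF]\to\mathbb{Z}_2\{(\sigma^2)_{14}\}\to0$.

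For non-splitting I will repeat the computation \eqref{todaeq1} verbatim with $\nu^2$ replaced by $\sigma^2$, using \cite[Corollary~2.6, Proposition~2.7]{KMNST01}. This yields
\[2\,\overline{\sigma^2}\in \langle\sigma^2,2\iota,\eta\rangle\circ f+\langle\sigma^2,\eta,2\iota\rangle.\]
The key input is the Toda bracket $\langle\sigma^2,2,\eta\rangle\ni\eta^*$, which I recall is essentially the definition of $\eta^*$ in Toda's tables, with indeterminacy $\{0,\eta\rho\}$ because $\sigma^2\circ\pi_2^s+2\pi_{16}^s=0$ and $\pi_{15}^s\circ\eta=\{0,\eta\rho\}$. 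I also need $\langle\sigma^2,\eta,2\rangle$ to lie in the image $\mathbb{Z}_2\{\eta\rho\}$, which should be handled by a juggling argument or read off from Toda. Hence $2\overline{\sigma^2}=(z)_{16}\neq0$, and the group is $\mathbb{Z}_4$. Pinning down these two brackets is the step I expect to be the main obstacle, and I would first check Toda's definition of $\eta^*$ as $\langle\sigma,2\sigma,\eta\rangle$ and use $\langle\sigma^2,2,\eta\rangle\supseteq\langle\sigma,2\sigma,\eta\rangle$.

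For part (ii), I will use the cofibration $\mathbb{C}P^7/\mathbb{C}P^3\xrightarrow{i}\mathbb{C}P^8/\mathbb{C}P^3\to\mathbb{S}^{16}$ with attaching map $\varphi_8$. Its kernel on the right is $\operatorname{Ker}(\varphi_8^*)$, which was computed in the proof of Theorem~\ref{eight} using diagram \eqref{kappa} and Lemma~\ref{twonew}(2)(c). Both summands map onto $\mathbb{Z}_2\{\eta\kappa\}$: $(\kappa)_{14}\mapsto\eta\kappa$, $\overline{(\epsilon)_8}\mapsto\eta\kappa$, while $\sigma^2$ and $\overline{\eta\sigma}$ map to $0$. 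So $\operatorname{Ker}\varphi_8^*$ is spanned by $(\sigma^2)_{14}$, $\overline{(\eta\sigma)_8}$ and $(\kappa)_{14}+\overline{(\epsilon)_8}$.

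On the left, $\operatorname{Im}(\Sigma\varphi_8)^*$ in $\pi_{16}^s$ contains $\eta\rho$ through the $\eta$ component. I will show it is no larger by arguing as in Theorem~\ref{eight}(i): the image of $f^*$ is $\mathbb{Z}_2$, which is compatible via the quotient $\mathbb{C}P^8\to\mathbb{C}P^8/\mathbb{C}P^3$. Alternatively, I will redo the $\operatorname{cok}J$ argument of Theorem~\ref{kawa} to see that $\eta^*$ is not hit, using Lemma~\ref{twonew}(2)(d). This gives the short exact sequence.

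Finally, I will use naturality under the collapse $\mathbb{C}P^8/\mathbb{C}P^3\to\mathbb{C}P^8/\mathbb{C}P^6$ to pull back the order-$4$ element from (i). Under this collapse $(\sigma^2)_{14}$ pulls back to $(\sigma^2)_{14}$ and the top class is preserved, so $\overline{(\sigma^2)_{14}}$ has order $4$ and the sequence does not split. For the other two summands I will show their extensions have order $2$. I would do this by choosing extensions that factor through $\mathbb{C}P^8/\mathbb{C}P^3\to$ a quotient where $\mathbb{S}^{16}$ splits off 2-locally, or by the same Toda bracket argument with $\langle x,\varphi,2\rangle$ vanishing; this gives the stated decomposition.
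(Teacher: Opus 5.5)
Part (i) follows the paper's argument. In (ii), your identification of $\operatorname{Ker}(\varphi_8^*)$ and your non-splitting argument also match the paper: pulling back the order-$4$ class of (i) along $\mathbb{C}P^8/\mathbb{C}P^3\to\mathbb{C}P^8/\mathbb{C}P^6$ is exactly the paper's diagram chase.

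Your computation of $\operatorname{Im}(f^*_{\mathbb{C}P^8/\mathbb{C}P^3})$ is genuinely shorter than the paper's. You kill $\eta\circ\rho$ because $\varphi_8$ followed by the collapse onto $\mathbb{S}^{14}$ is $\eta$. You get $(z)_{16}\neq0$ because $f^*_{\mathbb{C}P^8}=q^*\circ f^*_{\mathbb{C}P^8/\mathbb{C}P^3}$ and $f^*_{\mathbb{C}P^8}(\eta^*)\neq0$ by Theorem~\ref{eight}(i). The paper instead routes this through $\operatorname{Im}(\delta^*)$ and the identity $i^*\bigl((\overline{(\eta\circ\sigma)_8})_{16}\bigr)=(\overline{(\eta\circ\sigma)_8})_{14}$. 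Since you work $2$-locally, you should also say why there is no odd torsion. The only candidate, $\beta_1$ on the $10$-cell, is obstructed by $\alpha_1\circ\beta_1\neq0$ on the $14$-cell; the paper instead gets this from the sequence through $[\mathbb{C}P^8,SF]$.

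The gap is in your last step, which is the actual content of the ``in particular'' decomposition. Put $s=(\sigma^2)_{14}$, $a=\left(\overline{(\eta\circ\sigma)_8}\right)_{14}$ and $b=(\kappa)_{14}+\left(\overline{(\epsilon)_8}\right)_{14}$. Extensions along $i^*$ are unique up to $\mathbb{Z}_2\{(z)_{16}\}$, so doubling an extension gives a well-defined homomorphism $d\colon \mathbb{Z}_2\{s\}\oplus\mathbb{Z}_2\{a\}\oplus\mathbb{Z}_2\{b\}\to\mathbb{Z}_2\{(z)_{16}\}$. The stated decomposition is equivalent to $d(s)=1$ and $d(a)=d(b)=0$. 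You prove only $d(s)=1$, which yields the abstract type $\mathbb{Z}_4\oplus\mathbb{Z}_2\oplus\mathbb{Z}_2$ but not the named generators.

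Neither of your suggested methods works:
\begin{itemize}
\item Every proper cellular quotient of $\mathbb{C}P^8/\mathbb{C}P^3$ collapses the $8$-cell. Both $a$ and $b$ are nonzero there ($\eta\circ\sigma$ and $\epsilon$), so no extension of $a$ or $b$ can factor through such a quotient.
\item The bracket $\langle x,\varphi_8,2\rangle$ is not defined, because $2\varphi_8\neq0$: projected to the $12$-cell it is $2\nu$.
\end{itemize}

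For $a$, the paper supplies the missing idea. It realizes an extension as the generator of $\operatorname{Im}\bigl(\delta^*\colon[\Sigma\mathbb{C}P^3,SF]\to[\mathbb{C}P^8/\mathbb{C}P^3,SF]\bigr)$. This image has order at most $2$, since $J\colon[\Sigma\mathbb{C}P^3,SO]\to[\Sigma\mathbb{C}P^3,SF]$ is onto and $\widetilde{KO}^{-1}(\mathbb{C}P^8/\mathbb{C}P^3)\cong\mathbb{Z}_2$. Mosher's computation, that the $14$-cell of $\mathbb{C}P^7/\mathbb{C}P^1$ attaches to the $6$-cell by $2\sigma$, then shows this element restricts to $a$ rather than $a+s$. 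The latter would force $d(a)=1$.

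For $b$ you also need a real argument. The two exact sequences do not rule out $d(s)=d(b)=1$, and the paper's list of ``three possible group structures'' omits this case too. In that case $\overline{s}+\overline{b}$, not $\overline{b}$, would be the order-$2$ generator.
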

\begin{proof}
Observe that
\[
\mathbb{C}P^8 / \mathbb{C}P^6 \simeq \mathbb{C}P^7 / \mathbb{C}P^6 \cup_{\varphi_8} \mathbb{D}^{16},
\]
where the attaching map
\(\varphi_8 : \mathbb{S}^{15} \longrightarrow \mathbb{C}P^7 / \mathbb{C}P^6 \simeq \mathbb{S}^{14}\)
is represented by \(\eta \in \pi_{1}^s\) by Proposition~\ref{St}.
Consider the induced exact sequence for the cofibre
\[
\mathbb{S}^{15} \xrightarrow{\eta} \mathbb{S}^{14}
\longrightarrow \mathbb{S}^{14} \cup_{\eta} \mathbb{D}^{16}
\longrightarrow \mathbb{S}^{16}.
\]
\[
\longrightarrow [\mathbb{S}^{15}, SF] \xrightarrow{\;\eta^{*}\;}
[\mathbb{S}^{16}, SF] \xrightarrow{f_{\mathbb{C}P^8 / \mathbb{C}P^6}^{*}}
[\mathbb{S}^{14} \cup_{\eta} \mathbb{D}^{16}, SF] \xrightarrow{\;i^{*}\;}
[\mathbb{S}^{14}, SF] \xrightarrow{\;\eta^{*}\;} [\mathbb{S}^{15}, SF] \longrightarrow \cdots
\]
where the image of \(\eta^{*} : [\mathbb{S}^{15}, SF] \longrightarrow [\mathbb{S}^{16}, SF]\) is
\(\mathbb{Z}_{2}\{\eta\circ \rho\}\) and the kernel of
\(\eta^{*} : [\mathbb{S}^{14}, SF] \longrightarrow [\mathbb{S}^{15}, SF]\) is
\(\mathbb{Z}_{2}\{\sigma^{2}\}\) (\cite{Tod62}). So, the above sequence reduces to a short exact sequence
\begin{equation}\label{eta-seq}
0 \longrightarrow \mathbb{Z}_{2}\{(z)_{16}\}
\xrightarrow{\;i^{*}\;}
[\mathbb{S}^{14} \cup_{\eta} \mathbb{D}^{16}, SF]
\xrightarrow{\;j^{*}\;}
\mathbb{Z}_{2}\{\sigma^{2}\} \longrightarrow 0,
\end{equation}
where $(z)_{16}=f_{\mathbb{C}P^8 / \mathbb{C}P^6}^{*}(z)$ for \(z\in \{\eta^*, \eta^*+\eta \circ \rho\}\subset [\mathbb{S}^{16}, SF]\). We now show that this sequence does not split. Write
\[
\mathbb{C}P^8 / \mathbb{C}P^6 \simeq \mathbb{S}^{14} \cup_{\eta} \mathbb{D}^{16} = \Sigma^{12}\mathbb{C}P^{2}.
\]
Note that the quotient map \(f_{\mathbb{C}P^8 / \mathbb{C}P^6} : \mathbb{S}^{14} \cup_{\eta} \mathbb{D}^{16} \longrightarrow \mathbb{S}^{16}\) is homotopic to \(\Sigma^{12} f_{\mathbb{C}P^{2}}\), and the inclusion \(i : \mathbb{S}^{14} \hookrightarrow \mathbb{S}^{14} \cup_{\eta} \mathbb{D}^{16}\) is homotopic to the map \(\Sigma^{12} i_{\mathbb{C}}\), where
\[
f_{\mathbb{C}P^{2}} : \mathbb{C}P^{2} \longrightarrow \mathbb{S}^{4}
\]
is the degree-one map and \(i_{\mathbb{C}} : \mathbb{S}^2 \hookrightarrow \mathbb{C}P^{2}\) is the inclusion. Let \(\overline{\sigma^{2}} : \Sigma^{12}\mathbb{C}P^{2} \longrightarrow SF\) be an extension of \(\sigma^{2} : \mathbb{S}^{14} \longrightarrow SF\). Denoting by \(\iota_{\mathbb{C}} : \mathbb{C}P^{2} \longrightarrow \mathbb{C}P^{2}\) and \(\iota_{n} : \mathbb{S}^{n} \longrightarrow \mathbb{S}^{n}\) the identity maps, we have :
\begin{equation}\label{todaeq}
\begin{aligned}
2\,\overline{\sigma^{2}} &= \overline{\sigma^{2}} \circ (2\,\Sigma^{12} \iota_{\mathbb{C}}) \\
&= \overline{\sigma^{2}} \circ (\Sigma^{12} i_{\mathbb{C}} \circ \overline{2 \iota_{14}} + \widetilde{2 \iota_{15}} \circ \Sigma^{12} f_{\mathbb{C}P^{2}}) \quad \text{by \cite[Corollary~2.6(i)]{KMNST01}} \\
&= \sigma^{2} \circ \overline{2 \iota_{14}} + \overline{\sigma^{2}} \circ \widetilde{2 \iota_{15}} \circ \Sigma^{12} f_{\mathbb{C}P^{2}}, \quad \text{since } (\Sigma^{12} i_{\mathbb{C}})^{*}(\overline{\sigma^{2}}) = \sigma^{2}.
\end{aligned}
\end{equation}
Applying Proposition~2.7(2) of \cite{KMNST01} with $\alpha = \sigma^2$, $\beta = 2 \iota_{14}$, $\gamma = \eta$, and $p = \Sigma^{12} f_{\mathbb{C}P^{2}}$, we obtain 
\[
\sigma^{2} \circ \overline{2 \iota_{14}} \in \langle \sigma^{2}, 2 \iota_{14}, \eta \rangle \circ \Sigma^{12} f_{\mathbb{C}P^{2}}.
\]
Similarly, by taking $\alpha = \sigma^2$, $\beta = \eta$, and $\gamma = 2 \iota_{15}$ in Proposition~2.7(1) of \cite{KMNST01}, we have
\begin{equation}\label{toda-eta}
\overline{\sigma^{2}} \circ \widetilde{2 \iota_{15}} \in \langle \sigma^{2}, \eta, 2 \iota_{15} \rangle.
\end{equation}
It follows from \cite[PROOF OF (2), p.~279]{Muk66} that $\langle \sigma^{2}, \eta, 2 \iota_{15} \rangle = 0$ and
\[
\langle \sigma, 2\sigma, \eta \rangle = \langle \sigma^{2}, 2, \eta \rangle = \{\eta^{*}, \eta^{*} + \eta \circ \rho\}.
\]
Therefore, the map $\overline{\sigma^{2}} \circ \widetilde{2 \iota_{15}} \circ \Sigma^{12} f_{\mathbb{C}P^{2}}$ is null-homotopic. Thus, from \eqref{todaeq} and \eqref{toda-eta}, we obtain
\begin{equation}\label{toda-b}
2\,\overline{\sigma^{2}} = \eta^{*} \circ \Sigma^{12} f_{\mathbb{C}P^{2}} \quad \text{or} \quad (\eta^{*} + \eta \circ \rho) \circ \Sigma^{12} f_{\mathbb{C}P^{2}}.
\end{equation}
Since the induced map $(f_{\mathbb{C}P^8 / \mathbb{C}P^6})^{*} = (\Sigma^{12} f_{\mathbb{C}P^{2}})^{*} : [\mathbb{S}^{16}, SF] \longrightarrow [\mathbb{S}^{14} \cup_{\eta} \mathbb{D}^{16}, SF]$ maps both $\eta^{*}$ and $\eta^{*} + \eta \circ \rho$ to the same non-trivial element in $[\mathbb{S}^{14} \cup_{\eta} \mathbb{D}^{16}, SF]$ by \eqref{eta-seq}, it follows from \eqref{toda-b} that
\[
2\,\overline{\sigma^{2}} \neq 0 \in [\mathbb{S}^{14} \cup_{\eta} \mathbb{D}^{16}, SF].
\]
Thus, any extension $\overline{\sigma^{2}}$ of $\sigma^{2}$ has order $4$. It then follows from \eqref{eta-seq} that
\[
[\mathbb{C}P^{8}/\mathbb{C}P^{6}, SF] = \mathbb{Z}_4 \left\{ \left( \overline{(\sigma^{2})_{14}} \right)_{16} \right\},
\]
where $2(\overline{(\sigma^{2})_{14}})_{16} = (f_{\mathbb{C}P^{8}/\mathbb{C}P^{6}})^{*}(z)$ for $z \in \{\eta^{*}, \eta^{*} + \eta \circ \rho\}$ by \eqref{toda-b}. This completes the proof of Part~\textup{(i)}.
 For the proof of Part (ii), we use the following commutative diagram
\begin{equation}\label{cp8-ex}
\xymatrix{
[\Sigma \mathbb{C}P^{3}, SO] \ar[r]^{\;{\!\!\!\!\!\delta^{*}}\;} \ar[d]_{J} &
   [\mathbb{C}P^{8}/\mathbb{C}P^{3}, SO] \cong \mathbb{Z}_2 \ar[d]_{J} \\
[\Sigma \mathbb{C}P^{3}, SF] \ar[r]^{\;{\!\!\!\delta^{*}}\;} &
   [\mathbb{C}P^{8}/\mathbb{C}P^{3}, SF] \ar@{->>}[r]^{\;q^{*}\;} &
   [\mathbb{C}P^{8}, SF] \ar[r]^{\;i^{*}\;} &
   [\mathbb{C}P^{3}, SF]
}
\end{equation}
where the rows are the portions of
the long exact sequence induced by the cofibre sequence
\(
\mathbb{C}P^{3} \hookrightarrow \mathbb{C}P^{8}
\xrightarrow{q} \mathbb{C}P^{8}/\mathbb{C}P^{3}
\xrightarrow{\delta} \Sigma \mathbb{C}P^{3},
\)
the vertical arrows are induced by the stable \(J\)-homomorphism
\(SO \longrightarrow SF\), the surjectivity of the map
\(q^{*} : [\mathbb{C}P^{8}/\mathbb{C}P^{3}, SF] \longrightarrow [\mathbb{C}P^{3}, SF]\)
follows from Lemma~\ref{twonew}(1), and
\(
\widetilde{KO}^{-1}(\mathbb{C}P^{8}/\mathbb{C}P^{3})
\cong [\mathbb{C}P^{8}/\mathbb{C}P^{3}, SO] \cong \mathbb{Z}_{2}
\)
by \cite[Theorem~1.1]{NT19}. As the map
\(
J\colon[\Sigma\mathbb{C}P^{3},SO]\longrightarrow[\Sigma\mathbb{C}P^{3},SF]
\)
is surjective as $[\Sigma\mathbb{C}P^{3},F/O]=0$, the left rectangle in the above diagram shows that the image of
\(
\delta^{*}\colon[\Sigma\mathbb{C}P^{3},SF]\longrightarrow[\mathbb{C}P^{8}/\mathbb{C}P^{3},SF]
\)
has order at most two. On the other hand, the map \(\delta^{*}\) also fits into the following commutative diagram:
\begin{equation}\label{conn-8}
\xymatrix{
& [\Sigma \mathbb{C}P^{3}, SF]
   \ar[r]^{\;\delta^{*}\;} &
[\mathbb{C}P^{8}/\mathbb{C}P^{3}, SF] \ar[d]^{i^{*}} \\
[\Sigma(\mathbb{C}P^{4}/\mathbb{C}P^{1}), SF] \ar[r] &
[\Sigma(\mathbb{C}P^{3}/\mathbb{C}P^{1}), SF]
   \cong [\mathbb{S}^{7}, SF]
   \ar[u]^{q^{*}}
   \ar[r]^{\eta^{*}} &
[\mathbb{S}^{8}, SF] \cong [\mathbb{C}P^{4}/\mathbb{C}P^{3}, SF]
}
\end{equation}
where
\[
\mathbb{C}P^{4}/\mathbb{C}P^{1}
= \mathbb{C}P^{3}/\mathbb{C}P^{1} \cup_{\varphi_{4}} \mathbb{D}^{8},
\]
and the attaching map
\[
\varphi_{4} \colon \mathbb{S}^{7} \longrightarrow
\mathbb{C}P^{3}/\mathbb{C}P^{1} \simeq \mathbb{S}^{6}\vee \mathbb{S}^{4}
\]
restricts to \(\eta \in \pi_{1}^{s}\) on the first factor by
Proposition~\ref{St} and to \(\nu \in \pi_{3}^{s}\) on the second factor by
\cite[Proposition~5.2, Table~5.3]{Mos68}, and
\(Sq^{4}\) is an isomorphism on
\(H^{4}(\mathbb{C}P^{4}/\mathbb{C}P^{1};\mathbb{Z}_{2})\).
Moreover, the bottom row is induced by the cofibre sequence
\[
\mathbb{C}P^{3}/\mathbb{C}P^{1} \longrightarrow
\mathbb{C}P^{4}/\mathbb{C}P^{1} \longrightarrow
\mathbb{C}P^{4}/\mathbb{C}P^{3} \simeq \mathbb{S}^{8}
\xrightarrow{(\eta,\nu)}
\Sigma(\mathbb{C}P^{3}/\mathbb{C}P^{1}) \simeq
\mathbb{S}^{7}\vee \mathbb{S}^{5},
\]
and
\[
i\colon \mathbb{C}P^{4}/\mathbb{C}P^{3} \hookrightarrow
\mathbb{C}P^{8}/\mathbb{C}P^{3}
\]
is the inclusion of the bottom cell.
Since \(\eta^{*}(\sigma)=\eta \circ \sigma\neq 0\in [\mathbb{S}^{8},SF]\cong \pi_{8}^{S}\), the image of
\[
\eta^{*}\colon [\mathbb{S}^{7},SF]\longrightarrow [\mathbb{S}^{8},SF]
\]
is \(\mathbb{Z}_{2}\{\eta \circ \sigma\}.\)
By the commutativity of Diagram \eqref{conn-8}, this implies that the map
\[
\delta^{*}\colon [\Sigma\mathbb{C}P^{3},SF]\longrightarrow
[\mathbb{C}P^{8}/\mathbb{C}P^{3},SF]
\]
is non-zero, and hence
\begin{equation}\label{exrho}
\operatorname{Im}\bigl([\Sigma\mathbb{C}P^{3},SF]\xrightarrow{\;\delta^{*}\;}
[\mathbb{C}P^{8}/\mathbb{C}P^{3},SF]\bigr)\cong \mathbb{Z}_{2}\{\left(\overline{(\eta \circ \sigma)_{8}}\right)_{16}\},
\end{equation}
where \(\left(\overline{(\eta \circ \sigma)_{8}}\right)_{16}\) is an extension of \((\eta \circ \sigma)_{8}=f^{*}_{\mathbb{C}P^{4}/\mathbb{C}P^{3}}(\eta \circ \sigma)\) along the composite
\[
i^{*}\colon [\mathbb{C}P^{8}/\mathbb{C}P^{3},SF]\longrightarrow
[\mathbb{C}P^{7}/\mathbb{C}P^{3},SF]\longrightarrow
[\mathbb{C}P^{4}/\mathbb{C}P^{3},SF].
\]
Therefore, it follows from the bottom exact sequence in Diagram~\eqref{cp8-ex}, together with \eqref{exrho} and the surjectivity of the map $q^{*}\colon [\mathbb{C}P^{8}/\mathbb{C}P^{3}, SF] \longrightarrow [\mathbb{C}P^{8}, SF]$, that we obtain the short exact sequence
\begin{equation}\label{exact8-1}
    0 \longrightarrow \mathbb{Z}_{2}\left\{\left(\overline{(\eta \circ \sigma)_{8}}\right)_{16}\right\} \longrightarrow [\mathbb{C}P^{8}/\mathbb{C}P^{3}, SF] \xrightarrow{\,q^{*}\,} [\mathbb{C}P^{8}, SF] \longrightarrow 0.
\end{equation}
Note from Theorem~\ref{eight}(i) and the exact sequence \eqref{exact8-1} that the group $[\mathbb{C}P^{8}/\mathbb{C}P^{3}, SF]$ is $2$-torsion; thus, it suffices to work $2$-locally. Using the cofiber sequence
\[
\mathbb{S}^{15} \xrightarrow{\;\varphi_{8}=((\varphi_{8})_{1},(\varphi_{8})_{2})\;} \mathbb{C}P^{7}/\mathbb{C}P^{3} \simeq_{(2)} \mathbb{C}P^{6}/\mathbb{C}P^{3} \vee \mathbb{S}^{14} \longrightarrow \mathbb{C}P^{8}/\mathbb{C}P^{3} \longrightarrow \mathbb{S}^{16},
\]
we obtain the exact sequence
\begin{equation}\label{equ13}
\dots \longrightarrow [\mathbb{S}^{16}, SF_{(2)}] \xrightarrow{f_{\mathbb{C}P^{8}/\mathbb{C}P^{3}}^{*}} [\mathbb{C}P^{8}/\mathbb{C}P^{3}, SF_{(2)}] \xrightarrow{i^{*}} [\mathbb{C}P^{7}/\mathbb{C}P^{3}, SF_{(2)}] \xrightarrow{\varphi_{8}^{*}} [\mathbb{S}^{15}, SF_{(2)}].
\end{equation}
Recall from \eqref{SF-spilit} that
\[
\begin{aligned}
[\mathbb{C}P^{7} / \mathbb{C}P^{3}, SF_{(2)}] &\cong [\mathbb{C}P^{6}/\mathbb{C}P^{3}, SF_{(2)}] \oplus [\mathbb{S}^{14}, SF_{(2)}] \\
&\cong \mathbb{Z}_2\{\left(\overline{(\epsilon)_{8}}\right)_{14}\} \oplus \mathbb{Z}_2\{\left(\overline{(\eta \circ \sigma)_{8}}\right)_{14}\} \oplus \mathbb{Z}_2\{(\sigma^{2})_{14}\} \oplus \mathbb{Z}_2\{(\kappa)_{14}\},
\end{aligned}
\]
where $i^{*}: [\mathbb{C}P^{6}/\mathbb{C}P^{3}, SF_{(2)}] \cong \mathbb{Z}_2\{\left(\overline{(\epsilon)_{8}}\right)_{12}\} \oplus \mathbb{Z}_2\{\left(\overline{(\eta \circ \sigma)_{8}}\right)_{12}\} \to [\mathbb{S}^{8}, SF_{(2)}]$ is an isomorphism. It follows from \eqref{cp8-1} and \eqref{cp8-2} that the images of the components
\[
(\varphi_8)_{1}^{*}: [\mathbb{C}P^{6}/\mathbb{C}P^{3}, SF_{(2)}] \to [\mathbb{S}^{15}, SF_{(2)}] \quad \text{and} \quad (\varphi_8)_{2}^{*} = \eta^{*}: [\mathbb{S}^{14}, SF_{(2)}] \to [\mathbb{S}^{15}, SF_{(2)}]
\]
are both $\mathbb{Z}_2\{\eta \circ \kappa\}$. Furthermore, from \eqref{imkereta}, the kernel of the composite
\[
(\varphi_8)_{1}^{*}\circ \Psi^{*}: [\mathbb{S}^{8}\cup_{2\nu}\mathbb{D}^{12}, SF_{(2)}] \xrightarrow{\Psi^{*}} [\mathbb{C}P^{6}/\mathbb{C}P^{3}, SF_{(2)}] \xrightarrow{(\varphi_8)_{1}^{*}} [\mathbb{S}^{15}, SF_{(2)}]
\]
is generated by $\left(\overline{\eta \circ \sigma}\right)_{12}$. Here, we identify $[\mathbb{S}^{8}\cup_{2\nu}\mathbb{D}^{12}, SF_{(2)}] \cong \mathbb{Z}_2\{\left(\overline{\epsilon}\right)_{12}\} \oplus \mathbb{Z}_2\{\left(\overline{\eta \circ \sigma}\right)_{12}\}$, since the map $\iota^{*}: [\mathbb{S}^{8}\cup_{2\nu}\mathbb{D}^{12}, SF_{(2)}] \to [\mathbb{S}^{8}, SF_{(2)}]$ is an isomorphism.
Note from \eqref{equ8} that the isomorphism $\Psi^{*} \colon [\mathbb{S}^{8}\cup_{2\nu}\mathbb{D}^{12}, SF_{(2)}] \to [\mathbb{C}P^{6}/\mathbb{C}P^{3}, SF_{(2)}]$ (see Diagram \eqref{kappa}) fits into the following commutative diagram:
\begin{equation}\label{extensioncomm}
\begin{gathered}
\xymatrix{
[\mathbb{S}^{8}\cup_{2\nu}\mathbb{D}^{12}, SF_{(2)}] \ar[rr]^-{\Psi^{*}} \ar[d]_{\iota^{*}} && [\mathbb{C}P^{6}/\mathbb{C}P^{3}, SF_{(2)}] \ar[d]^{i^{*}} \\
[\mathbb{S}^{8}, SF_{(2)}] \ar[rr]^-{Id} && [\mathbb{S}^{8}, SF_{(2)}],
}
\end{gathered}
\end{equation}
where we identify $[\mathbb{S}^{8}\cup_{2\nu}\mathbb{D}^{12}, SF_{(2)}] \cong \mathbb{Z}_2\{\left(\overline{\epsilon}\right)_{12}\} \oplus \mathbb{Z}_2\{\left(\overline{\eta \circ \sigma}\right)_{12}\}$ and $[\mathbb{C}P^{6}/\mathbb{C}P^{3}, SF_{(2)}] \cong \mathbb{Z}_2\{\left(\overline{(\epsilon)_{8}}\right )_{12}\} \oplus \mathbb{Z}_2\{\left(\overline{(\eta \circ \sigma)_{8}}\right)_{12}\}$. The commutativity of this diagram ensures that $\Psi^{*}$ identifies the generators by sending $\left(\overline{\epsilon}\right)_{12}$ to $\left(\overline{(\epsilon)_{8}}\right)_{12}$ and $\left(\overline{\eta \circ \sigma}\right)_{12}$ to $\left(\overline{(\eta \circ \sigma)_{8}}\right)_{12}$. Since the kernel of the composite
\[
(\varphi_8)_{1}^{*}\circ \Psi^{*} \colon [\mathbb{S}^{8}\cup_{2\nu}\mathbb{D}^{12}, SF_{(2)}] \to [\mathbb{S}^{15}, SF_{(2)}]
\]
is generated by $\left(\overline{\eta \circ \sigma}\right)_{12}$, it follows that the kernel of the component
\[
(\varphi_8)_{1}^{*} \colon [\mathbb{C}P^{6}/\mathbb{C}P^{3}, SF_{(2)}] \to [\mathbb{S}^{15}, SF_{(2)}]
\]
is generated by $\left(\overline{(\eta \circ \sigma)_{8}}\right)_{12}$.
Combining these results, we find that
\begin{equation}\label{kernelofp}
\operatorname{Ker}\left( [\mathbb{C}P^{7}/\mathbb{C}P^{3}, SF_{(2)}] \xrightarrow{\varphi_{8}^{*}} [\mathbb{S}^{15}, SF_{(2)}] \right)\cong \mathbb{Z}_{2}\{(\sigma^{2})_{14}\} \oplus \mathbb{Z}_{2}\{\left(\overline{(\eta \circ \sigma)_{8}}\right)_{14}\} \oplus \mathbb{Z}_{2}\{(\kappa)_{14} + \left(\overline{(\epsilon)_{8}}\right)_{14}\},
\end{equation}
We now analyze the map
\(f^{*}_{\mathbb{C}P^{8}/\mathbb{C}P^{3}}\colon [\mathbb{S}^{16},SF_{(2)}]\longrightarrow
[\mathbb{C}P^{8}/\mathbb{C}P^{3},SF_{(2)}]\) in order to turn the long exact sequence \eqref{equ13}
into a short exact sequence. Observe from \eqref{exrho} that $\left(\overline{(\eta \circ \sigma)_{8}}\right)_{16}$ is an extension of an element in $[\mathbb{C}P^{7}/\mathbb{C}P^{3}, SF_{(2)}]$ along the map 
\[
i^{*} \colon [\mathbb{C}P^{8}/\mathbb{C}P^{3}, SF_{(2)}] \longrightarrow [\mathbb{C}P^{7}/\mathbb{C}P^{3}, SF_{(2)}].
\] 
We now show that 
\begin{equation}\label{ext-8}
i^{*}\left( \left(\overline{(\eta \circ \sigma)_{8}}\right)_{16} \right) = \left(\overline{(\eta \circ \sigma)_{8}}\right)_{14}.
\end{equation}
Consider the following commutative diagram:
\begin{equation}\label{cp8-conn}
\xymatrix@C=0.5em@R=1.2em{
& [\mathbb{C}P^{8}/\mathbb{C}P^3, SF_{(2)}] \ar[dr]^{i^{*}} & & \\
[\Sigma \mathbb{C}P^{3}, SF_{(2)}] \ar[ur]^{\delta^{*}} \ar[rr]^{\delta^{*}} && [\mathbb{C}P^6/\mathbb{C}P^3, SF_{(2)}] \oplus [\mathbb{S}^{14}, SF_{(2)}] \ar[d]^{i^{*}} & \\
[\Sigma(\mathbb{C}P^{3}/\mathbb{C}P^{1}), SF_{(2)}] \cong [\mathbb{S}^7, SF_{(2)}] \ar[u]^{q^{*}} \ar[rr]^{(q\circ \delta \circ i)^{*}=\eta^{*}} \ar[rru]^{(q\circ \delta)^{*}} && [\mathbb{S}^{8}, SF_{(2)}] \cong [\mathbb{C}P^{4}/\mathbb{C}P^3, SF_{(2)}] &
}
\end{equation}
where $[\mathbb{C}P^{7}/\mathbb{C}P^3, SF_{(2)}] \cong [\mathbb{C}P^6/\mathbb{C}P^3, SF_{(2)}] \oplus [\mathbb{S}^{14}, SF_{(2)}]$. The map $\delta \colon \mathbb{C}P^{n}/\mathbb{C}P^{3} \longrightarrow \Sigma \mathbb{C}P^{3}$ is the connecting map of the cofiber sequence $\mathbb{C}P^3 \hookrightarrow \mathbb{C}P^{n} \longrightarrow \mathbb{C}P^{n}/\mathbb{C}P^3$ for $n=7,8$. The commutativity of the top triangle follows from the naturality of these cofiber sequences, while the outer arrows commute by Diagram~\ref{conn-8}.

Let the restrictions of the composite
\[
q \circ \delta \colon \mathbb{C}P^6/\mathbb{C}P^3 \vee \mathbb{S}^{14} \simeq_{(2)} \mathbb{C}P^{7}/\mathbb{C}P^3 \xrightarrow{\delta} \Sigma \mathbb{C}P^3 \xrightarrow{q} \Sigma(\mathbb{C}P^{3}/\mathbb{C}P^{1}) \simeq \mathbb{S}^{5} \vee \mathbb{S}^{7}
\]
to the first and second wedge summands be denoted by $(q\circ \delta)_{r_1}$ and $(q\circ \delta)_{r_2}$, respectively. Since the bottom arrow satisfies $\eta^{*}(\sigma) = \eta \circ \sigma \neq 0 \in [\mathbb{S}^8, SF_{(2)}]$, and the image of $\delta^{*} \colon [\Sigma \mathbb{C}P^{3}, SF_{(2)}] \longrightarrow [\mathbb{C}P^{8}/\mathbb{C}P^3, SF_{(2)}]$ is $\mathbb{Z}_2\{\left(\overline{(\eta \circ \sigma)_{8}}\right)_{16}\}$ by \eqref{exrho}, Diagram~\eqref{cp8-conn} implies that the image of the middle map
\begin{equation}\label{imageextension}
\delta^{*} \colon [\Sigma \mathbb{C}P^{3}, SF_{(2)}] \longrightarrow [\mathbb{C}P^6/\mathbb{C}P^3, SF_{(2)}] \oplus [\mathbb{S}^{14}, SF_{(2)}]
\end{equation}
is $\mathbb{Z}_2\{y\}$, where $y$ is an extension of $(\eta \circ \sigma)_{8}$ along the map $i^{*} \colon [\mathbb{C}P^{7}/\mathbb{C}P^3, SF_{(2)}] \cong [\mathbb{C}P^6/\mathbb{C}P^3, SF_{(2)}] \oplus [\mathbb{S}^{14}, SF_{(2)}] \to [\mathbb{C}P^{4}/\mathbb{C}P^3, SF_{(2)}]$. Moreover, $\left(\overline{(\eta \circ \sigma)_{8}}\right)_{16}$ is an extension of $y$ along the map $i^{*} \colon [\mathbb{C}P^{8}/\mathbb{C}P^3, SF_{(2)}] \to [\mathbb{C}P^{7}/\mathbb{C}P^3, SF_{(2)}]$. 

From the middle commutative triangle in Diagram~\eqref{cp8-conn}, it follows that the generator $y = \left(\overline{(\eta \circ \sigma)_{8}}\right)_{14} + l$ for some $l \in [\mathbb{S}^{14}, SF_{(2)}]$, and that
\begin{equation}\label{image-ex}
y \in \operatorname{Im}\Bigl([\mathbb{S}^7, SF_{(2)}] \xrightarrow{(q\circ \delta)^{*} = (q\circ \delta)_{r_1}^{*} \oplus (q\circ \delta)_{r_2}^{*}} [\mathbb{C}P^6/\mathbb{C}P^3, SF_{(2)}] \oplus [\mathbb{S}^{14}, SF_{(2)}]\Bigr).
\end{equation}
Since $\left(\overline{(\eta \circ \sigma)_{8}}\right)_{16}$ is an extension of $y$ along $i^{*} \colon [\mathbb{C}P^{8}/\mathbb{C}P^3, SF_{(2)}] \longrightarrow [\mathbb{C}P^{7}/\mathbb{C}P^3, SF_{(2)}]$, it follows from \eqref{kernelofp} that $l=0$ or $l=(\sigma^2)_{14}$. We now prove that the latter case is not possible. 

Observe that the cofiber of the connecting map $\delta \colon \mathbb{C}P^6/\mathbb{C}P^3 \vee \mathbb{S}^{14} \simeq_{(2)} \mathbb{C}P^{7}/\mathbb{C}P^3 \longrightarrow \Sigma \mathbb{C}P^{3}$ is $\Sigma \mathbb{C}P^{7}$, so the cofiber of the composite
\[
q\circ \delta \colon \mathbb{C}P^6/\mathbb{C}P^3 \vee \mathbb{S}^{14} \xrightarrow{\;\delta\;} \Sigma \mathbb{C}P^{3} \xrightarrow{\;q\;} \Sigma(\mathbb{C}P^{3}/\mathbb{C}P^{1}) \simeq \mathbb{S}^{5} \vee \mathbb{S}^{7}
\]
is $\Sigma (\mathbb{C}P^{7}/\mathbb{C}P^{1})$. Hence, the restriction map $(q\circ \delta)_{r_2} \colon \mathbb{S}^{14} \longrightarrow \mathbb{S}^5 \vee \mathbb{S}^{7}$ is the suspension of the $14$-cell attaching map $\mathbb{S}^{13} \xrightarrow{(\beta,\gamma)} \mathbb{S}^{4} \vee \mathbb{S}^{6}$ onto the $6$-skeleton of $\mathbb{C}P^{7}/\mathbb{C}P^{1}$, where the maps $\beta \colon \mathbb{S}^{13} \to \mathbb{S}^{4}$ and $\gamma \colon \mathbb{S}^{13} \longrightarrow \mathbb{S}^{6}$ are the attaching maps onto the $4$-cell and $6$-cell in $\mathbb{C}P^{7}/\mathbb{C}P^{1}$, respectively. 

We now determine the map $\gamma \colon \mathbb{S}^{13} \longrightarrow \mathbb{S}^6$. Observe that the composite
\[
\mathbb{S}^{13} \xrightarrow{\;\varphi_{7}\;} \mathbb{C}P^{6}/\mathbb{C}P^{2} \xrightarrow{\;q\;} \mathbb{C}P^{6}/\mathbb{C}P^{3}
\]
is the $14$-cell attaching map of $\mathbb{C}P^{7}/\mathbb{C}P^{3}$ and is thus null-homotopic (after localizing at $2$) by \eqref{spil1}. Thus, the map $\varphi_{7} \colon \mathbb{S}^{13} \longrightarrow \mathbb{C}P^{6}/\mathbb{C}P^{2}$ factors through the map $\gamma \colon \mathbb{S}^{13} \longrightarrow \mathbb{S}^6$; that is, it is homotopic to the composite $\mathbb{S}^{13} \xrightarrow{\gamma} \mathbb{S}^6 \hookrightarrow \mathbb{C}P^{6}/\mathbb{C}P^{2}$. Note that $\gamma$ is stably of the form $\gamma = \lambda \sigma \in (\pi^{s}_{7})_{(2)} \cong \mathbb{Z}_{16}\{\sigma\}$. Applying these observations to Diagram 4.12 of \cite[p. 184]{Mos68} with $n=3$, $k=4$, and $t=1$ for the map $\gamma$, and using \cite[Proposition 5.6 and Table 5.7, p. 185]{Mos68}, shows that $\lambda=2$.

Hence, the map $\gamma \colon \mathbb{S}^{13} \longrightarrow \mathbb{S}^6$ is represented stably by $2\sigma \in (\pi^{s}_{7})_{(2)}$, and the map $(q\circ \delta)_{r_2} \colon \mathbb{S}^{14} \longrightarrow \mathbb{S}^{5} \vee \mathbb{S}^{7}$ is represented stably by $2\sigma$ on the second factor. Consequently, the induced map
\[
(q\circ \delta)_{r_2}^{*} \colon [\mathbb{S}^{5} \vee \mathbb{S}^{7}, SF_{(2)}] \cong [\mathbb{S}^{7}, SF_{(2)}] \longrightarrow [\mathbb{S}^{14}, SF_{(2)}]
\]
is identified with $(2\sigma)^{*} \colon [\mathbb{S}^{7}, SF_{(2)}] \longrightarrow [\mathbb{S}^{14}, SF_{(2)}]$, since $SF_{(2)}$ is an infinite loop space. As $(2\sigma)^{*} = 2(\sigma^{*})$ and $[\mathbb{S}^{14}, SF_{(2)}] \cong \mathbb{Z}_{2}\{\sigma^2\} \oplus \mathbb{Z}_{2}\{\kappa\}$, the map $(2\sigma)^{*} \colon [\mathbb{S}^{7}, SF_{(2)}] \longrightarrow [\mathbb{S}^{14}, SF_{(2)}]$ is trivial, and hence $(q\circ \delta)_{r_2}^{*} \colon [\mathbb{S}^{7}, SF_{(2)}] \longrightarrow [\mathbb{S}^{14}, SF_{(2)}]$ is the trivial map. Thus, the image
\begin{equation}\label{image-ex2}
\operatorname{Im}\Bigl([\mathbb{S}^7, SF_{(2)}] \xrightarrow{(q\circ \delta)^{*} = (q\circ \delta)_{r_1}^{*} \oplus (q\circ \delta)_{r_2}^{*}} [\mathbb{C}P^6/\mathbb{C}P^3, SF_{(2)}] \oplus [\mathbb{S}^{14}, SF_{(2)}]\Bigr) \subseteq [\mathbb{C}P^6/\mathbb{C}P^3, SF_{(2)}].
\end{equation}
Together with \eqref{image-ex}, this implies that $y \in [\mathbb{C}P^6/\mathbb{C}P^3, SF_{(2)}]$, and hence $l = 0 \in [\mathbb{S}^{14}, SF_{(2)}]$. Therefore, the generator $y = \left(\overline{(\eta \circ \sigma)_{8}}\right)_{14}$, and thus from \eqref{imageextension}, we have that
\begin{equation}\label{image-cp7}
\operatorname{Im}\Bigl( [\Sigma \mathbb{C}P^{3}, SF_{(2)}] \xrightarrow{\delta^{*}} [\mathbb{C}P^{7}/\mathbb{C}P^{3}, SF_{(2)}] \Bigr) \cong \mathbb{Z}_{2}\{\left(\overline{(\eta \circ \sigma)_{8}}\right)_{14}\}.
\end{equation}
Applying this to the commutativity of the top triangle in Diagram~\eqref{cp8-conn}, and using the fact that $\left(\overline{(\eta \circ \sigma)_{8}}\right)_{16}$ is an extension of $y$ along $i^{*}$, yields \eqref{ext-8}, as required. We now turn to determining the image of the map
\(f^{*}_{\mathbb{C}P^{8}/\mathbb{C}P^{3}} \colon [\mathbb{S}^{16}, SF_{(2)}] \longrightarrow [\mathbb{C}P^{8}/\mathbb{C}P^{3}, SF_{(2)}]\). 
Since this map factors through the quotient map \(q \colon \mathbb{C}P^{8} \longrightarrow \mathbb{C}P^{8}/\mathbb{C}P^{3}\), we have the following commutative diagram:
{\small
\begin{equation}\label{cp8-deg1}
\xymatrix{
[\mathbb{S}^{16}, SF_{(2)}] \ar[r]^{f_{\mathbb{C}P^8}^*} \ar[d]_{f_{\mathbb{C}P^8 / \mathbb{C}P^3}^*}  
& [\mathbb{C}P^8, SF_{(2)}] \\
[\mathbb{C}P^8 / \mathbb{C}P^3, SF_{(2)}] \ar[ur]_{q^*}
&
}
\end{equation}}
By \eqref{cp8-deg}, the map \(f_{\mathbb{C}P^{8}}^{*} \colon [\mathbb{S}^{16}, SF_{(2)}] \longrightarrow [\mathbb{C}P^{8}, SF_{(2)}]\) sends both \(\eta^{*}\) and \(\eta^{*} + \eta \circ \rho\) to the same nontrivial element, while \(f_{\mathbb{C}P^{8}}^{*}(\eta \circ \rho) = 0\). 

Diagram \eqref{cp8-deg1} implies that \(f_{\mathbb{C}P^{8}/\mathbb{C}P^{3}}^{*}\) maps both \(\eta^{*}\) and \(\eta^{*} + \eta \circ \rho\) nontrivially, and further satisfies \(q^{*} \circ f_{\mathbb{C}P^{8}/\mathbb{C}P^{3}}^{*}(\eta \circ \rho) = 0\). Thus,
\[
f_{\mathbb{C}P^{8}/\mathbb{C}P^{3}}^{*}(\eta \circ \rho) \in \operatorname{Ker}\Bigl( [\mathbb{C}P^{8}/\mathbb{C}P^{3}, SF_{(2)}] \xrightarrow{\,q^{*}\,} [\mathbb{C}P^{8}, SF_{(2)}] \Bigr),
\]
which is identified as \(\mathbb{Z}_{2}\{\left(\overline{(\eta \circ \sigma)_{8}}\right)_{16}\}\) by \eqref{exact8-1}. 

It follows that either \(f_{\mathbb{C}P^{8}/\mathbb{C}P^{3}}^{*}(\eta \circ \rho) = 0\) or \(f_{\mathbb{C}P^{8}/\mathbb{C}P^{3}}^{*}(\eta \circ \rho) = \left(\overline{(\eta \circ \sigma)_{8}}\right)_{16}\). By \eqref{ext-8}, the element \(\left(\overline{(\eta \circ \sigma)_{8}}\right)_{16}\) is an extension of \(\left(\overline{(\eta \circ \sigma)_{8}}\right)_{14}\) along the map 
\[
i^{*} \colon [\mathbb{C}P^{8}/\mathbb{C}P^{3}, SF_{(2)}] \longrightarrow [\mathbb{C}P^{7}/\mathbb{C}P^{3}, SF_{(2)}].
\]
However, \(i^{*} \circ f^{*}_{\mathbb{C}P^{8}/\mathbb{C}P^{3}} = (f \circ i)^{*} = 0\), which forces \(f_{\mathbb{C}P^{8}/\mathbb{C}P^{3}}^{*}(\eta \circ \rho) = 0\). 

Consequently, the map 
\[
f_{\mathbb{C}P^{8}/\mathbb{C}P^{3}}^{*} \colon [\mathbb{S}^{16}, SF_{(2)}] \longrightarrow [\mathbb{C}P^{8}/\mathbb{C}P^{3}, SF_{(2)}]
\]
maps both \(\eta^{*}\) and \(\eta^{*} + \eta \circ \rho\) to the same nontrivial element, and its image is \(\mathbb{Z}_2\{f_{\mathbb{C}P^{8}/\mathbb{C}P^{3}}^{*}(z)\}\) for \(z \in \{\eta^{*}, \eta^{*} + \eta \circ \rho\}\). Combining this with \eqref{kernelofp}, the long exact sequence \eqref{equ13} reduces to the following short exact sequence:
\begin{equation}\label{equ14}
\begin{aligned}
0 \longrightarrow \mathbb{Z}_2\{f_{\mathbb{C}P^{8}/\mathbb{C}P^{3}}^{*}(z)\}
&\longrightarrow [\mathbb{C}P^{8}/\mathbb{C}P^{3}, SF_{(2)}]
\xrightarrow{i^{*}} \mathbb{Z}_{2}\{(\sigma^{2})_{14}\} \oplus \mathbb{Z}_{2}\{\left(\overline{(\eta \circ \sigma)_{8}}\right)_{14}\} \\
&\qquad \oplus \mathbb{Z}_{2}\{(\kappa)_{14} + \left(\overline{(\epsilon)_{8}}\right)_{14}\} \longrightarrow 0,
\end{aligned}
\end{equation}
where \(z \in \{\eta^{*}, \eta^{*} + \eta \circ \rho\}\). Now, combining the short exact sequences \eqref{exact8-1} and \eqref{equ14}, we see that there are three possible group structures for \([\mathbb{C}P^{8}/\mathbb{C}P^{3}, SF_{(2)}]\) arising as extensions, namely
\begin{equation}\label{eq:CP8CP3-SF-decomp-1}
\begin{aligned}
[\mathbb{C}P^{8}/\mathbb{C}P^{3}, SF_{(2)}] \cong \mathbb{Z}_{4}\{w\} \oplus \mathbb{Z}_{2}\{\left(\overline{(\eta \circ \sigma)_{8}}\right)_{16}\} \oplus \mathbb{Z}_2\{\left(\overline{(\kappa)_{14} + \left(\overline{(\epsilon)_{8}}\right)_{14}}\right)_{16}\},
\end{aligned}
\end{equation}
where \(w = \left(\overline{(\sigma^{2})_{14}}\right)_{16}\) or \(\left(\overline{(\kappa)_{14} + \left(\overline{(\epsilon)_{8}}\right)_{14}}\right)_{16}\), satisfying
\begin{equation}\label{cp8-degre}
2w = f^{*}_{\mathbb{C}P^{8}/\mathbb{C}P^{3}}(z)
\end{equation}
for \(z \in \{\eta^{*}, \eta^{*} + \eta \circ \rho\}\); or
\begin{equation}\label{eq:CP8CP3-SF-decomp-2}
\begin{aligned}
[\mathbb{C}P^{8}/\mathbb{C}P^{3}, SF_{(2)}] \cong \;& \mathbb{Z}_{2}\{f^{*}_{\mathbb{C}P^{8}/\mathbb{C}P^{3}}(z)\} \oplus \mathbb{Z}_{2}\{\left(\overline{(\sigma^{2})_{14}}\right)_{16}\} \\
&\oplus \mathbb{Z}_{2}\{\left(\overline{(\eta \circ \sigma)_{8}}\right)_{16}\} \oplus \mathbb{Z}_2\{\left(\overline{(\kappa)_{14} + \left(\overline{(\epsilon)_{8}}\right)_{14}}\right)_{16}\},
\end{aligned}
\end{equation}
again for \(z \in \{\eta^{*}, \eta^{*} + \eta \circ \rho\}\).

We now show that the extension \eqref{eq:CP8CP3-SF-decomp-1} occurs for \(w = \left(\overline{(\sigma^{2})_{14}}\right)_{16}\), which completes the proof of Part \textup{(ii)}. Consider the following commutative diagram of short exact sequences:
{\small
\[
\xymatrix@C=1.2em@R=3.5em{
0 \ar[r] & \mathbb{Z}_2\{f_{\mathbb{C}P^{8}/\mathbb{C}P^{6}}^{*}(z)\} \ar[r] \ar[d]^{\mathrm{Id}} & [\mathbb{C}P^{8}/\mathbb{C}P^{6}, SF_{(2)}] \cong \mathbb{Z}_4\{\left(\overline{(\sigma^{2})_{14}}\right)_{16}\} \ar[r]^-{i^{*}} \ar[d]^{q^{*}} & \mathbb{Z}_2\{(\sigma^2)_{14}\} \ar[r] \ar[d]^{p_{2}^{*}} & 0 \\
0 \ar[r] & \mathbb{Z}_2\{f_{\mathbb{C}P^{8}/\mathbb{C}P^{3}}^{*}(z)\} \ar[r] & [\mathbb{C}P^{8}/\mathbb{C}P^{3}, SF_{(2)}] \ar[r]^-{i^{*}} & K \ar[r] & 0
}
\]}
where \(K = \mathbb{Z}_2\{(\sigma^{2})_{14}\} \oplus \mathbb{Z}_2\{\left(\overline{(\eta \circ \sigma)_{8}}\right)_{14}\} \oplus \mathbb{Z}_2\{(\kappa)_{14} + \left(\overline{(\epsilon)_{8}}\right)_{14}\}\). Here, \(\mathbb{Z}_2\{(\sigma^2)_{14}\} \subset [\mathbb{C}P^{7}/\mathbb{C}P^{6}, SF_{(2)}]\) and \(K \subset [\mathbb{C}P^{7}/\mathbb{C}P^{3}, SF_{(2)}]\). The first row is the short exact sequence given by Part~\textup{(i)}, and the second row is \eqref{equ14}. The map \(q \colon \mathbb{C}P^{8}/\mathbb{C}P^{3} \longrightarrow \mathbb{C}P^{8}/\mathbb{C}P^{6}\) is the natural collapse map, and the rightmost vertical map \(p_{2}^{*}\) is induced by the canonical collapse map
\[
p_{2} \colon \mathbb{C}P^{7}/\mathbb{C}P^{3} \simeq_{(2)} \mathbb{C}P^{6}/\mathbb{C}P^{3} \vee \mathbb{S}^{14} \longrightarrow \mathbb{S}^{14}.
\]
The commutativity of the left square follows from the fact that the composite \(f_{\mathbb{C}P^{8}/\mathbb{C}P^{6}} \circ q \colon \mathbb{C}P^{8}/\mathbb{C}P^{3} \longrightarrow \mathbb{S}^{16}\) is homotopic to the map \(f_{\mathbb{C}P^{8}/\mathbb{C}P^{3}} \colon \mathbb{C}P^{8}/\mathbb{C}P^{3} \longrightarrow \mathbb{S}^{16}\). The commutativity of the right square follows from the fact that the composite
\[
\mathbb{C}P^{7}/\mathbb{C}P^{3} \simeq_{(2)} \mathbb{C}P^{6}/\mathbb{C}P^{3} \vee \mathbb{S}^{14} \xrightarrow{p_{2}} \mathbb{S}^{14} \simeq \mathbb{C}P^{7}/\mathbb{C}P^{6} \hookrightarrow \mathbb{C}P^{8}/\mathbb{C}P^{6}
\]
is homotopic to the composite of the inclusion and the collapse map :
\[
\mathbb{C}P^{7}/\mathbb{C}P^{3} \hookrightarrow \mathbb{C}P^{8}/\mathbb{C}P^{3} \xrightarrow{q} \mathbb{C}P^{8}/\mathbb{C}P^{6}.
\]
Since \(p_{2}^{*}\) is injective on the summand \(\mathbb{Z}_{2}\{(\sigma^{2})_{14}\}\), a diagram chase shows that the middle vertical map \(q^{*}\) is injective. This implies that the extension \eqref{eq:CP8CP3-SF-decomp-1} must occur for \(w = \left(\overline{(\sigma^{2})_{14}}\right)_{16}\), as required.
\end{proof}
The following corollary gives explicit generators of the group $[\mathbb{C}P^{7}, SF]$.
\begin{corollary}\label{cp7-sf}
There is a split short exact sequence
\[
\begin{aligned}
0 \longrightarrow\; & \mathbb{Z}_{2}\{\left(\overline{(\eta \circ \sigma)_8}\right)_{14}\} \hookrightarrow [\mathbb{C}P^{7}/\mathbb{C}P^{3}, SF_{(2)}] \\
&\cong \mathbb{Z}_{2}\{\left(\overline{(\epsilon)_8}\right)_{14}\} \oplus \mathbb{Z}_{2}\{\left(\overline{(\eta \circ \sigma)_8}\right)_{14}\} \oplus \mathbb{Z}_{2}\{(\kappa)_{14}\} \oplus \mathbb{Z}_{2}\{(\sigma^{2})_{14}\} \\
&\xrightarrow{\,q^{*}\,} [\mathbb{C}P^{7}, SF_{(2)}] \longrightarrow 0.
\end{aligned}
\]
In particular, there is an isomorphism
\[
[\mathbb{C}P^{7}, SF] \cong \mathbb{Z}_{2}\{\left(\overline{(\epsilon)_8}\right)_{14}\} \oplus \mathbb{Z}_{2}\{(\kappa)_{14}\} \oplus \mathbb{Z}_{2}\{(\sigma^{2})_{14}\},
\]
where $(\kappa)_{14} = f^{*}_{\mathbb{C}P^{7}}(\kappa)$ and $(\sigma^{2})_{14} = f^{*}_{\mathbb{C}P^{7}}(\sigma^{2})$. The element $\left(\overline{(\epsilon)_8}\right)_{14}$ is an extension of $(\epsilon)_8 \in [\mathbb{C}P^4, SF]$ along the map $i^{*} \colon [\mathbb{C}P^7, SF] \to [\mathbb{C}P^4, SF]$, where $(\epsilon)_8 = f^{*}_{\mathbb{C}P^{4}}(\epsilon)$ for $\epsilon \in [\mathbb{S}^8, SF]$. Moreover,
\begin{equation}\label{kernelofp1}
\operatorname{Ker}\Bigl( [\mathbb{C}P^{7}, SF_{(2)}] \xrightarrow{\;p^{*}\;} [\mathbb{S}^{15}, SF_{(2)}] \Bigr) \cong \mathbb{Z}_{2}\{(\sigma^{2})_{14}\} \oplus \mathbb{Z}_{2}\{(\kappa)_{14} + \left(\overline{(\epsilon)_8}\right)_{14}\},
\end{equation}
where $p \colon \mathbb{S}^{15} \to \mathbb{C}P^7$ is the Hopf fibration.
\end{corollary}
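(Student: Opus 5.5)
The plan is to assemble the corollary from results already proved, working $2$-locally; this is harmless because $[\mathbb{C}P^{7},SF]$ has order $8$ and is $2$-torsion by Proposition~\ref{stab}.

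\textbf{The short exact sequence.} Start with the cofibre sequence $\mathbb{C}P^{3}\hookrightarrow \mathbb{C}P^{7}\xrightarrow{q}\mathbb{C}P^{7}/\mathbb{C}P^{3}\xrightarrow{\delta}\Sigma\mathbb{C}P^{3}$ and apply $[-,SF_{(2)}]$.
\begin{itemize}
\item By Lemma~\ref{twonew}(1), $i^{*}\colon[\mathbb{C}P^{7},SF]\to[\mathbb{C}P^{3},SF]$ is trivial, so $q^{*}$ is surjective.
\item By exactness, $\operatorname{Ker}(q^{*})=\operatorname{Im}(\delta^{*})$.
\item Equation \eqref{image-cp7}, from the proof of Lemma~\ref{eight-sub}, gives $\operatorname{Im}(\delta^{*})=\mathbb{Z}_2\{(\overline{(\eta\circ\sigma)_8})_{14}\}$.
\item The middle group is $\mathbb{Z}_2^{4}$ with the listed generators, by \eqref{SF-spilit}.
\end{itemize}
This yields the displayed short exact sequence. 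Every group in it is an $\mathbb{F}_2$-vector space, so the sequence splits automatically, and $[\mathbb{C}P^{7},SF]$ is the quotient $\mathbb{Z}_2^{3}$.

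\textbf{Generators of $[\mathbb{C}P^{7},SF]$.} The images under $q^{*}$ of $(\overline{(\epsilon)_8})_{14}$, $(\kappa)_{14}$ and $(\sigma^2)_{14}$ form a basis of the quotient. Naturality of the collapse maps gives $q^{*}(\kappa)_{14}=f^{*}_{\mathbb{C}P^{7}}(\kappa)$, since $f_{\mathbb{C}P^{7}/\mathbb{C}P^{3}}\circ q=f_{\mathbb{C}P^{7}}$; the same holds for $\sigma^2$. Likewise $q^{*}(\overline{(\epsilon)_8})_{14}$ restricts on $\mathbb{C}P^{4}$ to $q^{*}(\epsilon)_8=f^{*}_{\mathbb{C}P^{4}}(\epsilon)$, so it is an extension of $(\epsilon)_8$. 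This matches the injectivity of $f^{*}_{\mathbb{C}P^{7}}$ on $[\mathbb{S}^{14},SF]$ established in Proposition~\ref{stab}.

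\textbf{The kernel \eqref{kernelofp1}.} Use the triangle \eqref{cp7-trai}, namely $p^{*}\circ q^{*}=\varphi_8^{*}$.
\begin{itemize}
\item $\operatorname{Ker}(\varphi_8^{*})$ is computed in \eqref{kernelofp}. It contains $\operatorname{Ker}(q^{*})=\mathbb{Z}_2\{(\overline{(\eta\circ\sigma)_8})_{14}\}$.
\item Since $q^{*}$ is surjective, $\operatorname{Ker}(p^{*})=q^{*}(\operatorname{Ker}\varphi_8^{*})$.
\item Modding out by $(\overline{(\eta\circ\sigma)_8})_{14}$ leaves $\mathbb{Z}_2\{(\sigma^2)_{14}\}\oplus\mathbb{Z}_2\{(\kappa)_{14}+(\overline{(\epsilon)_8})_{14}\}$, which is \eqref{kernelofp1}.
\end{itemize}
As a consistency check, this is rank $2$, matching the image $\mathbb{Z}_2\{\eta\circ\kappa\}$ of $p^{*}$ obtained in Theorem~\ref{eight}(i).

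\textbf{Main obstacle.} The only delicate point is the identification of $\operatorname{Ker}(q^{*})$. It relies on the argument already made in the proof of Lemma~\ref{eight-sub} that the $\mathbb{S}^{14}$-component $l$ of $y$ vanishes, using the $2\sigma$ attaching map. With \eqref{image-cp7} taken as given, the remaining steps are bookkeeping.
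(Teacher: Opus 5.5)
Your proposal is correct and follows the paper's proof essentially step for step. The paper also gets surjectivity of $q^{*}$ from Lemma~\ref{twonew}(1), the kernel from \eqref{image-cp7}, the middle group from \eqref{SF-spilit}, the generators by naturality of the collapse maps, and $\operatorname{Ker}(p^{*})=q^{*}(\operatorname{Ker}\varphi_8^{*})$ via \eqref{kernelofp}. The only cosmetic difference is that you deduce the splitting from the groups being $\mathbb{F}_2$-vector spaces, whereas the paper cites Proposition~\ref{stab} for it.
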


\begin{proof}
Using \eqref{cp7-q} and \eqref{image-cp7}, the long exact sequence associated to the cofiber sequence
\[
\mathbb{C}P^{3} \hookrightarrow \mathbb{C}P^{7} \longrightarrow \mathbb{C}P^{7}/\mathbb{C}P^{3} \xrightarrow{\delta} \Sigma \mathbb{C}P^{3}
\]
implies the existence of a short exact sequence
\begin{equation}\label{short-cp7}
0 \longrightarrow \mathbb{Z}_{2}\{\left(\overline{(\eta \circ \sigma)_8}\right)_{14}\} \hookrightarrow [\mathbb{C}P^{7}/\mathbb{C}P^{3}, SF_{(2)}] \xrightarrow{q^{*}} [\mathbb{C}P^{7}, SF_{(2)}] \longrightarrow 0.
\end{equation}
By \eqref{SF-spilit}, we have
\[
[\mathbb{C}P^{7}/\mathbb{C}P^{3}, SF_{(2)}] \cong \mathbb{Z}_{2}\{\left(\overline{(\epsilon)_8}\right)_{14}\} \oplus \mathbb{Z}_{2}\{\left(\overline{(\eta \circ \sigma)_8}\right)_{14}\} \oplus \mathbb{Z}_{2}\{(\sigma^{2})_{14}\} \oplus \mathbb{Z}_{2}\{(\kappa)_{14}\}.
\]
Combining this with Proposition~\ref{stab}, it follows that the short exact sequence \eqref{short-cp7} splits, which proves the first assertion. Consequently, we have the isomorphism
\begin{equation}\label{cp7-sfl}
[\mathbb{C}P^{7}, SF_{(2)}] \cong \mathbb{Z}_{2}\{q^{*}\left(\left(\overline{(\epsilon)_8}\right)_{14}\right)\} \oplus \mathbb{Z}_{2}\{q^{*}\left((\kappa)_{14}\right)\} \oplus \mathbb{Z}_{2}\{q^{*}\left((\sigma^{2})_{14}\right)\}.
\end{equation}
To identify the generators of $[\mathbb{C}P^{7}, SF_{(2)}]$, first observe that for $y \in \{\kappa, \sigma^{2}\}$, we have
\begin{equation}\label{gen1}
q^{*}\left((y)_{14}\right) = q^{*} \circ f^{*}_{\mathbb{C}P^{7}/\mathbb{C}P^{3}}(y) = f^{*}_{\mathbb{C}P^{7}}(y).
\end{equation}
Next, consider the following commutative diagram:
{\small
\begin{equation}\label{cp7-quoti}
\xymatrix@C=4.4em@R=3.4em{
[\mathbb{C}P^{7}/\mathbb{C}P^{3}, SF_{(2)}] \ar[r]^-{q^{*}} \ar[d]^-{i^{*}} 
& [\mathbb{C}P^{7}, SF_{(2)}] \ar[d]^-{i^{*}} \\
[\mathbb{C}P^{4}/\mathbb{C}P^{3}, SF_{(2)}] \cong [\mathbb{S}^{8}, SF_{(2)}] \ar[r]^-{q^{*}} 
& [\mathbb{C}P^{4}, SF_{(2)}],
}
\end{equation}}
where the bottom horizontal map $q^{*} \colon [\mathbb{S}^{8}, SF_{(2)}] \longrightarrow [\mathbb{C}P^{4}, SF_{(2)}]$ is identified with the map $f^{*}_{\mathbb{C}P^{4}} \colon [\mathbb{S}^{8}, SF_{(2)}] \longrightarrow [\mathbb{C}P^{4}, SF_{(2)}]$. Thus, $f^{*}_{\mathbb{C}P^{4}}(\epsilon) = q^{*}(\epsilon) = (\epsilon)_{8}$ for $\epsilon \in [\mathbb{S}^{8}, SF_{(2)}]$. 

Since $\left(\overline{(\epsilon)_8}\right)_{14}$ is an extension of $(\epsilon)_8$ along the vertical map $$i^{*} \colon [\mathbb{C}P^{7}, SF_{(2)}] \longrightarrow [\mathbb{C}P^{4}/\mathbb{C}P^{3}, SF_{(2)}],$$ the commutativity of Diagram~\eqref{cp7-quoti} implies that $q^{*}\left(\left(\overline{(\epsilon)_8}\right)_{14}\right)$ is an extension of $(\epsilon)_{8}$ along the map $i^{*} \colon [\mathbb{C}P^{7}, SF_{(2)}] \longrightarrow [\mathbb{C}P^{4}, SF_{(2)}]$. We therefore set
\begin{equation}\label{qmaps}
q^{*}\left(\left(\overline{(\epsilon)_8}\right)_{14}\right) = \left(\overline{(\epsilon)_{8}}\right)_{14}.
\end{equation}
Applying this and \eqref{gen1} to \eqref{cp7-sfl}, we obtain
\begin{equation}\label{cp7-sfl1}
[\mathbb{C}P^{7}, SF_{(2)}] \cong \mathbb{Z}_{2}\{\left(\overline{(\epsilon)_{8}}\right)_{14}\} \oplus \mathbb{Z}_{2}\{f^{*}_{\mathbb{C}P^{7}}(\kappa)\} \oplus \mathbb{Z}_{2}\{f^{*}_{\mathbb{C}P^{7}}(\sigma^{2})\}.
\end{equation}
As the group $[\mathbb{C}P^{7}, SF]$ is $2$-torsion by Proposition~\ref{stab}, the second assertion of the corollary follows from \eqref{cp7-sfl1}. Finally, we compute the kernel of $p^{*} \colon [\mathbb{C}P^{7}, SF_{(2)}] \to [\mathbb{S}^{15}, SF_{(2)}]$. 

The top-cell attaching map $\varphi_{8} \colon \mathbb{S}^{15} \to \mathbb{C}P^{7}/\mathbb{C}P^{3}$ of $\mathbb{C}P^{8}/\mathbb{C}P^{3}$ is the composite of the Hopf fibration $p \colon \mathbb{S}^{15} \to \mathbb{C}P^{7}$ followed by the quotient map $q \colon \mathbb{C}P^{7} \to \mathbb{C}P^{7}/\mathbb{C}P^{3}$. Thus, $p^{*} \circ q^{*} = \varphi_{8}^{*}$. Since $q^{*}$ is surjective by \eqref{short-cp7}, it follows that $\operatorname{Ker}(p^{*}) = q^{*}(\operatorname{Ker}(\varphi_{8}^{*}))$. Applying this and \eqref{kernelofp} to the split short exact sequence given in the first assertion, we obtain
\[
\operatorname{Ker}\Bigl( [\mathbb{C}P^{7}, SF_{(2)}] \xrightarrow{\;p^{*}\;} [\mathbb{S}^{15}, SF_{(2)}] \Bigr) \cong \mathbb{Z}_{2}\{(\sigma^{2})_{14}\} \oplus \mathbb{Z}_{2}\{(\kappa)_{14} + \left(\overline{(\epsilon)_8}\right)_{14}\}.
\]
This completes the proof.
\end{proof}
\begin{theorem}\label{eight-sub2}
\begin{itemize}
\item[(i)] The short exact sequence in Theorem~\ref{eight}(i) does not split. In particular,
\[
[\mathbb{C}P^{8}, SF] \cong \mathbb{Z}_{4}\{\left(\overline{(\sigma^{2})_{14}}\right)_{16}\} \oplus \mathbb{Z}_2\{\left(\overline{(\kappa)_{14} + \left(\overline{(\epsilon)_{8}}\right)_{14}}\right)_{16}\},
\]
where \(\left(\overline{(\sigma^{2})_{14}}\right)_{16}\) and \(\left(\overline{(\kappa)_{14} + \left(\overline{(\epsilon)_{8}}\right)_{14}}\right)_{16}\) are extensions of \((\sigma^{2})_{14}\) and \((\kappa)_{14} + \left(\overline{(\epsilon)_{8}}\right)_{14}\), respectively, along the map
\[
i^{*}\colon [\mathbb{C}P^{8}, SF] \longrightarrow [\mathbb{C}P^{7}, SF],
\]
satisfying \(2\left(\overline{(\sigma^{2})_{14}}\right)_{16} = (z)_{16}\) with \((z)_{16} = f^{*}_{\mathbb{C}P^{8}}(z)\) for \(z \in \{\eta^{*}, \eta^{*} + \eta \circ \rho\} \subset [\mathbb{S}^{16}, SF]\).

\item[(ii)] The short exact sequence in Theorem~2.9(ii) splits. Moreover,
\[
[\mathbb{C}P^{8}, \operatorname{Top}/O] \cong \mathbb{Z}_{2}\{f_{\mathbb{C}P^{8}}^{*}(\Sigma^{16})\} \oplus \operatorname{Ker}(p^{*}),
\]
where \(\Sigma^{16} \in \Theta_{16} \cong \mathbb{Z}_2\) is the exotic \(16\)-sphere and \(\operatorname{Ker}(p^{*}) \cong \mathbb{Z}_2\), with the map
\[
p^{*}\colon [\mathbb{C}P^{7}, \operatorname{Top}/O] \cong \mathbb{Z}_{2} \oplus \mathbb{Z}_{2} \longrightarrow \mathbb{Z}_{2}\{\eta \circ \kappa\} = \operatorname{Coker}(J_{15}) \subset [\mathbb{S}^{15}, \operatorname{Top}/O]
\]
induced by the Hopf fibration \(p\colon \mathbb{S}^{15} \longrightarrow \mathbb{C}P^{7}\).
\end{itemize}
\end{theorem}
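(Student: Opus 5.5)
For Part (i) I would work $2$-locally, which is legitimate because $[\mathbb{C}P^{8},SF]$ is $2$-torsion: it is an extension of $\operatorname{Ker}(p^{*})\cong\mathbb{Z}_2\oplus\mathbb{Z}_2$ by $\mathbb{Z}_2\{(z)_{16}\}$ (Theorem~\ref{eight}(i)). The starting point is the surjection $q^{*}\colon[\mathbb{C}P^{8}/\mathbb{C}P^{3},SF_{(2)}]\to[\mathbb{C}P^{8},SF_{(2)}]$ with kernel $\mathbb{Z}_2\{(\overline{(\eta\circ\sigma)_8})_{16}\}$ from \eqref{exact8-1}. Combining this with the decomposition in Lemma~\ref{eight-sub}(ii),
\[
[\mathbb{C}P^{8}/\mathbb{C}P^{3},SF_{(2)}]\cong\mathbb{Z}_4\{(\overline{(\sigma^{2})_{14}})_{16}\}\oplus\mathbb{Z}_2\{(\overline{(\eta\circ\sigma)_8})_{16}\}\oplus\mathbb{Z}_2\{(\overline{(\kappa)_{14}+(\overline{(\epsilon)_8})_{14}})_{16}\},
\]
I obtain $[\mathbb{C}P^{8},SF]$ as the quotient by the middle $\mathbb{Z}_2$ summand. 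Since that summand is a direct summand, the quotient is $\mathbb{Z}_4\oplus\mathbb{Z}_2$. Its generators are the images under $q^{*}$ of the two remaining generators.

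Next I must check that these images are genuinely extensions of $(\sigma^2)_{14}$ and $(\kappa)_{14}+(\overline{(\epsilon)_8})_{14}$ along $i^{*}\colon[\mathbb{C}P^{8},SF]\to[\mathbb{C}P^{7},SF]$. I would do this with the naturality square of restriction and collapse maps, analogous to \eqref{cp7-quoti}. It is combined with \eqref{gen1} and \eqref{qmaps}, which identify $q^{*}$ on $[\mathbb{C}P^{7}/\mathbb{C}P^{3},SF]$ with the named generators of Corollary~\ref{cp7-sf}. The relation $2(\overline{(\sigma^2)_{14}})_{16}=(z)_{16}$ then follows by applying $q^{*}$ to the relation $2w=f^{*}_{\mathbb{C}P^8/\mathbb{C}P^3}(z)$ from \eqref{cp8-degre}, using $q^{*}\circ f^{*}_{\mathbb{C}P^8/\mathbb{C}P^3}=f^{*}_{\mathbb{C}P^8}$ as in \eqref{cp8-deg1}. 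By \eqref{cp8-deg} this element is nonzero, so the generator has order $4$. An element of order $4$ rules out the split form $\mathbb{Z}_2^3$, and hence the sequence of Theorem~\ref{eight}(i) is non-split. I expect the only delicate point to be showing that the $\mathbb{Z}_4$ summand survives under $q^{*}$. For this, the relevant fact is that $\operatorname{Ker}(q^*)$ is generated by $(\overline{(\eta\circ\sigma)_8})_{16}$, which is independent of the $\mathbb{Z}_4$ summand.

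For Part (ii) I would identify $[\mathbb{C}P^{8},\operatorname{Top}/O]$ with a subgroup of $[\mathbb{C}P^{8},SF]$ via \eqref{equ21}--\eqref{equ23}. It has order $4$ by Theorem~\ref{eight}(ii) and contains $f^{*}_{\mathbb{C}P^8}(\Sigma^{16})$. Under the diagram \eqref{digram4}, this element maps to $(z)_{16}=2(\overline{(\sigma^2)_{14}})_{16}$, or at least corresponds to it under $\psi_*$ and $\phi_*$. To conclude splitting, it suffices to show that the subgroup has no element of order $4$, i.e.\ that it does not contain $\pm(\overline{(\sigma^2)_{14}})_{16}$ (possibly plus the other generator).

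This is the main obstacle. I would restrict to $\mathbb{C}P^{7}$: the image of $[\mathbb{C}P^{8},\operatorname{Top}/O]\to[\mathbb{C}P^{7},\operatorname{Top}/O]$ is $\operatorname{Ker}(p^{*})\cong\mathbb{Z}_2$ inside $[\mathbb{C}P^{7},\operatorname{Top}/O]\cong\mathbb{Z}_2\oplus\mathbb{Z}_2$. The first step is to check, via the Kervaire–Milnor identification \eqref{kappagenrator}, that this restricted image does not involve $(\sigma^2)_{14}$. By \eqref{kappagenrator} and the proof of Theorem~\ref{kawa}, the $\operatorname{Top}/O$ classes on $\mathbb{C}P^7$ are spanned by $(\kappa)_{14}$ and the $(\overline{(\epsilon)_8})$-type class, because $\sigma^2$ has Kervaire invariant one and so is not in the image from $\Theta_{14}$. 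The kernel of $p^*$ among these is then $(\kappa)_{14}+(\overline{(\epsilon)_8})_{14}$, by \eqref{kernelofp1}. Any lift of this element to $\mathbb{C}P^{8}$ is of the form $(\overline{(\kappa)_{14}+(\overline{(\epsilon)_8})_{14}})_{16}+a(z)_{16}$, which has order $2$. Therefore the subgroup is $\mathbb{Z}_2\oplus\mathbb{Z}_2$, and the sequence splits as stated.
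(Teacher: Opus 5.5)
\textbf{Part (i).} This is the paper's argument: take the quotient of the decomposition in Lemma~\ref{eight-sub}(ii) by $\operatorname{Ker}(q^{*})$, identify the generators through the restriction/collapse square, and use $2\left(\overline{(\sigma^{2})_{14}}\right)_{16}=(z)_{16}\neq 0$.

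\textbf{Part (ii).} The overall shape also matches the paper. Everything reduces to showing that no class of $[\mathbb{C}P^{8},\operatorname{Top}/O]\subset[\mathbb{C}P^{8},SF]$ restricts to something involving $(\sigma^{2})_{14}$.

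The gap is in the step you yourself flag as the main obstacle. The Kervaire-invariant remark via \eqref{kappagenrator} only shows that $f^{*}_{\mathbb{C}P^{7}}(\Theta_{14})$ goes to $(\kappa)_{14}$. The other generator of $[\mathbb{C}P^{7},\operatorname{Top}/O]$ is an extension of $(\epsilon)_{8}$. Its image in $[\mathbb{C}P^{7},SF]$ is a priori determined only modulo $\operatorname{Im}f^{*}_{\mathbb{C}P^{7}}=\{0,(\kappa)_{14},(\sigma^{2})_{14},(\kappa)_{14}+(\sigma^{2})_{14}\}$. Nothing you cite rules out that this image is $(\overline{(\epsilon)_{8}})_{14}+(\sigma^{2})_{14}$ modulo $(\kappa)_{14}$. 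Here $(\overline{(\epsilon)_{8}})_{14}$ means the specific class of Corollary~\ref{cp7-sf}, which is the one \eqref{kernelofp1} refers to.

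This possibility is decisive, not cosmetic. Since $p^{*}((\sigma^{2})_{14})=\eta\circ\sigma^{2}=0$, the kernel of $p^{*}$ on the $\operatorname{Top}/O$ classes would then be generated by $(\kappa)_{14}+(\overline{(\epsilon)_{8}})_{14}+(\sigma^{2})_{14}$. Its lifts to $\mathbb{C}P^{8}$ are $\pm\left(\overline{(\sigma^{2})_{14}}\right)_{16}+\left(\overline{(\kappa)_{14}+(\overline{(\epsilon)_{8}})_{14}}\right)_{16}$, which have order $4$, and the sequence would not split.

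What is missing is the paper's identification \eqref{cp7-pl}, obtained from Diagram~\eqref{equ-PL}.
\begin{itemize}
\item Work with $PL$ rather than $\operatorname{Top}/O$, because $PL$ maps to $F$ on stunted spaces as well. The isomorphisms \eqref{equ21}--\eqref{equ22} are special to $\mathbb{C}P^{m}$.
\item Use the $2$-local splitting $\mathbb{C}P^{7}/\mathbb{C}P^{3}\simeq_{(2)}\mathbb{C}P^{6}/\mathbb{C}P^{3}\vee\mathbb{S}^{14}$. It gives $[\mathbb{C}P^{7}/\mathbb{C}P^{3},PL_{(2)}]\cong[\mathbb{S}^{8},PL_{(2)}]\oplus[\mathbb{S}^{14},PL_{(2)}]$, compatibly with $J_{PL}$.
\item Hence the image in $[\mathbb{C}P^{7}/\mathbb{C}P^{3},F_{(2)}]$ is $[\mathbb{C}P^{6}/\mathbb{C}P^{3},F_{(2)}]\oplus\mathbb{Z}_{2}\{\kappa\}$. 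The class extending $\epsilon$ has zero $\mathbb{S}^{14}$-component. Your Kervaire-invariant argument is needed only on the $\mathbb{S}^{14}$-summand, in the form $\operatorname{Im}(\pi_{14}(PL)\to\pi_{14}(F))=\mathbb{Z}_{2}\{\kappa\}$.
\item Push down along the surjection $q^{*}$, whose kernel is $\mathbb{Z}_{2}\{(\overline{(\eta\circ\sigma)_{8}})_{14}\}$. This gives image $\mathbb{Z}_{2}\{(\overline{(\epsilon)_{8}})_{14}\}\oplus\mathbb{Z}_{2}\{(\kappa)_{14}\}$ in $[\mathbb{C}P^{7},SF]$.
\end{itemize}
With this identification in hand, your lifting argument completes (ii); it is equivalent to the paper's contradiction argument ruling out an element of order $4$.
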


\begin{proof}
It follows from the short exact sequence \eqref{exact8-1} that the map
\[
q^{*}\colon [\mathbb{C}P^{8}/\mathbb{C}P^{3}, SF] \longrightarrow [\mathbb{C}P^{8}, SF]
\]
is surjective with kernel \(\mathbb{Z}_{2}\{\left(\overline{(\eta \circ \sigma)_8}\right)_{16}\}\). By Lemma~\ref{eight-sub}(ii), we have
\[
[\mathbb{C}P^{8}/\mathbb{C}P^{3}, SF] \cong \mathbb{Z}_{4}\{\left(\overline{(\sigma^{2})_{14}}\right)_{16}\} \oplus \mathbb{Z}_{2}\{\left(\overline{(\eta \circ \sigma)_8}\right)_{16}\} \oplus \mathbb{Z}_{2}\{\left(\overline{(\kappa)_{14} + \left(\overline{(\epsilon)_8}\right)_{14}}\right)_{16}\}.
\]
Hence, we obtain the isomorphism
\begin{equation}\label{cp8-gen}
[\mathbb{C}P^{8}, SF] \cong \mathbb{Z}_{4}\{q^{*}\left(\left(\overline{(\sigma^{2})_{14}}\right)_{16}\right)\} \oplus \mathbb{Z}_{2}\{q^{*}\left(\left(\overline{(\kappa)_{14} + \left(\overline{(\epsilon)_8}\right)_{14}}\right)_{16}\right)\}.
\end{equation}
Note from Lemma~\ref{eight-sub}(ii) that \(2\left(\overline{(\sigma^{2})_{14}}\right)_{16} = f^{*}_{\mathbb{C}P^{8}/\mathbb{C}P^{3}}(z)\) for \(z \in \{\eta^*, \eta^* + \eta \circ \rho\}\), and thus
\begin{equation}\label{ext2}
2q^{*}\left(\left(\overline{(\sigma^{2})_{14}}\right)_{16}\right) = q^{*}\left(f^{*}_{\mathbb{C}P^{8}/\mathbb{C}P^{3}}(z)\right) = f^{*}_{\mathbb{C}P^{8}}(z).
\end{equation}

To identify the generators in \eqref{cp8-gen}, consider the following commutative diagram:
{\small
\begin{equation}\label{cp8-quoti}
\xymatrix@C=4.4em@R=3.4em{
[\mathbb{C}P^{8}/\mathbb{C}P^{3}, SF] \ar[r]^-{q^{*}} \ar[d]^-{i^{*}} & [\mathbb{C}P^{8}, SF] \ar[d]^-{i^{*}} \\
[\mathbb{C}P^{7}/\mathbb{C}P^{3}, SF] \ar[r]^-{q^{*}} & [\mathbb{C}P^{7}, SF],
}
\end{equation}}
where \(\left(\overline{(\kappa)_{14} + \left(\overline{(\epsilon)_8}\right)_{14}}\right)_{16}\) and \(\left(\overline{(\sigma^{2})_{14}}\right)_{16}\) are extensions of \((\kappa)_{14} + \left(\overline{(\epsilon)_8}\right)_{14}\) and \((\sigma^{2})_{14} \in [\mathbb{C}P^{7}/\mathbb{C}P^{3}, SF]\), respectively, along the vertical map \(i^{*}\colon [\mathbb{C}P^{8}/\mathbb{C}P^{3}, SF] \to [\mathbb{C}P^{7}/\mathbb{C}P^{3}, SF]\). Here, \(f^{*}_{\mathbb{C}P^{7}/\mathbb{C}P^{3}}(\sigma^{2}) = (\sigma^{2})_{14}\). The commutativity of Diagram~\eqref{cp8-quoti}, combined with \eqref{qmaps} and \eqref{gen1}, implies that the image of \(\left(\overline{(\kappa)_{14} + \left(\overline{(\epsilon)_8}\right)_{14}}\right)_{16}\) under \(q^{*}\) is an extension of the element \((\kappa)_{14} + \left(\overline{(\epsilon)_8}\right)_{14} \in [\mathbb{C}P^{7}, SF]\) along the map \(i^{*}\colon [\mathbb{C}P^{8}, SF] \to [\mathbb{C}P^{7}, SF]\). We therefore set
\begin{equation}\label{ext1}
q^{*}\left(\left(\overline{(\kappa)_{14} + \left(\overline{(\epsilon)_8}\right)_{14}}\right)_{16}\right) = \left(\overline{(\kappa)_{14} + \left(\overline{(\epsilon)_{8}}\right)_{14}}\right)_{16},
\end{equation}
and similarly for the other generator:
\begin{equation}\label{ext3}
q^{*}\left(\left(\overline{(\sigma^{2})_{14}}\right)_{16}\right) = \left(\overline{f^{*}_{\mathbb{C}P^{7}}(\sigma^{2})}\right)_{16} = \left(\overline{(\sigma^{2})_{14}}\right)_{16}.
\end{equation}
Applying \eqref{ext2}, \eqref{ext1}, and \eqref{ext3} to \eqref{cp8-gen}, we obtain
\[
[\mathbb{C}P^{8}, SF] \cong \mathbb{Z}_{4}\{\left(\overline{(\sigma^{2})_{14}}\right)_{16}\} \oplus \mathbb{Z}_{2}\{\left(\overline{(\kappa)_{14} + \left(\overline{(\epsilon)_{8}}\right)_{14}}\right)_{16}\},
\]
where \(2\left(\overline{(\sigma^{2})_{14}}\right)_{16} = f^{*}_{\mathbb{C}P^{8}}(z)\) for \(z \in \{\eta^*, \eta^* + \eta \circ \rho\}\). This completes the proof of Part~\textup{(i)}.
For Part \textup{(ii)}, it follows from Theorem~\ref{eight}(ii) that the group $[\mathbb{C}P^{8}, \operatorname{Top}/O]$ is abelian of order $4$, and therefore is isomorphic to either $\mathbb{Z}_{2} \oplus \mathbb{Z}_{2}$ or $\mathbb{Z}_{4}$. Thus, we work $2$\nobreakdash-locally. In view of \eqref{equ21} and \eqref{equ22}, it suffices to show that $[\mathbb{C}P^{8}, PL_{(2)}] \cong \mathbb{Z}_{2} \oplus \mathbb{Z}_{2}$, which completes the proof of assertion \textup{(ii)}. To this end, we first analyze the generators of the images of the maps 
\[ J_{PL_{(2)}} \colon \pi_{14}(PL_{(2)}) \longrightarrow \pi_{14}(F_{(2)}) \quad \text{and} \quad J_{PL_{(2)}} \colon [\mathbb{C}P^{7}, PL_{(2)}] \longrightarrow [\mathbb{C}P^{7}, F_{(2)}] \] 
induced by the canonical map $j_{PL_{(2)}} \colon PL_{(2)} \longrightarrow F_{(2)}$. From the Kervaire–Milnor braid (see \cite[p.~463]{LM24} and \cite[p.~305]{Bru69}), and using the facts that $\pi_{k}(F/PL)=0$ for $k=9,15$, $\pi_{8}(F/PL)=\mathbb{Z}$, and $\pi_{k}(O)=0$ for $k=13,14$, we obtain that $J_{PL_{(2)}} \colon \pi_{14}(PL_{(2)}) \longrightarrow \pi_{14}(F_{(2)})$ is injective, $J_{PL_{(2)}} \colon \pi_{8}(PL_{(2)}) \longrightarrow \pi_{8}(F_{(2)})$ is an isomorphism, and $$\beta_{*} \colon \pi_{14}(PL_{(2)}) \longrightarrow \pi_{14}(PL/O_{(2)})$$ is also an isomorphism. This yields the commutative diagram:
{\small
\[
\xymatrix@C=4.4em@R=3.4em{
  \pi_{14}(PL_{(2)})
    \ar[r]^-{J_{PL_{(2)}}}
    \ar[d]_-{\beta_{*}}^-{\cong} &
  \pi_{14}(F_{(2)}) \cong \mathbb{Z}_2\{\kappa\} \oplus \mathbb{Z}_2\{\sigma^2\}
    \ar[d]^-{\varphi_{*}}_-{\cong} \\
  \Theta_{14} \cong \pi_{14}(PL/O_{(2)}) \cong \mathbb{Z}_{2}\{\Sigma^{14}\}
    \ar[r]^-{(\psi_{PL/O_{(2)}})_{*}} &
  \pi_{14}(F/O_{(2)}),
}
\]}
where $\Sigma^{14}$ is the exotic $14$\nobreakdash-sphere and $(\psi_{PL/O_{(2)}})_{*}(\Sigma^{14}) = \varphi_{*}(\kappa)$ (see \eqref{kappa1}). From this diagram, we obtain
\[
\operatorname{Im}\bigl( \pi_{14}(PL_{(2)}) \xrightarrow{J_{PL_{(2)}}} \pi_{14}(F_{(2)}) \bigr) \cong \mathbb{Z}_{2}\{\kappa\}.
\]
For the map $[\mathbb{C}P^{7}, PL_{(2)}] \longrightarrow [\mathbb{C}P^{7}, F_{(2)}]$, consider the commutative diagram:
{\small
\begin{equation}\label{equ-PL}
\xymatrix@C=1.2em@R=2.4em{
[\mathbb{C}P^{7}, PL_{(2)}]
\ar@{>->}[rr]^-{J_{PL_{(2)}}}
&&
[\mathbb{C}P^{7}, F_{(2)}] \cong
\mathbb{Z}_{2}\{\left(\overline{(\epsilon)_{8}}\right)_{14}\}
\oplus \mathbb{Z}_{2}\{(\kappa)_{14}\} \oplus \mathbb{Z}_{2}\{(\sigma^{2})_{14}\}
\\
[\mathbb{C}P^{7}/\mathbb{C}P^{3}, PL_{(2)}]
\ar@{>->}[rr]^-{J_{PL_{(2)}} \oplus J_{PL_{(2)}}}
\ar@{->>}[u]_-{q^{*}}
&&
[\mathbb{C}P^{7}/\mathbb{C}P^{3}, F_{(2)}] 
\ar@{->>}[u]^-{q^{*}}
}
\end{equation}}
The injectivity of $J_{PL_{(2)}} \colon [\mathbb{C}P^{7}, PL_{(2)}] \longrightarrow [\mathbb{C}P^{7}, F_{(2)}]$ follows from \eqref{equ23}, and the surjectivity of the vertical maps follows from $[\mathbb{C}P^{3}, PL]=0$ by \eqref{equ22} and Corollary~\ref{cp7-sf}. Moreover, 
\[ [\mathbb{C}P^{7}/\mathbb{C}P^{3}, PL_{(2)}] \cong [\mathbb{S}^{8}, PL_{(2)}] \oplus [\mathbb{S}^{14}, PL_{(2)}] \] 
by \eqref{spil1} and the fact that $i^{*} \colon [\mathbb{C}P^{6}/\mathbb{C}P^{3}, PL_{(2)}] \longrightarrow [\mathbb{S}^{8}, PL_{(2)}]$ is an isomorphism. The latter is obtained by repeating the arguments used for \eqref{cp6-1}, replacing $\operatorname{Top}/O_{(2)}$ with $PL_{(2)}$, together with the fact that the kernel of $\eta^{*} \colon [\mathbb{S}^{10}, PL_{(2)}] \longrightarrow [\mathbb{S}^{11}, PL_{(2)}]$ is zero (see the first paragraph of Lemma~3.1 in \cite[p.~11]{BKS25}). Since the image of the bottom horizontal map $(J_{PL_{(2)}} \oplus J_{PL_{(2)}})$ is $[\mathbb{S}^{8}, F_{(2)}] \oplus \mathbb{Z}_{2}\{\kappa\}$, and the kernel of $q^{*} \colon [\mathbb{C}P^{7}/\mathbb{C}P^{3}, F_{(2)}] \to [\mathbb{C}P^{7}, F_{(2)}]$ is $\mathbb{Z}_{2}\{\left(\overline{(\eta \circ \sigma)_8}\right)_{14}\}$ by Corollary~\ref{cp7-sf}, the commutativity of Diagram~\eqref{equ-PL} implies
\begin{equation}\label{cp7-pl}
\operatorname{Im}\bigl([\mathbb{C}P^{7}, PL_{(2)}] \xrightarrow{J_{PL_{(2)}}} [\mathbb{C}P^{7}, F_{(2)}]\bigr) \cong \mathbb{Z}_{2}\{\left(\overline{(\epsilon)_{8}}\right)_{14}\} \oplus \mathbb{Z}_{2}\{(\kappa)_{14}\}.
\end{equation}
Now apply this observation to the commutative diagram:
{\small
\begin{equation}\label{cp8-sf1}
\xymatrix@C=3.5em@R=3.6em{
[\mathbb{C}P^{8}, PL_{(2)}]
  \ar[rr]^-{J_{PL_{(2)}}}
  \ar[d]_-{i^{*}} &&
[\mathbb{C}P^{8}, F_{(2)}]
  \ar[d]^-{i^{*}} \\
[\mathbb{C}P^{7}, PL_{(2)}]
  \ar[rr]^-{J_{PL_{(2)}}} &&
\mathbb{Z}_{2}\{\left(\overline{(\epsilon)_{8}}\right)_{14}\} \oplus \mathbb{Z}_{2}\{(\kappa)_{14}\}
\subset [\mathbb{C}P^{7}, F_{(2)}],
}
\end{equation}}
where $[\mathbb{C}P^{8}, F_{(2)}] \cong \mathbb{Z}_{4}\{\left(\overline{(\sigma^{2})_{14}}\right)_{16}\} \oplus \mathbb{Z}_2\{\left(\overline{(\kappa)_{14} + \left(\overline{(\epsilon)_{8}}\right)_{14}}\right)_{16}\}$ by Part~\textup{(i)}, and the top horizontal map $J_{PL_{(2)}}$ is injective by \eqref{equ23}. If the group $[\mathbb{C}P^{8}, PL_{(2)}]$ contains an element of order $4$, then  
$J_{PL_{(2)}} \colon [\mathbb{C}P^{8}, PL_{(2)}] \to [\mathbb{C}P^{8}, F_{(2)}]$ 
maps onto a subgroup $\mathbb{Z}_{4}\{l\}$, where $l = \left(\overline{(\sigma^{2})_{14}}\right)_{16}$ 
or $l = \left(\overline{(\sigma^{2})_{14}}\right)_{16} + \left(\overline{(\kappa)_{14} + \left(\overline{(\epsilon)_{8}}\right)_{14}}\right)_{16}$. 
This would imply that the image of the composite 
\[
i^{*} \circ J_{PL_{(2)}} \colon [\mathbb{C}P^{8}, PL_{(2)}] \to [\mathbb{C}P^{7}, F_{(2)}] \cong \mathbb{Z}_{2}\{\left(\overline{(\epsilon)_{8}}\right)_{14}\} \oplus \mathbb{Z}_{2}\{(\sigma^{2})_{14}\} \oplus \mathbb{Z}_{2}\{(\kappa)_{14}\}
\]
contains $(\sigma^{2})_{14}$ or $(\sigma^{2})_{14} + (\kappa)_{14} + \left(\overline{(\epsilon)_{8}}\right)_{14}$. However, Diagram~\eqref{cp8-sf1} shows that this image must lie in the subgroup 
$\mathbb{Z}_{2}\{\left(\overline{(\epsilon)_{8}}\right)_{14}\} \oplus \mathbb{Z}_{2}\{(\kappa)_{14}\}$. 
This yields a contradiction, as $(\sigma^{2})_{14}$ (and its sum with other generators) is not 
contained in that subgroup. Consequently, $[\mathbb{C}P^{8}, PL_{(2)}]$ cannot contain an 
element of order $4$, which implies that the group is isomorphic to $\mathbb{Z}_{2} \oplus \mathbb{Z}_{2}$. Hence
\begin{equation}\label{pl8}
[\mathbb{C}P^{8}, PL_{(2)}] \cong \mathbb{Z}_{2}\{(z)_{16}\} \oplus \mathbb{Z}_{2}\{\left(\overline{(\kappa)_{14} + \left(\overline{(\epsilon)_{8}}\right)_{14}}\right)_{16}\} \subset [\mathbb{C}P^{8}, F_{(2)}],
\end{equation}
where $(z)_{16} = 2\left(\overline{(\sigma^{2})_{14}}\right)_{16}$ for $z \in \{\eta^{*}, \eta^{*} + \eta \circ \rho\} \subset [\mathbb{S}^{16}, SF_{(2)}]$. This completes the proof of the theorem.
\end{proof}
\begin{corollary}\label{cor-cp8}
The kernel of the map
\[
p^{*}\colon[\mathbb{C}P^{8},SF]\cong \mathbb{Z}_{4}\{\left(\overline{(\sigma^{2})_{14}}\right)_{16}\}
     \oplus \mathbb{Z}_2\{\left(\overline{(\kappa)_{14}+\left(\overline{(\epsilon)_{8}}\right)_{14}}\right)_{16}\}\longrightarrow [\mathbb{S}^{17},SF]
\] 
induced by the Hopf fibration \(p\colon\mathbb{S}^{17}\to \mathbb{C}P^{8}\) is
\(\mathbb{Z}_{4}\{\left(\overline{(\sigma^{2})_{14}}\right)_{16}\}\).
\end{corollary}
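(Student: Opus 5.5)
The plan is to compute $p^{*}$ on generators, working $2$-locally. The group $[\mathbb{C}P^{8},SF]$ is $2$-torsion by Theorem~\ref{eight-sub2}(i), so nothing is lost. As in the proof of Theorem~\ref{eight}, I would factor $p^{*}$ through the quotient $\mathbb{C}P^{8}/\mathbb{C}P^{3}$. The map $q^{*}\colon[\mathbb{C}P^{8}/\mathbb{C}P^{3},SF_{(2)}]\to[\mathbb{C}P^{8},SF_{(2)}]$ is surjective by \eqref{exact8-1}, and $p^{*}\circ q^{*}=\varphi_{9}^{*}$. Here $\varphi_{9}\colon\mathbb{S}^{17}\to\mathbb{C}P^{8}/\mathbb{C}P^{3}$ is the attaching map of the top cell of $\mathbb{C}P^{9}/\mathbb{C}P^{3}$. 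So it suffices to evaluate $\varphi_{9}^{*}$ on the three generators in Lemma~\ref{eight-sub}(ii) and then push the answer forward via \eqref{ext1} and \eqref{ext3}.

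\emph{Step 1: $p^{*}$ kills the $\mathbb{Z}_{4}$.} First, since $m=8$ is even, Proposition~\ref{St} gives $f_{\mathbb{C}P^{8}}\circ p\simeq *$, so $p^{*}((z)_{16})=0$. Next, by the proof of Lemma~\ref{eight-sub}(ii), the generator $\left(\overline{(\sigma^{2})_{14}}\right)_{16}$ of $[\mathbb{C}P^{8}/\mathbb{C}P^{3},SF_{(2)}]$ is the image under the collapse $\mathbb{C}P^{8}/\mathbb{C}P^{3}\to\mathbb{C}P^{8}/\mathbb{C}P^{6}$ of the generator of $[\mathbb{C}P^{8}/\mathbb{C}P^{6},SF]\cong\mathbb{Z}_{4}$. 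Hence $\varphi_{9}^{*}$ of this generator is computed through the attaching map $\mathbb{S}^{17}\to\mathbb{C}P^{8}/\mathbb{C}P^{6}=\mathbb{S}^{14}\cup_{\eta}\mathbb{D}^{16}$ of the top cell of $\mathbb{C}P^{9}/\mathbb{C}P^{6}$. Its composite to $\mathbb{S}^{16}$ is null by Proposition~\ref{St}, so it factors through the bottom cell as $\lambda\nu$. The coefficient $\lambda$ is odd: $Sq^{4}x^{7}=\binom{7}{2}x^{9}$ is nonzero. Therefore $\varphi_{9}^{*}\bigl(\overline{\sigma^{2}}\bigr)=\lambda\,\sigma^{2}\circ\nu=0$, because $\nu\sigma=0$ \cite{Tod62}. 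Consequently the whole $\mathbb{Z}_{4}\{\left(\overline{(\sigma^{2})_{14}}\right)_{16}\}$ lies in $\operatorname{Ker}(p^{*})$.

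\emph{Step 2: the order-two generator is detected.} It remains to show that $p^{*}$ is nonzero on $w=\left(\overline{(\kappa)_{14}+\left(\overline{(\epsilon)_{8}}\right)_{14}}\right)_{16}$. By Step~1 it is then nonzero on $w$ plus any element of the $\mathbb{Z}_{4}$, which gives the kernel exactly. I would project $\varphi_{9}$ to $\mathbb{C}P^{8}/\mathbb{C}P^{5}$, and further to the $14$-cell of $\mathbb{C}P^{8}/\mathbb{C}P^{6}$. There the component onto $\mathbb{S}^{14}$ is $\nu$ by Step~1. The $\kappa$-part of $w$ then contributes via a coextension/Toda bracket argument of the type in Lemma~\ref{twonew}(2)(c), using \cite[Proposition~2.7]{KMNST01} with the $16$-cell attached by $\eta$. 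The expected value is $\kappa\circ\nu=\nu\kappa\neq0$ in $\pi_{17}^{s}$, modulo indeterminacy of the form $\langle\kappa,\eta,\cdot\rangle$. The $\epsilon$-part of $w$ contributes $\epsilon\circ\theta$, where $\theta\in\pi_{9}^{s}$ is the component of $\varphi_{9}$ on the $8$-cell. I would determine $\theta$ using the splitting \eqref{spil1} together with Mosher's tables \cite{Mos68}, as in the proof of Theorem~\ref{kawa}.

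\emph{Main obstacle.} The difficult step is Step~2: proving that these contributions do not cancel, i.e.\ that $\nu\kappa+\epsilon\circ\theta+(\text{bracket indeterminacy})\neq0$. I would first try to show that the $\epsilon$-contribution lies in the span of $\eta\eta^{*}$, $\eta^{2}\rho$ and $\bar\mu$, using Toda's relations for $\epsilon\circ\pi_{9}^{s}$. Then $\nu\kappa$ survives in the summand it generates. Failing that, I would instead argue via the exact sequence \eqref{longG} for $m=9$, where $\operatorname{Ker}(p^{*})=\operatorname{Im}(i^{*}\colon[\mathbb{C}P^{9},SF]\to[\mathbb{C}P^{8},SF])$. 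In that approach I would show that the class restricting to $(\kappa)_{14}+\left(\overline{(\epsilon)_8}\right)_{14}$ cannot extend over $\mathbb{C}P^{9}$.
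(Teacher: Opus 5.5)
Your Step~1 is correct and is essentially the paper's argument. The paper compresses $\varphi_{9}$ into $\mathbb{C}P^{7}/\mathbb{C}P^{3}$ rather than projecting to $\mathbb{C}P^{8}/\mathbb{C}P^{6}$, but the inputs are the same: the $14$-cell component is an odd multiple of $\nu$ because $Sq^{4}x^{7}\neq 0$, and $\nu\sigma=0$.

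\textbf{The gap is Step~2.} You leave it uncompleted, and the mechanism you sketch for it would not work.

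\emph{Projection only evaluates pulled-back classes.} Projecting $\varphi_{9}$ to $\mathbb{C}P^{8}/\mathbb{C}P^{5}$ or $\mathbb{C}P^{8}/\mathbb{C}P^{6}$ only evaluates $\varphi_{9}^{*}$ on classes pulled back from those quotients. That is why it worked for $\bigl(\overline{(\sigma^{2})_{14}}\bigr)_{16}$. But $w$ is not such a class, since it restricts to $\epsilon$ on the $8$-cell.

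\emph{$w$ does not split.} Nor can $w$ be split into a ``$\kappa$-part'' and an ``$\epsilon$-part'' inside $[\mathbb{C}P^{8}/\mathbb{C}P^{3},SF_{(2)}]$. By \eqref{kernelofp}, neither $(\kappa)_{14}$ nor $\bigl(\overline{(\epsilon)_{8}}\bigr)_{14}$ lies in $\operatorname{Ker}\varphi_{8}^{*}$ (each goes to $\eta\circ\kappa$), so neither extends separately. In particular $\kappa$ does not extend over $\mathbb{S}^{14}\cup_{\eta}\mathbb{D}^{16}$, because $\eta\circ\kappa\neq 0$. So there is nothing to feed into \cite[Proposition~2.7]{KMNST01} there, and no bracket $\langle\kappa,\eta,\cdot\rangle$ arises.

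\textbf{The missing idea} is to apply to $\mathbb{C}P^{8}/\mathbb{C}P^{3}$ the same compression you used in Step~1.

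\emph{Compress to the $15$-skeleton.} Since $f_{\mathbb{C}P^{8}/\mathbb{C}P^{3}}\circ\varphi_{9}\simeq *$ by Proposition~\ref{St}, $\varphi_{9}$ factors through the $15$-skeleton $\mathbb{C}P^{7}/\mathbb{C}P^{3}\simeq_{(2)}\mathbb{C}P^{6}/\mathbb{C}P^{3}\vee\mathbb{S}^{14}$ as $(\lambda_{1},\lambda_{2})$. Then $\varphi_{9}^{*}(w)$ depends only on $i^{*}(w)=(\kappa)_{14}+\bigl(\overline{(\epsilon)_{8}}\bigr)_{14}$. It is computed by plain composition, with no Toda brackets:
$\varphi_{9}^{*}(w)=\kappa\circ\lambda_{2}+\lambda_{1}^{*}\bigl(\bigl(\overline{(\epsilon)_{8}}\bigr)_{12}\bigr)$.
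Here $\kappa\circ\lambda_{2}=\nu\circ\kappa$, because $\lambda_{2}$ is an odd multiple of $\nu$.

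\emph{Compress $\lambda_{1}$ further.} $\lambda_{1}$ compresses into $\mathbb{S}^{8}\vee\mathbb{S}^{10}$ because $\pi_{17}(\mathbb{S}^{12})\cong\pi_{5}^{s}=0$. The $\mathbb{S}^{10}$-component of the restriction of $\bigl(\overline{(\epsilon)_{8}}\bigr)_{12}$ vanishes $2$-locally: it must be annihilated by $\eta$, and $\eta^{2}\circ\mu\neq 0$. So the second term is $\epsilon\circ t_{1}$ for some $t_{1}\in\pi_{9}^{s}$.

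\emph{No need to identify $t_{1}$.} You do not need Mosher's tables here. We have $\epsilon\circ\pi_{9}^{s}\subseteq\{0,\eta^{2}\circ\rho\}$, since $\nu^{3}\circ\epsilon=0$, $\eta\circ\epsilon\circ\epsilon=0$ and $\mu\circ\epsilon=\eta^{2}\circ\rho$ \cite[Theorem~14.1]{Tod62}. Hence $\varphi_{9}^{*}(w)\in\{\nu\circ\kappa,\ \nu\circ\kappa+\eta^{2}\circ\rho\}$, which is nonzero. Passing through $p^{*}\circ q^{*}=\varphi_{9}^{*}$ with \eqref{ext1} and \eqref{ext3} finishes the proof.

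This is exactly how the paper concludes. Your fallback via $[\mathbb{C}P^{9},SF]$ only restates the same non-extension question and supplies no method for answering it.
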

\begin{proof}
It is sufficient to work \(2\)-locally. Consider the following commutative diagram:
\begin{equation}\label{commus17}
\begin{tikzcd}[column sep=small, row sep=large, font=\small]
{\bigl[\mathbb{C}P^{8}/\mathbb{C}P^{3},SF_{(2)}\bigr]\cong
\mathbb{Z}_{4}\{\left(\overline{(\sigma^{2})_{14}}\right)_{16}\} 
\oplus 
\mathbb{Z}_2\{\left(\overline{(\eta\circ \sigma)}\right)_{16}\}
\oplus
\mathbb{Z}_2\{\left(\overline{(\kappa)_{14}+\left(\overline{(\epsilon)}\right)_{14}}\right)_{16}\}}
\arrow[r, "{\varphi_{9}^{*}}"]
\arrow[d, "{q^{*}}"']
&
{\bigl[\mathbb{S}^{17},SF_{(2)}\bigr]}
\\
{\bigl[\mathbb{C}P^{8},SF_{(2)}\bigr]\cong 
\mathbb{Z}_{4}\{\left(\overline{(\sigma^{2})_{14}}\right)_{16}\}
\oplus 
\mathbb{Z}_2\{\left(\overline{(\kappa)_{14}+\left(\overline{(\epsilon_{8})}\right)_{14}}\right)_{16}\}}
\arrow[ru, "{p^{*}}"']
&
\end{tikzcd}
\end{equation}
where the isomorphisms are provided by Lemma~\ref{eight-sub}(ii) and Theorem~\ref{eight-sub2}(i). In view of this diagram and \eqref{ext1} and \eqref{ext3}, to prove that
\[
\operatorname{Ker}\left([\mathbb{C} P^{8}, SF_{(2)}] \xrightarrow{p^{*}} [\mathbb{S}^{17},SF_{(2)}]\right) \cong \mathbb{Z}_{4}\{\left(\overline{(\sigma^{2})_{14}}\right)_{16}\},
\]
it suffices to show that the map \(\varphi_{9}^{*}\colon [\mathbb{C}P^{8}/\mathbb{C}P^{3},SF_{(2)}] \to [\mathbb{S}^{17},SF_{(2)}]\) sends the element \(\left(\overline{(\sigma^{2})_{14}}\right)_{16}\) to zero and maps \(\left(\overline{(\kappa)_{14}+\left(\overline{(\epsilon)_{8}}\right)_{14}}\right)_{16}\) nontrivially. 

By Proposition~\ref{St}, the composite
\[
\mathbb{S}^{17}\xrightarrow{\;\varphi_{9}\;} \mathbb{C}P^{8}/\mathbb{C}P^{3}\xrightarrow{f_{\mathbb{C}P^{8}/\mathbb{C}P^{3}}} \mathbb{S}^{16}
\]
is null-homotopic. Hence, \(\varphi_{9}\colon \mathbb{S}^{17}\to \mathbb{C}P^{8}/\mathbb{C}P^{3}\) is homotopic to the composite
\[
\mathbb{S}^{17}\xrightarrow{(\lambda_{1},\lambda_{2})} \mathbb{C}P^{7}/\mathbb{C}P^{3} \simeq_{(2)} \mathbb{C}P^{6}/\mathbb{C}P^{3} \vee \mathbb{S}^{14} \hookrightarrow \mathbb{C}P^{8}/\mathbb{C}P^{3},
\]
where \(\lambda_{2}\colon \mathbb{S}^{17}\to \mathbb{S}^{14}\) is the attaching map of the \(18\)-cell onto the \(14\)-cell of \(\mathbb{C}P^{9}/\mathbb{C}P^{6}\). This map is represented by \(l\nu \in \pi_{17}(\mathbb{S}^{14}) \cong (\pi_{3}^{s})_{(2)} \cong \mathbb{Z}_{8}\{\nu\}\) with \(l\) odd, since \(Sq^{4}\) on \(H^{14}(\mathbb{C}P^{9}/\mathbb{C}P^{6};\mathbb{Z}_{2})\) is an isomorphism. Moreover, the composite
\[
\mathbb{S}^{17}\xrightarrow{\lambda_{1}} \mathbb{C}P^{6}/\mathbb{C}P^{3}\xrightarrow{f_{\mathbb{C}P^{6}/\mathbb{C}P^{3}}} \mathbb{S}^{12}
\] 
is null-homotopic, so \(\lambda_{1}\colon \mathbb{S}^{17}\to \mathbb{C}P^{6}/\mathbb{C}P^{3}\) is homotopic to
\[
\mathbb{S}^{17}\xrightarrow{(t_{1},t_{2})} \mathbb{C}P^{5}/\mathbb{C}P^{3} \simeq \mathbb{S}^{8} \vee \mathbb{S}^{10} \hookrightarrow \mathbb{C}P^{6}/\mathbb{C}P^{3}
\]
for some maps \(t_{1}\colon \mathbb{S}^{17}\to \mathbb{S}^{8}\) and \(t_{2}\colon \mathbb{S}^{17}\to \mathbb{S}^{10}\). These identifications yield the commutative diagram:
{\small
\begin{equation}\label{hopindu}
\xymatrix@C=20pt{
\bigl[\mathbb{C}P^{8}/\mathbb{C}P^{3},SF_{(2)}\bigr]
  \ar[r]^{\varphi_{9}^{*}}
  \ar[d]_{i^{*}}
&
\bigl[\mathbb{S}^{17},SF_{(2)}\bigr]\cong \mathbb{Z}_{8}\oplus \mathbb{Z}_{2}\oplus \mathbb{Z}_{2}
\\
\bigl[\mathbb{C}P^{7}/\mathbb{C}P^{3},SF_{(2)}\bigr]\cong
\bigl[\mathbb{C}P^{6}/\mathbb{C}P^{3},SF_{(2)}\bigr]\oplus \bigl[\mathbb{S}^{14},SF_{(2)}\bigr]
  \ar[ur]^{\lambda_{1}^{*}\oplus \lambda_{2}^{*}}
  \ar[dr]_{i^{*}\oplus (\mathrm{Id}_{\mathbb{S}^{14}})^{*}}
& \\
& \bigl[\mathbb{S}^{8},SF_{(2)}\bigr]\oplus \bigl[\mathbb{S}^{10},SF_{(2)}\bigr]\oplus \bigl[\mathbb{S}^{14},SF_{(2)}\bigr]
  \ar[uu]_{t_{1}^{*}\oplus t_{2}^{*}\oplus \lambda_{2}^{*}}
}
\end{equation}}
Under the splitting of \([\mathbb{C}P^{8}/\mathbb{C}P^{3}, SF_{(2)}]\) given above, the element \(\left(\overline{(\sigma^{2})_{14}}\right)_{16}\) is an extension of \(\sigma^{2}\) from the \(\mathbb{S}^{14}\)-summand via \(i^{*}\). Since \(\lambda_{2}^{*}\) is given by multiplication by \(l\nu\) and \(\nu \circ \sigma^{2} = 0\), the image of \(\sigma^{2}\) in \([\mathbb{S}^{17}, SF_{(2)}]\) under \(\lambda_{2}^{*}\) is zero. Consequently, \(\varphi_{9}^{*}\) maps the extension \(\left(\overline{(\sigma^{2})_{14}}\right)_{16}\) to zero. 

On the other hand, \(\left(\overline{(\kappa)_{14}+\left(\overline{(\epsilon)_{8}}\right)_{14}}\right)_{16}\) is an extension of \(\kappa+\epsilon \in [\mathbb{S}^{8},SF_{(2)}] \oplus [\mathbb{S}^{14},SF_{(2)}]\) under \(i^{*}\colon[\mathbb{C}P^{8}/\mathbb{C}P^{3},SF_{(2)}] \to [\mathbb{C}P^{7}/\mathbb{C}P^{3},SF_{(2)}]\). The map \(t_{1}\colon \mathbb{S}^{17}\to \mathbb{S}^{8}\) is stably represented by an element of \(\pi_{9}^{s} \cong \mathbb{Z}_{2}\{\nu^{3}\} \oplus \mathbb{Z}_{2}\{\eta \circ \epsilon\} \oplus \mathbb{Z}_{2}\{\mu\}\). Using $\nu^{3}\circ \epsilon=0$, $\eta\circ \epsilon\circ \epsilon=0$, and $\mu\circ \epsilon=\eta^{2}\circ \rho\in \pi_{17}^{s}$ (see \cite[Theorem~14.1]{Tod62}), and the fact that $SF_{(2)}$ is an infinite loop space, the map $t_{1}^{*}\colon [\mathbb{S}^{8},SF_{(2)}]\to [\mathbb{S}^{17},SF_{(2)}]$ cannot send $\epsilon$ to $\nu\circ \kappa \in \pi_{17}^{s}$. In contrast, the element \(\kappa \in [\mathbb{S}^{14},SF_{(2)}]\) maps to \(\nu \circ \kappa\) under \(\lambda_{2}^{*}\colon [\mathbb{S}^{14},SF_{(2)}] \to [\mathbb{S}^{17},SF_{(2)}]\). Therefore, the direct sum map
\[ 
t_{1}^{*} \oplus t_{2}^{*} \oplus \lambda_{2}^{*} \colon [\mathbb{S}^{8},SF_{(2)}] \oplus [\mathbb{S}^{10},SF_{(2)}] \oplus [\mathbb{S}^{14},SF_{(2)}] \to [\mathbb{S}^{17},SF_{(2)}]
\]
maps the sum \(\kappa+\epsilon\) to either \(\nu \circ \kappa\) or \(\nu \circ \kappa + \eta^{2} \circ \rho\), depending on whether \(t_{1}^{*}(\epsilon)\) is \(0\) or \(\eta^{2} \circ \rho\). Hence, by the commutativity of Diagram \eqref{hopindu}, it follows that \(\varphi_{9}^{*} \colon [\mathbb{C}P^{8}/\mathbb{C}P^{3},SF_{(2)}] \to [\mathbb{S}^{17},SF_{(2)}]\) sends \(\left(\overline{(\kappa)_{14}+\left(\overline{(\epsilon)_{8}}\right)_{14}}\right)_{16}\) nontrivially to either \(\nu \circ \kappa\) or \(\nu \circ \kappa + \eta^{2} \circ \rho\). In light of Diagram \eqref{commus17}, the map 
\begin{equation}\label{kers17}
p^{*} \colon [\mathbb{C}P^{8},SF_{(2)}] \to [\mathbb{S}^{17},SF_{(2)}]
\end{equation}
sends the generator \(\left(\overline{(\kappa)_{14}+\left(\overline{(\epsilon)_{8}}\right)_{14}}\right)_{16}\) nontrivially to either \(\nu \circ \kappa\) or \(\nu \circ \kappa + \eta^{2} \circ \rho\), and maps \(\left(\overline{(\sigma^{2})_{14}}\right)_{16}\) to zero. Thus, the kernel of \(p^{*} \colon [\mathbb{C}P^{8},SF_{(2)}] \to [\mathbb{S}^{17},SF_{(2)}]\) is precisely \(\mathbb{Z}_{4}\{\left(\overline{(\sigma^{2})_{14}}\right)_{16}\}\). This completes the proof.
\end{proof}
\begin{remark}\label{group}\rm
Observe from \cite{KM63} that the Kervaire--Milnor map
\[
\psi_{*}\colon \pi_{n}(\operatorname{Top}/O)\longrightarrow 
\mathrm{Coker}(J_{n})
= \pi_{n}^{s}/\operatorname{Im}(J_{n})
\]
is an isomorphism onto its image for $n=8,10,14,$ and $16$.
Under this identification, we regard a generator of $\pi_{n}(\operatorname{Top}/O)$
as being represented by a class $[x]\in\mathrm{Coker}(J_n)$ with
$x\in \pi_n^s$, and we simply write this generator as $x$.
From \cite[Table~A.3.3]{Rav03} (see, for example, \cite[Figure 2.1]{BHHM20}), we have
\[
\pi_{8}(\operatorname{Top}/O)\cong \mathbb{Z}_{2}\{\epsilon\},\qquad
\pi_{10}(\operatorname{Top}/O)\cong \mathbb{Z}_{2}\{\eta\circ \mu\}\oplus
\mathbb{Z}_{3}\{\beta_{1}\},
\]
\[
\pi_{14}(\operatorname{Top}/O)\cong \mathbb{Z}_{2}\{\kappa\},\qquad
\pi_{16}(\operatorname{Top}/O)\cong \mathbb{Z}_{2}\{\eta^{*}\}.
\]

We now express the groups
\[
\mathcal{C}(\mathbb{C}P^{m})\cong [\mathbb{C}P^{m},\operatorname{Top}/O],
\qquad 5\le m\le 8,
\]
using the notation introduced in Section~\ref{2}.
Recall that if $\alpha\in \pi_{2k}(\operatorname{Top}/O)$, then
\[
(\alpha)_{2k}\in [\mathbb{C}P^{k},\operatorname{Top}/O]
\]
denotes the element represented by the composition
\[
\mathbb{C}P^{k}
\xrightarrow{\, f_{\mathbb{C}P^{k}} \,}
\mathbb{S}^{2k}
\xrightarrow{\, \alpha \,}
\operatorname{Top}/O.
\]
For $m\ge k$, a chosen extension of $(\alpha)_{2k}$ along the restriction map
\[
i^{*}\colon [\mathbb{C}P^{m},\operatorname{Top}/O]
\longrightarrow [\mathbb{C}P^{k},\operatorname{Top}/O]
\]
is denoted by $\left (\overline{(\alpha)_{2k}}\right )_{2m}$.
\begin{itemize}
\item[(A)] Since the natural map
\(f^{*}_{\mathbb{C} P^{4}}\colon[\mathbb{S}^{8},\operatorname{Top}/O]\longrightarrow
[\mathbb{C}P^{4},\operatorname{Top}/O]\)
is an isomorphism, \(\Theta_8\cong \mathbb{Z}_2\{\epsilon\}\), and
\(\Theta_{10}\cong \mathbb{Z}_2\{\eta \circ \mu\}\oplus \mathbb{Z}_3\{\beta_1\}\),
it follows from Theorem~\ref{main}(i) that
\[
\mathcal{C}(\mathbb{C}P^{5})
=
\mathbb{Z}_2\{(\eta \circ \mu)_{10}\}
\oplus
\mathbb{Z}_3\{(\beta_1)_{10}\}
\oplus
\mathbb{Z}_2\{\left (\overline{(\epsilon)_{8}}\right )_{10}\},
\]
and \(\mathcal{C}(\mathbb{C}P^{4})=\mathbb{Z}_{2}\{(\epsilon)_{8}\}\). By \eqref{equ41} and \cite[Proposition~8.12]{Bru68}, we have that
\[
\mathbb{Z}_3\{(\beta_1)_{10}\}
\oplus
\mathbb{Z}_2\{\left (\overline{(\epsilon)_{8}}\right )_{10}\}
=\Ker\left([\mathbb{C} P^{5}, \operatorname{Top}/O]
\stackrel{p^{*}}{\longrightarrow}
\mathbb{Z}_{2}\{496\Sigma_{M}^{11}\}\right),
\]
where \(\Sigma_{M}^{11}\) is the Milnor generator of
\(\mathit{bP}_{12}=[\mathbb{S}^{11},\operatorname{Top}/O]\).

\item[(B)] It follows from Theorem~\ref{main}(ii) that the image of
\(\mathcal{C}(\mathbb{C}P^{6})\) in \(\mathcal{C}(\mathbb{C}P^{5})\)
is 
\[
\Ker\left([\mathbb{C} P^{5}, \operatorname{Top}/O]
\stackrel{p^{*}}{\longrightarrow}
\mathbb{Z}_{2}\{496\Sigma_{M}^{11}\}\right).
\]
By Part~(A), this kernel is
\[
\mathbb{Z}_3\{(\beta_1)_{10}\}
\oplus
\mathbb{Z}_2\{\left (\overline{(\epsilon)_{8}}\right )_{10}\}.
\]
Hence,
\[
\mathcal{C}(\mathbb{C}P^{6})
=
\mathbb{Z}_3\{\left(\overline{(\beta_1)_{10}}\right )_{12}\}
\oplus
\mathbb{Z}_2\{\left (\overline{(\epsilon)_{8}}\right )_{12}\}.
\]
\item[(C)] Since \(\Theta_{14} \cong \mathbb{Z}_2\{\kappa\}\), it follows from Theorem~\ref{main}(iii) and Part~(B) that
\begin{equation}\label{cp7-top}
\mathcal{C}(\mathbb{C}P^{7}) = \mathbb{Z}_2\{(\kappa)_{14}\} \oplus \mathbb{Z}_2\{\left(\overline{(\epsilon)_{8}}\right)_{14}\},
\end{equation}
where \(\left(\overline{(\epsilon)_{8}}\right)_{14}\) is an extension of \(\left(\overline{(\epsilon)_{8}}\right)_{12}\), and
\[
\Ker\left([\mathbb{C} P^{6}, \operatorname{Top}/O] \xrightarrow{p^{*}} \mathbb{Z}_{3} \right) = \mathbb{Z}_2\{\left(\overline{(\epsilon)_{8}}\right)_{12}\}.
\]
\item[(D)] Note from the proof of Theorem~\ref{eight}(ii) and  \eqref{kernelcp7-s15} that the map
\[
p^{*}\colon [\mathbb{C}P^{7}, \operatorname{Top}/O] \longrightarrow [\mathbb{S}^{15}, \operatorname{Top}/O]
\]
sends the element \(f^{*}_{\mathbb{C}P^{7}}(\Sigma^{14})=(\kappa)_{14}\) to \(\eta\circ \kappa\), where \(\Sigma^{14}\) is the nontrivial element of \(\Theta_{14}\) corresponding to the class \(\kappa\) in \(\operatorname{Coker}(J_{14})\). Furthermore, 
\[
\operatorname{Im} \left([\mathbb{C}P^{7}, \operatorname{Top}/O] \xrightarrow{p^{*}} [\mathbb{S}^{15}, \operatorname{Top}/O]\right) \cong \mathbb{Z}_{2}\{\eta \circ \kappa\}.
\] 
By Part~(C), we have the splitting
\[
[\mathbb{C}P^{7}, \operatorname{Top}/O] = \mathbb{Z}_{2}\{(\kappa)_{14}\} \oplus \mathbb{Z}_{2}\{\left(\overline{(\epsilon)_{8}}\right)_{14}\}.
\]
Hence, the kernel of \(p^{*}\colon [\mathbb{C}P^{7}, \operatorname{Top}/O] \longrightarrow [\mathbb{S}^{15}, \operatorname{Top}/O]\) is generated by either \((\kappa)_{14}+\left(\overline{(\epsilon)_{8}}\right)_{14}\) or \(\left(\overline{(\epsilon)_{8}}\right)_{14}\).
Using \eqref{kernelofp1} and \eqref{cp7-hop}, we see that the map
\[
p^{*}\colon [\mathbb{C}P^{7},SF] \cong \mathbb{Z}_{2}\{\left(\overline{(\epsilon)_{8}}\right)_{14}\} \oplus \mathbb{Z}_{2}\{(\kappa)_{14}\} \oplus \mathbb{Z}_{2}\{(\sigma^{2})_{14}\} \longrightarrow [\mathbb{S}^{15},SF]
\]
sends the element \(\left(\overline{(\epsilon)_{8}}\right)_{14} \in [\mathbb{C}P^{7},SF]\) to \(\eta\circ \kappa \in [\mathbb{S}^{15},SF]\). Consequently, under the composite
\[
[\mathbb{C}P^{7},SF] \xrightarrow{p^{*}} [\mathbb{S}^{15},SF] \xrightarrow{\phi_{*}} \operatorname{Coker}(J_{15}) \cong [\mathbb{S}^{15},F/O],
\]
the element \(\left(\overline{(\epsilon)_{8}}\right)_{14}\) is mapped to the nontrivial class \(\eta\circ \kappa \in \operatorname{Coker}(J_{15})\). Using this fact, together with the commutativity of Diagram~\eqref{digram4}, and arguing as in the proof of Theorem~\ref{eight}(ii), we conclude that the map
\[
p^{*}\colon [\mathbb{C}P^{7},\operatorname{Top}/O] \longrightarrow \operatorname{Coker}(J_{15}) \subset [\mathbb{S}^{15},\operatorname{Top}/O]
\]
sends \(\left(\overline{(\epsilon)_{8}}\right)_{14}\) nontrivially.

Therefore, \((\kappa)_{14}+\left(\overline{(\epsilon)_{8}}\right)_{14}\) is the unique generator of the kernel of \(p^{*}\colon [\mathbb{C}P^{7},\operatorname{Top}/O] \longrightarrow [\mathbb{S}^{15},\operatorname{Top}/O]\). Applying this to Theorem~\ref{eight-sub2}(ii), we obtain
\[
[\mathbb{C}P^{8}, \operatorname{Top}/O] = \mathbb{Z}_{2}\{(\eta^{*})_{16}\} \oplus \mathbb{Z}_{2}\{\left(\overline{(\kappa)_{14}+\left(\overline{(\epsilon)_{8}}\right)_{14}}\right)_{16}\},
\]
where \(\left(\overline{(\kappa)_{14}+\left(\overline{(\epsilon)_{8}}\right)_{14}}\right)_{16}\) denotes an extension of \((\kappa)_{14}+\left(\overline{(\epsilon)_{8}}\right)_{14}\) along the induced map
\[
i^{*}\colon [\mathbb{C}P^{8}, \operatorname{Top}/O] \to [\mathbb{C}P^{7}, \operatorname{Top}/O].
\]
\end{itemize}
\end{remark}
\begin{lemma}\label{cor-cp8pl}
The kernel of the map
\[
p^{*}\colon[\mathbb{C}P^{8},\operatorname{Top}/O] \longrightarrow [\mathbb{S}^{17},\operatorname{Top}/O]
\] induced by the Hopf fibration \(p\colon\mathbb{S}^{17}\to \mathbb{C}P^{8}\) is
\(\mathbb{Z}_{2}\{(\eta^{*})_{16}\}\).
\end{lemma}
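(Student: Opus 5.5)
The plan is to transport the question to $SF$, or to $F/O$, where Corollary~\ref{cor-cp8} already computes $p^{*}$, using naturality of $p^{*}$ with respect to the maps $\psi_{*}$ and $\phi_{*}$ (or the chain $[\mathbb{C}P^{8},\operatorname{Top}/O]\cong[\mathbb{C}P^{8},PL]\hookrightarrow[\mathbb{C}P^{8},F]$ from \eqref{equ21}--\eqref{equ23}). By Remark~\ref{group}(D),
\[
[\mathbb{C}P^{8},\operatorname{Top}/O]=\mathbb{Z}_{2}\{(\eta^{*})_{16}\}\oplus\mathbb{Z}_{2}\{g\},\qquad g=\left(\overline{(\kappa)_{14}+\left(\overline{(\epsilon)_{8}}\right)_{14}}\right)_{16}.
\]
It therefore suffices to show two things: that $p^{*}$ kills $(\eta^{*})_{16}$, and that $p^{*}$ does not kill either $g$ or $g+(\eta^{*})_{16}$.

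\emph{Step 1: $p^{*}$ kills $(\eta^{*})_{16}$.} This is immediate. Since $(\eta^{*})_{16}=f^{*}_{\mathbb{C}P^{8}}(\Sigma^{16})$, we have $p^{*}(\eta^{*})_{16}=(f_{\mathbb{C}P^{8}}\circ p)^{*}\Sigma^{16}$. By Proposition~\ref{St}, $f_{\mathbb{C}P^{8}}\circ p$ is null-homotopic because $8$ is even, so this class is $0$.

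\emph{Step 2: $p^{*}(g)\neq 0$.} Use the embedding \eqref{pl8}, which identifies $[\mathbb{C}P^{8},PL_{(2)}]$ with the subgroup $\mathbb{Z}_{2}\{(z)_{16}\}\oplus\mathbb{Z}_{2}\{g'\}$ of $[\mathbb{C}P^{8},F_{(2)}]$, where $g'=\left(\overline{(\kappa)_{14}+\left(\overline{(\epsilon)_{8}}\right)_{14}}\right)_{16}\in[\mathbb{C}P^{8},SF]$ and $(z)_{16}=2\left(\overline{(\sigma^{2})_{14}}\right)_{16}$. Since the map $PL\to F$ commutes with precomposition by $p$, there is a commutative square relating $p^{*}$ on $[\mathbb{C}P^{8},PL]$ to $p^{*}$ on $[\mathbb{C}P^{8},F]$. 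Under the identification of \eqref{equ21}--\eqref{equ22}, $g$ corresponds to an element of $[\mathbb{C}P^{8},PL_{(2)}]$ whose image is $g'$ or $g'+(z)_{16}$. To confirm this, note that $g$ restricts to the generator $(\kappa)_{14}+\left(\overline{(\epsilon)_{8}}\right)_{14}$ of $[\mathbb{C}P^{7},\operatorname{Top}/O]$, and the identification of \eqref{cp7-pl} is compatible with $i^{*}$. By Corollary~\ref{cor-cp8}, $p^{*}(g')\neq 0$ while $p^{*}(z)_{16}=0$, so in either case the image of $g$ is sent nontrivially into $[\mathbb{S}^{17},F]\cong\pi_{17}^{s}$, with value $\nu\circ\kappa$ or $\nu\circ\kappa+\eta^{2}\circ\rho$. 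Naturality then shows that $p^{*}(g)\in[\mathbb{S}^{17},PL]$ is nonzero, and hence so is its image in $[\mathbb{S}^{17},\operatorname{Top}/O]$.

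\emph{Step 3: compatibility of the transfer.} The main obstacle is making the transfer from $PL$ to $\operatorname{Top}/O$ on the sphere $\mathbb{S}^{17}$ rigorous, because $[\mathbb{S}^{17},PL]\to[\mathbb{S}^{17},\operatorname{Top}/O]$ need not be injective. I would avoid relying on that map and argue directly with $\psi_{*}:[\,\cdot\,,\operatorname{Top}/O]\to[\,\cdot\,,F/O]$ together with $\phi_{*}$, as in the proof of Theorem~\ref{eight}(ii). First, $\psi_{*}(g)=\phi_{*}(g')$ modulo $\phi_{*}(z)_{16}$, since the images of $[\mathbb{C}P^{8},\operatorname{Top}/O]$ and the order-$2$ part of $[\mathbb{C}P^{8},SF]$ agree inside $[\mathbb{C}P^{8},F/O]$. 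Second, one must check that $\phi_{*}:\pi_{17}^{s}\to\pi_{17}(F/O)=\operatorname{Coker}J_{17}$ does not kill $\nu\circ\kappa$ or $\nu\circ\kappa+\eta^{2}\circ\rho$. This holds because $\operatorname{Im}J_{17}$ is generated by $\eta^{2}\circ\rho$ and $\mu_{17}$-type classes, which do not include $\nu\circ\kappa$ (Adams; \cite{Rav03}). With that, $\psi_{*}p^{*}(g)=p^{*}\psi_{*}(g)\neq 0$, so $p^{*}(g)\neq 0$. The same argument applies to $g+(\eta^{*})_{16}$, so the kernel of $p^{*}$ is exactly $\mathbb{Z}_{2}\{(\eta^{*})_{16}\}$.
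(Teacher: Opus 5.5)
Your proof is correct, but it detects $p^{*}(g)$ differently from the paper.

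\textbf{The paper's route.} It passes to $PL/O$ and uses Brumfiel's description of the degree-$17$ part of the Kervaire--Milnor braid: $[\mathbb{S}^{17},PL_{(2)}]\cong\mathbb{Z}_2\oplus[\mathbb{S}^{17},F_{(2)}]$, with $J_{PL}$ the projection and $\beta_{*}=\mathrm{Id}\oplus\chi$. From this it sees that the class of $\nu\circ\kappa$ survives in $[\mathbb{S}^{17},PL/O]\cong[\mathbb{S}^{17},\operatorname{Top}/O]$.

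\textbf{Your route.} You push forward along $\psi_{*}$ instead. The relation $\psi_{*}\circ F_{*}\circ\beta_{*}=\phi_{*}\circ J_{PL}$, together with \eqref{pl8}, places $\psi_{*}(g)$ in $\phi_{*}$ of the $2$-torsion subgroup of $[\mathbb{C}P^{8},SF]$. After that you only need exactness of $\pi_{17}(SO)\xrightarrow{J}\pi_{17}^{s}\xrightarrow{\phi_{*}}\pi_{17}(F/O)$. So no information about $\pi_{17}(PL)$ or about $\beta_{*}$ on spheres is required. This is the same device the paper itself uses in Theorem~\ref{eight}(ii).

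\textbf{The price.} You must pin $\psi_{*}(g)$ down to $\phi_{*}(g')$ or $\phi_{*}(g'+(z)_{16})$. Your restriction to $\mathbb{C}P^{7}$ does this. Alternatively, $\psi_{*}$ is injective and $\psi_{*}((\eta^{*})_{16})=\phi_{*}((z)_{16})$, which rules out $0$ and $\phi_{*}((z)_{16})$.

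\textbf{Small corrections.}
\begin{itemize}
\item The clause ending Step~2 (``hence so is its image in $[\mathbb{S}^{17},\operatorname{Top}/O]$'') is a non sequitur and should be deleted in favour of Step~3.
\item $\operatorname{Im}J_{17}$ is only $\mathbb{Z}_2\{\eta^{2}\circ\rho\}$; $\bar{\mu}$ is not in the image of $J\colon\pi_{17}(SO)\to\pi_{17}^{s}$. This does not affect your conclusion, since $\nu\circ\kappa$ is independent of both.
\item The case $g+(\eta^{*})_{16}$ needs no separate argument: it follows from Step~1 because $p^{*}$ is a homomorphism.
\end{itemize}
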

\begin{proof}
It follows from \eqref{equ21} and Remark \ref{group}(D) that there is an isomorphism
\[
[\mathbb{C}P^{8},PL/O] \cong [\mathbb{C}P^{8},\operatorname{Top}/O] \cong \mathbb{Z}_{2}\{(\eta^{*})_{16}\} \oplus \mathbb{Z}_2\{\left(\overline{(\kappa)_{14}+\left(\overline{(\epsilon)_{8}}\right)_{14}}\right)_{16}\}.
\] 
Combining this with the fact that $[\mathbb{S}^{17},PL/O] \cong [\mathbb{S}^{17},\operatorname{Top}/O]$, it suffices to show that the kernel of the induced map 
\[
p^{*}: [\mathbb{C}P^{8},PL/O_{(2)}] \to [\mathbb{S}^{17},PL/O_{(2)}]
\]
is precisely the summand $\mathbb{Z}_{2}\{(\eta^{*})_{16}\}$. Consider the following commutative square:
\begin{equation}\label{cp8-pl1}
\xymatrix@C=15pt@R=35pt{
[\mathbb{C}P^{8}, PL_{(2)}] \ar[rr]^-{J_{PL_{(2)}}} \ar[d]_-{p^{*}} && 
{ \begin{array}{c} [\mathbb{C}P^{8}, F_{(2)}] \cong \mathbb{Z}_{4}\{\left(\overline{(\sigma^{2})_{14}}\right)_{16}\} \\ \oplus\, \mathbb{Z}_2\{\left(\overline{(\kappa)_{14} + \left(\overline{(\epsilon)_{8}}\right)_{14}}\right)_{16}\} \end{array} } \ar[d]^-{p^{*}} \\
[\mathbb{S}^{17}, PL_{(2)}] \cong \mathbb{Z}_{2} \oplus [\mathbb{S}^{17}, F_{(2)}] \ar[rr]^-{J_{PL_{(2)}}} && [\mathbb{S}^{17}, F_{(2)}],
}
\end{equation}
where the group 
\[
[\mathbb{C}P^{8}, PL_{(2)}] \cong \mathbb{Z}_{2}\{2\left(\overline{(\sigma^{2})_{14}}\right)_{16}\} \oplus \mathbb{Z}_{2}\{\left(\overline{(\kappa)_{14} + \left(\overline{(\epsilon)_{8}}\right)_{14}}\right)_{16}\}
\] 
is given by \eqref{pl8}. Under this identification, the top horizontal map \( J_{PL_{(2)}} \) can be identified as an inclusion map. Furthermore, the bottom $J$-homomorphism is identified with the projection map, as established in \cite[Diagram B, p.~306]{Bru69}. It follows from Corollary \ref{cor-cp8} and \eqref{kers17} that the second vertical map $p^{*}: [\mathbb{C}P^{8}, F_{(2)}] \to [\mathbb{S}^{17}, SF_{(2)}]$ sends the element $\left(\overline{(\kappa)_{14} + (\overline{(\epsilon)_{8}})_{14}}\right)_{16}$ nontrivially to either $\nu \circ \kappa$ or $\nu \circ \kappa + \eta^{2} \circ \rho$. The commutativity of Diagram \eqref{cp8-pl1} then implies that the composition $J_{PL_{(2)}} \circ p^{*}:[\mathbb{C}P^{8}, PL_{(2)}] \to [\mathbb{S}^{17}, SF_{(2)}]$ sends the element $\left(\overline{(\kappa)_{14} + (\overline{(\epsilon)_{8}})_{14}}\right)_{16}$ to either $\nu \circ \kappa$ or $\nu \circ \kappa + \eta^{2} \circ \rho$. Applying these observations to the following commutative diagram:
\begin{equation}\label{cp8-plo1}
\resizebox{\textwidth}{!}{%
\xymatrix@C=2em@R=3.2em{
[\mathbb{C}P^{8},PL_{(2)}] \ar[rr]^-{\beta_{*}} \ar[d]_-{p^{*}} &&
\bigl[\mathbb{C}P^{8},PL/O_{(2)}\bigr]\cong \mathbb{Z}_{2}\{(\eta^{*})_{16}\} \oplus \mathbb{Z}_2\{\left(\overline{(\kappa)_{14}+\left(\overline{(\epsilon)_{8}}\right)_{14}}\right)_{16}\} \ar[d]^-{p^{*}} \\
[\mathbb{S}^{17},PL_{(2)}]\cong \mathbb{Z}_{2} \oplus [\mathbb{S}^{17}, SF_{(2)}] \ar[rr]^-{\beta_{*}} && [\mathbb{S}^{17},PL/O_{(2)}]\cong \mathbb{Z}_{2} \oplus [\mathbb{S}^{17}, SF_{(2)}]/\operatorname{Im}\,J,
}
} 
\end{equation}
the top horizontal map is an isomorphism by \eqref{equ22}, and the bottom map $\beta_{*}$ is identified as $\text{Id}_{\mathbb{Z}_{2}} \oplus \chi$ following \cite[Diagram B, p.~306]{Bru69}. Here, $\chi: \pi_{17}^s \to \pi_{17}^s/\operatorname{Im}\,J$ is the quotient map and $\operatorname{Im}\,J \cong \mathbb{Z}_{2}\{\eta^2 \circ \rho\}$. We thus find that $p^{*}: [\mathbb{C}P^{8}, PL/O_{(2)}] \to [\mathbb{S}^{17}, PL/O_{(2)}]$ sends $\left(\overline{(\kappa)_{14} + (\overline{(\epsilon)_{8}})_{14}}\right)_{16}$ to the element whose component in the quotient is $\nu \circ \kappa \in \pi_{17}^s/\operatorname{Im}\,J$. This element is nontrivial, and hence 
\begin{equation}\label{nontrivialge}
p^{*}\left(\left(\overline{(\kappa)_{14} + (\overline{(\epsilon)_{8}})_{14}}\right)_{16}\right) \neq 0 
\end{equation}
On the other hand, since the composition $\mathbb{S}^{17} \xrightarrow{p} \mathbb{C}P^8 \xrightarrow{f_{\mathbb{C}P^8}} \mathbb{S}^{16}$ is null-homotopic, the map $p^{*}$ sends the element $(\eta^{*})_{16} = f_{\mathbb{C}P^{8}}^{*}(\eta^{*})$ to zero. Therefore, the kernel of the induced map 
$p^{*}: [\mathbb{C}P^{8},PL/O_{(2)}] \to [\mathbb{S}^{17},PL/O_{(2)}]$ 
is precisely $\mathbb{Z}_{2}\{(\eta^{*})_{16}\}$, which completes the proof.
\end{proof}
\section{Diffeomorphism Classification of Manifolds Homeomorphic to \texorpdfstring{$\mathbb{C}P^{m}$ ($5 \le m \le 8$)}{}}\label{3}

In this section, we classify, up to (unoriented) diffeomorphism and orientation-preserving diffeomorphism, all closed smooth manifolds homeomorphic to $\mathbb{C}P^{m}$ for $5\le m\le 8$, using the computations from the previous section.

For a smooth manifold $M^{m}$ $(m\ge 5)$, the group of concordance classes of self-homeomorphisms (resp.\ orientation-preserving self-homeomorphisms) of $M$ is denoted by $MCG^{\pm}(M)$ (resp.\ $MCG^{+}(M)$). Let $\mathcal{M}^{+}_{\mathrm{Diff}}(M)$ denote the set of oriented diffeomorphism classes of smooth manifolds homeomorphic to $M$.

There is a natural forgetful map
\[
F_{\mathcal{C}} \colon \mathcal{C}(M) \longrightarrow \mathcal{M}^{+}_{\mathrm{Diff}}(M), 
\qquad [f \colon N \to M] \longmapsto [N],
\]
which descends to a bijection
\begin{equation}\label{forget}
\mathcal{C}(M)/MCG^{+}(M)\xrightarrow{\;\cong\;} \mathcal{M}^{+}_{\mathrm{Diff}}(M),
\end{equation}
where the group $MCG^{+}(M)$ acts on $\mathcal{C}(M)$ via post-composition:
\begin{equation}\label{act}
MCG^{+}(M)\times \mathcal{C}(M)\longrightarrow \mathcal{C}(M), 
\qquad \bigl([h],[N,f]\bigr)\longmapsto [N,h\circ f].
\end{equation}
Next, we recall a homotopy-theoretic reformulation of the action \eqref{act}.
Let $[M, \mathrm{Id}]$ denote the identity element in the group $\mathcal{C}(M)$ of concordance classes of smoothings. Given a smoothing $(N, f)$ representing the concordance class $[N, f] \in \mathcal{C}(M)$ and a self-homeomorphism $[h] \in MCG^{\pm}(M)$, the smoothing $(N, h \circ f)$ determines a new concordance class given by
\begin{equation}\label{for}
[N, h \circ f] = [M, h] + (h^{-1})^{*}([N, f]).
\end{equation}

Here we identify $\mathcal{C}(M) \cong [M, \operatorname{Top}/O]$. Under this identification, the map
\[
(h^{-1})^{*} : [M, \operatorname{Top}/O] \longrightarrow [M, \operatorname{Top}/O]
\]
is induced by the self-map $h^{-1} : M \to M$, and $(h^{-1})^{*}([N, f])$ denotes the pullback of the concordance class $[N, f]$. The operation $+$ denotes the abelian group structure on $[M, \operatorname{Top}/O]$ induced by the Whitney sum in $\operatorname{Top}/O$ (see \cite[pp.~143--144]{Sch71} or Proposition~3.10 of \cite{Cro10}, with $G/O$ replaced by $\operatorname{Top}/O$). We now compute the action \eqref{for} for $M=\mathbb{C}P^{m}$. By \cite[Theorem~8]{Sul67}, for $m>2$,
\begin{equation}\label{iso1}
MCG^{\pm}(\mathbb{C}P^{m})=\{[Id],[C]\}\cong \mathbb{Z}_{2},
\end{equation}
and
\begin{equation}\label{iso2}
MCG^{+}(\mathbb{C}P^{m})=
\begin{cases}
\{[Id],[C]\}, & \text{if $m$ is even},\\
\{[Id]\}, & \text{if $m$ is odd},
\end{cases}
\end{equation}
where $[Id]$ and $[C]$ denote the concordance classes of the identity map
$Id\colon \mathbb{C}P^{m}\to \mathbb{C}P^{m}$ and the complex conjugation
\[
C \colon \mathbb{C}P^{m} \to \mathbb{C}P^{m}, \qquad [z_{0}, z_{1}, \dots, z_{m}] \longmapsto [\overline{z}_{0}, \overline{z}_{1}, \dots, \overline{z}_{m}],
\]
respectively.

Since the conjugation map $C$ is a diffeomorphism, the smooth structures
$(\mathbb{C}P^{m},C)$ and $(\mathbb{C}P^{m},Id)$ are concordant. Therefore, by
\eqref{for}, the action of $[C]\in MCG^{+}(\mathbb{C}P^{m})$ on an arbitrary smoothing
$[N,f]\in [\mathbb{C}P^{m},\operatorname{Top}/O]$ is given by
\begin{equation}\label{act1}
[N,C\circ f] = C^{*}([N,f]).
\end{equation}
Hence, to compute $\mathcal{M}^{+}_{\mathrm{Diff}}(\mathbb{C}P^{m})$, it suffices to
determine the map
\[
C^{*}\colon [\mathbb{C}P^{m},\operatorname{Top}/O]\longrightarrow [\mathbb{C}P^{m},\operatorname{Top}/O].
\]
\begin{remark}\label{actions}
Let \(X=\operatorname{Top}/O\) (or \(X=SF\)). Recall that there is a natural map
\[
f^{*}_{\mathbb{C}P^{m}}\colon [\mathbb{S}^{2m}, X]
\longrightarrow
[\mathbb{C}P^{m}, X],
\]
where \(f_{\mathbb{C}P^{m}}\colon \mathbb{C}P^{m}\to \mathbb{S}^{2m}\) is the degree-one map,
and a map
\[
p^{*}\colon [\mathbb{C}P^{m}, X]
\longrightarrow
[\mathbb{S}^{2m+1}, X]
\]
induced by the Hopf fibration \(p\colon \mathbb{S}^{2m+1}\to \mathbb{C}P^{m}\).
We record the following observations.
\begin{itemize}
\item[(1)]
If \(C\) has degree \(1\), then the maps
\(f_{\mathbb{C}P^{m}}\circ C\) and \(f_{\mathbb{C}P^{m}}\) are homotopic. Hence
\[
(f_{\mathbb{C}P^{m}}\circ C)^{*}
= C^{*}\circ f^{*}_{\mathbb{C}P^{m}}
= f^{*}_{\mathbb{C}P^{m}},
\]
and therefore every element in the image of \(f^{*}_{\mathbb{C}P^{m}}\)
is fixed by \(C^{*}\).

\item[(2)]
If \(C\) has degree \(-1\), then \(f_{\mathbb{C}P^{m}}\circ C\) is homotopic to
\(\rho\circ f_{\mathbb{C}P^{m}}\), where
\(\rho\colon \mathbb{S}^{2m}\to \mathbb{S}^{2m}\) is a reflection. Consequently,
\(C^{*}\) sends \(f^{*}_{\mathbb{C}P^{m}}(\Sigma)\) to
\(-f^{*}_{\mathbb{C}P^{m}}(\Sigma)\).

\item[(3)]
Since the map \(C\) extends to \(\mathbb{C}P^{m+1}\) for all \(m\), the maps
\(C\circ p\) and \(p\) are homotopic. Consequently,
\[
p^{*}\circ C^{*} = p^{*}.
\]
Since
\[
C^{*}\colon [\mathbb{C}P^{m}, X]
\longrightarrow
[\mathbb{C}P^{m}, X]
\]
is an isomorphism, it follows that
\[
C^{*}\bigl(\operatorname{Ker}(p^{*})\bigr)
=
\operatorname{Ker}(p^{*}),
\]
where
\[
p^{*}\colon [\mathbb{C}P^{m}, X]
\longrightarrow
[\mathbb{S}^{2m+1}, X].
\]
\end{itemize}
\end{remark}
Since $[\mathbb{C}P^{3},\operatorname{Top}/O]=\{[\mathbb{C}P^{3}]\}$ and
$[\mathbb{C}P^{4},\operatorname{Top}/O]=\{[\mathbb{C}P^{4}\#\Sigma]\mid \Sigma\in\Theta_{8}\}
\cong \mathbb{Z}_{2}$ (see \cite[Theorem~2.3]{Ram16}), it follows that
$C^{*}$ is the identity map for $m=3,4$. For $5\leq m\leq 8$, we have the following.
\begin{theorem}\label{main2}
Let $C\colon \mathbb{C}P^{m}\to \mathbb{C}P^{m}$ be the complex conjugation map.
\begin{itemize}

\item[(i)]
The induced map
\[
C^{*}\colon 
\mathbb{Z}_{2}\{(\eta\circ\mu)_{10}\}
\oplus
\mathbb{Z}_{3}\{(\beta_{1})_{10}\}
\oplus
\mathbb{Z}_{2}\{\left(\overline{(\epsilon)_{8}}\right)_{10}\}
\longrightarrow
\mathbb{Z}_{2}\{(\eta\circ\mu)_{10}\}
\oplus
\mathbb{Z}_{3}\{(\beta_{1})_{10}\}
\oplus
\mathbb{Z}_{2}\{\left(\overline{(\epsilon)_{8}}\right)_{10}\},
\]
where
\[
[\mathbb{C}P^{5},\operatorname{Top}/O]
=
\mathbb{Z}_{2}\{(\eta\circ\mu)_{10}\}
\oplus
\mathbb{Z}_{3}\{(\beta_{1})_{10}\}
\oplus
\mathbb{Z}_{2}\{\left(\overline{(\epsilon)_{8}}\right)_{10}\},
\]
is given in coordinates by
\[
C^{*}(x,y,z)=(x,-y,z).
\]

\item[(ii)]
The induced map
\[
C^{*}\colon
\mathbb{Z}_{3}\{\left(\overline{(\beta_{1})_{10}}\right)_{12}\}
\oplus
\mathbb{Z}_{2}\{\left(\overline{(\epsilon)_{8}}\right)_{12}\}
\longrightarrow
\mathbb{Z}_{3}\{\left(\overline{(\beta_{1})_{10}}\right)_{12}\}
\oplus
\mathbb{Z}_{2}\{\left(\overline{(\epsilon)_{8}}\right)_{12}\},
\]
where
\[
[\mathbb{C}P^{6},\operatorname{Top}/O]
=
\mathbb{Z}_{3}\{\left(\overline{(\beta_{1})_{10}}\right)_{12}\}
\oplus
\mathbb{Z}_{2}\{\left(\overline{(\epsilon)_{8}}\right)_{12}\},
\]
is given by
\[
C^{*}(x,y)=(-x,y).
\]

\item[(iii)]
The induced map
\[
C^{*}\colon
\mathbb{Z}_{2}\{(\kappa)_{14}\}
\oplus
\mathbb{Z}_{2}\{\left(\overline{(\epsilon)_{8}}\right)_{14}\}
\longrightarrow
\mathbb{Z}_{2}\{(\kappa)_{14}\}
\oplus
\mathbb{Z}_{2}\{\left(\overline{(\epsilon)_{8}}\right)_{14}\},
\]
where 
\[
[\mathbb{C}P^{7},\operatorname{Top}/O]
=
\mathbb{Z}_{2}\{(\kappa)_{14}\}
\oplus
\mathbb{Z}_{2}\{\left(\overline{(\epsilon)_{8}}\right)_{14}\},
\]
is the identity map.
\end{itemize}
\end{theorem}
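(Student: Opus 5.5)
We will determine $C^{*}$ generator by generator, using Remark~\ref{actions} together with the naturality of the cofibre sequences $\mathbb{C}P^{k}\subset\mathbb{C}P^{m}$. Complex conjugation preserves the standard CW filtration: $C$ restricts to complex conjugation on each $\mathbb{C}P^{k}$. On the top cell, $C$ has degree $(-1)^{m}$, so it is orientation preserving exactly when $m$ is even. Hence $f_{\mathbb{C}P^{k}}\circ C\simeq f_{\mathbb{C}P^{k}}$ for $k$ even, and $f_{\mathbb{C}P^{k}}\circ C\simeq \rho\circ f_{\mathbb{C}P^{k}}$ for $k$ odd. Since $\operatorname{Top}/O$ is an infinite loop space, precomposition with a reflection $\rho$ acts as $-1$ on $[\mathbb{S}^{2k},\operatorname{Top}/O]$.

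For (i), with $m=5$: by Remark~\ref{actions}(2), $C^{*}$ sends $(\eta\circ\mu)_{10}\mapsto-(\eta\circ\mu)_{10}=(\eta\circ\mu)_{10}$ and $(\beta_{1})_{10}\mapsto-(\beta_{1})_{10}$. It remains to handle $\left(\overline{(\epsilon)_{8}}\right)_{10}$. By naturality $i^{*}C^{*}=C^{*}i^{*}$, and on $[\mathbb{C}P^{4},\operatorname{Top}/O]$ the map $C^{*}$ is the identity (Remark~\ref{actions}(1), degree $+1$). Hence $C^{*}\left(\overline{(\epsilon)_{8}}\right)_{10}-\left(\overline{(\epsilon)_{8}}\right)_{10}\in\operatorname{Ker}(i^{*})=\operatorname{Im}(f^{*}_{\mathbb{C}P^{5}})\cong\mathbb{Z}_{6}$. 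Moreover, by Remark~\ref{actions}(3) this difference lies in $\operatorname{Ker}(p^{*})$, which by Remark~\ref{group}(A) is $\mathbb{Z}_{3}\{(\beta_1)_{10}\}\oplus\mathbb{Z}_{2}\{\left(\overline{(\epsilon)_{8}}\right)_{10}\}$. Intersecting with $\operatorname{Im}(f^{*}_{\mathbb{C}P^{5}})$, the difference lies in $\mathbb{Z}_{3}\{(\beta_{1})_{10}\}$. Since $\left(\overline{(\epsilon)_{8}}\right)_{10}$ has order $2$ and $C^{*}$ is an automorphism, its image has order $2$. Every element $\left(\overline{(\epsilon)_{8}}\right)_{10}+t(\beta_1)_{10}$ with $t\neq0$ has order $6$, so $t=0$. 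This gives $C^{*}(x,y,z)=(x,-y,z)$. The step we expect to be the main obstacle is exactly this control of the extension ambiguity, and the order-plus-$\operatorname{Ker}(p^{*})$ argument is what we would try first.

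For (ii), we use that $i^{*}\colon[\mathbb{C}P^{6},\operatorname{Top}/O]\to[\mathbb{C}P^{5},\operatorname{Top}/O]$ is injective (Theorem~\ref{main}(ii)) and commutes with $C^{*}$. The restriction of $\left(\overline{(\beta_1)_{10}}\right)_{12}$ is $(\beta_1)_{10}$ up to a possible $\mathbb{Z}_2$ correction. Since its order is $3$, the restriction is exactly $\pm(\beta_1)_{10}$, so part (i) gives $C^{*}=-1$ on it. Similarly, $\left(\overline{(\epsilon)_{8}}\right)_{12}$ restricts to an order-$2$ element of $\operatorname{Ker}(p^{*})$, namely $\left(\overline{(\epsilon)_{8}}\right)_{10}$, which is fixed by (i). Injectivity then yields $C^{*}(x,y)=(-x,y)$.

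For (iii), with $m=7$: the class $(\kappa)_{14}$ is sent to $-(\kappa)_{14}=(\kappa)_{14}$ by Remark~\ref{actions}(2). For $\left(\overline{(\epsilon)_{8}}\right)_{14}$, naturality and (ii) show that $C^{*}$ fixes its restriction $\left(\overline{(\epsilon)_{8}}\right)_{12}$. Hence $C^{*}\left(\overline{(\epsilon)_{8}}\right)_{14}-\left(\overline{(\epsilon)_{8}}\right)_{14}\in\operatorname{Ker}(i^{*})=\mathbb{Z}_{2}\{(\kappa)_{14}\}$. By Remark~\ref{actions}(3), $p^{*}$ takes the same value on both terms. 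By Remark~\ref{group}(D), $p^{*}(\kappa)_{14}=\eta\circ\kappa\neq0$, so the difference cannot be $(\kappa)_{14}$. It is therefore zero, and $C^{*}$ is the identity.
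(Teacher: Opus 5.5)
Your proposal is correct and follows essentially the paper's route. Generators in $\operatorname{Im}(f^{*}_{\mathbb{C}P^{m}})$ are handled by the degree $(-1)^{m}$ of $C$ (Remark~\ref{actions}(1)--(2)), the remaining generator by the $C^{*}$-invariance of $\operatorname{Ker}(p^{*})$ (Remark~\ref{actions}(3), valid here since $m=5,7$ are odd), and part (ii) by injectivity of $i^{*}\colon[\mathbb{C}P^{6},\operatorname{Top}/O]\to[\mathbb{C}P^{5},\operatorname{Top}/O]$; your extra use of naturality under restriction to $\mathbb{C}P^{m-1}$ in (i) and (iii) just makes explicit the extension-ambiguity step the paper leaves implicit, and in (iii) it needs only $p^{*}\bigl((\kappa)_{14}\bigr)\neq 0$.
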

\begin{proof}
\noindent
(i)
By Remark~\ref{group}(A) and (B),
\[
[\mathbb{C}P^{5},\operatorname{Top}/O]
=
\mathbb{Z}_{2}\{(\eta\circ\mu)_{10}\}
\oplus
\mathbb{Z}_{3}\{(\beta_{1})_{10}\}
\oplus
\mathbb{Z}_{2}\{\left(\overline{(\epsilon)_{8}}\right)_{10}\},
\]
where $(\eta\circ\mu)_{10}$ and $(\beta_{1})_{10}$ lie in the image of
$f^{*}_{\mathbb{C}P^{5}}$, and
$\left(\overline{(\epsilon)_{8}}\right)_{10}$ lies in $$\Ker\left([\mathbb{C} P^{5}, \operatorname{Top}/O]
\stackrel{p^{*}}{\longrightarrow}
\mathbb{Z}_{2}\{496\Sigma_{M}^{11}\}\right).$$
By Remark~\ref{actions}(1)--(3),
$(\eta\circ\mu)_{10}$ and $\left(\overline{(\epsilon)_{8}}\right)_{10}$
are fixed by $C^{*}$, while
$(\beta_{1})_{10}$ is sent to its inverse.
This yields $C^{*}(x,y,z)=(x,-y,z)$.

\medskip
\noindent
(ii)
The conclusion follows from the commutative diagram
\[
\begin{CD}
0 @>>> [\mathbb{C}P^{6},\operatorname{Top}/O] @>i^{*}>> [\mathbb{C}P^{5},\operatorname{Top}/O] \\
@. @VV C^{*} V @VV C^{*} V \\
0 @>>> [\mathbb{C}P^{6},\operatorname{Top}/O] @>i^{*}>> [\mathbb{C}P^{5},\operatorname{Top}/O],
\end{CD}
\]
together with Part~(i) and Remark~\ref{group}(B).

\medskip
\noindent
(iii) By Remark~\ref{group}(C) and (D), \((\kappa)_{14}\) lies in the image of \(f^{*}_{\mathbb{C}P^{7}}\), and the sum \((\kappa)_{14} + \left(\overline{(\epsilon)_{8}}\right)_{14}\) is the unique generator of 
\[
\Ker\left([\mathbb{C}P^{7}, \operatorname{Top}/O] \xrightarrow{p^{*}} [\mathbb{S}^{15}, \operatorname{Top}/O] \right).
\]
Consequently, the result follows from Remark~\ref{actions}(1) and (3).
This completes the proof of the theorem.
\end{proof}
For analyzing the map $\widetilde{C} : [\mathbb{C}P^8, \operatorname{Top}/O] \to [\mathbb{C}P^8, \operatorname{Top}/O]$, we use the following observations and lemmas. Recall from Madsen and Milgram \cite[Theorem 5.18]{MM79} that there is a fibration
\[
\mathrm{cok}\, J_{(2)} \longrightarrow SF_{(2)} \longrightarrow \operatorname{Im}\, J_{(2)}
\]
and a homotopy equivalence $SF_{(2)} \cong \mathrm{cok}\, J_{(2)} \times \operatorname{Im}\, J_{(2)}$, where $\mathrm{cok}\, J_{(2)}$ is $5$-connected. 

In the following lemma, we use the cell structures of the stunted projective spaces $\mathbb{R}P^n/\mathbb{R}P^{n-2}$ and $\mathbb{R}P^n/\mathbb{R}P^{n-3}$ as given in \cite[p.~5]{GMS12}. Denote by $M(\mathbb{Z}_2, n) \simeq \Sigma^{n-1} \mathbb{R}P^2$ the Moore space of type $(\mathbb{Z}_2, n)$ for the cyclic group $\mathbb{Z}_2$ of order two, with $n \geq 2$. Then we have 
\[ 
[\mathbb{S}^{n+2}, M(\mathbb{Z}_2, n)] = \mathbb{Z}_4\{\widetilde{\eta}\} \quad \text{and} \quad [\mathbb{S}^{n+1}, M(\mathbb{Z}_2, n)] = \mathbb{Z}_2\{i \circ \eta\} 
\]
for $n \geq 3$, where the generator $\widetilde{\eta}$ is the map $\mathbb{S}^{n+2} \to M(\mathbb{Z}_2, n)$ such that the composition 
\[
\mathbb{S}^{n+2} \xrightarrow{\quad \widetilde{\eta} \quad} M(\mathbb{Z}_2, n) \xrightarrow{\quad q \quad} \mathbb{S}^{n+1}
\]
is represented by $\eta \in \pi_{n+2}(\mathbb{S}^{n+1}) \cong \pi_1^S$, and $i \circ \eta : \mathbb{S}^{n+1} \to M(\mathbb{Z}_2, n)$ is the composition 
\[
\mathbb{S}^{n+1} \xrightarrow{\quad \eta \quad} \mathbb{S}^n \xrightarrow{\quad i \quad} M(\mathbb{Z}_2, n).
\]
\begin{lemma}\label{teccp8}
Let 
\[ q_{n,k}: \mathbb{R}P^{n}/\mathbb{R}P^{k} \to \mathbb{R}P^{n}/\mathbb{R}P^{k+1} \] 
be the quotient map, and let 
\[ i: \mathbb{R}P^{k+1}/\mathbb{R}P^k \hookrightarrow \mathbb{R}P^n/\mathbb{R}P^k \] 
denote the inclusion map. Then:
\begin{enumerate} 
    \item Suppose $k+2\leq n$ with $k+3 \equiv 0 \pmod 4$.
    \begin{enumerate}
        \item The induced map 
        \[ i^*: [ \Sigma^2 \mathbb{R}P^n/\mathbb{R}P^k, G/\mathrm{Top}_{(2)} ] \to [ \Sigma^2 \mathbb{R}P^{k+1}/\mathbb{R}P^k, G/\mathrm{Top}_{(2)} ] \]
        is surjective.
        \item Let $k \in \{1, 5, 9\}$ and let $L \colon \mathbb{S}^{15} \to \mathbb{R}P^n/\mathbb{R}P^k$ be any map. For any 
\[ x \in [\Sigma \mathbb{R}P^{k+1}/\mathbb{R}P^k, \operatorname{Top}/O_{(2)}] \] 
and any extension $\overline{x} \in [\Sigma \mathbb{R}P^n/\mathbb{R}P^k, \operatorname{Top}/O_{(2)}]$ of $x$ along the map 
\[ i^* \colon [ \Sigma \mathbb{R}P^n/\mathbb{R}P^k, \operatorname{Top}/O_{(2)} ] \to [ \Sigma \mathbb{R}P^{k+1}/\mathbb{R}P^k, \operatorname{Top}/O_{(2)} ], \] 
there exists an element $y \in [ \Sigma \mathbb{R}P^n/\mathbb{R}P^k, \operatorname{Top}/O_{(2)} ]$ such that $i^*(y) = i^*(\overline{x}) = x$, and the induced map 
\[ (\Sigma L)^* \colon [ \Sigma \mathbb{R}P^n/\mathbb{R}P^k, \operatorname{Top}/O_{(2)} ] \to [ \mathbb{S}^{16}, \operatorname{Top}/O_{(2)} ] \] 
sends $y$ to zero.
    \end{enumerate}
    \item For $n \geq 9$, the map 
    \[ (\Sigma q_{n,6})^* : [ \Sigma \mathbb{R}P^n/\mathbb{R}P^{7}, \operatorname{Top}/O_{(2)} ] \longrightarrow [ \Sigma \mathbb{R}P^n/\mathbb{R}P^{6}, \operatorname{Top}/O_{(2)} ] \] 
    is surjective.
   \item Let $\overline{\pi}_{n,t}: \mathbb{R}P^{2n+1}/\mathbb{R}P^{2t} \to \mathbb{C}P^{n}/\mathbb{C}P^{t}$ be the map induced by the canonical projection $\pi: \mathbb{R}P^{2n+1} \to \mathbb{C}P^{n}$. Then
    \begin{enumerate}
        \item The map $(\Sigma \overline{\pi}_{4,3})^{*}: [ \Sigma \mathbb{C}P^4/\mathbb{C}P^3, \operatorname{Top}/O_{(2)} ] \to [ \Sigma \mathbb{R}P^9/\mathbb{R}P^6, \operatorname{Top}/O_{(2)} ]$ is an isomorphism.
        \item Let 
        \[ i^*: [ \Sigma \mathbb{R}P^{15}/\mathbb{R}P^6, \operatorname{Top}/O_{(2)} ] \to [ \Sigma \mathbb{R}P^{9}/\mathbb{R}P^6, \operatorname{Top}/O_{(2)} ] \] 
        be the map induced by the inclusion $i: \mathbb{R}P^{9}/\mathbb{R}P^6 \to \mathbb{R}P^{15}/\mathbb{R}P^6$. For any $x \in [\Sigma \mathbb{R}P^{9}/\mathbb{R}P^6, \operatorname{Top}/O_{(2)}]$, there exists an element $y$ in the image of 
        \[ (\Sigma \overline{\pi}_{7,3})^{*}: [\Sigma \mathbb{C}P^7/\mathbb{C}P^3, \operatorname{Top}/O_{(2)}] \to [\Sigma \mathbb{R}P^{15}/\mathbb{R}P^6, \operatorname{Top}/O_{(2)}] \] 
        such that $i^{*}(y) = x$ in $[\Sigma \mathbb{R}P^{9}/\mathbb{R}P^6, \operatorname{Top}/O_{(2)}]$.
    \end{enumerate}
\end{enumerate}
\end{lemma}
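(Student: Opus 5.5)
The plan is to prove every part by cell-by-cell obstruction theory on the stunted projective spaces, using the cell structures of \cite[p.~5]{GMS12}. Recall that in $\mathbb{R}P^{n}/\mathbb{R}P^{k}$ the $j$-cell attaches to the $(j-1)$-cell with degree $1+(-1)^{j}$, and its higher components are read off from $Sq^{2}$, $Sq^{4}$ as in the proofs above. When $k+3\equiv 0 \pmod 4$, the $(k+2)$-cell attaches to the $(k+1)$-cell with degree $0$, and a possible $\eta$-component is detected by $Sq^{2}$ on $H^{k+1}$. So the bottom two cells form $\mathbb{S}^{k+1}\vee\mathbb{S}^{k+2}$ or $\Sigma^{k-1}\mathbb{C}P^{2}$. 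The $(k+3)$-cell attaches by degree $2$ onto the $(k+2)$-cell, so the $(k+2),(k+3)$ part is a Moore space $M(\mathbb{Z}_2,k+2)$. This is where the description of $[\mathbb{S}^{n+2},M(\mathbb{Z}_2,n)]=\mathbb{Z}_4\{\widetilde{\eta}\}$ recalled just before the lemma enters.

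For (1)(a), I will extend a class on the bottom sphere $\mathbb{S}^{k+3}=\Sigma^{2}\mathbb{R}P^{k+1}/\mathbb{R}P^{k}$ across the remaining cells of $\Sigma^{2}\mathbb{R}P^{n}/\mathbb{R}P^{k}$ one at a time. I will use $\pi_{*}(G/\mathrm{Top})=L_{*}(e)$, which is $\mathbb{Z},0,\mathbb{Z}_2,0$ in degrees $0,1,2,3 \pmod 4$, and Sullivan's $2$-local product decomposition of $G/\mathrm{Top}$ into Eilenberg--MacLane spaces above degree $4$. Each successive obstruction is the restriction to an attaching map $\mathbb{S}^{j-1}\to(\text{skeleton})$. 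I expect it to vanish for one of two reasons: it lands in an odd homotopy group $\pi_{j-1}(G/\mathrm{Top})=0$, or it is a composite with $\eta$ or a degree-$2$ map of a class coming from $\pi_{4i}$. The latter can be killed by the freedom of re-choosing the extension on the previous cell, because $\pi_{4i+2}=\mathbb{Z}_2$. Equivalently, I will check surjectivity of the relevant cohomology restrictions with $\mathbb{Z}_{(2)}$ and $\mathbb{Z}_2$ coefficients, using the decomposition; the degree-$0$ attachment of the $(k+2)$-cell is exactly what makes the integral class in degree $k+3$ extend. Part (2) is the same argument for $\operatorname{Top}/O_{(2)}$ in the range of cells $7<j\le n$. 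Using the cofibre sequence
\[
\mathbb{S}^{7}\to\mathbb{R}P^{n}/\mathbb{R}P^{6}\to\mathbb{R}P^{n}/\mathbb{R}P^{7},
\]
surjectivity of $(\Sigma q_{n,6})^{*}$ reduces to showing that the connecting map $(\Sigma\delta)^{*}$ into $[\mathbb{S}^{8},\operatorname{Top}/O_{(2)}]=\mathbb{Z}_2\{\epsilon\}$ is zero. This is a statement about the attaching map of the $8$-cell, which is $2\iota+(\text{an }\eta\text{-component})$. I will show that $\epsilon$ is not in the image, using $\eta^{*}$ on $\pi_{7}(\operatorname{Top}/O)=0$, so that the image is already zero.

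For (3)(a), I will compare cell structures: $\overline{\pi}_{4,3}\colon\mathbb{R}P^{9}/\mathbb{R}P^{6}\to\mathbb{C}P^{4}/\mathbb{C}P^{3}=\mathbb{S}^{8}$ collapses the $7$- and $9$-cells and has degree $1$ on the $8$-cell. Applying $[\Sigma(-),\operatorname{Top}/O_{(2)}]$ gives a map $\pi_{9}(\operatorname{Top}/O_{(2)})\to[\Sigma\mathbb{R}P^{9}/\mathbb{R}P^{6},\operatorname{Top}/O_{(2)}]$. I will prove it is an isomorphism using $\pi_{8}(\operatorname{Top}/O_{(2)})$, $\pi_{10}(\operatorname{Top}/O_{(2)})$ and the long exact sequences for the $7$- and $9$-cells. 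The needed inputs are $\Theta_{7}\cong bP_8$, which is odd-torsion-free but $2$-locally $\mathbb{Z}_4\oplus\ldots$, so I will need care here, and the behaviour of $\eta^{*}$ and $2$ recorded in \cite[Lemma~3.1]{BKS25}.

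For (3)(b), I will start from $x$ and use (3)(a) to write $x=(\Sigma\overline{\pi}_{4,3})^{*}(x')$. Next I will extend $x'$ over $\Sigma\mathbb{C}P^{7}/\mathbb{C}P^{3}$, which should be possible by the analogues of \eqref{cp6-1} and \eqref{spil1} after suspension. Then I set $y=(\Sigma\overline{\pi}_{7,3})^{*}(\overline{x'})$; naturality of $\overline{\pi}$ with respect to the inclusions gives $i^{*}(y)=x$.

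For (1)(b), I will take an arbitrary extension $\overline{x}$ and compute $(\Sigma L)^{*}(\overline{x})\in\pi_{16}(\operatorname{Top}/O_{(2)})=\mathbb{Z}_2\{\eta^{*}\}$. If it is nonzero, I will modify $\overline{x}$ by an element of $\ker i^{*}$, that is, by a class pulled back from $\Sigma\mathbb{R}P^{n}/\mathbb{R}P^{k+1}$. For this I want a class whose $L$-image is $\eta^{*}$, for instance one through the top cell via $\eta$ using $\eta\circ\pi_{15}$, or via a Toda bracket as in Lemma~\ref{twonew}. Showing that such a correcting class exists for every $L$, and not just for specific $L$, is the step I expect to be the main obstacle. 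My first attempt will be to show that $(\Sigma L)^{*}$ factors through the $16$-skeleton and then use (1)(a)-type surjectivity to pick the correction; failing that, I will argue that $(\Sigma L)^{*}$ vanishes identically on the image of the $\operatorname{cok}J_{(2)}$-projection, as in Lemma~\ref{twonew}(2)(d).
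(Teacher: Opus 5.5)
Your treatment of (1)(a) (Sullivan's splitting plus the degree-$0$ attachment of the $(k+2)$-cell, so that $H^{k+3}(\Sigma^2\mathbb{R}P^n/\mathbb{R}P^k;\mathbb{Z}_{(2)})\to H^{k+3}(\mathbb{S}^{k+3};\mathbb{Z}_{(2)})$ is onto) matches the paper. So does (3)(b): lift $x$ through (3)(a), extend over $\Sigma\mathbb{C}P^7/\mathbb{C}P^3$ using the $2$-local splitting, and pull back by $\Sigma\overline{\pi}_{7,3}$. Your plan for (3)(a) follows the paper's lines.

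\textbf{The gap is (1)(b),} which you leave open, and neither fallback can work for arbitrary $L$. For example, take $k=9$, $n=15$ and $L$ the inclusion of the top sphere in $\mathbb{R}P^{15}/\mathbb{R}P^{9}\simeq\mathbb{R}P^{14}/\mathbb{R}P^{9}\vee\mathbb{S}^{15}$. Then $(\Sigma L)^*$ is projection onto a summand $\pi_{16}(\operatorname{Top}/O_{(2)})\cong\mathbb{Z}_2\{\eta^*\}$, which maps isomorphically to $\pi_{16}(\operatorname{cok}J_{(2)})$. So $(\Sigma L)^*$ is already nonzero on $[\Sigma\mathbb{R}P^{15}/\mathbb{R}P^{9},\operatorname{cok}J_{(2)}]$. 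You also have no construction of a correcting class in $\operatorname{Ker}(i^*)$ with prescribed $(\Sigma L)^*$-image.

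The missing idea is to choose $y$ in the image of $\ell_*\colon[\Sigma\mathbb{R}P^n/\mathbb{R}P^k,\Omega G/\mathrm{Top}_{(2)}]\to[\Sigma\mathbb{R}P^n/\mathbb{R}P^k,\operatorname{Top}/O_{(2)}]$, coming from the fibration $\Omega G/\mathrm{Top}\xrightarrow{\ell}\operatorname{Top}/O\xrightarrow{\psi}F/O$.
\begin{itemize}
\item Since $\pi_{k+2}(F/O_{(2)})=0$ for $k\in\{1,5,9\}$, $x$ lifts to $\pi_{k+2}(\Omega G/\mathrm{Top}_{(2)})$.
\item Since $[\Sigma X,\Omega G/\mathrm{Top}]=[\Sigma^{2}X,G/\mathrm{Top}]$, part (1)(a) extends that lift over $\Sigma\mathbb{R}P^n/\mathbb{R}P^k$. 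This is why (1)(a) is stated for $G/\mathrm{Top}$ after a double suspension.
\item Pushing forward gives $y$ with $i^*(y)=x$ and $\psi_*(y)=0$, hence $\psi_*\bigl((\Sigma L)^*y\bigr)=0$.
\item The map $\psi_*\colon\pi_{16}(\operatorname{Top}/O_{(2)})\to\pi_{16}(F/O_{(2)})$ is injective, because its kernel comes from $\pi_{17}(G/\mathrm{Top})=0$.
\end{itemize}
So $(\Sigma L)^*(y)=0$, with no information about $L$ needed.

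\textbf{Your argument for (2) uses the wrong mechanism.} What must vanish is the restriction $i^*\colon[\Sigma\mathbb{R}P^n/\mathbb{R}P^6,\operatorname{Top}/O_{(2)}]\to\pi_8(\operatorname{Top}/O_{(2)})=\mathbb{Z}_2\{\epsilon\}$. Equivalently, the connecting homomorphism out of $\pi_8$ must be injective, not zero.

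The $8$-cell attaches to $\mathbb{S}^7$ by exactly degree $2$; there is no room for an $\eta$-component in $\pi_7(\mathbb{S}^7)$. Since $2\epsilon=0$, $\epsilon$ does extend over $\Sigma\mathbb{R}P^8/\mathbb{R}P^6=M(\mathbb{Z}_2,8)$. Moreover, $\pi_7(\operatorname{Top}/O)=\Theta_7\cong\mathbb{Z}_{28}\neq0$, and $\pi_7$ does not even enter after one suspension.

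The obstruction lives on the next cell. The $9$-cell of $\mathbb{R}P^9/\mathbb{R}P^6$ attaches by $i\circ\eta$ onto the $7$-cell, since $Sq^2x^7=x^9$. So every extension $\bar\epsilon$ over $M(\mathbb{Z}_2,8)$ meets the obstruction $\bar\epsilon\circ i\circ\eta=\eta\circ\epsilon$. This is independent of the choice of $\bar\epsilon$, because $q\circ i\simeq *$. Finally, $\eta\circ\epsilon\neq0$ because $\eta^*\colon\pi_8(\operatorname{Top}/O_{(2)})\to\pi_9(\operatorname{Top}/O_{(2)})$ is injective \cite[Lemma~3.1]{BKS25}. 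This is precisely the paper's argument.

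There are also two minor slips. For $k$ odd, the $(k+1)$- and $(k+2)$-cells always form $\mathbb{S}^{k+1}\vee\mathbb{S}^{k+2}$; an $\eta$ onto $\mathbb{S}^{k+1}$ would attach a $(k+3)$-cell, not the $(k+2)$-cell. And $bP_8\cong\mathbb{Z}_{28}$ has $7$-torsion, so it is not odd-torsion-free.
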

\begin{proof}
It follows from \cite{Sul67} that the $2$-local homotopy type of $G/\mathrm{Top}$ splits as a product of Eilenberg--MacLane spaces
\[ G/\mathrm{Top}_{(2)} \simeq \prod_{i \ge 1} K(\mathbb{Z}_{(2)}, 4i) \times \prod_{i \ge 1} K(\mathbb{Z}_2, 4i-2), \]
where the homotopy groups are given by
\[ \pi_n(G/\mathrm{Top}_{(2)}) \cong \begin{cases} \mathbb{Z}_{(2)} & n \equiv 0 \pmod 4 \\ \mathbb{Z}_2 & n \equiv 2 \pmod 4 \\ 0 & \text{otherwise.} \end{cases} \]
Therefore, for any CW complex $X$, there is an isomorphism:
\[ [ X, G/\mathrm{Top}_{(2)} ] \xrightarrow{\cong} \prod_{i \ge 1} H^{4i}(X; \mathbb{Z}_{(2)}) \times \prod_{i \ge 1} H^{4i-2}(X; \mathbb{Z}_2). \]
Now we consider the following commutative diagram 
\begin{equation}
\xymatrix@C=2pc{
[ \Sigma^2 \mathbb{R}P^n/\mathbb{R}P^k, G/\mathrm{Top}_{(2)} ]
\ar[r]^-{\cong} \ar[d]_{i^*}
&
\displaystyle \prod_{i \ge 1} H^{4i}(\Sigma^2 \mathbb{R}P^n/\mathbb{R}P^k; \mathbb{Z}_{(2)})
\times
\prod_{i \ge 1} H^{4i-2}(\Sigma^2 \mathbb{R}P^n/\mathbb{R}P^k; \mathbb{Z}_{2})
\ar[d]^{i_1^* \oplus i_2^*}
\\
[ \Sigma^2 \mathbb{S}^{k+1}, G/\mathrm{Top}_{(2)} ]
\ar[r]^-{\cong}
&
\displaystyle \prod_{i \ge 1} H^{4i}(\Sigma^2 \mathbb{S}^{k+1}; \mathbb{Z}_{(2)})
\times
\prod_{i \ge 1} H^{4i-2}(\Sigma^2 \mathbb{S}^{k+1}; \mathbb{Z}_{2})
}
\end{equation}
where $i_1^* = i_2^* = i^*$. Note that $i_{2}^*$ is the trivial map on the $H^{4i-2}$ components since $k+3 \equiv 0 \pmod 4$. From this diagram, to prove result (1)(a), it is enough to show that the map
\[ i_1^*: H^{k+3}(\Sigma^2 \mathbb{R}P^n/\mathbb{R}P^k; \mathbb{Z}_{(2)}) \to H^{k+3}(\Sigma^2 \mathbb{S}^{k+1}; \mathbb{Z}_{(2)}) \cong \mathbb{Z}_{(2)} \]
is surjective. Applying the long exact sequence of cohomology induced by the cofiber sequence
\[ 
\Sigma^{2} \mathbb{S}^{t-1} \to \Sigma^{2} (\mathbb{R}P^{t-1}/\mathbb{R}P^{k}) \to \Sigma^{2} (\mathbb{R}P^{t}/\mathbb{R}P^{k}) 
\]
for $k+3 \le t \le n$, we see that the induced map
\[ 
i^{*} \colon [ \Sigma^{2} (\mathbb{R}P^{t}/\mathbb{R}P^{k}), K(\mathbb{Z}_{(2)}, k+3) ] \to [ \Sigma^{2} (\mathbb{R}P^{t-1}/\mathbb{R}P^{k}), K(\mathbb{Z}_{(2)}, k+3) ] 
\] 
is an isomorphism. Additionally, for $t=k+2$, the map
\[ 
i^{*} \colon [ \Sigma^{2} (\mathbb{R}P^{k+2}/\mathbb{R}P^{k}), K(\mathbb{Z}_{(2)}, k+3) ] \to [ \Sigma^{2} \mathbb{S}^{k+1}, K(\mathbb{Z}_{(2)}, k+3) ] 
\]
is also an isomorphism, as $\mathbb{R}P^{k+2}/\mathbb{R}P^{k} \simeq \mathbb{S}^{k+2} \vee \mathbb{S}^{k+1}$ for odd $k$, and $[\mathbb{S}^{k+4}, K(\mathbb{Z}_{(2)}, k+3)] = 0$. Since the map $i_{1}^{*}$ is the successive composition of these isomorphisms, it follows that 
\[ 
i_{1}^{*} \colon H^{k+3}(\Sigma^{2} (\mathbb{R}P^{n}/\mathbb{R}P^{k}); \mathbb{Z}_{(2)}) \to H^{k+3}(\Sigma^{2} \mathbb{S}^{k+1}; \mathbb{Z}_{(2)}) 
\]
is an isomorphism, completing the proof of Part (1)(a). To prove Part (1)(b), consider the following commutative diagram:
\begin{equation}\label{gtopdiag}
\begin{CD}
[ \Sigma \mathbb{R}P^n/\mathbb{R}P^k, \Omega G/\mathrm{Top}_{(2)} ] @> i^* >> [ \Sigma \mathbb{R}P^{k+1}/\mathbb{R}P^k, \Omega G/\mathrm{Top}_{(2)} ] \\
@V \ell_* VV @VV \ell_* V \\
[ \Sigma \mathbb{R}P^n/\mathbb{R}P^k, \operatorname{Top}/O_{(2)} ] @> i^* >> [ \Sigma \mathbb{R}P^{k+1}/\mathbb{R}P^k, \operatorname{Top}/O_{(2)} ] \\
@V \psi_* VV @VV \psi_* V \\
[ \Sigma \mathbb{R}P^n/\mathbb{R}P^k, F/O_{(2)} ] @. [ \Sigma \mathbb{R}P^{k+1}/\mathbb{R}P^k, F/O_{(2)} ]
\end{CD}
\end{equation}
where $\ell \colon \Omega G/\mathrm{Top}_{(2)} \to \operatorname{Top}/O_{(2)}$ is the canonical map, and the columns are parts of exact sequences. The vertical map 
\[
\ell_* \colon [ \Sigma \mathbb{R}P^{k+1}/\mathbb{R}P^k, \Omega G/\mathrm{Top}_{(2)} ] \longrightarrow [ \Sigma \mathbb{R}P^{k+1}/\mathbb{R}P^k, \operatorname{Top}/O_{(2)} ]
\] 
is surjective, since $\pi_{k+2}(F/O_{(2)}) = 0$ for $k \in \{1,5,9\}$. By Part (1), the top horizontal map 
\[
i^* \colon [ \Sigma \mathbb{R}P^n/\mathbb{R}P^k, \Omega G/\mathrm{Top}_{(2)} ] \longrightarrow [ \Sigma \mathbb{R}P^{k+1}/\mathbb{R}P^k, \Omega G/\mathrm{Top}_{(2)} ]
\]
is also surjective. Let $\overline{x}$ be an extension of $x$ along the map $i^* \colon [ \Sigma \mathbb{R}P^n/\mathbb{R}P^k, \operatorname{Top}/O_{(2)} ] \to [ \Sigma \mathbb{R}P^{k+1}/\mathbb{R}P^k, \operatorname{Top}/O_{(2)} ]$. A diagram chase then implies the existence of an element $y \in \operatorname{Im}([ \Sigma \mathbb{R}P^n/\mathbb{R}P^k, \Omega G/\mathrm{Top}_{(2)} ] \xrightarrow{\ell_*} [ \Sigma \mathbb{R}P^n/\mathbb{R}P^k, \operatorname{Top}/O_{(2)} ])$ such that $i^*(y) = x$ under the middle horizontal map 
$i^* \colon [ \Sigma \mathbb{R}P^n/\mathbb{R}P^k, \operatorname{Top}/O_{(2)} ] \to [ \Sigma \mathbb{R}P^{k+1}/\mathbb{R}P^k, \operatorname{Top}/O_{(2)} ]$. This proves the first assertion of Part (1)(b). It remains to show that the map 
\[(\Sigma L)^* \colon [ \Sigma \mathbb{R}P^n/\mathbb{R}P^k, \operatorname{Top}/O_{(2)} ] \to [ \mathbb{S}^{16}, \operatorname{Top}/O_{(2)} ]\]
sends the element $y$ to zero. From the exactness of the left vertical column in Diagram~\eqref{gtopdiag}, the fact that $y \in \operatorname{Im}(\ell_*)$ implies that $y$ maps to zero under 
$\psi_* \colon [ \Sigma \mathbb{R}P^n/\mathbb{R}P^k, \operatorname{Top}/O_{(2)} ] \to [ \Sigma \mathbb{R}P^n/\mathbb{R}P^k, F/O_{(2)} ]$. 
Now, consider the following commutative diagram:
\begin{equation}\label{injective-diag}
\begin{CD}
[ \Sigma \mathbb{R}P^n/\mathbb{R}P^k, \operatorname{Top}/O_{(2)} ] @> (\Sigma L)^* >> [ \mathbb{S}^{16}, \operatorname{Top}/O_{(2)} ] \\
@V \psi_* VV @VV \psi_* V \\
[ \Sigma \mathbb{R}P^n/\mathbb{R}P^k, F/O_{(2)} ] @>>> [ \mathbb{S}^{16}, F/O_{(2)} ]
\end{CD}
\end{equation}
where the right vertical map $\psi_* \colon [ \mathbb{S}^{16}, \operatorname{Top}/O_{(2)} ] \to [ \mathbb{S}^{16}, F/O_{(2)} ]$ is injective. By the commutativity of Diagram~\eqref{injective-diag}, we have $\psi_* ((\Sigma L)^*(y)) = (\Sigma L)^*(\psi_*(y)) = 0$. Given the injectivity of $\psi_*$ on the right, it follows that $(\Sigma L)^*(y) = 0$. This completes the proof of Part (1)(b). We now turn to the proof of Part (2). Applying $[-, \operatorname{Top}/O_{(2)}]$ to the cofibration sequence
$\Sigma \mathbb{R}P^7/\mathbb{R}P^{6} \hookrightarrow \Sigma \mathbb{R}P^n/\mathbb{R}P^{6} \xrightarrow{q_{n,6}} \Sigma \mathbb{R}P^n/\mathbb{R}P^7$ yields the exact sequence
\begin{equation} \label{eq:exact1}
[\Sigma \mathbb{R}P^n/\mathbb{R}P^7, \operatorname{Top}/O_{(2)}] \xrightarrow{q_{n,6}^*} [\Sigma \mathbb{R}P^n/\mathbb{R}P^6, \operatorname{Top}/O_{(2)}] \xrightarrow{i^*} [\mathbb{S}^8, \operatorname{Top}/O_{(2)}] \cong \mathbb{Z}_2\{\epsilon\}. 
\end{equation}
From this sequence, to show that $q_{n,6}^*$ is surjective, it is enough to prove that the map $i^*: [\Sigma \mathbb{R}P^{n}/\mathbb{R}P^{6}, \operatorname{Top}/O_{(2)}] \to [\mathbb{S}^8, \operatorname{Top}/O_{(2)}]$ is trivial. Since this map factors through $[\Sigma \mathbb{R}P^{9}/\mathbb{R}P^{6}, \operatorname{Top}/O_{(2)}]$, it suffices to show that $i^*: [\Sigma \mathbb{R}P^{9}/\mathbb{R}P^{6}, \operatorname{Top}/O_{(2)}] \to [\mathbb{S}^8, \operatorname{Top}/O_{(2)}]$ is the trivial map. 
Note that $\mathbb{R}P^8/\mathbb{R}P^6 \simeq M(\mathbb{Z}_2, 7)$ and $\mathbb{R}P^9/\mathbb{R}P^6 \simeq M(\mathbb{Z}_2, 7) \cup_{i \circ \eta} \mathbb{S}^9$. Applying the functor $[-, \operatorname{Top}/O_{(2)}]$ to the cofibre sequences $\mathbb{S}^8 \xrightarrow{i} M(\mathbb{Z}_2, 8) \xrightarrow{q} \mathbb{S}^9$ and $\Sigma \mathbb{R}P^8/\mathbb{R}P^6 \simeq M(\mathbb{Z}_2, 8) \xrightarrow{i} \Sigma \mathbb{R}P^9/\mathbb{R}P^6 \to \mathbb{S}^{10}$, we obtain the exact sequences
\begin{equation} \label{eq:exact3}
0 \to [\mathbb{S}^9, \operatorname{Top}/O_{(2)}] \xrightarrow{q^{*}} [M(\mathbb{Z}_2, 8), \operatorname{Top}/O_{(2)}] \xrightarrow{i^*} [\mathbb{S}^8, \operatorname{Top}/O_{(2)}] \to 0
\end{equation}
and
\begin{equation} \label{eq:exact4}
\begin{aligned}
[M(\mathbb{Z}_2, 9), \operatorname{Top}/O_{(2)}] &\xrightarrow{(i \circ \eta)^*} [\mathbb{S}^{10}, \operatorname{Top}/O_{(2)}] \to [\Sigma \mathbb{R}P^9/\mathbb{R}P^6, \operatorname{Top}/O_{(2)}] \\
&\to [M(\mathbb{Z}_2, 8), \operatorname{Top}/O_{(2)}] \xrightarrow{(i \circ \eta)^*} [\mathbb{S}^9, \operatorname{Top}/O_{(2)}].
\end{aligned}
\end{equation}
Note from \cite[Lemma~3.1]{BKS25} that the map $\eta^* : [\mathbb{S}^8, \operatorname{Top}/O_{(2)}] \to [\mathbb{S}^9, \operatorname{Top}/O_{(2)}]$ is injective and $\eta^* : [\mathbb{S}^9, \operatorname{Top}/O_{(2)}] \to [\mathbb{S}^{10}, \operatorname{Top}/O_{(2)}]$ is surjective. Therefore, the latter long exact sequence \eqref{eq:exact4} reduces to
\begin{equation} \label{eq:exact5}
0 \to [\Sigma \mathbb{R}P^9/\mathbb{R}P^6, \operatorname{Top}/O_{(2)}] \xrightarrow{i^*} [M(\mathbb{Z}_2, 8), \operatorname{Top}/O_{(2)}] \xrightarrow{(i \circ \eta)^*} \mathbb{Z}_{2} \subset [\mathbb{S}^9, \operatorname{Top}/O_{(2)}] \to 0.
\end{equation}
Since $\eta^* : [\mathbb{S}^8, \operatorname{Top}/O_{(2)}] \to [\mathbb{S}^9, \operatorname{Top}/O_{(2)}]$ is injective, we have 
\begin{equation}
\begin{split}
\Ker\left([M(\mathbb{Z}_2, 8), \operatorname{Top}/O_{(2)}] \right. & \left. \xrightarrow{(i \circ \eta)^*} [\mathbb{S}^9, \operatorname{Top}/O_{(2)}]\right) \\
&= \Ker\left([M(\mathbb{Z}_2, 8), \operatorname{Top}/O_{(2)}] \xrightarrow{i^*} [\mathbb{S}^8, \operatorname{Top}/O_{(2)}]\right).
\end{split}
\end{equation}
Thus, the exact sequence \eqref{eq:exact5} implies that the image 
\begin{equation}\label{imker1}
\begin{split}
\operatorname{Im} \left( [\Sigma \mathbb{R}P^9/\mathbb{R}P^6, \operatorname{Top}/O_{(2)}] \right. & \left. \xrightarrow{i^{*}} [M(\mathbb{Z}_2, 8), \operatorname{Top}/O_{(2)}] \right) \\
&= \mathrm{Ker} \left( [M(\mathbb{Z}_2, 8), \operatorname{Top}/O_{(2)}] \xrightarrow{i^*} [\mathbb{S}^8, \operatorname{Top}/O_{(2)}] \right).
\end{split}
\end{equation}
This implies that the composite 
\[ [\Sigma \mathbb{R}P^9/\mathbb{R}P^6, \operatorname{Top}/O_{(2)}] \xrightarrow{i^*} [M(\mathbb{Z}_2, 8), \operatorname{Top}/O_{(2)}] \xrightarrow{i^*} [\mathbb{S}^8, \operatorname{Top}/O_{(2)}] \] 
is trivial. This finishes the proof of Part (2). 

For considering Part (3), we note that the composite $\mathbb{R}P^8/\mathbb{R}P^6\xrightarrow{i}\mathbb{R}P^9/\mathbb{R}P^6\xrightarrow{\overline{\pi}_{4,3}} \mathbb{C}P^4/\mathbb{C}P^3 \simeq \mathbb{S}^{8}$ is homotopic to the collapse map $q:\mathbb{R}P^8/\mathbb{R}P^6\to \mathbb{S}^{8}$ as a consequence of Lemma~3.2 of \cite{Muk00}, and induces the following commutative diagram 
\begin{equation}\label{imker2}
\xymatrix@C=2pc@R=3pc{
[ \Sigma \mathbb{C}P^4/\mathbb{C}P^3, \operatorname{Top}/O_{(2)} ] \cong [\mathbb{S}^9, \operatorname{Top}/O_{(2)}] \ar@{=}[r] \ar[d]_{(\Sigma \overline{\pi}_{4,3})^{*}} 
& [\mathbb{S}^9, \operatorname{Top}/O_{(2)}] \cong [ \Sigma \mathbb{R}P^8/\mathbb{R}P^7, \operatorname{Top}/O_{(2)} ] \ar[d]^{q^{*}} \\
[ \Sigma \mathbb{R}P^9/\mathbb{R}P^6, \operatorname{Top}/O_{(2)} ] \ar[r]^-{\qquad i^{*} \qquad} 
& [ \Sigma \mathbb{R}P^8/\mathbb{R}P^6, \operatorname{Top}/O_{(2)}]
}
\end{equation}
where the map $i^{*}$ is injective by \eqref{eq:exact5}, and the map $q^{*}$ is also injective by \eqref{eq:exact3}. Therefore, from the commutativity of the above diagram, the vertical map $$(\Sigma \overline{\pi}_{4,3})^{*}:[ \Sigma \mathbb{C}P^4/\mathbb{C}P^3, \operatorname{Top}/O_{(2)} ]\to [ \Sigma \mathbb{R}P^9/\mathbb{R}P^6, \operatorname{Top}/O_{(2)} ]$$ is injective. On the other hand, it follows from \eqref{imker1} and \eqref{eq:exact3} that 
\begin{equation}\label{imker3}
\begin{split}
[\Sigma \mathbb{R}P^9/\mathbb{R}P^6, \operatorname{Top}/O_{(2)}] 
&\cong \operatorname{Im} \left( [\Sigma \mathbb{R}P^9/\mathbb{R}P^6, \operatorname{Top}/O_{(2)}] \xrightarrow{i^*} [M(\mathbb{Z}_2, 8), \operatorname{Top}/O_{(2)}] \right) \\
&= \operatorname{Im} \left( [\mathbb{S}^9, \operatorname{Top}/O_{(2)}] \xrightarrow{q^*} [M(\mathbb{Z}_2, 8), \operatorname{Top}/O_{(2)}] \right) \\
&\cong \mathbb{Z}_{2} \oplus \mathbb{Z}_{2} \oplus \mathbb{Z}_{2}.
\end{split}
\end{equation}
This implies that the map $(\Sigma \overline{\pi}_{4,3})^{*}:[ \Sigma \mathbb{C}P^4/\mathbb{C}P^3, \operatorname{Top}/O_{(2)} ] \to [ \Sigma \mathbb{R}P^9/\mathbb{R}P^6, \operatorname{Top}/O_{(2)} ]$ is also surjective and, hence, an isomorphism. This proves Part (3)(a). For Part (3)(b), consider the following commutative diagram:
\begin{equation}\label{imker4}
\xymatrix@C=4pc@R=3pc{
[ \Sigma \mathbb{C}P^7/\mathbb{C}P^3, \operatorname{Top}/O_{(2)} ] \ar[r]^-{(\Sigma \overline{\pi}_{7,3})^{*}} \ar[d]_{i^{*}} & [ \Sigma \mathbb{R}P^{15}/\mathbb{R}P^6, \operatorname{Top}/O_{(2)} ] \ar[dd]^{i^{*}} \\
[ \Sigma \mathbb{C}P^6/\mathbb{C}P^3, \operatorname{Top}/O_{(2)} ] \ar[d]_{i^{*}} & \\
[ \Sigma \mathbb{C}P^4/\mathbb{C}P^3, \operatorname{Top}/O_{(2)} ] \ar[r]^-{(\Sigma \overline{\pi}_{4,3})^{*}}_{\cong} & [ \Sigma \mathbb{R}P^9/\mathbb{R}P^6, \operatorname{Top}/O_{(2)} ]
}
\end{equation}
where the bottom isomorphism $(\Sigma \overline{\pi}_{4,3})^{*}$ is given by Part (3)(a). Using $\mathbb{C}P^7/\mathbb{C}P^3 \simeq_{(2)} \mathbb{S}^{14} \vee \mathbb{C}P^6/\mathbb{C}P^3$, and the facts that $\pi_{12}(\operatorname{Top}/O_{(2)}) = 0$ and $\pi_{13}(\operatorname{Top}/O_{(2)}) = 0$, we apply the same arguments used to establish \eqref{cp6-1} to see that the induced map
\[
i^{*}: [ \Sigma \mathbb{C}P^6/\mathbb{C}P^3, \operatorname{Top}/O_{(2)} ] \to [ \Sigma \mathbb{C}P^4/\mathbb{C}P^3, \operatorname{Top}/O_{(2)} ] \oplus [ \Sigma \mathbb{S}^{10}, \operatorname{Top}/O_{(2)} ]
\]
is an isomorphism. Consequently, the first vertical composition
\[
i^{*}: [ \Sigma \mathbb{C}P^7/\mathbb{C}P^3, \operatorname{Top}/O_{(2)} ] \to [ \Sigma \mathbb{C}P^4/\mathbb{C}P^3, \operatorname{Top}/O_{(2)} ]
\]
is surjective. Combining this surjectivity with the commutativity of Diagram \eqref{imker4}, the assertion in Part (3)(b) follows immediately.
\end{proof}
\begin{lemma}\label{RP16-CP8-TopO}
\begin{itemize}
    \item[(1)] Let $f_{\mathbb{R}P^{16}}: \mathbb{R}P^{16} \to \mathbb{R}P^{16}/\mathbb{R}P^{15} \cong \mathbb{S}^{16}$ be the canonical quotient map. Then the induced map
    \[
    f_{\mathbb{R}P^{16}}^*: [\mathbb{S}^{16}, \operatorname{Top}/O] \longrightarrow [\mathbb{R}P^{16}, \operatorname{Top}/O]
    \]
    is injective.
    
    \item[(2)] Let $\pi \colon \mathbb{R}P^{17} \to \mathbb{C}P^{8}$ be the natural projection map. The induced map
\[
\pi^* \colon [\mathbb{C}P^{8}, \operatorname{Top}/O] \cong \mathbb{Z}_2\{(\eta^*)_{16}\} \oplus \mathbb{Z}_2\{\left(\overline{(\kappa)_{14}+\left(\overline{(\epsilon)_{8}}\right)_{14}}\right)_{16}\} \longrightarrow [\mathbb{R}P^{17}, \operatorname{Top}/O]
\]
is injective. Moreover, the composite $i^* \circ \pi^* \colon [\mathbb{C}P^{8}, \operatorname{Top}/O] \to [\mathbb{R}P^{16}, \operatorname{Top}/O]$ sends the generator $(\eta^*)_{16}$ to $f_{\mathbb{R}P^{16}}^*(\eta^*)$, where $i \colon \mathbb{R}P^{16} \to \mathbb{R}P^{17}$ is the inclusion map.
\end{itemize}
\end{lemma}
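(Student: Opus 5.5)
For Part (1), I would run the cofibre sequence $\mathbb{R}P^{15}\hookrightarrow \mathbb{R}P^{16}\xrightarrow{f_{\mathbb{R}P^{16}}}\mathbb{S}^{16}\xrightarrow{\delta}\Sigma\mathbb{R}P^{15}$. Exactness gives that the kernel of $f^*_{\mathbb{R}P^{16}}$ is the image of $(\Sigma\varphi)^*\colon[\Sigma\mathbb{R}P^{15},\operatorname{Top}/O]\to[\mathbb{S}^{16},\operatorname{Top}/O]\cong\mathbb{Z}_2\{\eta^*\}$. Here $\varphi\colon\mathbb{S}^{15}\to\mathbb{R}P^{15}$ is the attaching map of the top cell, that is, the double cover.

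Since the target is $2$-torsion, I would localize at $2$. The top cell of $\mathbb{R}P^{16}/\mathbb{R}P^{14}$ attaches with degree $2$ onto the $15$-cell, so the composite $\mathbb{S}^{15}\to\mathbb{R}P^{15}\to\mathbb{S}^{15}$ is $2$. It therefore induces zero on $\mathbb{Z}_2$-valued groups. The real issue is the lower-cell contributions. To handle them, I would factor $\varphi$ through a stunted space $\mathbb{R}P^{15}/\mathbb{R}P^{k}$ with $k\in\{1,5,9\}$, chosen so that $[\Sigma\mathbb{R}P^{k},\operatorname{Top}/O_{(2)}]$ is small. Then I would invoke Lemma~\ref{teccp8}(1)(b) with $L$ the induced map $\mathbb{S}^{15}\to\mathbb{R}P^{n}/\mathbb{R}P^{k}$.

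Concretely, I would filter $[\Sigma\mathbb{R}P^{15},\operatorname{Top}/O_{(2)}]$ by cells, using the $2$-local groups $\pi_j(\operatorname{Top}/O)$ for $j\le 16$, which vanish for $j\le 6$ and for $j=12,13$. For each nonzero layer I would either show that the generator lifts through $\Omega G/\mathrm{Top}$, as in Lemma~\ref{teccp8}(1), so that $(\Sigma L)^*$ kills it, or reduce to $\Sigma\mathbb{R}P^n/\mathbb{R}P^6$. The latter reduction uses Lemma~\ref{teccp8}(2), which lets me pass to $\Sigma\mathbb{R}P^n/\mathbb{R}P^7$ for the $\epsilon$-cell.

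For Part (2), I would restrict along $i\colon\mathbb{R}P^{16}\to\mathbb{R}P^{17}$. The composite $\pi\circ i$ followed by $f_{\mathbb{C}P^8}$ is the top-cell collapse of $\mathbb{R}P^{16}$, because $\pi$ maps the top real cell homeomorphically onto the top complex cell. Hence $i^*\pi^*(\eta^*)_{16}=f^*_{\mathbb{R}P^{16}}(\eta^*)$, which is nonzero by Part (1). This gives the second assertion.

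For injectivity on the other generator $w=\bigl(\overline{(\kappa)_{14}+(\overline{(\epsilon)_8})_{14}}\bigr)_{16}$, I would use the factorization through $\mathbb{C}P^8/\mathbb{C}P^3$ from \eqref{quo} and the maps $\overline{\pi}_{n,t}\colon\mathbb{R}P^{2n+1}/\mathbb{R}P^{2t}\to\mathbb{C}P^n/\mathbb{C}P^t$ from Lemma~\ref{teccp8}(3). Restricting to the $8$-cell, $\pi^*w$ has $\epsilon$-component detected on $\mathbb{R}P^{8}/\mathbb{R}P^{6}$: by the argument of (3)(a), the composite $\mathbb{R}P^8/\mathbb{R}P^6\to\mathbb{C}P^4/\mathbb{C}P^3$ is the collapse, and $q^*$ is injective by \eqref{eq:exact3}. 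Thus $\pi^*w$ and $\pi^*(w+(\eta^*)_{16})$ have nonzero image in $[\mathbb{R}P^{8},\operatorname{Top}/O]$, whereas $(\eta^*)_{16}$ restricts to zero there. Together with the previous paragraph, this shows that all three nonzero elements map nontrivially.

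The main obstacle is Part (1): showing that no class in $[\Sigma\mathbb{R}P^{15},\operatorname{Top}/O]$ hits $\eta^*$ through the double-cover attaching map. My first approach would be to compose with $\psi\colon\operatorname{Top}/O\to F/O$, which is injective on $\pi_{16}$. There I would use the splitting $F/O_{(2)}\simeq BSO_{(2)}\times\operatorname{cok}J_{(2)}$ and argue, as in Lemma~\ref{teccp8}(1)(b), that the $BSO$ part cannot hit $\eta^*$ while the $\operatorname{cok}J$ part vanishes by the connectivity and low-dimensional computations above.
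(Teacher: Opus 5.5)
Your Part~(1) has a genuine gap: the outline never handles the layers that carry the difficulty. The $\Omega G/\mathrm{Top}$-lifting of Lemma~\ref{teccp8}(1)(b) only disposes of layers where $\pi_{k+2}(F/O_{(2)})=0$, that is, the $bP$-classes in $\pi_3,\pi_7,\pi_{11}$ of $\operatorname{Top}/O$. (Incidentally, $\pi_3(\operatorname{Top}/O)\cong\mathbb{Z}_2$, not $0$.) Passing to $\Sigma\mathbb{R}P^n/\mathbb{R}P^6$, or to $\Sigma\mathbb{R}P^n/\mathbb{R}P^7$ via Lemma~\ref{teccp8}(2), only relocates the $\pi_8,\pi_9,\pi_{10}$ and $\pi_{14},\pi_{15}$ layers; it does not kill them.

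Your fallback through $F/O_{(2)}\simeq BSO_{(2)}\times\operatorname{cok}J_{(2)}$ does not work as stated. $\operatorname{cok}J_{(2)}$ is only $5$-connected and has $\nu^2,\epsilon,\nu^3,\kappa,\eta\circ\kappa$ in degrees $6,8,9,14,15$, and such classes can reach the top cell through Toda brackets. Already in the $8$-dimensional analogue this happens. There $\Sigma(\mathbb{R}P^7/\mathbb{R}P^4)\simeq M(\mathbb{Z}_2,6)\vee\mathbb{S}^8$, and the top cell of $\Sigma\mathbb{R}P^8$ is attached by $(\pm\widetilde{\eta},2)$. So an extension of $\nu^2$ is sent into $\langle\nu^2,2,\eta\rangle\ni\epsilon$. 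You therefore have to use that the classes come from $\operatorname{Top}/O$, not merely from $F/O$.

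The paper does this with two specific inputs absent from your plan. First, the $\Sigma\mathbb{R}P^9/\mathbb{R}P^6$ layer is handled by Lemma~\ref{teccp8}(3)(b). Every class there has an extension pulled back along $\Sigma\overline{\pi}_{7,3}$ from $[\Sigma\mathbb{C}P^7/\mathbb{C}P^3,\operatorname{Top}/O_{(2)}]$, where $(\Sigma\varphi_8)^*=0$ by \eqref{conntcp7}; this ultimately rests on $\langle\nu^3,2\nu,\nu\rangle=0$, Lemma~\ref{twonew}(2)(d). Second, after splitting $\mathbb{R}P^{15}/\mathbb{R}P^9\simeq\mathbb{R}P^{14}/\mathbb{R}P^9\vee\mathbb{S}^{15}$, the $\kappa$-layer on $\Sigma\mathbb{R}P^{14}/\mathbb{R}P^{12}\simeq M(\mathbb{Z}_2,14)$ is killed by computing $\widetilde{\eta}^*$ on $[M(\mathbb{Z}_2,14),\operatorname{Top}/O_{(2)}]\cong\mathbb{Z}_4\{\overline{\kappa}\}\oplus\mathbb{Z}_2\{[b]\}$. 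This uses $\eta^*=0$ on $\pi_{15}(\operatorname{Top}/O)$ and $\langle\kappa,2,\eta\rangle\equiv0 \bmod \eta\circ\rho\in\operatorname{Im}J$.

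In Part~(2), the second assertion and its cell-level justification are fine. For injectivity you take a different route from the paper. The paper factors the Hopf map as $p=\pi\circ\overline{p}$ and uses Lemma~\ref{cor-cp8pl} to get $\overline{p}^*\pi^*(w)\neq0$ and $\overline{p}^*\pi^*(w+(\eta^*)_{16})\neq0$. Detecting $w$ on $\mathbb{R}P^8$ instead is a sensible alternative that avoids the $PL$/$J$ computation behind Lemma~\ref{cor-cp8pl}. As written, however, it stops short. Nonvanishing of $q^*(\epsilon)$ in $[\mathbb{R}P^8/\mathbb{R}P^6,\operatorname{Top}/O]$ says nothing about $[\mathbb{R}P^8,\operatorname{Top}/O]$, because the pullback has kernel equal to the image of $[\Sigma\mathbb{R}P^6,\operatorname{Top}/O]$.

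Since $\pi|_{\mathbb{R}P^8}$ lands in $\mathbb{C}P^4$ and $w|_{\mathbb{C}P^4}=(\epsilon)_8$, you get directly $\pi^*(w)|_{\mathbb{R}P^8}=f^*_{\mathbb{R}P^8}(\epsilon)$. What you actually need is injectivity of $f^*_{\mathbb{R}P^8}\colon[\mathbb{S}^8,\operatorname{Top}/O]\to[\mathbb{R}P^8,\operatorname{Top}/O]$, an $8$-dimensional analogue of Part~(1). That one is easy: only the $bP$-layers $\pi_3,\pi_7$ and the degree-$2$ top cell occur, so the argument of Lemma~\ref{teccp8}(1)(b) suffices. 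It must be done with $\operatorname{Top}/O$ coefficients, since by the bracket above it fails for $\operatorname{cok}J_{(2)}$. Your route also still depends on Part~(1) for $(\eta^*)_{16}$.
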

\begin{proof}
Since $[\mathbb{S}^{16}, \operatorname{Top}/O] \cong \mathbb{Z}_2\{\eta^*\}$, it is enough to work $2$-locally. Consider the following commutative diagram :
\begin{equation}\label{topquo}
\xymatrix@C=3pc{
[\Sigma \mathbb{R}P^{15}, \operatorname{Top}/O_{(2)}] \ar[r]^{(\Sigma p)^*} & [\mathbb{S}^{16}, \operatorname{Top}/O_{(2)}] \ar[r]^{f^*_{\mathbb{R}P^{16}}} & [\mathbb{R}P^{16}, \operatorname{Top}/O_{(2)}] \\
[\Sigma \mathbb{R}P^{15}/\mathbb{R}P^{1}, \operatorname{Top}/O_{(2)}] \ar[u]^{(\Sigma q_{15,0})^{*}} \ar[ur]_{(\Sigma p)^{*}\circ(\Sigma q_{15,0})^{*}} & & 
}
\end{equation}
where the top row is part of the long exact sequence induced by the cofibre sequence
$\mathbb{S}^{15} \xrightarrow{p} \mathbb{R}P^{15} \xrightarrow{i} \mathbb{R}P^{16} \xrightarrow{f_{\mathbb{R}P^{16}}} \mathbb{S}^{16}$,
and $q_{n,k}$ denotes the quotient map
$\mathbb{R}P^{n}/\mathbb{R}P^{k} \to \mathbb{R}P^{n}/\mathbb{R}P^{k+1}$, where
$q_{n,0} : \mathbb{R}P^{n} \to \mathbb{R}P^{n}/\mathbb{R}P^{1}$.
Since $[\Sigma \mathbb{R}P^{1}, \operatorname{Top}/O_{(2)}] = 0$, the induced map
\[
(\Sigma q_{15,0})^* : [\Sigma \mathbb{R}P^{15}/\mathbb{R}P^{1}, \operatorname{Top}/O_{(2)}]
\to
[\Sigma \mathbb{R}P^{15}, \operatorname{Top}/O_{(2)}]
\]
is surjective. It follows from the commutativity of Diagram~\eqref{topquo} and the exactness of the top row that, to show $f_{\mathbb{R}P^{16}}^*$ is injective, it suffices to prove that the composition
\[
(\Sigma p)^* \circ (\Sigma q_{15,0})^* :
[\Sigma(\mathbb{R}P^{15}/\mathbb{R}P^{1}), \operatorname{Top}/O_{(2)}]
\to
[\mathbb{S}^{16}, \operatorname{Top}/O_{(2)}]
\]
is trivial. Let $\varphi_{n,k}$ denote the composition
\[
q_{n,k-1} \circ \cdots \circ q_{n,1} \circ q_{n,0} \circ p \colon \mathbb{S}^{n} \to \mathbb{R}P^{n}/\mathbb{R}P^{k}.
\]
Note that $q_{n,k} \circ \varphi_{n,k} = \varphi_{n,k+1}$. Consider the following commutative diagram:
\begin{equation}\label{redu1}
\xymatrix@C=3.5pc@R=3pc{
{[\Sigma (\mathbb{R}P^{k+1}/\mathbb{R}P^{k}), \operatorname{Top}/O_{(2)}]} & \\
{[\Sigma (\mathbb{R}P^{15}/\mathbb{R}P^{k}), \operatorname{Top}/O_{(2)}]} \ar[u]^{i^*} \ar[r]^-{(\Sigma \varphi_{15,k})^*} & {[\mathbb{S}^{16}, \operatorname{Top}/O_{(2)}]} \\
{[\Sigma (\mathbb{R}P^{15}/\mathbb{R}P^{k+1}), \operatorname{Top}/O_{(2)}]} \ar[u]^{(\Sigma q_{15,k})^*} \ar[ru]_-{(\Sigma \varphi_{15,k+1})^*} &  
}
\end{equation}
where the first column is part of the long exact sequence induced by the cofiber sequence $\mathbb{R}P^{k+1}/\mathbb{R}P^{k} \hookrightarrow \mathbb{R}P^{15}/\mathbb{R}P^{k} \xrightarrow{q_{15,k}} \mathbb{R}P^{15}/\mathbb{R}P^{k+1}$. The commutativity of Diagram \eqref{redu1} immediately implies the following statement:
\begin{itemize}
    \item[($\star$)] Suppose $(\Sigma \varphi_{15,k+1})^* \colon [\Sigma (\mathbb{R}P^{15}/\mathbb{R}P^{k+1}), \operatorname{Top}/O_{(2)}] \to [\mathbb{S}^{16}, \operatorname{Top}/O_{(2)}]$ is the trivial map. Then the restriction of $(\Sigma \varphi_{15,k})^*$ to the image of $(\Sigma q_{15,k})^*$ is also trivial.
\end{itemize}
Now assume that $(\Sigma \varphi_{15,6})^* \colon [\Sigma (\mathbb{R}P^{15}/\mathbb{R}P^{6}), \operatorname{Top}/O_{(2)}] \to [\mathbb{S}^{16}, \operatorname{Top}/O_{(2)}]$ is the trivial map. For $k=5$, let $\overline{x} \in [\Sigma \mathbb{R}P^{15}/\mathbb{R}P^k, \operatorname{Top}/O_{(2)}]$ be any extension of an element \[x \in [\Sigma \mathbb{R}P^{k+1}/\mathbb{R}P^k, \operatorname{Top}/O_{(2)}]\] along the vertical map $i^*$. It follows from Lemma \ref{teccp8}(1)(b) with $L=\varphi_{15,k}$ that there exists an element $y \in [\Sigma \mathbb{R}P^{15}/\mathbb{R}P^k, \operatorname{Top}/O_{(2)}]$ such that $i^{*}(y) = i^{*}(\overline{x}) = x$ and $(\Sigma \varphi_{15,k})^*(y) = 0$.
Since $i^{*}(y - \overline{x}) = 0$, the exactness of the first column in Diagram \eqref{redu1} implies that $y - \overline{x}$ lies in the image of $(\Sigma q_{15,k})^*$. By statement ($\star$), we have $(\Sigma \varphi_{15,k})^*(y - \overline{x}) = 0$. Consequently,
\[
(\Sigma \varphi_{15,k})^*(\overline{x}) = (\Sigma \varphi_{15,k})^*(y) = 0.
\]
Thus, the map $(\Sigma \varphi_{15,k})^*$ sends any extension $\overline{x}$ of $x$ to zero. Combined with ($\star$), this implies that $(\Sigma \varphi_{15,k})^*$ is itself the trivial map for $k=5$.
This result for $k=5$ allows us to proceed further down the filtration.
For $k=2, 3, 4$, we have $[\Sigma \mathbb{R}P^{k+1}/\mathbb{R}P^k, \operatorname{Top}/O_{(2)}]=0$. By the exactness of the first column in Diagram \eqref{redu1}, this implies that the maps 
\[ (\Sigma q_{15,k})^* \colon [\Sigma (\mathbb{R}P^{15}/\mathbb{R}P^{k+1}), \operatorname{Top}/O_{(2)}] \to [\Sigma (\mathbb{R}P^{15}/\mathbb{R}P^{k}), \operatorname{Top}/O_{(2)}] \]
are surjective for $k=2, 3, 4$. Applying statement ($\star$) iteratively, starting from $k=4$, the triviality of $(\Sigma \varphi_{15,k+1})^*$ together with this surjectivity implies that $(\Sigma \varphi_{15,k})^*$ is trivial for $k=4,3,2$. Finally, for $k=1$, repeating the argument used for $k=5$ by invoking Lemma \ref{teccp8}(1)(b) with $L=\varphi_{15,k}$ and statement ($\star$), we conclude that $(\Sigma \varphi_{15,1})^*$ is also the trivial map. Therefore, instead of directly showing that $(\Sigma \varphi_{15,1})^* = (\Sigma p)^* \circ (\Sigma q_{15,0})^*:
[\Sigma(\mathbb{R}P^{15}/\mathbb{R}P^{1}), \operatorname{Top}/O_{(2)}]
\to
[\mathbb{S}^{16}, \operatorname{Top}/O_{(2)}]$ is trivial, it suffices to prove that 
\[ (\Sigma \varphi_{15,6})^* \colon [\Sigma \mathbb{R}P^{15}/\mathbb{R}P^6, \operatorname{Top}/O_{(2)}] \to [\mathbb{S}^{16}, \operatorname{Top}/O_{(2)}] \] 
is the trivial map. To this end, we first assume that the map 
\[ (\Sigma \varphi_{15,9})^* \colon [\Sigma \mathbb{R}P^{15}/\mathbb{R}P^9, \operatorname{Top}/O_{(2)}] \longrightarrow [\mathbb{S}^{16}, \operatorname{Top}/O_{(2)}] \] 
is trivial. Consider the part of the long exact sequence induced by the cofiber sequence $\mathbb{R}P^{9}/\mathbb{R}P^6 \xrightarrow{i} \mathbb{R}P^{15}/\mathbb{R}P^6 \xrightarrow{q} \mathbb{R}P^{15}/\mathbb{R}P^9$, where $q = q_{15,8} \circ q_{15,7} \circ q_{15,6}$:
\begin{equation} \label{redu67}
\dots \to [\Sigma \mathbb{R}P^{15}/\mathbb{R}P^9, \operatorname{Top}/O_{(2)}] \xrightarrow{q^*} [\Sigma \mathbb{R}P^{15}/\mathbb{R}P^6, \operatorname{Top}/O_{(2)}] \xrightarrow{i^*} [\Sigma \mathbb{R}P^{9}/\mathbb{R}P^6, \operatorname{Top}/O_{(2)}].
\end{equation}
Since $(\Sigma \varphi_{15,6})^{*} \circ q^* = (\Sigma \varphi_{15,9})^{*}$, which is trivial by assumption, the map $(\Sigma \varphi_{15,6})^*$ vanishes on $\operatorname{Im}(q^*)$. From this and the exact sequence \eqref{redu67}, it suffices to show that $(\Sigma \varphi_{15,6})^*$ also sends any extension of an element $x \in [\Sigma \mathbb{R}P^{9}/\mathbb{R}P^6, \operatorname{Top}/O_{(2)}]$ to zero, which establishes the triviality of $(\Sigma \varphi_{15,6})^*$.
Consider the following diagram:
\begin{equation}\label{reduconnct}
\begin{tikzcd}[column sep=3.5em, row sep=3em]
{[\Sigma \mathbb{R}P^{15}/\mathbb{R}P^{6}, \operatorname{Top}/O_{(2)}]}
\arrow[r, "{(\Sigma \varphi_{15,6})^*}"]
&
{[\mathbb{S}^{16}, \operatorname{Top}/O_{(2)}]}
\\
{[\Sigma \mathbb{C}P^{7}/\mathbb{C}P^3, \operatorname{Top}/O_{(2)}]
\cong
[\mathbb{S}^{15}, \operatorname{Top}/O_{(2)}] \oplus
[\Sigma \mathbb{C}P^{6}/\mathbb{C}P^3, \operatorname{Top}/O_{(2)}]}
\arrow[u, "{(\Sigma \overline{\pi}_{7,3})^{*}}"]
\arrow[ru, "{(\Sigma \varphi_{8})^* = (\Sigma \varphi_8)_1^{*} \oplus \eta^{*}}"']
&
\end{tikzcd}
\end{equation}
where $\varphi_8 = ((\varphi_8)_1, (\varphi_8)_2)$ with the first component $(\varphi_8)_1 \colon \mathbb{S}^{15} \to \mathbb{C}P^{6}/\mathbb{C}P^{3}$ and the second component $(\varphi_8)_2 = \eta \in \pi_{15}(\mathbb{S}^{14})$ (cf. \eqref{spil11}). The map $(\Sigma \varphi_{8})^*$ is trivial because both $\eta^{*} \colon [\mathbb{S}^{15}, \operatorname{Top}/O_{(2)}] \to [\mathbb{S}^{16}, \operatorname{Top}/O_{(2)}]$ and $(\Sigma \varphi_8)_1^{*} \colon [\Sigma \mathbb{C}P^{6}/\mathbb{C}P^3, \operatorname{Top}/O_{(2)}] \to [\mathbb{S}^{16}, \operatorname{Top}/O_{(2)}]$ are trivial (see \ref{conntcp7}). This, together with the commutativity of Diagram \eqref{reduconnct}, implies that the map $(\Sigma \varphi_{15,6})^*$ is trivial on the image $\operatorname{Im}((\Sigma \overline{\pi}_{7,3})^{*})$. Let $\overline{x} \in [\Sigma \mathbb{R}P^{15}/\mathbb{R}P^6, \operatorname{Top}/O_{(2)}]$ be an extension of $x \in [\Sigma \mathbb{R}P^{9}/\mathbb{R}P^6, \operatorname{Top}/O_{(2)}]$ along $i^*$. By Lemma \ref{teccp8}(3)(b), there exists an element 
\[ y \in \operatorname{Im} \left( (\Sigma\overline{\pi}_{7,3})^{*} \colon [\Sigma \mathbb{C}P^{7}/\mathbb{C}P^3, \operatorname{Top}/O_{(2)}] \longrightarrow [\Sigma \mathbb{R}P^{15}/\mathbb{R}P^{6}, \operatorname{Top}/O_{(2)}] \right) \] 
such that $i^*(y) = x$, and consequently $(\Sigma \varphi_{15,6})^{*}(y) = 0$.

On the other hand, the exactness of \eqref{redu67} implies the existence of an element $z \in [\Sigma \mathbb{R}P^{15}/\mathbb{R}P^9, \operatorname{Top}/O_{(2)}]$ such that $q^*(z) = \overline{x} - y$. Since $(\Sigma \varphi_{15,6})^{*}$ is trivial on $\operatorname{Im}(q^*)$, we have $(\Sigma \varphi_{15,6})^*(\overline{x} - y) = 0$, which implies $(\Sigma \varphi_{15,6})^*(\overline{x}) = (\Sigma \varphi_{15,6})^*(y) = 0$. 
Consequently, the map $(\Sigma \varphi_{15,6})^*$ is trivial on any extension of $x$, as well as on $\operatorname{Im}(q^*)$. This concludes the proof that $(\Sigma \varphi_{15,6})^* \colon [\Sigma \mathbb{R}P^{15}/\mathbb{R}P^6, \operatorname{Top}/O_{(2)}] \to [\mathbb{S}^{16}, \operatorname{Top}/O_{(2)}]$ is the trivial map, assuming the triviality of $(\Sigma \varphi_{15,9})^*$. To complete the proof of Part (1), it remains to verify the assumption that $(\Sigma \varphi_{15,9})^*:[\Sigma \mathbb{R}P^{15}/\mathbb{R}P^9, \operatorname{Top}/O_{(2)}] \longrightarrow [\mathbb{S}^{16}, \operatorname{Top}/O_{(2)}]$ is indeed the trivial map. Since $\mathbb{R}P^{15}/\mathbb{R}P^{9} \simeq \mathbb{R}P^{14}/\mathbb{R}P^{9} \vee \mathbb{S}^{15}$ (see, for example, \cite[Proof of Theorem 1·6, p.~237]{LR05}), the map $\varphi_{15,9} : \mathbb{S}^{15} \to \mathbb{R}P^{15}/\mathbb{R}P^{9}$ decomposes into two components, $\pi_0$ and $\pi_1$, where $\pi_1 : \mathbb{S}^{15} \to \mathbb{S}^{15}$ is a map of degree two and $\pi_0: \mathbb{S}^{15} \to \mathbb{R}P^{14}/\mathbb{R}P^{9}$ is the attaching map of the $16$-cell of $\mathbb{R}P^{16}/\mathbb{R}P^{9}$ onto the $14$-skeleton. Consequently, $(\Sigma \varphi_{15,9})^*:[\Sigma \mathbb{R}P^{15}/\mathbb{R}P^9, \operatorname{Top}/O_{(2)}]\to [\mathbb{S}^{16}, \operatorname{Top}/O_{(2)}]$ can be identified with the map
\[ (\Sigma\pi_0)^{*} \oplus (\Sigma\pi_1)^{*} : [\Sigma \mathbb{R}P^{14}/\mathbb{R}P^{9}, \operatorname{Top}/O_{(2)}] \oplus [\mathbb{S}^{16}, \operatorname{Top}/O_{(2)}] \to [\mathbb{S}^{16}, \operatorname{Top}/O_{(2)}]. \]
Since the component $(\Sigma\pi_1)^*$ corresponds to multiplication by $2$ and $[ \mathbb{S}^{16}, \operatorname{Top}/O_{(2)} ] \cong \mathbb{Z}_{2}$, the map $(\Sigma\pi_1)^*: [\mathbb{S}^{16}, \operatorname{Top}/O_{(2)}] \to [\mathbb{S}^{16}, \operatorname{Top}/O_{(2)}]$ is trivial. It remains to show that the map
\[ (\Sigma \pi_0)^{*} : [\Sigma \mathbb{R}P^{14}/\mathbb{R}P^9, \operatorname{Top}/O_{(2)}] \to [\mathbb{S}^{16}, \operatorname{Top}/O_{(2)}] \] 
is trivial. To establish this, we examine the following commutative diagram for $9 \leq k \leq 11$:
\begin{equation}\label{iterat9}
\xymatrix@C=4.5em@R=4em{
    [\Sigma \mathbb{R}P^{14}/\mathbb{R}P^{k}, \operatorname{Top}/O_{(2)}] \ar[r]^-{(\Sigma \pi_0)^{*}} & [\mathbb{S}^{16}, \operatorname{Top}/O_{(2)}] \\
    [\Sigma \mathbb{R}P^{14}/\mathbb{R}P^{k+1}, \operatorname{Top}/O_{(2)}] \ar[u]^{(\Sigma q_{14,k})^*} \ar[ru]_-{(\Sigma \pi_0)^{*} \circ (\Sigma q_{14,k})^*} &  
}  
\end{equation}
where $(\Sigma q_{14,k})^*$ is surjective for $k = 10, 11$, since $[\mathbb{S}^{12}, \operatorname{Top}/O_{(2)}] = [\mathbb{S}^{13}, \operatorname{Top}/O_{(2)}] = 0$. For $k=9$, we have the exact sequence
\begin{equation} \label{redexten}
\dots \to [\Sigma \mathbb{R}P^{14}/\mathbb{R}P^{10}, \operatorname{Top}/O_{(2)}] \xrightarrow{(\Sigma q_{14,9})^*} [\Sigma \mathbb{R}P^{14}/\mathbb{R}P^9, \operatorname{Top}/O_{(2)}] \xrightarrow{i^*} [\Sigma \mathbb{R}P^{10}/\mathbb{R}P^9, \operatorname{Top}/O_{(2)}].
\end{equation}
We first show that the composition
\[ (\Sigma \pi_{0})^{*} \circ (\Sigma q_{14,9})^* \circ (\Sigma q_{14,10})^* \circ (\Sigma q_{14,11})^* : [\Sigma \mathbb{R}P^{14}/\mathbb{R}P^{12}, \operatorname{Top}/O_{(2)}] \longrightarrow [\mathbb{S}^{16}, \operatorname{Top}/O_{(2)}] \]
is the trivial map. Note that $\Sigma \mathbb{R}P^{14}/\mathbb{R}P^{12} \simeq M(\mathbb{Z}_2, 14)$ and $\pi_{16}(M(\mathbb{Z}_2, 14)) \cong \mathbb{Z}_4\{\widetilde{\eta}\}$. The composition $q_{14,11} \circ q_{14,10} \circ q_{14,9} \circ \pi_0$ is the attaching map of the $16$-cell of $\mathbb{R}P^{16}/\mathbb{R}P^{12}$ onto the $14$-skeleton and is homotopic to $\widetilde{\eta}$ in $[\mathbb{S}^{16}, M(\mathbb{Z}_2, 14)]$, since $Sq^{2}$ acts nontrivially on $H^{14}(\mathbb{R}P^{16}/\mathbb{R}P^{12}; \mathbb{Z}_{2})$ (see, for example, \cite[p.~5]{GMS12}). Therefore, the composition 
\[ (\Sigma \pi_{0})^{*} \circ (\Sigma q_{14,9})^* \circ (\Sigma q_{14,10})^* \circ (\Sigma q_{14,11})^* \]
can be identified with the map $\widetilde{\eta}^* : [M(\mathbb{Z}_2, 14), \operatorname{Top}/O_{(2)}] \to [\mathbb{S}^{16}, \operatorname{Top}/O_{(2)}]$.
Applying $[-,X]$ to the cofiber sequence $\mathbb{S}^{14} \xrightarrow{2} \mathbb{S}^{14} \longrightarrow M(\mathbb{Z}_2,14) \xrightarrow{q} \mathbb{S}^{15}$, where \[X \in \{SF_{(2)}, F/O_{(2)}, \operatorname{Top}/O_{(2)}\},\] we obtain the short exact sequence
\begin{equation}\label{moorelemme}
0 \longrightarrow \pi_{15}(X)/2 \longrightarrow [M(\mathbb{Z}_2,14),X] \xrightarrow{i^*} \Ker\!\left(\pi_{14}(X)\xrightarrow{2} \pi_{14}(X)\right) \longrightarrow 0,
\end{equation}
where $A/2$ denotes the quotient $A/2A$. Let $a \in \Ker(\pi_{14}(X) \xrightarrow{2} \pi_{14}(X))$ and let $\overline{a} \in [M(\mathbb{Z}_2,14),X]$ be an extension, i.e., $i^*(\overline{a})=a$. By Lemma 2.3 of \cite[p.12]{CSS18}, \[ 2\overline{a} = [a \circ \eta] \in \pi_{15}(X)/2, \] 
where $[r]$ denotes the class of $r \in A$ in the quotient $A/2$. 

For $X = SF_{(2)}$, we have 
\[ \pi_{14}(SF_{(2)})=\mathbb{Z}_2\{\kappa\}\oplus \mathbb{Z}_2\{\sigma^2\}, \quad \pi_{15}(SF_{(2)})=\mathbb{Z}_2\{\eta\circ\kappa\}\oplus \mathbb{Z}_{32}\{\rho\} \quad (\text{see \cite{Tod62}}). \]
Thus, $\pi_{15}(SF_{(2)})/2 = \mathbb{Z}_2\{[\eta\circ\kappa]\}\oplus \mathbb{Z}_2\{[\rho]\}$. Let $\overline{\kappa}, \overline{\sigma^2} \in [M(\mathbb{Z}_2,14),SF_{(2)}]$ be extensions of $\kappa$ and $\sigma^2$, respectively. Then
\[ 2\overline{\kappa} = [\eta\circ\kappa] \neq 0, \quad 2\overline{\sigma^2} = [\eta\circ\sigma^2] = 0. \]
Hence, $\overline{\kappa}$ has order $4$, $\overline{\sigma^2}$ has order $2$, and $[\rho]$ provides an independent element of order $2$. Therefore,
\begin{equation}\label{m1}
[M(\mathbb{Z}_2,14),SF_{(2)}] \cong \mathbb{Z}_4\{\overline{\kappa}\} \oplus \mathbb{Z}_2\{\overline{\sigma^2}\} \oplus \mathbb{Z}_2\{[\rho]\}.
\end{equation}

For $X = F/O_{(2)}$, we have $\pi_{14}(F/O_{(2)})=\mathbb{Z}_2\{\kappa\}\oplus \mathbb{Z}_2\{\sigma^2\}$ and $\pi_{15}(F/O_{(2)})=\mathbb{Z}_2\{\eta\circ\kappa\}$. Thus, $\pi_{15}(F/O_{(2)})/2 = \mathbb{Z}_2\{[\eta\circ\kappa]\}$. Letting $\overline{\kappa}, \overline{\sigma^2}$ be extensions, we have $2\overline{\kappa} = [\eta\circ\kappa] \neq 0$ and $2\overline{\sigma^2} = 0$. Therefore,
\begin{equation}\label{m2}
[M(\mathbb{Z}_2,14),F/O_{(2)}] \cong \mathbb{Z}_4\{\overline{\kappa}\} \oplus \mathbb{Z}_2\{\overline{\sigma^2}\}.
\end{equation}

For $X = \operatorname{Top}/O_{(2)}$, we have $\pi_{14}(\operatorname{Top}/O_{(2)})=\mathbb{Z}_2\{\kappa\}$ and $\pi_{15}(\operatorname{Top}/O_{(2)})=\mathbb{Z}_2\{\eta\circ\kappa\} \oplus \mathbb{Z}_{8128}\{b\}$, where $b$ is the generator of $\mathit{bP}_{16} \subset \Theta_{15}$. Thus, $\pi_{15}(\operatorname{Top}/O_{(2)})/2 = \mathbb{Z}_2\{[\eta\circ\kappa]\} \oplus \mathbb{Z}_2\{[b]\}$. Letting $\overline{\kappa} \in [M(\mathbb{Z}_2,14),\operatorname{Top}/O_{(2)}]$ be an extension of $\kappa$, we have $2\overline{\kappa} = [\kappa\circ \eta] = [\eta\circ\kappa] \neq 0$. Consequently,
\begin{equation}\label{m3}
[M(\mathbb{Z}_2,14),\operatorname{Top}/O_{(2)}] \cong \mathbb{Z}_4\{\overline{\kappa}\} \oplus \mathbb{Z}_2\{[b]\}.
\end{equation}
Note that the composition $\mathbb{S}^{16} \xrightarrow{\widetilde{\eta}} M(\mathbb{Z}_2,14) \xrightarrow{q} \mathbb{S}^{15}$ is homotopic to $\eta$. Consequently, by \cite[Lemma 3.1]{BKS25}, the induced map 
\[ \eta^{*} \colon [\mathbb{S}^{15}, \operatorname{Top}/O_{(2)}] \longrightarrow [\mathbb{S}^{16}, \operatorname{Top}/O_{(2)}] \] 
is the trivial map. Since $\eta^* = \widetilde{\eta}^* \circ q^*$, the triviality of $\eta^*$ implies that the image of $q^*$ must lie in the kernel of $\widetilde{\eta}^*$. In view of \eqref{moorelemme}, where $X = \operatorname{Top}/O_{(2)}$, and \eqref{m3}, the generator $[b]$ is identified with the image of the generator of $[\mathbb{S}^{15}, \operatorname{Top}/O_{(2)}]$ under $q^*$. It follows immediately that 
\[ \widetilde{\eta}^* \colon [M(\mathbb{Z}_2,14), \operatorname{Top}/O_{(2)}] \cong \mathbb{Z}_4\{\overline{\kappa}\} \oplus \mathbb{Z}_2\{[b]\} \longrightarrow [\mathbb{S}^{16}, \operatorname{Top}/O_{(2)}] \] 
sends the generator $[b]$ to zero. It remains to show that the generator $\overline{\kappa}$ also maps to zero under $\widetilde{\eta}^*$. To this end, consider the following commutative diagram:
\begin{equation*}
\begin{aligned}
&[M(\mathbb{Z}_2,14), \operatorname{Top}/O_{(2)}]
\cong \mathbb{Z}_4\{\overline{\kappa}\} \oplus \mathbb{Z}_2\{[b]\}
\;\xrightarrow{\widetilde{\eta}^*}\;
[\mathbb{S}^{16}, \operatorname{Top}/O_{(2)}]
\cong \mathbb{Z}_2\{\eta^*\}
\\[8pt]
&\hspace{8.5em} \downarrow \psi_* \hspace{16em} \downarrow \psi_*
\\[8pt]
&[M(\mathbb{Z}_2,14), F/O_{(2)}]
\cong \mathbb{Z}_4\{\overline{\kappa}\} \oplus \mathbb{Z}_2\{\overline{\sigma^2}\}
\;\xrightarrow{\widetilde{\eta}^*}\;
[\mathbb{S}^{16}, F/O_{(2)}]
\cong \mathbb{Z}_{(2)} \oplus \mathbb{Z}_2\{\eta^*\}
\\[8pt]
&\hspace{8.5em} \uparrow \phi_* \hspace{16.5em} \uparrow \phi_*
\\[8pt]
&[M(\mathbb{Z}_2,14), SF_{(2)}]
\cong \mathbb{Z}_4\{\overline{\kappa}\} \oplus \mathbb{Z}_2\{\overline{\sigma^2}\} \oplus \mathbb{Z}_2\{[\rho]\}
\;\xrightarrow{\widetilde{\eta}^*}\;
[\mathbb{S}^{16}, SF_{(2)}]
\cong \mathbb{Z}_2\{\eta \circ \rho\} \oplus \mathbb{Z}_2\{\eta^*\}
\end{aligned}
\end{equation*}
where the extension $\overline{\kappa}$ of $\kappa$ can be chosen such that the vertical maps $\psi_*$ and $\phi_*$ in the first column send $\overline{\kappa}$ to $\overline{\kappa}$. By the commutativity of this diagram, instead of proving $\widetilde{\eta}^*(\overline{\kappa})=0$ in the first row, it suffices to show that the second row map $\widetilde{\eta}^*: [M(\mathbb{Z}_2,14), F/O_{(2)}] \to [\mathbb{S}^{16}, F/O_{(2)}]$ sends $\overline{\kappa}$ to zero. Since $\overline{\kappa}$ is an extension of $\kappa$ and $\widetilde{\eta}$ is a co-extension of $\eta$, we have $\widetilde{\eta}^*(\overline{\kappa}) = \overline{\kappa} \circ \widetilde{\eta} \in \langle \kappa, 2, \eta \rangle$. It follows from \cite[Lemma 15.2]{MT63} (see also \cite[p.~80]{Muk69}) that $\langle \kappa, 2, \eta \rangle \equiv 0 \pmod{\eta\circ \rho}$ in $[\mathbb{S}^{16}, SF_{(2)}]$. Since $\eta\circ \rho \in \operatorname{Im}([\mathbb{S}^{16}, O] \to [\mathbb{S}^{16}, SF])$, the Toda bracket $\langle \kappa, 2, \eta \rangle$ vanishes in $[\mathbb{S}^{16}, F/O_{(2)}]$. Consequently, the commutativity of the bottom square implies that $\widetilde{\eta}^*$ maps $\overline{\kappa}$ to zero in the middle row, which in turn ensures its triviality in the top row. Therefore, the composition 
\begin{equation}\label{compositiontrivial}
 (\Sigma \pi_{0})^{*} \circ (\Sigma q_{14,9})^* \circ (\Sigma q_{14,10})^* \circ (\Sigma q_{14,11})^* = \widetilde{\eta}^* \colon [M(\mathbb{Z}_2, 14), \operatorname{Top}/O_{(2)}] \longrightarrow [\mathbb{S}^{16}, \operatorname{Top}/O_{(2)}]   
\end{equation}
is the trivial map. Combining this result with the surjectivity of $(\Sigma q_{14,k})^*$ for $k = 10, 11$ (as shown in Diagram \eqref{iterat9}), we see that the composite 
\[ (\Sigma \pi_{0})^{*} \circ (\Sigma q_{14,9})^* \colon [\Sigma \mathbb{R}P^{14}/\mathbb{R}P^{10}, \operatorname{Top}/O_{(2)}] \longrightarrow [\mathbb{S}^{16}, \operatorname{Top}/O_{(2)}] \] 
is also trivial. This implies that $(\Sigma \pi_{0})^{*}$ is trivial on the image $\operatorname{Im}((\Sigma q_{14,9})^*)$. In view of the exact sequence \eqref{redexten} and following the same reasoning as before, to show that $(\Sigma \pi_{0})^{*}$ is trivial on its entire domain, it suffices to show that any extension in $[\Sigma \mathbb{R}P^{14}/\mathbb{R}P^{9}, \operatorname{Top}/O_{(2)}]$ of an element $x \in [\Sigma \mathbb{R}P^{10}/\mathbb{R}P^9, \operatorname{Top}/O_{(2)}]$ is mapped to zero under $(\Sigma \pi_{0})^{*}$. To this end, let $\overline{x} \in [\Sigma \mathbb{R}P^{14}/\mathbb{R}P^{9}, \operatorname{Top}/O_{(2)}]$ be an extension of $x \in [\mathbb{S}^{11}, \operatorname{Top}/O_{(2)}]$ along the induced map 
\[ i^* \colon [ \Sigma \mathbb{R}P^{14}/\mathbb{R}P^9, \operatorname{Top}/O_{(2)} ] \longrightarrow [ \Sigma \mathbb{R}P^{10}/\mathbb{R}P^9, \operatorname{Top}/O_{(2)} ]. \]
As established previously, by Lemma \ref{teccp8}(1)(b) with $L = \pi_0$, there exists an element $y \in [\Sigma \mathbb{R}P^{14}/\mathbb{R}P^9, \operatorname{Top}/O_{(2)}]$ such that $i^{*}(y) = i^{*}(\overline{x}) = x$ and $(\Sigma \pi_0)^*(y) = 0$. From the exactness of \eqref{redexten}, the difference $y - \overline{x}$ belongs to $\operatorname{Im}((\Sigma q_{14,9})^*)$. Since $(\Sigma \pi_{0})^{*}$ is trivial on $\operatorname{Im}((\Sigma q_{14,9})^*)$, it follows that 
\[ (\Sigma \pi_{0})^{*}(\overline{x}) = (\Sigma \pi_{0})^{*}(y) = 0. \]
Consequently, the map 
\[ (\Sigma \pi_{0})^{*} \colon [\Sigma \mathbb{R}P^{14}/\mathbb{R}P^{9}, \operatorname{Top}/O_{(2)}] \longrightarrow [\mathbb{S}^{16}, \operatorname{Top}/O_{(2)}] \] 
is indeed the trivial map, which concludes the proof of Part (1).
We turn to the proof of Part (2). By Remark~\ref{group}(D), we have 
\[
[\mathbb{C}P^{8},\operatorname{Top}/O]
\cong
\mathbb{Z}_{2}\{(\eta^{*})_{16}\}
\oplus
\mathbb{Z}_{2}\{\alpha_{16}\},
\]
where $\alpha_{16} = \left(\overline{(\kappa)_{14}+\left(\overline{(\epsilon)_{8}}\right)_{14}}\right)_{16}$. Consider the commutative diagram:
\begin{equation}\label{hopfrpn}
\xymatrix@C=3em@R=3em{
    [\mathbb{C}P^8,\operatorname{Top}/O] \ar[r]^-{\pi^*} \ar[rd]_-{p^*}
    &
    [\mathbb{R}P^{17},\operatorname{Top}/O] \ar[d]^{\overline{p}^*}
    \\
    &
    [\mathbb{S}^{17},\operatorname{Top}/O]
}
\end{equation}
where $\overline{p}:\mathbb{S}^{17}\to\mathbb{R}P^{17}$ is the quotient map and $p:\mathbb{S}^{17}\to\mathbb{C}P^8$ is the Hopf fibration. By Lemma \ref{cor-cp8pl}, $\mathrm{Ker}\!\left( [\mathbb{C}P^8,\operatorname{Top}/O]\xrightarrow{p^*}[\mathbb{S}^{17},\operatorname{Top}/O] \right) = \mathbb{Z}_2\{(\eta^*)_{16}\}$, and moreover $p^*(\alpha_{16})\neq 0$ (see \eqref{nontrivialge}). Since Diagram \eqref{hopfrpn} commutes, it follows that 
\begin{equation}\label{pi-alpha-nonzero}
\pi^*(\alpha_{16}) \neq 0 \quad \text{and} \quad \pi^*((\eta^*)_{16}+\alpha_{16}) \neq 0.
\end{equation}
From Lemma~3.2 of \cite{Muk00}, the composite $\mathbb{R}P^{16}\hookrightarrow\mathbb{R}P^{17} \xrightarrow{\pi}\mathbb{C}P^8 \xrightarrow{f_{\mathbb{C}P^8}} \mathbb{S}^{16}$ is homotopic to the collapse map $f_{\mathbb{R}P^{16}}:\mathbb{R}P^{16}\to \mathbb{S}^{16}$. Combining this with Part~(1), we obtain $\pi^*((\eta^*)_{16}) = f_{\mathbb{R}P^{16}}^*(\eta^*) \neq 0$. Now \eqref{pi-alpha-nonzero} shows that $\pi^*(\alpha_{16}) \neq \pi^*((\eta^*)_{16}+\alpha_{16})$. Thus the two generators $(\eta^*)_{16}$ and $\alpha_{16}$ have distinct non-trivial images under $\pi^*$. This implies that $\pi^*:[\mathbb{C}P^8,\operatorname{Top}/O]\longrightarrow[\mathbb{R}P^{17},\operatorname{Top}/O]$ is injective and $\pi^*((\eta^*)_{16})=f_{\mathbb{R}P^{16}}^*(\eta^*)$. This completes the proof of Part~(2).
\end{proof}
Since the complex conjugation 
\(C \colon \mathbb{C}P^{n} \longrightarrow \mathbb{C}P^{n}\) 
preserves the subspace \(\mathbb{C}P^{k}\), it induces a self–homotopy equivalence
\[
\widetilde{C}_{n,k} \colon \mathbb{C}P^{n}/\mathbb{C}P^{k} 
\longrightarrow 
\mathbb{C}P^{n}/\mathbb{C}P^{k}.
\]
Note that the restriction of \(\widetilde{C}_{n,k}\) to the bottom cell
\begin{equation}\label{degreeref}
\mathbb{S}^{2k+2} \longrightarrow \mathbb{S}^{2k+2}
\end{equation}
is homotopic to the identity map if \(k\) is odd, and to the reflection map if \(k\) is even.
\begin{theorem}\label{conju-cp8}
\begin{itemize}
    \item[(1)] The induced map $C^{*}\colon [\mathbb{C}P^{8},\operatorname{Top}/O] \to [\mathbb{C}P^{8},\operatorname{Top}/O]$ is the identity map, where
    \[
    [\mathbb{C}P^{8},\operatorname{Top}/O]
    =
    \mathbb{Z}_{2}\{(\eta^{*})_{16}\}
    \oplus
    \mathbb{Z}_{2}\left\{\left(\overline{(\kappa)_{14}+\left(\overline{(\epsilon)_{8}}\right)_{14}}\right)_{16}\right\}.
    \] 

    \item[(2)] The induced map
    \[
    (\widetilde{C}_{8,6})^{*}\colon 
    [\mathbb{C}P^{8}/\mathbb{C}P^{6},SF] \longrightarrow [\mathbb{C}P^{8}/\mathbb{C}P^{6},SF]
    \]
    is given by $(\widetilde{C}_{8,6})^{*}(x)=-x$, where $[\mathbb{C}P^{8}/\mathbb{C}P^{6},SF] = \mathbb{Z}_{4}\Bigl\{\bigl(\overline{(\sigma^{2})_{14}}\bigr)_{16}\Bigr\}$.

    \item[(3)] The induced map
    \[
    (\widetilde{C}_{4,2})^{*} \colon [\mathbb{C}P^{4}/\mathbb{C}P^{2}, SF] \longrightarrow [\mathbb{C}P^{4}/\mathbb{C}P^{2}, SF]
    \]
    is given by $(\widetilde{C}_{4,2})^{*}(x)=-x$. Here, $[\mathbb{C}P^{4}/\mathbb{C}P^{2}, SF] \cong \mathbb{Z}_4 \left\{ \left( \overline{(\nu^{2})_{6}} \right)_{8} \right\}$.
\end{itemize}
\end{theorem}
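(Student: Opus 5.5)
For part (1) we work with the decomposition of Remark~\ref{group}(D), $[\mathbb{C}P^{8},\operatorname{Top}/O]=\mathbb{Z}_{2}\{(\eta^{*})_{16}\}\oplus\mathbb{Z}_{2}\{\alpha_{16}\}$ with $\alpha_{16}=\left(\overline{(\kappa)_{14}+\left(\overline{(\epsilon)_{8}}\right)_{14}}\right)_{16}$.

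\emph{Step 1 (the summand $(\eta^{*})_{16}$).} Since $m=8$ is even, $C$ has degree $(-1)^{8}=1$. Remark~\ref{actions}(1) then shows that $(\eta^{*})_{16}=f^{*}_{\mathbb{C}P^{8}}(\eta^{*})$ is fixed by $C^{*}$.

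\emph{Step 2 (the summand $\alpha_{16}$).} By Remark~\ref{actions}(3), $C^{*}$ preserves $\operatorname{Ker}(p^{*})$, which equals $\mathbb{Z}_2\{(\eta^*)_{16}\}$ by Lemma~\ref{cor-cp8pl}. Since $C^{*}$ is an automorphism of a group of order four, the only options are $C^{*}(\alpha_{16})=\alpha_{16}$ or $C^{*}(\alpha_{16})=\alpha_{16}+(\eta^{*})_{16}$. Restricting to $\mathbb{C}P^{7}$ does not separate these, because $(\eta^{*})_{16}$ restricts to zero.

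To exclude the second option I would use the injectivity of $\pi^{*}\colon[\mathbb{C}P^{8},\operatorname{Top}/O]\to[\mathbb{R}P^{17},\operatorname{Top}/O]$ from Lemma~\ref{RP16-CP8-TopO}(2). Complex conjugation on $\mathbb{C}^{9}$ descends to a map $\widehat{C}\colon\mathbb{R}P^{17}\to\mathbb{R}P^{17}$ with $\pi\circ\widehat{C}=C\circ\pi$. The map $\widehat{C}$ is induced by a linear map of determinant $(-1)^{9}$, so it is a reflection-type map on $\mathbb{R}P^{17}$. Since $\mathbb{R}P^{17}$ is orientable, any linear automorphism of $\mathbb{R}^{18}$ is isotopic to the identity or to a single reflection, and a single reflection acts as the identity up to homotopy on $\mathbb{R}P^{17}$ (it lies in $O(18)$, and $\pm$ acts trivially). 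More simply, $\operatorname{GL}_{18}(\mathbb{R})$ acting on $\mathbb{R}P^{17}$ factors through $PGL$, and $-I$ has determinant $+1$, so $PGL_{18}$ is connected. Hence $\widehat{C}\simeq \mathrm{Id}$, and so $\pi^{*}\circ C^{*}=\widehat{C}^{*}\circ\pi^{*}=\pi^{*}$. Injectivity of $\pi^{*}$ then gives $C^{*}=\mathrm{Id}$. This step is the crux, and it is why Lemma~\ref{RP16-CP8-TopO} was proved. I expect the main obstacle to be verifying carefully that $\widehat{C}$ is homotopic to the identity; the connectedness of $PGL_{18}(\mathbb{R})$ handles this, since $18$ is even.

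\emph{Parts (2) and (3).} Both have the form $[\Sigma^{2k}\mathbb{C}P^{2},SF]\cong\mathbb{Z}_{4}$, where the generator $g$ extends the bottom-cell class and $2g$ is pulled back from the top cell. This comes from Lemma~\ref{eight-sub}(i) and the proof of Lemma~\ref{lem:Bru-seq}(iii). For $k=6$ and $k=2$ (both even), \eqref{degreeref} shows that $\widetilde{C}$ is a reflection on the bottom cell. On the top cell its degree is the product of the degrees on the two cells of $\mathbb{C}P^{n}/\mathbb{C}P^{k}$, namely $(-1)\cdot(+1)=-1$, since conjugation on each successive complex cell has degree $(-1)^{j}$ with $j=7,8$ (resp.\ $3,4$).

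Write $\widetilde{C}^{*}(g)=ag$. Restricting to the bottom cell gives $i^{*}(ag)=-i^{*}(g)$, so $a$ is odd, i.e.\ $a=\pm1$. To decide the sign, note that $\widetilde{C}$ acts by $-1$ on the top-cell class $2g$, so $\widetilde{C}^{*}(2g)=-2g=2g$; this is consistent with either sign and gives no information. I would instead use that $\widetilde{C}_{n,k}$ is homotopic to $\Sigma^{2k}$ of the degree-$(-1)$ self-equivalence of $\mathbb{C}P^{2}$, which is a reflection suspension coordinate composed with the identity of $\Sigma^{2k-1}\mathbb{C}P^{2}$. Such a map is $-\mathrm{Id}$ in the co-H group $[\Sigma^{2k}\mathbb{C}P^2,\Sigma^{2k}\mathbb{C}P^2]$. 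Therefore $\widetilde{C}^{*}(x)=-x$ for all $x$. The point to check is that the self-map of $\Sigma^{2k}\mathbb{C}P^{2}$ is determined by its effect on the two cells. The group $[\Sigma^{2k}\mathbb{C}P^2,\Sigma^{2k}\mathbb{C}P^2]$ has only a small ambiguity, given by maps of the form $i\circ\xi\circ f$ with $\xi\in\pi_{2}^{s}=\mathbb{Z}_2\{\eta^2\}$. Such a term adds $x\mapsto x\circ i\eta^{2}f$, and this vanishes on $g$ because $\nu^{2}\eta^{2}=0$ and $\sigma^{2}\eta^{2}=0$.
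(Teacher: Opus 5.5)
Both halves of your proposal have a genuine gap.

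\emph{Part (1).} The crux claim $\widehat{C}\simeq\mathrm{Id}$ on $\mathbb{R}P^{17}$ is false. Conjugation on $\mathbb{C}^{9}=\mathbb{R}^{18}$ negates nine coordinates, so it has determinant $-1$ and degree $-1$ on $\mathbb{S}^{17}$. Since $\mathbb{R}P^{17}$ is orientable and $\overline{p}\colon\mathbb{S}^{17}\to\mathbb{R}P^{17}$ has degree $2$, the induced map $\widehat{C}$ has degree $-1$, so it cannot be homotopic to the identity. Your connectedness argument runs the wrong way: scalars $\lambda I$ have determinant $\lambda^{18}>0$, so they cannot join the two components of $GL_{18}(\mathbb{R})$. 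In fact $PGL_{n}(\mathbb{R})$ is connected only for $n$ odd.

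The discrepancy cannot simply be absorbed either. $\widehat{C}$ agrees with the identity on a conjugation-invariant $\mathbb{R}P^{16}$ and differs from it by a top-cell term, which gives $\widehat{C}^{*}(y)=y-f^{*}_{\mathbb{R}P^{17}}(y\circ\overline{p})$. For $y=\pi^{*}\alpha_{16}$ one has $y\circ\overline{p}=p^{*}\alpha_{16}\neq0$. You would therefore additionally need $f^{*}_{\mathbb{R}P^{17}}(p^{*}\alpha_{16})=0$, and nothing in your argument supplies this.

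This is exactly why the paper descends to $\mathbb{R}P^{16}$. On the projectivization of the invariant subspace $\{\operatorname{Im}z_{8}=0\}\cong\mathbb{R}^{17}$, conjugation has determinant $(-1)^{8}=+1$ (equivalently, $PGL_{17}(\mathbb{R})$ is connected). Hence $\widehat{C}\circ i\simeq i$ and $i^{*}\pi^{*}C^{*}=i^{*}\pi^{*}$. You then do not need $i^{*}\pi^{*}$ to be injective. You only need it to separate $\alpha_{16}$ from $\alpha_{16}+(\eta^{*})_{16}$. That is the second assertion of Lemma~\ref{RP16-CP8-TopO}(2), namely $i^{*}\pi^{*}((\eta^{*})_{16})=f^{*}_{\mathbb{R}P^{16}}(\eta^{*})\neq0$, which rests on part (1) of that lemma. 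Your proposal uses only the injectivity of $\pi^{*}$.

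\emph{Parts (2) and (3).} Your cellular degrees are wrong, and with them the identification $\widetilde{C}=-\mathrm{Id}$. Conjugation acts on $H^{2j}$ by $(-1)^{j}$. So $\widetilde{C}_{8,6}$ has degree $-1$ on the $14$-cell but $+1$ on the $16$-cell; likewise $\widetilde{C}_{4,2}$ has degrees $-1,+1$ on the $6$- and $8$-cells. There is no product rule. Also, $\Sigma^{2k}$ of conjugation on $\mathbb{C}P^{2}$ has degrees $(-1,+1)$, so it is not $-\mathrm{Id}$.

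In this stable range, $[\Sigma^{2k}\mathbb{C}P^{2},\Sigma^{2k}\mathbb{C}P^{2}]\cong\mathbb{Z}\oplus\mathbb{Z}$, generated by $\mathrm{Id}$ and $\widetilde{2\iota}\circ f$, with cellular degrees $(1,1)$ and $(0,2)$. The group is detected by these two degrees. The $i\circ\eta^{2}\circ f$ ambiguity you consider is already null, since $i\circ\eta\simeq*$. Hence $\widetilde{C}=-\mathrm{Id}+\widetilde{2\iota}\circ f$; the paper's $k$ equals $1$.

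The extra term sends the generator $g$ to $f^{*}(g\circ\widetilde{2\iota})$, where $g\circ\widetilde{2\iota}\in\langle\sigma^{2},\eta,2\rangle$ (resp.\ $\langle\nu^{2},\eta,2\rangle$). If this bracket contained an element $z$ with $f^{*}(z)=2g$, you would get $\widetilde{C}^{*}(g)=-g+2g=g$, i.e.\ the identity. So the sign is decided precisely by the vanishing of these two Toda brackets (Mukai). That is the input the paper uses, and your proposal never addresses it.
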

\begin{proof}
Consider the following commutative diagram:
\begin{equation}\label{ref-con}
\xymatrix@C=4.5em @R=4em{
[\mathbb{R}P^{16}, \operatorname{Top}/O] 
\ar[r]^{\mathrm{Id}}
& 
[\mathbb{R}P^{16}, \operatorname{Top}/O] 
\\
[\mathbb{R}P^{17}, \operatorname{Top}/O] 
\ar[u]^{i^*}
\ar[r]^{r^*}
& 
[\mathbb{R}P^{17}, \operatorname{Top}/O]
\ar[u]^{i^*}
\\
[\mathbb{C}P^{8}, \operatorname{Top}/O]
\ar[u]^{\pi^*}
\ar[r]^{C^{*}}
& 
[\mathbb{C}P^{8}, \operatorname{Top}/O]
\ar[u]^{\pi^*}
}
\end{equation}
where $r \colon \mathbb{R}P^{17} \to \mathbb{R}P^{17}$ is the reflection and 
$\pi \colon \mathbb{R}P^{17} \to \mathbb{C}P^{8}$ is the natural fibration.
The commutativity of the top square follows from the fact that any self-homotopy equivalence of $\mathbb{R}P^{16}$ is homotopic to the identity. The commutativity of the bottom square follows from the fact that the composition $C \circ p \colon \mathbb{S}^{17} \to \mathbb{C}P^{8}$ is homotopic to $(-1) \circ p \colon \mathbb{S}^{17} \to \mathbb{C}P^{8}$, where $(-1) \colon \mathbb{S}^{17} \to \mathbb{S}^{17}$ is the reflection map and $p \colon \mathbb{S}^{17} \to \mathbb{C}P^{8}$ is the Hopf fibration.
Recall from Remark~\ref{group}(D) that 
\[ [\mathbb{C}P^{8},\operatorname{Top}/O] \cong \mathbb{Z}_{2}\{(\eta^{*})_{16}\} \oplus \mathbb{Z}_{2}\{\alpha_{16}\}, \]
where $\alpha_{16} = \left(\overline{(\kappa)_{14}+\left(\overline{\epsilon}\right)_{14}}\right)_{16}$.
It follows from Remark~\eqref{actions}(1) that $C^{*}((\eta^*)_{16}) = (\eta^*)_{16}$.
Hence $C^{*}(\alpha_{16})$ must be either $\alpha_{16}$ or $(\eta^*)_{16} + \alpha_{16}$. We now show that the second possibility cannot occur.
By Lemma~\ref{RP16-CP8-TopO}, the composition
$i^* \circ \pi^* \colon [\mathbb{C}P^{8}, \operatorname{Top}/O] \longrightarrow [\mathbb{R}P^{16}, \operatorname{Top}/O]$
maps $(\eta^*)_{16}$ nontrivially; specifically,
$(i^* \circ \pi^*)((\eta^*)_{16}) = f_{\mathbb{R}P^{16}}^*(\eta^{*}) \neq 0$.
Consequently, 
\[ (i^* \circ \pi^*)(\alpha_{16}) \neq (i^* \circ \pi^*) \left( (\eta^*)_{16} + \alpha_{16} \right). \]
Applying this to the commutativity of Diagram~\eqref{ref-con}, a diagram chase shows that 
\[ C^*(\alpha_{16}) = \alpha_{16}. \]
Thus, $C^*$ fixes all the generators of $[\mathbb{C}P^{8}, \operatorname{Top}/O]$, which completes the proof of Part~(1). We proceed to the proof of Part (2). It follows from the proof of Lemma~\ref{eight-sub}(i) that
\[
\mathbb{C}P^{8}/\mathbb{C}P^{6} \simeq \mathbb{S}^{14} \cup_{\eta} \mathbb{D}^{16} \cong \Sigma^{12}\mathbb{C}P^{2},
\]
and 
\[
[\mathbb{C}P^{8}/\mathbb{C}P^{6}, SF] = \mathbb{Z}_{4}\Bigl\{\bigl(\overline{(\sigma^{2})_{14}}\bigr)_{16}\Bigr\},
\]
where $\bigl(\overline{(\sigma^{2})_{14}}\bigr)_{16}$ denotes an extension of $(\sigma^{2})_{14}$ under the restriction map
\[
i^{*}\colon [\mathbb{C}P^{8}/\mathbb{C}P^{6}, SF] \longrightarrow [\mathbb{C}P^{7}/\mathbb{C}P^{6}, SF] \cong [\mathbb{S}^{14}, SF].
\]
As noted in \eqref{degreeref}, the restriction of 
\[ 
\widetilde{C}_{8,6} \colon \mathbb{S}^{14} \cup_{\eta} \mathbb{D}^{16} \longrightarrow \mathbb{S}^{14} \cup_{\eta} \mathbb{D}^{16} 
\]
to the bottom cell is homotopic to the reflection map $(-1) \colon \mathbb{S}^{14} \to \mathbb{S}^{14}$. Consequently, the composition of the natural inclusion $i \colon \mathbb{S}^{14} \hookrightarrow \mathbb{S}^{14} \cup_{\eta} \mathbb{D}^{16}$ with $\widetilde{C}_{8,6}$ is homotopic to $i \circ (-1)$. 

Applying this fact, together with the observation that $\pi_{16}(\mathbb{S}^{14}) / \eta \pi_{16}(\mathbb{S}^{15}) = 0$, to Proposition~2.4(1) and Lemma~2.1(1) of \cite{KMNST01}, it follows that the self-map $\widetilde{C}_{8,6}$ is homotopic to
\[
-(\Sigma^{12}\iota_{\mathbb{C}}) + k\bigl(\widetilde{2\iota_{15}} \circ \Sigma^{12}f_{\mathbb{C}P^{2}}\bigr) \quad \text{for some } k \in \mathbb{Z},
\]
where $-(\Sigma^{12}\iota_{\mathbb{C}})$ denotes the additive inverse of the homotopy class of the identity map of $\Sigma^{12}\mathbb{C}P^{2}$ in the group $[\Sigma^{12}\mathbb{C}P^{2}, \Sigma^{12}\mathbb{C}P^{2}]$. Thus, we obtain the induced map:
\begin{equation}\label{iota-class}
(\widetilde{C}_{8,6})^{*} = -(\Sigma^{12}\iota_{\mathbb{C}})^{*} + k\bigl(\widetilde{2\iota_{15}} \circ \Sigma^{12}f_{\mathbb{C}P^{2}}\bigr)^{*}.
\end{equation}

We now demonstrate that the composite 
\[
\bigl(\widetilde{2\iota_{15}} \circ \Sigma^{12}f_{\mathbb{C}P^{2}}\bigr)^{*}\colon [\Sigma^{12}\mathbb{C}P^{2}, \operatorname{Top}/O] \xrightarrow{\;\widetilde{2\iota_{15}}^{*}\;} [\mathbb{S}^{16}, \operatorname{Top}/O] \xrightarrow{\;(\Sigma^{12}f_{\mathbb{C}P^{2}})^{*}\;} [\Sigma^{12}\mathbb{C}P^{2}, \operatorname{Top}/O]
\]
is trivial. Since $\bigl(\overline{(\sigma^{2})_{14}}\bigr)_{16}$ generates $[\Sigma^{12}\mathbb{C}P^{2}, \operatorname{Top}/O]$, we have
\[
\widetilde{2\iota_{15}}^{*} \left( \bigl(\overline{(\sigma^{2})_{14}}\bigr)_{16} \right) = \bigl(\overline{(\sigma^{2})_{14}}\bigr)_{16} \circ \widetilde{2\iota_{15}} \in \langle \sigma^{2}, \eta, 2 \rangle
\]
by \eqref{toda-eta}. This Toda bracket vanishes, as shown in the proof of Lemma~\ref{eight-sub}(i). This implies that the map
\[
\widetilde{2\iota_{15}}^{*}\colon [\Sigma^{12}\mathbb{C}P^{2}, \operatorname{Top}/O] \longrightarrow [\mathbb{S}^{16}, \operatorname{Top}/O]
\]
is trivial, which in turn ensures that the composite $\bigl(\widetilde{2\iota_{15}} \circ \Sigma^{12}f_{\mathbb{C}P^{2}}\bigr)^{*}$ is also trivial.

Substituting this result into \eqref{iota-class}, we conclude that
\[
(\widetilde{C}_{8,6})^{*} = -(\Sigma^{12}\iota_{\mathbb{C}})^{*} = -(\mathrm{Id})^{*},
\]
where $\mathrm{Id} \colon \mathbb{S}^{14} \cup_{\eta} \mathbb{D}^{16} \to \mathbb{S}^{14} \cup_{\eta} \mathbb{D}^{16}$ is the identity map. This completes the proof of Part (2). For the proof of Part (3), note that
\[
\mathbb{C}P^{4}/\mathbb{C}P^{2} \simeq \mathbb{S}^{6} \cup_{\eta} \mathbb{D}^{8}=\Sigma^{4}\mathbb{C}P^{2}.
\]

Consider the cofibre sequence
\[
\mathbb{S}^{7} \xrightarrow{\eta} \mathbb{S}^{6} \xrightarrow{\Sigma^{4}i_{\mathbb{C}}} \mathbb{S}^{6} \cup_{\eta} \mathbb{S}^{8} \xrightarrow{\Sigma^{4}f_{\mathbb{C}P^{2}}} \mathbb{S}^{8},
\]
where \(i_{\mathbb{C}}:\mathbb{S}^2\hookrightarrow \mathbb{C}P^{2}\) is the inclusion. Applying the functor $[-, SF]$ to this cofibre sequence, and using the facts that $\eta^{*}:[\mathbb{S}^{6},SF]\to [\mathbb{S}^{7},SF]$ is trivial and that the image of $\eta^{*}:[\mathbb{S}^{7},SF]\to [\mathbb{S}^{8},SF]$ is $\mathbb{Z}_{2}\{\eta \circ \sigma\}$ (since $\eta \circ \nu^2=0$ and $\eta \circ \sigma\neq 0$; see \cite{Tod62}), we obtain the following short exact sequence:
\begin{equation}\label{exct-cp4-cp2}
0 \longrightarrow  \mathbb{Z}_{2}\{x\} \xrightarrow{(\Sigma^{4}f_{\mathbb{C}P^{2}})^*} [\Sigma^{4}\mathbb{C}P^{2}, SF] \xrightarrow{(\Sigma^{4}i_{\mathbb{C}})^*} [\mathbb{S}^{6}, SF] \cong \mathbb{Z}_{2}\{\nu^{2}\} \longrightarrow 0,
\end{equation}
where $x\in \{\epsilon, \epsilon+\eta \circ \sigma\}$.
Let $\overline{\nu^2}\in [\Sigma^{4}\mathbb{C}P^{2}, SF]$ be an extension of $\nu^{2}\in [\mathbb{S}^{6}, SF]$ along the map $(\Sigma^{4}i_{\mathbb{C}})^{*}$. Applying the same technique as in \eqref{todaeq}, we obtain
\[
2\overline{\nu^2}=\nu^{2} \circ \overline{2 \iota_{6}} + \overline{\nu^{2}} \circ \widetilde{2 \iota_{7}} \circ \Sigma^{4} f_{\mathbb{C}P^{2}},
\]
where
\[
\nu^{2} \circ \overline{2 \iota_{6}} \in \langle \nu^{2}, 2 \iota_{6}, \eta \rangle \circ \Sigma^{4} f_{\mathbb{C}P^{2}},
\]
and
\[
\overline{\nu^{2}} \circ \widetilde{2 \iota_{7}} \in \langle \nu^{2}, \eta, 2 \iota_{7} \rangle.
\]
Since $\langle \nu^{2}, \eta, 2 \rangle=0$ and $\langle \nu^{2}, 2, \eta \rangle=\{\epsilon, \epsilon+\eta \circ \sigma\}$ by \cite[Theorem 2.1(ii), p.~68]{Muk69}, it follows that
\[
2\overline{\nu^2}=\nu^{2} \circ \overline{2 \iota_{6}}=x\circ \Sigma^{4} f_{\mathbb{C}P^{2}}=(\Sigma^{4}f_{\mathbb{C}P^{2}})^{*}(x),
\]
which is non-zero in $[\Sigma^{4}\mathbb{C}P^{2}, SF]$ by \eqref{exct-cp4-cp2}.
This shows that any extension $\overline{\nu^{2}}$ of $\nu^{2}$ has order $4$. Hence, by \eqref{exct-cp4-cp2},
\begin{equation}\label{cp4-cp2}
[\mathbb{C}P^{4}/\mathbb{C}P^{2}, SF]\cong [\Sigma^{4}\mathbb{C}P^{2},SF]\cong \mathbb{Z}_4\{\overline{\nu^{2}}\},
\end{equation}
where $2\overline{\nu^{2}} = (\Sigma^{4}f_{\mathbb{C}P^{2}})^{*}(x)$ for $x \in \{\epsilon, \epsilon+\eta \circ \sigma\}$.
Since the restriction of the map
\[
\widetilde{C}_{4,2} \colon \mathbb{S}^{6} \cup_{\eta} \mathbb{S}^{8} \longrightarrow \mathbb{S}^{6} \cup_{\eta} \mathbb{S}^{8}
\]
to the bottom cell is homotopic to the reflection map $\mathbb{S}^{6} \to \mathbb{S}^{6}$, we may apply the same arguments as in Part (2). Thus, $\widetilde{C}_{4,2}$ is homotopic to
\[
-(\Sigma^{4}\iota_{\mathbb{C}}) + s\bigl(\widetilde{2\iota_{7}} \circ \Sigma^{4}f_{\mathbb{C}P^{2}}\bigr), \qquad s \in \mathbb{Z},
\]
and hence
\begin{equation}\label{iota-class1}
(\widetilde{C}_{4,2})^{*} = -(\Sigma^{4}\iota_{\mathbb{C}})^{*} + s\bigl(\widetilde{2\iota_{7}} \circ \Sigma^{4}f_{\mathbb{C}P^{2}}\bigr)^{*}.
\end{equation}
Since $\overline{\nu^{2}}$ generates $[\Sigma^{4}\mathbb{C}P^{2}, SF]$ and is an extension of $\nu^{2}$, applying the same arguments as in Part (2) gives
\[
\widetilde{2\iota_{7}}^{*}\left(\overline{\nu^{2}}\right)
= \overline{\nu^{2}} \circ \widetilde{2\iota_{7}}
\in \langle \nu^{2}, \eta, 2 \rangle = 0.
\]
Therefore, the map
\[
\widetilde{2\iota_{7}}^{*} \colon [\Sigma^{4}\mathbb{C}P^{2}, SF] \to [\mathbb{S}^{8}, SF]
\]
is trivial. Hence, the composition
\[
s\bigl(\widetilde{2\iota_{7}} \circ \Sigma^{4}f_{\mathbb{C}P^{2}}\bigr)^{*}
\]
is also trivial. Substituting this into \eqref{iota-class1}, we obtain
\[
(\widetilde{C}_{4,2})^{*} = -(\Sigma^{4}\iota_{\mathbb{C}})^{*} = -(\mathrm{Id})^{*}.
\]
Together with \eqref{cp4-cp2}, this completes the proof of Part (3), since the map 
\[
f_{\mathbb{C}P^4 / \mathbb{C}P^2} : \mathbb{C}P^4 / \mathbb{C}P^2 \simeq \mathbb{S}^{6} \cup_{\eta} \mathbb{D}^{8} \longrightarrow \mathbb{S}^{8}
\]
is homotopic to \(\Sigma^{4} f_{\mathbb{C}P^{2}}\), and the inclusion 
\[
i : \mathbb{S}^{6} \hookrightarrow \mathbb{S}^{6} \cup_{\eta} \mathbb{D}^{8}
\]
is homotopic to \(\Sigma^{4} i_{\mathbb{C}}\).
\end{proof}
\begin{remark}\label{pl-top}\rm
It follows from \eqref{equ21} and \eqref{equ22} that the induced maps $\beta_{*} \colon [\mathbb{C}P^{m}, PL] \to [\mathbb{C}P^{m}, PL/O]$ and $F_{*} \colon [\mathbb{C}P^{m}, PL/O] \to [\mathbb{C}P^{m}, \operatorname{Top}/O]$ are isomorphisms. Consequently, the induced maps $C^{*} \colon [\mathbb{C}P^{m}, PL] \to [\mathbb{C}P^{m}, PL]$ and $C^{*} \colon [\mathbb{C}P^{m}, PL/O] \to [\mathbb{C}P^{m}, PL/O]$ satisfy the same formulas as the map $C^{*} \colon [\mathbb{C}P^{m}, \operatorname{Top}/O] \to [\mathbb{C}P^{m}, \operatorname{Top}/O]$ provided in Theorem~\ref{main2} and Theorem~\ref{conju-cp8}(1) for $4 \leq m \leq 8$.
\end{remark}
For $\alpha \in \mathcal{C}(\mathbb{C}P^{m})$, we denote by $\left[\alpha\right]$ the (both oriented and unoriented) diffeomorphism class of $\alpha$. If $\alpha, \gamma \in \mathcal{C}(\mathbb{C}P^{m})$, then $\alpha \cong_{+} \gamma$ and $\alpha \cong \gamma$ denote that $\alpha$ and $\gamma$ are orientation-preserving diffeomorphic and diffeomorphic as unoriented manifolds, respectively. 

With these notational conventions, applying Theorem~\ref{main2} and Theorem~\ref{conju-cp8}(1) to the action in \eqref{act1}, together with \eqref{iso2}, \eqref{forget}, and Remark~\eqref{group}(A) and (D), we obtain the following description of the orientation-preserving diffeomorphism classes.
\begin{theorem}\label{main3}
\begin{itemize}
\item[(i)]
For odd $m$, the map
\[
F_{\mathcal{C}} \colon \mathcal{C}(\mathbb{C}P^{m}) \longrightarrow \mathcal{M}^{+}_{\mathrm{Diff}}(\mathbb{C}P^{m})
\]
is bijective. In particular, for $m=5$, the set is given by
\[
\mathcal{M}^{+}_{\mathrm{Diff}}(\mathbb{C}P^{5}) 
= \mathbb{Z}_2\left\{\left[(\eta \circ \mu)_{10}\right]\right\} 
\oplus \mathbb{Z}_3\left\{\left[(\beta_1)_{10}\right]\right\} 
\oplus \mathbb{Z}_2\left\{\left[\left(\overline{(\epsilon)_{8}}\right)_{10}\right]\right\},
\]
where
\[
\mathbb{Z}_2\left\{\left[(\eta \circ \mu)_{10}\right]\right\} 
\oplus \mathbb{Z}_3\left\{\left[(\beta_1)_{10}\right]\right\}
=
\left\{\left[\mathbb{C}P^{5} \# \Sigma \right] \mid \Sigma \in \Theta_{10}\right\}.
\]
For $m=7$, we have
\[
\mathcal{M}^{+}_{\mathrm{Diff}}(\mathbb{C}P^{7}) 
= \mathbb{Z}_{2}\left\{[(\kappa)_{14}]\right\}
\oplus
\mathbb{Z}_{2}\left\{\left[\left(\overline{(\epsilon)_{8}}\right)_{14}\right]\right\},
\]
where
\[
\mathbb{Z}_{2}\left\{[(\kappa)_{14}]\right\}
=
\left\{\left[\mathbb{C}P^{7} \# \Sigma \right] \mid \Sigma \in \Theta_{14}\right\}.
\]
\item[(ii)]
\[
\mathcal{M}^{+}_{\mathrm{Diff}}(\mathbb{C}P^{6})
=
\mathbb{Z}_{2}\left\{\left[\left(\overline{(\epsilon)_{8}}\right)_{12}\right]\right\}
\oplus 
\mathbb{Z}_{2}\left\{\left[\left(\overline{(\beta_{1})_{10}}\right)_{12}\right]\right\}
\]
as sets, where
\[
-\left(\overline{(\beta_{1})_{10}}\right)_{12} \cong_{+} \left(\overline{(\beta_{1})_{10}}\right)_{12}.
\]
\item[(iii)]
\[
\mathcal{M}^{+}_{\mathrm{Diff}}(\mathbb{C}P^{8})
=
\mathbb{Z}_{2}\left\{\left[(\eta^{*})_{16}\right]\right\}
\oplus
\mathbb{Z}_{2}\left\{
\left[
\left(\overline{(\kappa)_{14} + \left(\overline{(\epsilon)_{8}}\right)_{14}}\right)_{16}
\right]
\right\}
\]
as sets, where
\[
\mathbb{Z}_{2}\left\{(\eta^{*})_{16}\right\}
=
\left\{\left[\mathbb{C}P^{8} \# \Sigma^{16}\right] \mid \Sigma^{16} \in \Theta_{16}\right\}.
\]
\end{itemize}
\end{theorem}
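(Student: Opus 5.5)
The plan is to compute the orbit set $\mathcal{C}(\mathbb{C}P^{m})/MCG^{+}(\mathbb{C}P^{m})$ case by case and then transport it through the bijection \eqref{forget}. By \eqref{iso2}, $MCG^{+}(\mathbb{C}P^{m})$ is trivial for odd $m$ and equals $\{[Id],[C]\}$ for even $m$. By \eqref{act1}, the nontrivial element acts on $[\mathbb{C}P^{m},\operatorname{Top}/O]$ through the induced map $C^{*}$. So the work reduces to identifying the group $\mathcal{C}(\mathbb{C}P^{m})$ with explicit generators, which Remark~\ref{group}(A)--(D) provides, and reading off the orbits of $C^{*}$ from Theorem~\ref{main2} and Theorem~\ref{conju-cp8}(1).

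For part (i), $m$ is odd, so the action is trivial and $F_{\mathcal{C}}$ is a bijection. The descriptions of $\mathcal{M}^{+}_{\mathrm{Diff}}(\mathbb{C}P^{5})$ and $\mathcal{M}^{+}_{\mathrm{Diff}}(\mathbb{C}P^{7})$ are then Remark~\ref{group}(A) and (C) transcribed.

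To identify the subgroups $\{[\mathbb{C}P^{m}\#\Sigma]\}$, I use the fact that under Theorem~\ref{kirby} the map $f^{*}_{\mathbb{C}P^{m}}$ sends $[\Sigma]$ to $[\mathbb{C}P^{m}\#\Sigma]$. Theorem~\ref{kawa} makes this map injective. For $m=5$ its image is $\mathbb{Z}_2\{(\eta\circ\mu)_{10}\}\oplus\mathbb{Z}_3\{(\beta_1)_{10}\}$, and for $m=7$ it is $\mathbb{Z}_2\{(\kappa)_{14}\}$.

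For part (ii), Theorem~\ref{main2}(ii) gives $C^{*}(x,y)=(-x,y)$ on $\mathbb{Z}_3\oplus\mathbb{Z}_2$. The orbits are therefore $\{(0,y)\}$ and $\{(\pm1,y)\}$ for $y\in\mathbb{Z}_2$, which gives four classes. In particular $-\bigl(\overline{(\beta_1)_{10}}\bigr)_{12}$ and $\bigl(\overline{(\beta_1)_{10}}\bigr)_{12}$ lie in the same orbit and hence are orientation-preservingly diffeomorphic. The orbit set is in bijection with $\mathbb{Z}_2\times\mathbb{Z}_2$ as a set, where the $\mathbb{Z}_2$ labelled by $\bigl(\overline{(\beta_1)_{10}}\bigr)_{12}$ records whether the $\mathbb{Z}_3$-coordinate is zero.

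For part (iii), Theorem~\ref{conju-cp8}(1) shows that $C^{*}$ is the identity on $[\mathbb{C}P^{8},\operatorname{Top}/O]$. Hence the orbits are singletons, and $\mathcal{M}^{+}_{\mathrm{Diff}}(\mathbb{C}P^{8})$ coincides with the group listed in Remark~\ref{group}(D). The identification with $\{[\mathbb{C}P^{8}\#\Sigma^{16}]\}$ again uses Theorem~\ref{kawa} for $m=8$.

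I do not expect a serious obstacle here, since the substantive inputs (Theorem~\ref{conju-cp8}(1) and Kawakubo's theorem) are already established. The one point that needs care is justifying that a smoothing $(N,f)$ and $(N,C\circ f)$ give the same oriented diffeomorphism class, and that \eqref{forget} really is a bijection onto oriented classes. For this I use that $C$ preserves orientation exactly when $m$ is even, which is consistent with \eqref{iso2}.
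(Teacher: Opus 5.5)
Your proposal is correct and follows the paper's own argument. The paper likewise obtains Theorem~\ref{main3} by combining the bijection \eqref{forget}, the description \eqref{iso2} of $MCG^{+}(\mathbb{C}P^{m})$, the action formula \eqref{act1}, the explicit generators from Remark~\ref{group}, the computations of $C^{*}$ in Theorem~\ref{main2}(ii) and Theorem~\ref{conju-cp8}(1), and Kawakubo's theorem to identify the connected-sum classes.
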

\begin{remark}\label{rema-un}\rm
Let $\mathcal{M}^{\pm}_{\mathrm{Diff}}(\mathbb{C}P^{m})$ denote the set of unoriented diffeomorphism classes of smooth manifolds homeomorphic to $\mathbb{C}P^{m}$. Then
\[
\mathcal{M}^{\pm}_{\mathrm{Diff}}(\mathbb{C}P^{m})
=
\mathcal{M}^{+}_{\mathrm{Diff}}(\mathbb{C}P^{m}) / \{\pm 1\},
\]
where the group $\mathbb{Z}_{2} = \{\pm 1\}$ acts on $\mathcal{M}^{+}_{\mathrm{Diff}}(\mathbb{C}P^{m})$ by orientation reversal.
\begin{itemize}
    \item[(1)] Since $\mathbb{C}P^{2n}$ does not admit a self-homeomorphism that reverses orientation, the action of $\mathbb{Z}_{2} = \{\pm 1\}$ on $\mathcal{M}^{+}_{\mathrm{Diff}}(\mathbb{C}P^{2n})$ is trivial. Hence,
    \[
    \mathcal{M}^{\pm}_{\mathrm{Diff}}(\mathbb{C}P^{2n})
    =
    \mathcal{M}^{+}_{\mathrm{Diff}}(\mathbb{C}P^{2n}).
    \]

    \item[(2)] Note that the mapping class group $MCG^{\pm}(\mathbb{C}P^{2n+1}) = \{[\mathrm{Id}], [C]\} \cong \mathbb{Z}_2$ acts on the concordance set $\mathcal{C}(\mathbb{C}P^{2n+1})$ by
    \[
    [C] \cdot [N, f] = [N, C \circ f].
    \]
    Moreover, the forgetful map
    \[
    F_{\mathcal{C}} : \mathcal{C}(\mathbb{C}P^{2n+1}) \longrightarrow \mathcal{M}^{\pm}_{\mathrm{Diff}}(\mathbb{C}P^{2n+1}),
    \]
    defined by $[f : N \to M] \mapsto [N]$ (the unoriented diffeomorphism class), induces a bijection 
    \[\mathcal{C}(\mathbb{C}P^{2n+1}) / MCG^{\pm}(\mathbb{C}P^{2n+1})
    \;\cong\;
    \mathcal{M}^{\pm}_{\mathrm{Diff}}(\mathbb{C}P^{2n+1}).\]
It follows from \eqref{for} that
\begin{equation}
[N, C \circ f] = C^*([N, f]).
\end{equation}
\end{itemize}
\end{remark}
Combining Remark~\ref{rema-un} and Theorem~\ref{main2}, we obtain the following result.
\begin{theorem}\label{main4}
\begin{itemize}

\item[(i)]
For $m$ even,
\[
\mathcal{M}^{\pm}_{\rm Diff}(\mathbb{C}P^{m})
=
\mathcal{M}^{+}_{\rm Diff}(\mathbb{C}P^{m}).
\]

\item[(ii)]
\[
\mathcal{M}^{\pm}_{\mathrm{Diff}}(\mathbb{C}P^{5})
=
\mathbb{Z}_{2}\{[(\eta\circ\mu)_{10}]\}
\oplus \mathbb{Z}_{2}\{[(\beta_{1})_{10}]\}
\oplus
\mathbb{Z}_{2}\{[\left(\overline{(\epsilon)_{8}}\right)_{10}]\}, \text{ as sets.}
\]
Here $(\eta\circ\mu)_{10}$ represents the unoriented diffeomorphism class
$[\mathbb{C}P^{5}\#\Sigma_{\eta\circ\mu}]$, where
$\Sigma_{\eta\circ\mu}\in\Theta_{10}$ is the unique element of order $2$.
The generator $(\beta_{1})_{10}$ represents the class
$[\mathbb{C}P^{5}\#\Sigma_{\beta_{1}}]$, where
$\Sigma_{\beta_{1}}\in\Theta_{10}$ has order $3$. $(\beta_{1})_{10}\cong -(\beta_{1})_{10}$ 
\item[(iii)]
\[
\mathcal{M}^{\pm}_{\mathrm{Diff}}(\mathbb{C}P^{7})
=
\mathbb{Z}_{2}\{[(\kappa)_{14}]\}
\oplus
\mathbb{Z}_{2}\{[\left(\overline{(\epsilon)_{8}}\right)_{14}]\}, \text{ as sets,}
\]
where $\mathbb{Z}_{2}\{(\kappa)_{14}\}
=\{[\mathbb{C}P^{7}\#\Sigma^{14}]\mid \Sigma^{14}\in \Theta_{14}\}.$
\end{itemize}
\end{theorem}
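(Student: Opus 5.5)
Part (i) is immediate from Remark~\ref{rema-un}(1): for $m=2n$ there is no orientation-reversing self-homeomorphism of $\mathbb{C}P^{2n}$. Hence the orientation-reversal involution on $\mathcal{M}^{+}_{\mathrm{Diff}}(\mathbb{C}P^{m})$ is trivial, and the two classification sets coincide.

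For the odd cases $m=5,7$, I will use Remark~\ref{rema-un}(2). It gives a bijection
\[
\mathcal{C}(\mathbb{C}P^{m})/MCG^{\pm}(\mathbb{C}P^{m})\cong\mathcal{M}^{\pm}_{\mathrm{Diff}}(\mathbb{C}P^{m}),
\]
with $MCG^{\pm}=\{[\mathrm{Id}],[C]\}$ by \eqref{iso1}. The nontrivial element acts by $[N,f]\mapsto C^{*}([N,f])$. The whole computation therefore reduces to finding the orbits of the involution $C^{*}$ on $[\mathbb{C}P^{m},\operatorname{Top}/O]$, and Theorem~\ref{main2} already gives $C^{*}$ explicitly.

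For $m=7$, Theorem~\ref{main2}(iii) says $C^{*}$ is the identity. So every orbit is a singleton, and $\mathcal{M}^{\pm}_{\mathrm{Diff}}(\mathbb{C}P^{7})$ is in bijection with $\mathcal{C}(\mathbb{C}P^{7})=\mathbb{Z}_2\{(\kappa)_{14}\}\oplus\mathbb{Z}_2\{(\overline{(\epsilon)_8})_{14}\}$ by Remark~\ref{group}(C). The identification $\mathbb{Z}_2\{(\kappa)_{14}\}=\{[\mathbb{C}P^7\#\Sigma^{14}]\}$ comes from $(\kappa)_{14}=f^{*}_{\mathbb{C}P^7}(\Sigma^{14})$ together with the injectivity of $f^{*}_{\mathbb{C}P^7}$ (Theorem~\ref{kawa}).

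For $m=5$, Theorem~\ref{main2}(i) gives $C^{*}(x,y,z)=(x,-y,z)$ on $\mathbb{Z}_2\oplus\mathbb{Z}_3\oplus\mathbb{Z}_2$. The orbits are then:
\begin{itemize}
\item singletons for $y=0$;
\item the pairs $\{(x,1,z),(x,2,z)\}$ for $y\neq 0$.
\end{itemize}
The $\mathbb{Z}_3$-coordinate thus collapses to the two classes $\{0\}$ and $\{\pm1\}$, which I encode as a $\mathbb{Z}_2$ factor. This yields the set-level description $\mathbb{Z}_2\oplus\mathbb{Z}_2\oplus\mathbb{Z}_2$, with $(\beta_1)_{10}\cong-(\beta_1)_{10}$. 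The identification of the generators with $[\mathbb{C}P^5\#\Sigma]$ for $\Sigma\in\Theta_{10}\cong\mathbb{Z}_6$ follows from Remark~\ref{group}(A) and injectivity of $f^{*}_{\mathbb{C}P^5}$: $\Sigma_{\eta\circ\mu}$ is the order-$2$ element and $\Sigma_{\beta_1}$ is an order-$3$ element.

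The only delicate point is bookkeeping rather than mathematics. The result is stated only "as sets", because the orbit space is not a group quotient in general. I also need to check that orientation reversal on a class $[N]$ matches the action of $C^{*}$, and not, say, the inverse $-[N,f]$. This is exactly the content of \eqref{for} with $[\mathbb{C}P^m,C]=[\mathbb{C}P^m,\mathrm{Id}]$, since $C$ is a diffeomorphism of degree $(-1)^m$, so I would cite it directly. Everything else is direct substitution of Theorem~\ref{main2} into Remark~\ref{rema-un}.
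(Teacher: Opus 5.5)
Your proposal is correct and follows the paper's own route. Part (i) comes from Remark~\ref{rema-un}(1), and parts (ii)--(iii) come from counting the orbits of the involution $C^{*}$ of Theorem~\ref{main2} on $\mathcal{C}(\mathbb{C}P^{m})$ via Remark~\ref{rema-un}(2) and \eqref{for}, which is exactly what the paper means by ``combining Remark~\ref{rema-un} and Theorem~\ref{main2}.'' Your worry about $C^{*}$ versus the group inverse is harmless here: $C^{*}=-\mathrm{Id}$ on $[\mathbb{C}P^{5},\operatorname{Top}/O]$, and $[\mathbb{C}P^{7},\operatorname{Top}/O]$ is elementary abelian, so the two possible actions coincide in both cases.
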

\begin{remark}\rm
\begin{enumerate}
    \item Theorem~\ref{main4}(i) and Theorem~\ref{main3}(i) determine the set $\mathcal{M}^{\pm}_{\mathrm{Diff}}(\mathbb{C}P^{m})$ completely for $m=6$ and $m=8$.
    
    \item It follows from Theorem~\ref{main3}(i) and Theorem~\ref{main4}(ii) that the manifolds $\mathbb{C}P^{5} \# \Sigma^{10}$ and $\mathbb{C}P^{5} \# (-\Sigma^{10})$ are diffeomorphic, but not orientation-preservingly diffeomorphic, where $\Sigma^{10}$ corresponds to the element $\beta_{1} \in \Theta_{10} \cong \mathbb{Z}_{2}\{\eta \circ \mu\} \oplus \mathbb{Z}_{3}\{\beta_{1}\}$.
\end{enumerate}
\end{remark}
\section{The smooth tangential structure set \texorpdfstring{$\mathcal{S}^{t}_{\mathrm{Diff}}(\mathbb{C}P^{m})$}{}}\label{4}

In this section, the smooth tangential structure set \(\mathcal{S}^t_{\mathrm{Diff}}(\mathbb{C}P^{m})\) (see \cite[p.~103]{CH15} for definition and details) is determined for \(m=7\) and \(8\). The main tool for computing the smooth tangential structure set \(\mathcal{S}^t_{\mathrm{Diff}}(M)\) of an \(n\)-dimensional simply connected closed smooth manifold \(M\) with \(n\ge 5\) is the surgery exact sequence
\begin{equation}\label{tsurgery}
L_{n+1}(e)\xrightarrow{\;\theta\;}\mathcal{S}^t_{\mathrm{Diff}}(M)
\xrightarrow{\;\eta^t\;}[M,SF]\xrightarrow{\;s\;}L_n(e),
\end{equation}
where \(s\colon[M,SF]\to L_{n}(e)\) is the tangential surgery obstruction map (see \cite[\S6]{CH15}). Recall that there is a forgetful map \(F_{t}\colon \mathcal{S}^t_{\mathrm{Diff}}(M)\longrightarrow \mathcal{S}_{\rm Diff}(M)\), where \(\mathcal{S}_{\rm Diff}(M)\) is the homotopy surgery smooth structure set of \(M\). Let \(\mathcal{M}^{\pm}_{\mathrm{hDiff}}(M)\) denote the set of diffeomorphism classes of smooth manifolds homotopy equivalent to \(M\). This set can be identified with the quotient \(\mathcal{S}_{\mathrm{Diff}}(M)/\mathrm{haut}(M)\), where \(\mathrm{haut}(M)\) is the group of homotopy classes of self-homotopy equivalences of \(M\), acting on \(\mathcal{S}_{\mathrm{Diff}}(M)\) by post-composition.

Since \(L_{\mathrm{odd}}(e)=0\) \cite{Wal99}, the tangential surgery exact sequence \eqref{tsurgery} for \(M=\mathbb{C}P^{m}\) takes the form
\begin{equation}\label{taur}
0\longrightarrow \mathcal{S}^t_{\mathrm{Diff}}(\mathbb{C}P^{m})
\longrightarrow [\mathbb{C}P^{m},SF]\xrightarrow{\;s\;}L_{2m}(e),
\end{equation}
where \(s\colon[\mathbb{C}P^{m},SF]\to L_{2m}(e)\) is trivial for \(m\) even (see \cite[Lemma~I.5(i)]{Bru71}). The surgery obstruction maps \(s\colon[\mathbb{C}P^{m},SF]\to L_{2m}(e)\) were computed by Brumfiel \cite[Lemma~I.5 and I.6]{Bru71} for \(m\le 6\); in particular, \(s\) is an isomorphism for \(m=3\), while \(s=0\) for \(m=4, 5, 6\). Applying these results to the exact sequence \eqref{taur}, we obtain:

\begin{proposition}[Brumfiel {\cite{Bru71,Bru68}}]\label{lowbru}
\indent
\begin{itemize}
    \item[(i)] \(\mathcal{S}^t_{\mathrm{Diff}}(\mathbb{C}P^{3})\cong \{[\mathbb{C}P^{3}]\}\).
    \item[(ii)] For \(m=4, 5, 6, 8\), the normal invariant \(\eta^{t}\colon\mathcal{S}^t_{\mathrm{Diff}}(\mathbb{C}P^{m})\longrightarrow [\mathbb{C}P^{m},SF]\) is bijective.
\end{itemize}
\end{proposition}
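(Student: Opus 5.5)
The proof is read off from the exact sequence \eqref{taur}, using that $L_{\mathrm{odd}}(e)=0$. It says $\eta^{t}\colon \mathcal{S}^t_{\mathrm{Diff}}(\mathbb{C}P^{m})\to[\mathbb{C}P^{m},SF]$ is injective for every $m$, with image $\operatorname{Ker}(s)$. So in each case it suffices to identify $\operatorname{Ker}(s)$.

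For Part (i), take $m=3$. Brumfiel's computation \cite[Lemma~I.5, I.6]{Bru71} shows that $s\colon[\mathbb{C}P^{3},SF]\to L_{6}(e)\cong\mathbb{Z}_2$ is an isomorphism. This is consistent with Lemma~\ref{lem:Bru-seq}(ii): the group is $\mathbb{Z}_2\{(\nu^2)_6\}$, and $\nu^2$ has Kervaire invariant one in dimension $6$. Hence $\operatorname{Ker}(s)=0$, so $\mathcal{S}^t_{\mathrm{Diff}}(\mathbb{C}P^3)$ has exactly one element, the base point $[\mathbb{C}P^3,\mathrm{Id},\mathrm{Id}]$.

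For Part (ii), take $m$ even, i.e. $m=4,6,8$. Here $s=0$ by \cite[Lemma~I.5(i)]{Bru71}. The reason is that $[\mathbb{C}P^{m},SF]$ is finite while $L_{2m}(e)\cong\mathbb{Z}$ is torsion free. Indeed, $[\mathbb{C}P^{m},SF]$ is finite by Lemma~\ref{lem:Bru-seq} for $m=4,6$ and by Theorem~\ref{eight-sub2}(i) for $m=8$. Alternatively, for $m=8$ one can argue that the signature defect of a normal map coming from $SF$ vanishes, since the stable normal bundle is unchanged and the Hirzebruch formula then forces equal signatures. Either way $\operatorname{Ker}(s)$ is everything, and $\eta^t$ is bijective.

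For $m=5$, the target is $L_{10}(e)\cong\mathbb{Z}_2$, and the tangential surgery obstruction is the Kervaire invariant. Brumfiel \cite[Lemma~I.6]{Bru71} shows it vanishes on $[\mathbb{C}P^5,SF]$. I would check this against the generators from Lemma~\ref{lem:Bru-seq}(iv), namely $(\eta\circ\mu)_{10}$, $(\beta_1)_{10}$ and $\bigl(\overline{(\epsilon)_8}\bigr)_{10}$. The first two come from $\pi_{10}^s$, and $\pi_{10}^s$ contains no Kervaire invariant one element, so their obstructions vanish. The $\mathbb{Z}_3$-summand maps trivially to $\mathbb{Z}_2$ in any case. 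For the third, I would use a Sullivan-type characteristic class formula for the Kervaire invariant, evaluated through the composite $[\mathbb{C}P^5,SF]\to[\mathbb{C}P^5,F/O]\to[\mathbb{C}P^5,F/\mathrm{Top}]$. Since $(\epsilon)_8$ restricts trivially to $\mathbb{C}P^3$, its image in $F/\mathrm{Top}$ has vanishing $k_2,k_6$ classes and contributes nothing.

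The main obstacle is this last generator for $m=5$. Vanishing of $s$ on extensions of $(\epsilon)_8$ is not a purely formal consequence of the earlier lemmas, so the proof relies on Brumfiel's explicit computation there.
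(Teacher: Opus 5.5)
Your proposal is correct and is essentially the paper's argument. Since $L_{\mathrm{odd}}(e)=0$, the sequence \eqref{taur} makes $\eta^t$ a bijection onto $\operatorname{Ker}(s)$, and Brumfiel's values of $s$ finish the proof: $s$ is an isomorphism for $m=3$ and zero for $m=4,5,6$ and every even $m$ (for even $m$ your Hirzebruch-signature argument is the clean reason, not only for $m=8$).

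One correction to your closing remark. For $m=5$ the vanishing of $s$ is formal from the paper's own data. The group $\psi_*[\mathbb{C}P^5,\operatorname{Top}/O]$ has order $12$ and lies in the torsion subgroup $\phi_*[\mathbb{C}P^5,SF]$, which also has order $12$. Hence every element of $[\mathbb{C}P^5,SF]$ is the normal invariant of a homeomorphism and has $s=0$, as in the proof of Proposition~\ref{sur-obst}. By contrast, your Sullivan-formula sketch would not settle the third generator. On $\mathbb{C}P^5$ one has $V^2=1+x^4$, with $x$ the generator of $H^2(\mathbb{C}P^5;\mathbb{Z}_2)$, so its obstruction reduces to $\langle g^{*}k_{10},[\mathbb{C}P^5]\rangle$. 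The vanishing of $k_2$ and $k_6$ does not control this term.
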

Using the computations of \([\mathbb{C}P^{m}, SF]\) provided in Lemma \eqref{lem:Bru-seq}, we describe the generators of \([\mathbb{C}P^{m}, SF]\) for \(3 \le m \le 6\), adopting the notation from Remark~\ref{group}:
\begin{equation}\label{equtan_final}
\begin{gathered}
[\mathbb{C}P^{3}, SF] = \mathbb{Z}_{2}\{(\nu^{2})_{6}\}, \quad  [\mathbb{C}P^{4}, SF] = \mathbb{Z}_{4}\{\left(\overline{(\nu^{2})_{6}}\right)_{8}\} \\[1.5ex]
\quad  
[\mathbb{C}P^{5}, SF] = \mathbb{Z}_2\{(\eta \circ \mu)_{10}\} \oplus \mathbb{Z}_3\{(\beta_1)_{10}\} \oplus \mathbb{Z}_2\{\left (\overline{(\epsilon)_{8}}\right )_{10}\} \\[1.5ex]
[\mathbb{C}P^{6}, SF] = \mathbb{Z}_3\{\left(\overline{(\beta_1)_{10}}\right )_{12}\} \oplus \mathbb{Z}_2\{\left (\overline{(\epsilon)_{8}}\right )_{12}\}.
\end{gathered}
\end{equation}
For the case \(m=7\), we have the following result.
\begin{proposition}\label{sur-obst}
The surgery obstruction map
\[
s\colon [\mathbb{C}P^{7},SF]\cong \mathbb{Z}_{2}\{\left (\overline{(\epsilon)_{8}}\right )_{14}\} \oplus \mathbb{Z}_{2}\{(\kappa)_{14}\} \oplus \mathbb{Z}_{2}\{(\sigma^{2})_{14}\}\longrightarrow \mathbb{Z}_2
\]
is surjective. In particular,
\[
\Ker\left([\mathbb{C}P^{7}, SF]\xrightarrow{s}L_{14}(e)\cong \mathbb{Z}_{2}\right) = \mathbb{Z}_2\{(\kappa)_{14}\} \oplus \mathbb{Z}_2\{\left (\overline{(\epsilon)_{8}}\right )_{14}\}.
\]
\end{proposition}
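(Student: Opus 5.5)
The plan is to determine $s$ on each of the three generators of $[\mathbb{C}P^{7},SF]$ from Corollary~\ref{cp7-sf}. The two generators $(\kappa)_{14}$ and $\left(\overline{(\epsilon)_{8}}\right)_{14}$ will be shown to lie in the kernel using smooth manifolds, and $(\sigma^{2})_{14}$ will be shown to map nontrivially using the Kervaire invariant.

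\emph{The kernel.} By Theorem~\ref{main}(iii) and Remark~\ref{group}(C), $\mathcal{C}(\mathbb{C}P^{7})=\mathbb{Z}_2\{(\kappa)_{14}\}\oplus\mathbb{Z}_2\{\left(\overline{(\epsilon)_{8}}\right)_{14}\}$. Every element here is represented by a homeomorphism $h\colon N\to\mathbb{C}P^{7}$ from a smooth manifold $N$. Since $h$ is a homeomorphism, it preserves rational Pontryagin classes, so $N$ and $\mathbb{C}P^{7}$ have the same topological tangent bundle. Together with the identification of $[\mathbb{C}P^{7},\operatorname{Top}/O]$ as a subgroup of $[\mathbb{C}P^{7},SF]$ via \eqref{equ21}–\eqref{equ23}, this should let $h$ be promoted to a tangential homotopy equivalence $(N,h,\widehat h)$. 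Its tangential normal invariant is then the image of the concordance class under $[\mathbb{C}P^{7},\operatorname{Top}/O]\hookrightarrow[\mathbb{C}P^{7},SF]$. I expect this image to be exactly the subgroup generated by $(\kappa)_{14}$ and $\left(\overline{(\epsilon)_{8}}\right)_{14}$. The reason is the naming convention of Remark~\ref{group}: $\Sigma^{14}$ corresponds to $\kappa$, and $\left(\overline{(\epsilon)_{8}}\right)_{14}$ restricts to $(\epsilon)_8$ on $\mathbb{C}P^4$. The main point to verify is that the map $\mathcal{C}\to[\,\cdot\,,SF]$ is compatible with $\phi_*$ and $\psi_*$ in Diagram~\eqref{kappa1}, so the generators match as claimed. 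By exactness of \eqref{taur}, these two elements then lie in $\operatorname{Ker}(s)$.

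\emph{Surjectivity.} The element $(\sigma^{2})_{14}=f^*_{\mathbb{C}P^{7}}(\sigma^2)$ comes from the top cell. Naturality of the surgery obstruction under the degree-one collapse $f_{\mathbb{C}P^{7}}$ (the analogue of Diagram~\eqref{digram31} for the tangential sequence, cf.\ \cite[Lemma~3.4]{Cro10}) gives $s\bigl(f^*_{\mathbb{C}P^{7}}(x)\bigr)=s_{\mathbb{S}^{14}}(x)$. Here $s_{\mathbb{S}^{14}}\colon\pi_{14}^s\to L_{14}(e)=\mathbb{Z}_2$ is the Kervaire invariant. Since $\sigma^2$ has Kervaire invariant one \cite{Bro69}, we get $s((\sigma^2)_{14})\neq0$, and hence $s$ is surjective.

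This also gives a consistency check on the first step: $s((\kappa)_{14})=\operatorname{Kerv}(\kappa)=0$, matching the fact that $\kappa$ comes from $\Theta_{14}$. Since $s$ is a homomorphism onto $\mathbb{Z}_2$, its kernel has order $4$. Combining the two steps identifies $\operatorname{Ker}(s)$ as $\mathbb{Z}_2\{(\kappa)_{14}\}\oplus\mathbb{Z}_2\{\left(\overline{(\epsilon)_{8}}\right)_{14}\}$.

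\emph{Expected obstacle.} The delicate step is showing $s\bigl(\left(\overline{(\epsilon)_{8}}\right)_{14}\bigr)=0$, because this element is not pulled back from the top cell. I would handle it through the homeomorphism argument above. If promoting a homeomorphism to a tangential equivalence is not cleanly available, there is a fallback. Restricting the smooth normal-invariant square \eqref{digram31} through $\phi_*$ shows that $\phi_*\left(\left(\overline{(\epsilon)_{8}}\right)_{14}\right)$ equals $\psi_*$ of a concordance class. Because the smooth surgery obstruction restricted to $[\mathbb{C}P^{7},SF]$ agrees with the tangential one, it would vanish on that class.
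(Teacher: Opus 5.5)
Your argument is essentially the paper's. Surjectivity comes from $(\sigma^{2})_{14}=f^{*}_{\mathbb{C}P^{7}}(\sigma^{2})$ via naturality of the obstruction under the collapse map (Diagram~\eqref{digram31}) and the fact that $\sigma^{2}$ has Kervaire invariant one. The kernel is shown to contain $\psi_*[\mathbb{C}P^{7},\operatorname{Top}/O]$ because these classes come from smoothings. Your ``fallback'' is exactly the paper's proof, and your tangential-structure route is the same argument one level up.

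The gap is the step you flagged: identifying $\psi_*[\mathbb{C}P^{7},\operatorname{Top}/O]$ with $\mathbb{Z}_2\{(\kappa)_{14}\}\oplus\mathbb{Z}_2\{(\overline{(\epsilon)_{8}})_{14}\}$. Your proposed justification does not establish it.
\begin{itemize}
\item Diagram~\eqref{kappa1} (and likewise \eqref{digram31}) only controls the top-cell class $(\kappa)_{14}$.
\item For the other generator, knowing that it restricts to $(\epsilon)_{8}$ on $\mathbb{C}P^{4}$ fixes its image only modulo $\operatorname{Ker}\bigl([\mathbb{C}P^{7},SF]\to[\mathbb{C}P^{4},SF]\bigr)=\mathbb{Z}_2\{(\kappa)_{14}\}\oplus\mathbb{Z}_2\{(\sigma^{2})_{14}\}$.
\item Since $s((\sigma^{2})_{14})=1$, the two candidates $(\overline{(\epsilon)_{8}})_{14}$ and $(\overline{(\epsilon)_{8}})_{14}+(\sigma^{2})_{14}$ have different values of $s$.
\end{itemize}
So the naming convention decides nothing. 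As written, your argument only proves $\operatorname{Ker}(s)=\psi_*[\mathbb{C}P^{7},\operatorname{Top}/O]$, not that this equals the named subgroup.

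What pins it down is \eqref{cp7-pl}. It uses the same $2$-local splitting $\mathbb{C}P^{7}/\mathbb{C}P^{3}\simeq_{(2)}\mathbb{C}P^{6}/\mathbb{C}P^{3}\vee\mathbb{S}^{14}$ that defines $(\overline{(\epsilon)_{8}})_{14}$ in Corollary~\ref{cp7-sf}, together with $\operatorname{Im}(\pi_{14}(PL)\to\pi_{14}(F))=\mathbb{Z}_2\{\kappa\}$. From these it follows that the image of $[\mathbb{C}P^{7},PL]\cong[\mathbb{C}P^{7},\operatorname{Top}/O]$ in $[\mathbb{C}P^{7},SF]$ is exactly $\mathbb{Z}_2\{(\overline{(\epsilon)_{8}})_{14}\}\oplus\mathbb{Z}_2\{(\kappa)_{14}\}$. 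Cite that, together with the compatibility $\phi_*\circ J_{PL}=\psi_*\circ F_*\circ\beta_*$, and the proof is complete.

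A smaller point: a homeomorphism automatically preserves topological tangent bundles, so that is not what you need. To promote $h$ to a tangential structure you need the stable vector bundles to agree, i.e.\ vanishing in $\widetilde{KO}^{0}(\mathbb{C}P^{7})$. That follows from $\psi_*[\mathbb{C}P^{7},\operatorname{Top}/O]\subseteq\phi_*[\mathbb{C}P^{7},SF]$ and the exact sequence \eqref{bex}, or from the $\widetilde{KO}$ argument in Lemma~\ref{pont}. It does not follow directly from preservation of rational Pontryagin classes.
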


\begin{proof}
The group of smooth homotopy \(14\)-spheres is \(\Theta_{14}\cong \mathbb{Z}_2\{\kappa\}\), and the surgery obstruction map
\[
s\colon \pi_{14}(F/O) \cong \mathbb{Z}_{2}\{\kappa\} \oplus \mathbb{Z}_{2}\{\sigma^{2}\} \longrightarrow \mathbb{Z}_2
\]
maps both \(\sigma^{2}\) and \(\sigma^{2}+\kappa\) to \(1 \in \mathbb{Z}_2\) (see \eqref{kappagenrator}); hence, it is surjective. It follows from the commutativity of Diagram~\eqref{digram31} and  \eqref{equ3} that the composite map
\[
s\colon [\mathbb{C}P^{7}, SF] \hookrightarrow [\mathbb{C}P^{7}, F/O] \longrightarrow L_{14}(e) \cong \mathbb{Z}_2
\]
is also surjective. From Corollary~\ref{cp7-sf} and \eqref{cp7-top}, we have:
\[
[\mathbb{C}P^{7},SF]=\mathbb{Z}_{2}\{\left (\overline{(\epsilon)_{8}}\right )_{14}\} \oplus \mathbb{Z}_{2}\{(\kappa)_{14}\} \oplus \mathbb{Z}_{2}\{(\sigma^{2})_{14}\}
\]
and
\[
[\mathbb{C}P^{7},\operatorname{Top}/O]=\mathbb{Z}_2\{(\kappa)_{14}\} \oplus \mathbb{Z}_2\{\left (\overline{(\epsilon)_{8}}\right )_{14}\}.
\]
Moreover, by \eqref{equ2} and \eqref{equ3}, we have the inclusion
\[
[\mathbb{C}P^{7},\operatorname{Top}/O]\cong \operatorname{Im}\left([\mathbb{C}P^{7},\operatorname{Top}/O]\xrightarrow{\psi_{*}}[\mathbb{C}P^{7},F/O]\right) \subseteq [\mathbb{C}P^{7}, SF].
\]
Since \([\mathbb{C}P^{7},\operatorname{Top}/O]\subset \mathcal{S}_{\mathrm{Diff}}(\mathbb{C}P^{7})\) (see, for example, \cite[Theorem 3.1]{Ram17}), it follows that the image
\[
\operatorname{Im}\left([\mathbb{C}P^{7},\operatorname{Top}/O]\xrightarrow{\psi_{*}}[\mathbb{C}P^{7},F/O]\right) \subseteq \Ker\left([\mathbb{C}P^{7}, SF]\xrightarrow{s}L_{14}(e)\cong \mathbb{Z}_{2}\right).
\]
Therefore, the kernel of $s$ is precisely \(\mathbb{Z}_2\{(\kappa)_{14}\} \oplus \mathbb{Z}_2\{\left (\overline{(\epsilon)_{8}}\right )_{14}\}\). This completes the proof.
\end{proof}
\begin{theorem}\label{main5}
\begin{itemize}
\item[(i)] The normal invariant \(\eta^{t}\colon\mathcal{S}^t_{\mathrm{Diff}}(\mathbb{C}P^{7})\longrightarrow [\mathbb{C}P^{7},SF]\) is injective and its image is \(\mathbb{Z}_2\{(\kappa)_{14}\}
\oplus
\mathbb{Z}_2\{\left (\overline{(\epsilon)_{8}}\right )_{14}\}\subset [\mathbb{C}P^{7},SF]\).
\item[(ii)] The normal invariant \(\eta^{t}\colon\mathcal{S}^t_{\mathrm{Diff}}(\mathbb{C}P^{8})\longrightarrow [\mathbb{C}P^{8},SF]\) is bijective, where
\[
[\mathbb{C}P^{8},SF]=
\mathbb{Z}_{4}\{\left(\overline{(\sigma^{2})_{14}}\right)_{16}\}
\oplus \mathbb{Z}_2\{\left(\overline{(\kappa)_{14}+(\overline{(\epsilon)_{8}})_{14}}\right)_{16}\}.
\]
\end{itemize}
\end{theorem}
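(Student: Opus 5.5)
Both parts will be read off the tangential surgery exact sequence \eqref{taur}, using that $L_{\mathrm{odd}}(e)=0$.

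For part (i), take $m=7$. Since $L_{15}(e)=0$, the term $L_{15}(e)$ that would act on $\mathcal{S}^t_{\mathrm{Diff}}(\mathbb{C}P^{7})$ vanishes. Exactness at the structure set therefore makes $\eta^{t}$ injective. Exactness at $[\mathbb{C}P^{7},SF]$ says that the image of $\eta^{t}$ is $\operatorname{Ker}\bigl(s\colon[\mathbb{C}P^{7},SF]\to L_{14}(e)\cong\mathbb{Z}_2\bigr)$. Proposition~\ref{sur-obst} identifies this kernel as $\mathbb{Z}_2\{(\kappa)_{14}\}\oplus\mathbb{Z}_2\{(\overline{(\epsilon)_{8}})_{14}\}$, which gives (i). As a consistency check, this kernel is exactly the image of $[\mathbb{C}P^{7},\operatorname{Top}/O]$ under $\psi_*$. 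This matches the fact that these classes are realized by manifolds homeomorphic to $\mathbb{C}P^7$, and in particular are tangentially homotopy equivalent to it.

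For part (ii), take $m=8$. Again $L_{17}(e)=0$, so $\eta^{t}$ is injective. The surgery obstruction $s\colon[\mathbb{C}P^{8},SF]\to L_{16}(e)\cong\mathbb{Z}$ is zero. The quickest argument is that $[\mathbb{C}P^8,SF]$ is finite by Theorem~\ref{eight-sub2}(i), so any homomorphism to $\mathbb{Z}$ vanishes. The only point to check is that $s$ is a homomorphism on $[M,SF]$ for the infinite loop structure. Since $m$ is even, this is also covered by the citation of \cite[Lemma~I.5(i)]{Bru71} recorded after \eqref{taur}. Exactness then makes $\eta^{t}$ surjective, hence bijective, as already noted in Proposition~\ref{lowbru}(ii). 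The stated group structure comes from Theorem~\ref{eight-sub2}(i).

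The main obstacle is justifying that $s$ respects the group structure for $m=8$; that is, that the signature obstruction is additive on $[\mathbb{C}P^8,SF]$. If the additivity argument seems shaky, I would fall back on Brumfiel's lemma directly. I would also note that the signature obstruction factors through rational characteristic classes, which vanish on torsion normal invariants coming from $SF$, because these classes are pulled back from $F/O\to BO$ and are rationally trivial on $SF$. Part (i) needs no new input beyond Proposition~\ref{sur-obst}.
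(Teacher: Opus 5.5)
Your proposal is correct and follows the paper's own proof. In both parts you read the result off the exact sequence \eqref{taur}, with $L_{\mathrm{odd}}(e)=0$ giving injectivity. You then use Proposition~\ref{sur-obst} for the image when $m=7$, and the vanishing of $s$ together with Theorem~\ref{eight-sub2}(i) when $m=8$.

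Of your justifications for $s=0$ on $[\mathbb{C}P^{8},SF]$, the reliable ones are Brumfiel's Lemma~I.5(i) and the Hirzebruch signature argument: an $SF$-normal invariant gives $p(M)=f^{*}p(\mathbb{C}P^{8})$, so the signatures agree. The finiteness argument would require $s$ to be additive for the Whitney-sum structure, which is not automatic, so it is best dropped.
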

\begin{proof}
Statement \textup{(i)} follows from the exact sequence \eqref{taur} using Proposition~\ref{sur-obst}. Since the surgery obstruction \(s\colon[\mathbb{C}P^{8},SF]\to \mathbb{Z}\) is trivial, Statement \textup{(ii)} follows immediately from the exact sequence \eqref{taur} and Theorem~\ref{eight-sub2}\textup{(i)}.
\end{proof}
Recall that the surgery exact sequence appearing in the bottom row of Diagram~\eqref{digram31} for \(\mathbb{C}P^{m}\) yields an inclusion
\[
\mathcal{S}_{\mathrm{Diff}}(\mathbb{C}P^{m})\subset [\mathbb{C}P^{m},F/O],
\]
and identifies \(\mathcal{S}_{\mathrm{Diff}}(\mathbb{C}P^{m})\) with the kernel
\[
\Ker\!\left([\mathbb{C}P^{m},F/O]\xrightarrow{\;s\;}L_{2m}(e)\right).
\]
There is also an exact sequence
\begin{equation}\label{bex}
0\longrightarrow [\mathbb{C}P^{m},SF]\xrightarrow{\;\phi_{*}\;}[\mathbb{C}P^{m},F/O]
\xrightarrow{\;i_{*}\;}\widetilde{KO}^{0}(\mathbb{C}P^{m})
\longrightarrow [\mathbb{C}P^{m},BF],
\end{equation}
induced by the fibration sequence
\[
SF\longrightarrow F/O\longrightarrow BSO\longrightarrow BSF,
\]
where \(\widetilde{KO}^{0}(\mathbb{C}P^{m})\) denotes reduced real \(K\)-theory, and if
\(f\colon M\longrightarrow \mathbb{C}P^{m}\) represents an element of
\(\mathcal{S}_{\mathrm{Diff}}(\mathbb{C}P^{m})\), then its image under the map
\(i_{*}\colon[\mathbb{C}P^{m},F/O]\longrightarrow \widetilde{KO}^{0}(\mathbb{C}P^{m})\) is given by
\[
i_{*}\bigl([(f,M)]\bigr)
=(f^{-1})^{*}\nu_{M}-\nu_{\mathbb{C}P^{m}},
\]
see \cite[\S2]{Bru68}. Thus, given \([(f,M)]\in \mathcal{S}_{\mathrm{Diff}}(\mathbb{C}P^{m})\), we have from the exact sequence \eqref{bex} that \(f^{*}(\nu_{\mathbb{C}P^{m}})=\nu_{M}\) if and only if
\((f^{-1})^{*}\nu_{M}-\nu_{\mathbb{C}P^{m}}=0\) in \(\widetilde{KO}^{0}(\mathbb{C}P^{m})\), or, equivalently, if and only if the element \([(f,M)]\) lies in \([\mathbb{C}P^{m},SF]\). We say that \([M]\) and \([N]\in \mathcal{M}^{\pm}_{\mathrm{hDiff}}(\mathbb{C}P^{m})\) have the same Pontryagin classes if there exists a homotopy equivalence \(h\colon M\longrightarrow N\) such that
\(h^{*}\bigl(p_{i}(\nu_{N})\bigr)=p_{i}(\nu_{M})\) for all \(i\). With all these observations, we have the following.
\begin{proposition}\label{num}
Let $[N] \in \mathcal{M}^{\pm}_{\mathrm{hDiff}}(\mathbb{C}P^{m})$ with $m \ge 3$. Then the elements in $\mathcal{M}^{\pm}_{\mathrm{hDiff}}(\mathbb{C}P^{m})$ with the same Pontryagin classes as $[N]$ are in one-to-one correspondence with the elements of $\Ker\left([\mathbb{C}P^{m}, SF] \xrightarrow{s} L_{2m}(e)\right)$.
\end{proposition}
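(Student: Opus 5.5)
The plan is to translate ``same Pontryagin classes as $[N]$'' into ``stable normal bundle pulled back from $N$'', and then to read off the answer from the smooth surgery exact sequence together with the fibration sequence \eqref{bex}.

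\textbf{Step 1: re-base everything at $N$.} Fix a homotopy equivalence $g\colon N\to\mathbb{C}P^{m}$ and work in $\mathcal{S}_{\mathrm{Diff}}(N)$. Composition with $g$ gives a bijection $\mathcal{S}_{\mathrm{Diff}}(N)\cong\mathcal{S}_{\mathrm{Diff}}(\mathbb{C}P^{m})$. Since $g$ is a homotopy equivalence, $g^{*}$ identifies $[\mathbb{C}P^{m},SF]$, $[\mathbb{C}P^{m},F/O]$ and $\widetilde{KO}^{0}(\mathbb{C}P^{m})$ with the corresponding groups for $N$. As in the discussion preceding the statement, the surgery exact sequence (bottom row of \eqref{digram31}) identifies $\mathcal{S}_{\mathrm{Diff}}(N)$ with $\Ker\bigl([N,F/O]\xrightarrow{s}L_{2m}(e)\bigr)$. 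Under this identification, the exact sequence \eqref{bex} with base $N$ shows that a structure $h\colon M\to N$ satisfies $h^{*}\nu_{N}=\nu_{M}$ stably if and only if its normal invariant lies in $\phi_{*}[N,SF]$. Hence the structures with fibre-homotopically trivial normal data relative to $N$ are exactly the elements of $\Ker\bigl([N,SF]\xrightarrow{s}L_{2m}(e)\bigr)\cong\Ker\bigl([\mathbb{C}P^{m},SF]\xrightarrow{s}L_{2m}(e)\bigr)$, using \eqref{equ1} for the injectivity of $\phi_{*}$.

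\textbf{Step 2: equality of Pontryagin classes forces equality in $\widetilde{KO}^{0}$.} I must show that $h^{*}p_{i}(\nu_{N})=p_{i}(\nu_{M})$ for all $i$ forces $(h^{-1})^{*}\nu_{M}-\nu_{N}=0$ in $\widetilde{KO}^{0}(N)\cong\widetilde{KO}^{0}(\mathbb{C}P^{m})$. The converse direction is trivial.
\begin{itemize}
\item On the free part of $\widetilde{KO}^{0}(\mathbb{C}P^{m})$, the Pontryagin character is injective, so equal Pontryagin classes give equal images there.
\item The possible torsion ($\mathbb{Z}_{2}$ when $m\equiv 1\pmod 4$, by Sanderson) is the difficulty. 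There I would use the element $x=(h^{-1})^{*}\nu_{M}-\nu_{N}$ itself: it lies in the image of $[N,F/O]$, hence in $\Ker\bigl(\widetilde{KO}^{0}\to[N,BSF]\bigr)$ by \eqref{bex}. I would then check that this kernel meets the torsion trivially, via the known computation of the $J$-image on $\mathbb{C}P^{m}$ in \cite{Bru68}.
\end{itemize}
I expect this torsion check to be the main technical point.

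\textbf{Step 3: pass from structures to diffeomorphism classes.} Every $[M]$ with the same Pontryagin classes as $[N]$ arises, by Step 1 and Step 2, from some element of the kernel. It remains to show that distinct kernel elements give non-diffeomorphic manifolds, i.e.\ that the $\mathrm{haut}(N)$-action on $\mathcal{S}_{\mathrm{Diff}}(N)$ does not identify distinct elements of the kernel.

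Since $N\simeq\mathbb{C}P^{m}$, the group $\mathrm{haut}(N)$ corresponds to $\{\mathrm{Id},C\}$, where $C$ is complex conjugation. A self-equivalence changes the normal invariant by the composition formula \eqref{for}: the pullback is composed with a translation by the normal invariant of the self-equivalence.

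\begin{itemize}
\item \textbf{What is needed.} Each element of the kernel must be fixed by this action; equivalently, the action must be trivial on $\Ker(s)$.
\item \textbf{First attempt.} Compute the action via $C^{*}$ using Remark~\ref{actions} and the explicit generators in \eqref{equtan_final}, Corollary~\ref{cp7-sf} and Theorem~\ref{eight-sub2}.
\item \textbf{Known obstruction.} These computations show $C^{*}$ need not be the identity: it negates the $\mathbb{Z}_{3}$ summand $(\beta_{1})_{10}$ (Theorem~\ref{main2}(i)), and it acts as $-1$ on $[\mathbb{C}P^{8}/\mathbb{C}P^{6},SF]=\mathbb{Z}_{4}$ (Theorem~\ref{conju-cp8}(2)). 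So $C^{*}$-invariance of each individual element of $\Ker(s)$ is not available in general.
\end{itemize}

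This is the step I expect to be the main obstacle. Achieving the stated one-to-one correspondence would require that the translation term in \eqref{for} exactly cancels the nontrivial part of $C^{*}$ on $\Ker(s)$, or an independent argument showing that $M$ determines its normal invariant up to this action only trivially. I do not currently see how to obtain either.
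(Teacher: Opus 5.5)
Your Steps 1 and 2 are correct and are essentially the paper's argument. The paper stays in $\mathcal{S}_{\mathrm{Diff}}(\mathbb{C}P^{m})$ instead of rebasing at $N$. It uses Sullivan's result that $h\circ f^{-1}$ is homotopic to $g^{-1}$ or to $g^{-1}\circ C$, together with $C^{*}=\mathrm{id}$ on $\widetilde{KO}^{0}(\mathbb{C}P^{m})$, to get $p_i\bigl((f^{-1})^{*}\nu_M\bigr)=p_i\bigl((g^{-1})^{*}\nu_N\bigr)$. Browder's Lemma~2.25 then makes $(f^{-1})^{*}\nu_M-(g^{-1})^{*}\nu_N$ torsion. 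Brumfiel's Corollary~2.1 (the image of $[\mathbb{C}P^{m},F/O]\to\widetilde{KO}^{0}(\mathbb{C}P^{m})$ is free) forces it to vanish. That freeness statement is exactly your claim that the kernel of $\widetilde{KO}^{0}\to[\,\cdot\,,BSF]$ meets the torsion trivially, so the reference you had in mind settles your torsion check. Rebasing at $N$ spares you the Sullivan step. The cost is that you must justify $\operatorname{Ker}(s_N)\cong\operatorname{Ker}(s_{\mathbb{C}P^{m}})$ under $g^{*}$, for instance via Sullivan's formula for the surgery obstruction, which involves only homotopy-invariant data.

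The obstacle you isolate in Step 3 is real, and no argument can remove it. Read as a statement about $\mathcal{M}^{\pm}_{\mathrm{hDiff}}(\mathbb{C}P^{m})$, the proposition is false in general, and the paper's proof never takes this step. It shows that the normal invariants of $(f,M)$ and $(g,N)$ differ by an element of $\operatorname{Ker}s$, and conversely, and stops there. It never divides out by $\mathrm{haut}(\mathbb{C}P^{m})=\{[\mathrm{Id}],[C]\}$. So what is actually proved, by the paper and by your Steps 1--2, is a bijection between $\operatorname{Ker}s$ and the set of \emph{structures} whose domain has the Pontryagin classes of $N$.

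Your hope that the translation term cancels $C^{*}$ also fails. For $N=\mathbb{C}P^{m}$, $C$ is a diffeomorphism, so its normal invariant is $0$, and the action on normal invariants is exactly $C^{*}$, as in \eqref{act2}. Take $m=5$: $C$ reverses orientation, so $C^{*}(\beta_1)_{10}=-(\beta_1)_{10}$ by Remark~\ref{actions}(2). The smoothings $\mathbb{C}P^{5}\#\Sigma_{\beta_1}$ and $\mathbb{C}P^{5}\#(-\Sigma_{\beta_1})$ both have the Pontryagin classes of $\mathbb{C}P^{5}$ (Lemma~\ref{pont}). Their normal invariants $\pm(\beta_1)_{10}\in\operatorname{Ker}s$ are distinct, since $(\beta_1)_{10}$ has order $3$ (Lemma~\ref{lem:Bru-seq}(iv)). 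Yet the two manifolds are diffeomorphic, as recorded in Theorem~\ref{main4}(ii).

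In numbers: $|\operatorname{Ker}s|=|[\mathbb{C}P^{5},SF]|=12$, while Theorem~\ref{difftang}(2) gives only $8$ diffeomorphism classes. For $m=4$, $|\operatorname{Ker}s|=4$, but $C^{*}=-1$ on $\mathbb{Z}_4$ (Theorem~\ref{main6}(i)) leaves $3$ classes (Theorem~\ref{difftang}(1)). The correct formulation is therefore the structure-set version. For $N=\mathbb{C}P^{m}$, the diffeomorphism classes correspond to the orbits of $\operatorname{Ker}s$ under $C^{*}$, which is the orbit computation the paper carries out later in Section~\ref{4}.
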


\begin{proof}
Let $h \colon M \to N$ be a homotopy equivalence such that $h^{*}\bigl(p_{i}(\nu_{N})\bigr) = p_{i}(\nu_{M})$ for all $i$. Consider the elements $[(f,M)]$ and $[(g,N)] \in \mathcal{S}_{\mathrm{Diff}}(\mathbb{C}P^{m})$ representing the diffeomorphism classes $[M]$ and $[N]$ in $\mathcal{M}^{\pm}_{\mathrm{hDiff}}(\mathbb{C}P^{m})$, respectively. From this, we have:
\begin{align*}
p_{i}\bigl((f^{-1})^{*}(\nu_{M})\bigr)
&=(f^{-1})^{*}\bigl(p_{i}(\nu_{M})\bigr)
=(f^{-1})^{*}\bigl(h^{*}(p_{i}(\nu_{N}))\bigr)\\
&=(f^{-1})^{*} \circ h^{*}\bigl(p_{i}(\nu_{N})\bigr)
=\bigl(h \circ f^{-1}\bigr)^{*}\bigl(p_{i}(\nu_{N})\bigr).
\end{align*}
By \cite[Theorem~8(i)]{Sul67}, $h \circ f^{-1}$ is homotopic either to $g^{-1}$ or to $g^{-1} \circ C$, where $C \colon \mathbb{C}P^{m} \longrightarrow \mathbb{C}P^{m}$ is the conjugation map. Since
$C^{*}\colon \widetilde{KO}^{0}(\mathbb{C}P^{m}) \longrightarrow \widetilde{KO}^{0}(\mathbb{C}P^{m})$
is the identity (see \cite[p.~35]{Bru68}), we obtain:
\[
\bigl(h \circ f^{-1}\bigr)^{*}\bigl(p_{i}(\nu_{N})\bigr)
=(g^{-1})^{*}\bigl(p_{i}(\nu_{N})\bigr)
=p_{i}\bigl((g^{-1})^{*}(\nu_{N})\bigr).
\]
Hence, $p_{i}\bigl((f^{-1})^{*}(\nu_{M})\bigr) = p_{i}\bigl((g^{-1})^{*}(\nu_{N})\bigr)$.
Now, by \cite[Lemma~2.25]{Bro68}, the difference $(f^{-1})^{*}(\nu_{M}) - (g^{-1})^{*}(\nu_{N})$ lies in the torsion subgroup of $\widetilde{KO}^{0}(\mathbb{C}P^{m})$. Since the image
\[
\operatorname{Im}\Bigl([\mathbb{C}P^{m},F/O] \xrightarrow{\;i_{*}\;} \widetilde{KO}^{0}(\mathbb{C}P^{m})\Bigr)
\]
is a free subgroup of $\widetilde{KO}^{0}(\mathbb{C}P^{m})$ (see \cite[Corollary~2.1]{Bru68}), it follows that $(f^{-1})^{*}(\nu_{M}) = (g^{-1})^{*}(\nu_{N})$ in $\widetilde{KO}^{0}(\mathbb{C}P^{m})$. Therefore, the elements $[(f,M)]$ and $[(g,N)]$ have the same image in $\widetilde{KO}^{0}(\mathbb{C}P^{m})$. It follows from the exact sequence \eqref{bex} that the difference $[(f,M)] - [(g,N)]$ lies in $\Ker\left([\mathbb{C}P^{m}, SF] \xrightarrow{s} L_{2m}(e)\right)$. Conversely, any element $$[h,W] \in \Ker\left([\mathbb{C}P^{m}, SF] \xrightarrow{s} L_{2m}(e)\right)$$ is represented by a difference $[(f,M)] - [(g,N)]$, where $[(f,M)] \in \mathcal{S}_{\mathrm{Diff}}(\mathbb{C}P^{m})$. This implies that $[(f,M)]$ and $[(g,N)]$ have the same image in $\widetilde{KO}^{0}(\mathbb{C}P^{m})$, showing that $[N]$ and $[M]$ have the same Pontryagin classes in $\mathcal{M}^{\pm}_{\mathrm{hDiff}}(\mathbb{C}P^{m})$. This completes the proof.
\end{proof}
Applying Proposition \ref{lowbru}, Proposition \ref{sur-obst} and Theorem \ref{main5} to Proposition~\ref{num}, we can completely determine the number of elements in $\mathcal{M}^{\pm}_{\mathrm{hDiff}}(\mathbb{C}P^{m})$ with fixed Pontryagin classes for $3 \leq m \leq 8$. It follows from Diagram 6.7 of \cite[p.~107]{CH15} using \eqref{equ1} that the natural map $F_{t}:\mathcal{S}^{t}(\mathbb{C}P^{m})\longrightarrow\mathcal{S}_{\rm Diff}(\mathbb{C}P^{m})$ is injective. The next result describes that the image of $F_{t}$ may be identified with the set of elements in $\mathcal{S}_{\rm Diff}(\mathbb{C}P^{m})$ with the same Pontryagin classes as $\mathbb{C}\mathbb{P}^{m}$.
\begin{lemma}\label{pont}
Let $M$ be a smooth manifold and $f \colon M \longrightarrow \mathbb{C}P^{m}$ be a homotopy equivalence. If $f$ preserves integral Pontryagin classes or is a homeomorphism, then $f$ is a tangential homotopy equivalence.
\end{lemma}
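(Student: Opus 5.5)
The plan is to reduce both hypotheses to the vanishing of the class
\[
(f^{-1})^{*}\nu_{M}-\nu_{\mathbb{C}P^{m}}\in \widetilde{KO}^{0}(\mathbb{C}P^{m}).
\]
If this class is zero, the stable normal bundles correspond under $f$, so there is a stable bundle map $\widehat{f}\colon \nu_M\to\nu_{\mathbb{C}P^m}$ covering $f$. That is exactly what it means for $f$ to be a tangential homotopy equivalence, i.e.\ for $(M,f,\widehat f)$ to define an element of $\mathcal{S}^t_{\mathrm{Diff}}(\mathbb{C}P^{m})$. Equivalently, by the exact sequence \eqref{bex} and the description of $i_*$ preceding Proposition~\ref{num}, it suffices to show that $[(f,M)]\in\mathcal{S}_{\mathrm{Diff}}(\mathbb{C}P^m)\subset[\mathbb{C}P^m,F/O]$ lies in $\operatorname{Ker}(i_*)=\phi_*([\mathbb{C}P^m,SF])$.

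\textbf{Pontryagin class hypothesis.} Suppose $f^{*}p_i(\nu_{\mathbb{C}P^m})=p_i(\nu_M)$ for all $i$. Then $\xi=(f^{-1})^*\nu_M$ and $\nu_{\mathbb{C}P^m}$ have the same Pontryagin classes. I would repeat the argument of Proposition~\ref{num}, taking $N=\mathbb{C}P^m$ and $g=\mathrm{Id}$:
\begin{itemize}
\item[(a)] By \cite[Lemma~2.25]{Bro68}, two stable bundles over $\mathbb{C}P^m$ with equal Pontryagin classes differ by a torsion element of $\widetilde{KO}^0(\mathbb{C}P^m)$.
\item[(b)] The difference $\xi-\nu_{\mathbb{C}P^m}=i_*([(f,M)])$ lies in the image of $i_*$, which is free by \cite[Corollary~2.1]{Bru68}.
\item[(c)] A torsion element of a free group is zero, so $\xi=\nu_{\mathbb{C}P^m}$ stably.
\end{itemize}
The main point to check is that (a) is applied to classes in $\operatorname{Im}(i_*)$, which holds because both bundles come from structures on $\mathbb{C}P^m$.

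\textbf{Homeomorphism hypothesis.} Suppose $f$ is a homeomorphism. By Novikov's theorem, rational Pontryagin classes are topological invariants. Since $H^*(\mathbb{C}P^m;\mathbb{Z})$ is torsion free, integral Pontryagin classes are detected rationally, so $f^*p_i(\nu_{\mathbb{C}P^m})=p_i(\nu_M)$ integrally and the first case applies.

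An alternative route for the homeomorphism case is as follows. The class $[(f,M)]\in\mathcal{C}(\mathbb{C}P^m)\cong[\mathbb{C}P^m,\operatorname{Top}/O]$ maps under $\psi_*$ into $[\mathbb{C}P^m,F/O]$. Since $\operatorname{Top}/O\to F/O\to BSO$ factors through $\operatorname{Top}/O\to BO\to BTop$, which is a fibration, the composite $\operatorname{Top}/O\to BSO$ is trivial on $\mathbb{C}P^m$ up to the torsion $[\mathbb{C}P^m,BTop/BO]$ issue. This alternative is more delicate, so I would use the Novikov route.

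I expect the only real obstacle to be justifying that a bundle isomorphism $\xi\cong\nu_{\mathbb{C}P^m}$ yields a bundle map covering the given $f$ rather than some other map. Since $\xi=(f^{-1})^*\nu_M$ is defined using $f$ itself, this is immediate: pull back along $f$.
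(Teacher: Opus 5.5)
Your proposal is correct and follows the paper's proof essentially step for step. Novikov's theorem plus the torsion-freeness of $H^{*}(\mathbb{C}P^{m};\mathbb{Z})$ reduces the homeomorphism case to the Pontryagin-class case, and the argument of Proposition~\ref{num} then forces $(f^{-1})^{*}\nu_{M}-\nu_{\mathbb{C}P^{m}}$ to vanish in $\widetilde{KO}^{0}(\mathbb{C}P^{m})$, since Brown's Lemma~2.25 makes it torsion while Brumfiel's Corollary~2.1 places it in the free group $\operatorname{Im}(i_{*})$; this gives the bundle map covering $f$. Your sketched ``alternative route'' is muddled as written (it could be repaired by noting that the image of the finite group $[\mathbb{C}P^{m},\operatorname{Top}/O]$ in $\widetilde{KO}^{0}(\mathbb{C}P^{m})$ is torsion and lies in $\operatorname{Im}(i_{*})$), but your main argument does not depend on it.
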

\begin{proof}
By Novikov's theorem, rational Pontryagin classes are topological invariants. Since $H^{*}(M; \mathbb{Z})$ is torsion-free, it follows that any homeomorphism $f \colon M \to \mathbb{C}P^{m}$ preserves integral Pontryagin classes. Therefore, following the argument in the proof of Proposition~\ref{num}, we obtain $(f^{-1})^{*}(\nu_M) - \nu_{\mathbb{C}P^{m}} = 0$ in $\widetilde{KO}^0(\mathbb{C}P^{m})$. This implies that the map $f \colon M \longrightarrow \mathbb{C}P^{m}$ is a tangential homotopy equivalence.
\end{proof}
Denote by $\epsilon^{t}(M)$ the group of tangential self-equivalences of $M$, i.e., bundle maps of the stable normal bundle of $M$, up to homotopy as bundle maps; let $\mathcal{M}^{\pm}_{\mathrm{tDiff}}(M)$ denote the set of diffeomorphism classes of smooth manifolds tangentially homotopy equivalent to $M$. Clearly, $\epsilon^{t}(M)$ acts on $\mathcal{S}^t_{\mathrm{Diff}}(M)$ via composition. The forgetful map \[\mathscr{F}_{t}\colon \mathcal{S}^t_{\mathrm{Diff}}(M) \to \mathcal{M}^{\pm}_{\mathrm{tDiff}}(M)\] induces a bijection from the orbit space $\mathcal{S}^t_{\mathrm{Diff}}(M) / \epsilon^{t}(M)$ to $\mathcal{M}^{\pm}_{\mathrm{tDiff}}(M)$. 

For $M = \mathbb{C}P^{m}$ ($m \geq 3$), it follows from the exact sequence in \cite[p.~161]{Bro65} that $\epsilon^{t}(\mathbb{C}P^{m}) \cong \mathrm{haut}(\mathbb{C}P^{m}) \cong \mathbb{Z}_{2}\{[Id], [C]\}$. For $4 \leq m \leq 8$, by Proposition~\ref{lowbru}, Proposition \ref{sur-obst} and Theorem~\ref{main5}, the action of $\epsilon^{t}(\mathbb{C}P^{m})$ on $\mathcal{S}^{t}(\mathbb{C}P^{m})$ is determined by the normal invariant composition formula:
\begin{equation}\label{act2}
\eta^{t}(C \circ f, \widehat{C} \circ \widehat{f}) = C^{*}(\eta^{t}(f, \widehat{f}))
\end{equation}
since the forgetful map $F_{t}\colon \mathcal{S}^{t}(\mathbb{C}P^{m}) \to \mathcal{S}_{\mathrm{Diff}}(\mathbb{C}P^{m})$ is injective, and the map \[\phi_{*}\colon [\mathbb{C}P^{m}, SF] \to [\mathbb{C}P^{m}, F/O]\] is also injective, implying $\eta^{t}(C, \widehat{C}) = 0$. Here, $C^{*}\colon [\mathbb{C}P^{m}, SF] \to [\mathbb{C}P^{m}, SF]$ is the induced map. We now prove the following.
\begin{theorem}\label{main6}
Let $C\colon \mathbb{C}P^{m} \to \mathbb{C}P^{m}$ denote the complex conjugation map.
\begin{itemize}
    \item[(i)] The map $C^{*} \colon [\mathbb{C}P^{4}, SF] \to [\mathbb{C}P^{4}, SF]$ is given by $C^{*}(x) = -x$, where $[\mathbb{C}P^{4}, SF] \cong \mathbb{Z}_{4}\{(\overline{(\nu^{2})_{6}})_{8}\}$.
    \item[(ii)] The map $C^{*} \colon [\mathbb{C}P^{7}, SF] \to [\mathbb{C}P^{7}, SF]$ is the identity map.
    \item[(iii)] On the group $[\mathbb{C}P^{5}, SF] \cong \mathbb{Z}_{2}\{(\eta \circ \mu)_{10}\} \oplus \mathbb{Z}_{3}\{(\beta_{1})_{10}\} \oplus \mathbb{Z}_{2}\{(\overline{(\epsilon)_{8}})_{10}\}$, the map $C^{*}$ is given by $C^{*}(x, y, z) = (x, -y, z)$.
    \item[(iv)] On the group $[\mathbb{C}P^{6}, SF] \cong \mathbb{Z}_{3}\{(\overline{(\beta_{1})_{10}})_{12}\} \oplus \mathbb{Z}_{2}\{(\overline{(\epsilon)_{8}})_{12}\}$, the map $C^{*}$ is given by $C^{*}(x, y) = (-x, y)$.
    \item[(v)] On the group $[\mathbb{C}P^{8}, SF] \cong \mathbb{Z}_{4}\{(\overline{(\sigma^{2})_{14}})_{16}\} \oplus \mathbb{Z}_{2}\{(\overline{(\kappa)_{14}+(\overline{(\epsilon)_{8}})_{14}})_{16}\}$, the map $C^{*}$ is given by $C^{*}(x, y) = (-x, y)$.
\end{itemize}
\end{theorem}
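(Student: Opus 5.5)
The approach is to reduce each case to the quotient spaces where $C^*$ is already understood, using naturality of $C$ with respect to $i$, $q$, $f_{\mathbb{C}P^m}$ and $p$, exactly as in Remark~\ref{actions} (which holds for $X=SF$). The two key facts are these. First, $C^*$ commutes with $q^*\colon [\mathbb{C}P^m/\mathbb{C}P^k,SF]\to[\mathbb{C}P^m,SF]$, because $C\circ q$ and $q\circ \widetilde{C}_{m,k}$ agree. Second, $C^*$ commutes with $i^*$ along the skeleta.

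For (i), I use $[\mathbb{C}P^2,SF]=0$ from Lemma~\ref{lem:Bru-seq}(i). By the proof of Lemma~\ref{lem:Bru-seq}(iii), $q^*\colon[\mathbb{C}P^4/\mathbb{C}P^2,SF]\to[\mathbb{C}P^4,SF]$ is an isomorphism with $q^*(\overline{\nu^2})=\left(\overline{(\nu^2)_6}\right)_8$. Then $C^*\circ q^*=q^*\circ(\widetilde{C}_{4,2})^*$, and Theorem~\ref{conju-cp8}(3) gives $C^*(x)=-x$.

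For (iii), I argue exactly as in Theorem~\ref{main2}(i). Since $C$ has degree $-1$ on $\mathbb{C}P^5$, Remark~\ref{actions}(2) gives $C^*f^*_{\mathbb{C}P^5}(\alpha)=-f^*_{\mathbb{C}P^5}(\alpha)$. This fixes $(\eta\circ\mu)_{10}$ and negates $(\beta_1)_{10}$. For the remaining generator $\left(\overline{(\epsilon)_8}\right)_{10}$, I have $C^*$ of it equal to itself plus an element of $\operatorname{Im} f^*_{\mathbb{C}P^5}$. This holds because its restriction to $\mathbb{C}P^4$ is $(\epsilon)_8$, which is fixed by Remark~\ref{actions}(1) since $C$ has degree $+1$ on $\mathbb{C}P^4$. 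By Remark~\ref{actions}(3), $C^*$ preserves $\operatorname{Ker}(p^*)=\mathbb{Z}_2\oplus\mathbb{Z}_3\{(\beta_1)_{10}\}$, computed in Lemma~\ref{lem:Bru-seq}(iv). Since $p^*(\eta\circ\mu)_{10}\neq0$, the correction has no $(\eta\circ\mu)_{10}$ component. Order considerations, with the generator having order $2$ and the correction lying in the kernel, then kill any $\mathbb{Z}_3$ component.

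For (iv), I use that $i^*\colon[\mathbb{C}P^6,SF]\to[\mathbb{C}P^5,SF]$ is injective (Lemma~\ref{lem:Bru-seq}(v)) and commutes with $C^*$. This gives the answer from (iii), as in Theorem~\ref{main2}(ii).

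For (ii), $C$ has degree $+1$ on $\mathbb{C}P^7$, so $(\kappa)_{14}$ and $(\sigma^2)_{14}$ are fixed by Remark~\ref{actions}(1). For $\left(\overline{(\epsilon)_8}\right)_{14}$, I use Corollary~\ref{cp7-sf}: $q^*$ from $[\mathbb{C}P^7/\mathbb{C}P^3,SF]$ is surjective. On $\mathbb{C}P^7/\mathbb{C}P^3$ the bottom cell $\mathbb{S}^8$ has $\widetilde{C}_{7,3}$ of degree $+1$ by \eqref{degreeref}. Together with the isomorphism $i^*\colon[\mathbb{C}P^6/\mathbb{C}P^3,SF_{(2)}]\to[\mathbb{S}^8,SF_{(2)}]$ and the splitting $\mathbb{C}P^7/\mathbb{C}P^3\simeq_{(2)}\mathbb{C}P^6/\mathbb{C}P^3\vee\mathbb{S}^{14}$, this shows $C^*$ fixes $\left(\overline{(\epsilon)_8}\right)_{14}$ up to an element of $\operatorname{Im} f^*_{\mathbb{C}P^7}=\mathbb{Z}_2\{\kappa\}\oplus\mathbb{Z}_2\{\sigma^2\}$. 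To kill this ambiguity I would combine Remark~\ref{actions}(3), which shows $C^*$ preserves $\operatorname{Ker}p^*$ as in \eqref{kernelofp1}, with Theorem~\ref{main2}(iii). By \eqref{equ21}–\eqref{equ23}, $[\mathbb{C}P^7,\operatorname{Top}/O]$ sits inside $[\mathbb{C}P^7,SF]$ as $\{(\kappa)_{14},\left(\overline{(\epsilon)_8}\right)_{14}\}$ naturally in $C$, and $C^*$ is the identity there. This forces the correction to vanish.

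For (v), I use the surjection $q^*\colon[\mathbb{C}P^8/\mathbb{C}P^3,SF]\to[\mathbb{C}P^8,SF]$ and the generators identified in Theorem~\ref{eight-sub2}(i). Since $\left(\overline{(\sigma^2)_{14}}\right)_{16}$ comes from $[\mathbb{C}P^8/\mathbb{C}P^6,SF]$ via the collapse, Theorem~\ref{conju-cp8}(2) gives $C^*(x)=-x$ on the $\mathbb{Z}_4$ summand. For the $\mathbb{Z}_2$ generator, I would use the PL/Top/O compatibility. By \eqref{pl8}, $[\mathbb{C}P^8,PL]$ embeds in $[\mathbb{C}P^8,SF]$ containing this generator, and Remark~\ref{pl-top} with Theorem~\ref{conju-cp8}(1) shows $C^*$ is the identity on it.

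The main obstacle is making these naturality-under-$C$ compatibilities of the identifications $[\mathbb{C}P^m,\operatorname{Top}/O]\subset[\mathbb{C}P^m,SF]$ rigorous. This matters most in (ii), where the ambiguity by $(\sigma^2)_{14}$ must be excluded using only the kernel of $p^*$ and the $J_{PL}$-image.
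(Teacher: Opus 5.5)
Your proposal is correct.

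Parts (i) and (v) follow the paper's proof exactly. You pass to $\mathbb{C}P^4/\mathbb{C}P^2$ and $\mathbb{C}P^8/\mathbb{C}P^6$ and apply Theorem~\ref{conju-cp8}(3),(2). For the $\mathbb{Z}_2$ summand in (v) you use that the $J_{PL}$-image \eqref{pl8} is fixed by $C^*$.

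For (iii) and (iv) your route differs. The paper observes that $J_{PL}\colon[\mathbb{C}P^m,PL]\to[\mathbb{C}P^m,SF]$ is injective, and that source and target have equal order for $m=5,6$. So $J_{PL}$ is a $C^*$-equivariant isomorphism, and the paper simply transports Theorem~\ref{main2}(i),(ii). You instead redo the computation inside $SF$:
\begin{itemize}
\item $C^*$ negates $\operatorname{Im} f^*_{\mathbb{C}P^5}$;
\item $C^*$ moves $(\overline{(\epsilon)_8})_{10}$ only by an element of $\operatorname{Im} f^*_{\mathbb{C}P^5}\cap\operatorname{Ker}(p^*)=\mathbb{Z}_3\{(\beta_1)_{10}\}$, and the order-$2$ argument kills this;
\item (iv) then follows from injectivity of $i^*$.
\end{itemize}
Your argument needs only Lemma~\ref{lem:Bru-seq}, and it makes explicit a step that the paper's proof of Theorem~\ref{main2}(i) passes over quickly. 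The paper's route is shorter because the $\operatorname{Top}/O$ case has already been done.

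In (ii) two small corrections are needed.

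First, $C$ has degree $(-1)^7=-1$ on $\mathbb{C}P^7$, not $+1$; you use $-1$ correctly for $\mathbb{C}P^5$. The right reference is therefore Remark~\ref{actions}(2). It still fixes $(\kappa)_{14}$ and $(\sigma^2)_{14}$, because they have order $2$. The paper makes the same slip, citing (1) here.

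Second, naturality in $C$ is not the real obstacle: it is automatic, since $F_*$, $\beta_*$ and $J_{PL}$ are post-compositions. What you actually need is where $[\mathbb{C}P^7,PL]$ lands in $[\mathbb{C}P^7,SF]$, and \eqref{equ21}--\eqref{equ23} do not tell you that. The paper computes this image in \eqref{cp7-pl}.

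For your argument it is enough to know that the image is not contained in $\operatorname{Im} f^*_{\mathbb{C}P^7}$. This follows from Theorem~\ref{main}(iii): restriction $[\mathbb{C}P^7,\operatorname{Top}/O]\to[\mathbb{C}P^6,\operatorname{Top}/O]$ is nonzero. Combine this with injectivity of $J_{PL}$ on $\mathbb{C}P^6$. The image and the index-$2$ subgroup $\operatorname{Im} f^*_{\mathbb{C}P^7}$ are then both fixed pointwise, and together they span $[\mathbb{C}P^7,SF]$. So your $\operatorname{Ker}(p^*)$ step is correct but unnecessary.
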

\begin{proof}
Consider the following commutative diagram:
\begin{equation}\label{cpn-pl}
\begin{gathered}
\xymatrix@C=3em@R=3.2em{
[\mathbb{C}P^{m}, PL] \ar[r]^-{J_{PL}} \ar[d]_-{C^{*}} & [\mathbb{C}P^{m}, SF] \ar[d]^-{C^*} \\
[\mathbb{C}P^{m}, PL] \ar[r]^-{J_{PL}} & [\mathbb{C}P^{m}, SF]
}
\end{gathered}
\end{equation}
where the $J$-homomorphism $J_{PL} \colon [\mathbb{C}P^{m}, PL] \to [\mathbb{C}P^{m}, SF]$ is injective by \eqref{equ23}. Since $[\mathbb{C}P^{m}, PL] \cong [\mathbb{C}P^{m}, \operatorname{Top}/O]$, it follows from the group structures given in Remark~\ref{group} and \eqref{equtan_final} that $J_{PL}$ is an isomorphism for $m=5, 6$. Applying these to Diagram~\eqref{cpn-pl} and using Remark~\ref{pl-top} and Theorem~\ref{main2}(i) and (ii), we conclude that Part~(iii) and Part~(iv) hold. 

For Part~(i), we have $[\mathbb{C}P^{4}, SF] =\mathbb{Z}_{4}\{\left(\overline{(\nu^{2})_{6}}\right)_{8}\}$ by \eqref{equtan_final}. Now consider the diagram:
\begin{equation}\label{cpn-pl2}
\begin{gathered}
\xymatrix@C=3em@R=3.2em{
[\mathbb{C}P^{4}/\mathbb{C}P^{2}, SF] \ar[r]^-{q^{*}} \ar[d]_-{(\widetilde{C}_{4,2})^{*}} & [\mathbb{C}P^{4}, SF] \ar[d]^-{C^*} \\
[\mathbb{C}P^{4}/\mathbb{C}P^{2}, SF] \ar[r]^-{q^{*}} & [\mathbb{C}P^{4}, SF]
}
\end{gathered}
\end{equation}
where the map \( q^{*} : [\mathbb{C}P^{4}/\mathbb{C}P^{2}, SF] \to [\mathbb{C}P^{4}, SF] \) is surjective since \( [\mathbb{C}P^{2}, SF] = 0 \), and the map \( (\widetilde{C}_{4,2})^{*} = -\mathrm{Id} \) on \( [\mathbb{C}P^{4}/\mathbb{C}P^{2}, SF] \) follows from Theorem~\ref{conju-cp8}(3). There exists an element \( y \in [\mathbb{C}P^{4}/\mathbb{C}P^{2}, SF] \) such that \( q^{*}(y) = (\overline{(\nu^{2})_{6}})_{8} \in [\mathbb{C}P^{4}, SF] \). This, together with the commutativity of Diagram \eqref{cpn-pl2}, implies that the map \( C^{*} : [\mathbb{C}P^{4}, SF] \to [\mathbb{C}P^{4}, SF] \) reverses the generator \( (\overline{(\nu^{2})_{6}})_{8} \). Therefore, \( C^{*} = -\mathrm{Id} \) on \( [\mathbb{C}P^{4}, SF] \), which completes the proof of Part (i). 

We now turn to the proof of Part (ii). By Remark~\ref{pl-top} and Theorem~\ref{main2}(iii), the map $C^{*} \colon [\mathbb{C}P^{7}, PL] \to [\mathbb{C}P^{7}, PL]$ is the identity map. From Diagram \eqref{cpn-pl} for $m=7$, the map $C^{*} \colon [\mathbb{C}P^{7}, SF] \to [\mathbb{C}P^{7}, SF]$ fixes all elements of the image
\[
\operatorname{Im}\Bigl([\mathbb{C}P^{7}, PL] \xrightarrow{J_{PL}} \mathbb{Z}_{2}\{\overline{(\epsilon)_{8}}\}_{14} \oplus \mathbb{Z}_{2}\{(\kappa)_{14}\} \oplus \mathbb{Z}_{2}\{(\sigma^{2})_{14}\}\Bigr) \cong \mathbb{Z}_{2}\{\overline{(\epsilon)_{8}}\}_{14} \oplus \mathbb{Z}_{2}\{(\kappa)_{14}\},
\]
as given by \eqref{cp7-pl} and Corollary~\ref{cp7-sf}. On the other hand, by Remark~\ref{actions}(1), the map $C^{*}$ also fixes the generator $(\sigma^{2})_{14}$. Combining these, we conclude that $C^{*} \colon [\mathbb{C}P^{7}, SF] \to [\mathbb{C}P^{7}, SF]$ is the identity map. This completes Part (ii). 

For the proof of Part (v), as in the previous cases, it follows from Theorem~\ref{conju-cp8}(1) that the map $C^{*} \colon [\mathbb{C}P^{8}, PL] \to [\mathbb{C}P^{8}, PL]$ is the identity map. Applying this to Diagram~\eqref{cpn-pl} for $m=8$, we find that the map $C^{*} \colon [\mathbb{C}P^{8}, SF] \to [\mathbb{C}P^{8}, SF]$ fixes all elements of the image:
$$
\operatorname{Im}\left( [\mathbb{C}P^{8}, PL] \xrightarrow{ J_{PL}} \mathbb{Z}_{4}\left\{\left(\overline{(\sigma^{2})_{14}}\right)_{16}\right\} \oplus \mathbb{Z}_2\left\{\left(\overline{(\kappa)_{14}+\left(\overline{(\epsilon)_{8}}\right)_{14}}\right)_{16}\right\} \right)
$$
$$
\cong \mathbb{Z}_{2}\left\{2\left(\overline{(\sigma^{2})_{14}}\right)_{16}\right\} \oplus \mathbb{Z}_{2}\left\{\left(\overline{(\kappa)_{14} + \left(\overline{(\epsilon)_{8}}\right)_{14}}\right)_{16}\right\}
$$
where $2\left(\overline{(\sigma^{2})_{14}}\right)_{16}=(z)_{16}$ for $z\in \{\eta^{*},\eta^{*}+\eta \circ \rho\}\subset \pi^{S}_{16}$, as provided by \eqref{pl8} and Theorem~\ref{eight-sub2}(1). To complete the proof of Part (v), it remains to show that the map $C^{*} \colon [\mathbb{C}P^{8}, SF] \to [\mathbb{C}P^{8}, SF]$ sends the generator $\left(\overline{(\sigma^{2})_{14}}\right)_{16}$ to $-\left(\overline{(\sigma^{2})_{14}}\right)_{16}$. Consider the diagram:
\begin{equation}\label{cp8-pl3}
\begin{gathered}
\xymatrix@C=3em@R=3.2em{
[\mathbb{C}P^{8}/\mathbb{C}P^{6}, SF] \ar[r]^-{q^{*}} \ar[d]_-{(\widetilde{C}_{8,6})^{*}} & [\mathbb{C}P^{8}, SF] \ar[d]^-{C^*} \\
[\mathbb{C}P^{8}/\mathbb{C}P^{6}, SF] \ar[r]^-{q^{*}} & [\mathbb{C}P^{8}, SF]
}
\end{gathered}
\end{equation}
where the map $q^{*}\colon [\mathbb{C}P^{8}/\mathbb{C}P^{6}, SF] \cong \mathbb{Z}_{4}\left\{\left(\overline{(\sigma^{2})_{14}}\right)_{16}\right\} \to [\mathbb{C}P^{8}, SF]\cong \mathbb{Z}_{4}\left\{\left(\overline{(\sigma^{2})_{14}}\right)_{16}\right\} \oplus \mathbb{Z}_2\left\{\left(\overline{(\kappa)_{14}+\left(\overline{(\epsilon)_{8}}\right)_{14}}\right)_{16}\right\}$ maps the generator $\left(\overline{(\sigma^{2})_{14}}\right)_{16}$ to itself (see \eqref{ext3} and the proof of Lemma~\ref{eight-sub}(1)), and the map $(\widetilde{C}_{8,6})^{*} = -\mathrm{Id}$ on $[\mathbb{C}P^{8}/\mathbb{C}P^{6}, SF]$ follows from Theorem~\ref{conju-cp8}(2). Applying these observations to Diagram~\eqref{cp8-pl3} and using its commutativity, we obtain $C^{*}(\left(\overline{(\sigma^{2})_{14}}\right)_{16})=-\left(\overline{(\sigma^{2})_{14}}\right)_{16}$. This proves Part (v). 
\end{proof}
Recall from \cite[p.~194, Definition]{Sch87} that a tangential PL smoothing of $M$ is a triple $(V, t, \widehat{t})$, where $V$ is a smooth manifold, $t \colon V \to M$ is a piecewise differentiable homeomorphism, and $\widehat{t} \colon \nu_{V} \to \nu_{M}$ is a vector bundle isomorphism covering $t$. By Theorem~3.8 of \cite{Sch87}, there is a bijection between the group of homotopy classes $[M, PL]$ and the set of concordance classes of tangential PL smoothings of $M$. Note that there is a forgetful map $H \colon [M, PL] \to \mathcal{S}^{t}(M)$ that takes a tangential PL smoothing to a tangential homotopy smoothing. For $M = \mathbb{C}P^{m}$, it follows from \cite[Theorem~3.9]{Sch87} that the map $H \colon [\mathbb{C}P^{m}, PL] \to \mathcal{S}^{t}(\mathbb{C}P^{m})$ is injective, since $[\Sigma \mathbb{C}P^{m}, F/PL] = 0$ and $L_{\mathrm{odd}}(e) = 0$. Since the maps $\beta_{*} \colon [\mathbb{C}P^{m}, PL] \to [\mathbb{C}P^{m}, PL/O]$ and $F_{*} \colon [\mathbb{C}P^{m}, PL/O] \to [\mathbb{C}P^{m}, \operatorname{Top}/O]$ are isomorphisms, there is a well-defined injective map $E = H \circ \beta_{*}^{-1} \circ F_{*}^{-1} \colon [\mathbb{C}P^{m}, \operatorname{Top}/O] \to \mathcal{S}^{t}(\mathbb{C}P^{m})$ sending $(N, f)$ to $(N, f_{PL}, \widehat{f_{PL}})$, where $f \colon N \to \mathbb{C}P^{m}$ is topologically concordant to a piecewise differentiable homeomorphism $f_{PL} \colon N \to \mathbb{C}P^{m}$, and $\widehat{f_{PL}} \colon \nu_{N} \to \nu_{\mathbb{C}P^{m}}$ is a vector bundle isomorphism covering $f_{PL}$. It also follows that the composition
\begin{equation}\label{normalinva}
[\mathbb{C}P^{m}, \operatorname{Top}/O] \xrightarrow{E} \mathcal{S}^{t}(\mathbb{C}P^{m}) \xrightarrow{\eta^{t}} [\mathbb{C}P^{m}, SF] \subset [\mathbb{C}P^{m}, F/O]
\end{equation}
is precisely the canonical map 
\(\psi_{*} : [\mathbb{C}P^{m}, \operatorname{Top}/O] \longrightarrow [\mathbb{C}P^{m}, F/O]\).
Note that the maps $\beta_{*}$ and $F_{*}$ induce well-defined bijective maps between orbit spaces:
\[
[\mathbb{C}P^{m}, PL]/\{[Id, \widehat{Id}], [C, \widehat{C}]\} \to [\mathbb{C}P^{m}, PL/O]/\{[Id], [C]\}
\]
and
\[
[\mathbb{C}P^{m}, PL/O]/\{[Id], [C]\} \to [\mathbb{C}P^{m}, \operatorname{Top}/O]/\{[Id], [C]\},
\]
respectively. Also, the map $H$ induces a well-defined injective map between orbit spaces:
\[
[\mathbb{C}P^{m}, PL]/\{[Id, \widehat{Id}], [C, \widehat{C}]\} \to \mathcal{S}^{t}(\mathbb{C}P^{m})/\{[Id, \widehat{Id}], [C, \widehat{C}]\}.
\]
Under these identifications, the map $E$ induces a well-defined injective map $\widetilde{E}$ between orbit spaces 
\begin{equation}\label{forgettan}
 \mathcal{M}^{\pm}_{\mathrm{Diff}}(\mathbb{C}P^{m}) \to \mathcal{M}^{\pm}_{\mathrm{tDiff}}(\mathbb{C}P^{m})   
\end{equation}
by sending the diffeomorphism class of $N$ in the homeomorphism type of $\mathbb{C}P^{m}$ to the diffeomorphism class of $N$ in the tangential homotopy type of $\mathbb{C}P^{m}$.
\begin{theorem}\label{difftang}
\begin{itemize}
    \item[(1)] The image of the map \(\widetilde{E} \colon \mathcal{M}^{\pm}_{\mathrm{Diff}}(\mathbb{C}P^{4}) \longrightarrow \mathcal{M}^{\pm}_{\mathrm{tDiff}}(\mathbb{C}P^{4})\) is \(\mathbb{Z}_{2}\{[(\epsilon)_{8}]\}\), where
    \[ 
    \mathcal{M}^{\pm}_{\mathrm{tDiff}}(\mathbb{C}P^{4}) \cong \mathbb{Z}_{4}\{(\overline{(\nu^{2})_{6}})_{8}\} / \sim,
    \] 
    with the relation \((\overline{(\nu^{2})_{6}})_{8} \sim -(\overline{(\nu^{2})_{6}})_{8}\). Furthermore, \((\overline{(\nu^{2})_{6}})_{8} \cong -(\overline{(\nu^{2})_{6}})_{8}\), \(\mathbb{Z}_{2}\{(\epsilon)_{8}\} = \{[\mathbb{C}P^{4} \# \Sigma^{8}] \mid \Sigma^{8} \in \Theta_{8}\}\), and \(2(\overline{(\nu^{2})_{6}})_{8} = (\epsilon)_{8}\).

    \item[(2)] For \(m=5, 6, 7\), the map \(\widetilde{E} \colon \mathcal{M}^{\pm}_{\mathrm{Diff}}(\mathbb{C}P^{m}) \longrightarrow \mathcal{M}^{\pm}_{\mathrm{tDiff}}(\mathbb{C}P^{m})\) is a bijection. In particular:
    \begin{itemize}
        \item For \(m=5\), the set is given by
        \[
        \mathcal{M}^{\pm}_{\mathrm{tDiff}}(\mathbb{C}P^{5}) = \mathbb{Z}_{2}\{[(\eta\circ\mu)_{10}]\} \oplus \mathbb{Z}_{2}\{[(\beta_{1})_{10}]\} \oplus \mathbb{Z}_{2}\{[\left(\overline{(\epsilon)_{8}}\right)_{10}]\},
        \]
        where \((\eta\circ\mu)_{10}\) represents the unoriented diffeomorphism class \([\mathbb{C}P^{5}\#\Sigma_{\eta\circ\mu}]\) (\(\Sigma_{\eta\circ\mu}\in\Theta_{10}\) is the unique element of order \(2\)), and the generator \((\beta_{1})_{10}\) represents \([\mathbb{C}P^{5}\#\Sigma_{\beta_{1}}]\) (\(\Sigma_{\beta_{1}}\in\Theta_{10}\) has order \(3\)), with \((\beta_{1})_{10} \cong -(\beta_{1})_{10}\).
        
        \item For \(m=6\), 
        \[
        \mathcal{M}^{\pm}_{\mathrm{tDiff}}(\mathbb{C}P^{6}) = \mathbb{Z}_{2}\left\{\left[\left(\overline{(\epsilon)_{8}}\right)_{12}\right]\right\} \oplus \mathbb{Z}_{2}\left\{\left[\left(\overline{(\beta_{1})_{10}}\right)_{12}\right]\right\},
        \]
        where \(-\left(\overline{(\beta_{1})_{10}}\right)_{12} \cong \left(\overline{(\beta_{1})_{10}}\right)_{12}\).
        
        \item For \(m=7\),
        \[
        \mathcal{M}^{\pm}_{\mathrm{tDiff}}(\mathbb{C}P^{7}) = \mathbb{Z}_{2}\{[(\kappa)_{14}]\} \oplus \mathbb{Z}_{2}\{[\left(\overline{(\epsilon)_{8}}\right)_{14}]\},
        \]
        where \(\mathbb{Z}_{2}\{(\kappa)_{14}\} = \{[\mathbb{C}P^{7}\#\Sigma^{14}] \mid \Sigma^{14} \in \Theta_{14}\}\).
    \end{itemize}

    \item[(3)] The image of the map \(\widetilde{E} \colon \mathcal{M}^{\pm}_{\mathrm{Diff}}(\mathbb{C}P^{8}) \longrightarrow \mathcal{M}^{\pm}_{\mathrm{tDiff}}(\mathbb{C}P^{8})\) is 
    \[
    \mathbb{Z}_{2}\{[2(\overline{(\sigma^{2})_{14}})_{16}]\} \oplus \mathbb{Z}_{2}\{[\left(\overline{(\kappa)_{14} + (\overline{(\epsilon)_{8}})_{14}}\right)_{16}]\},
    \] 
    where
    \[ 
    \mathcal{M}^{\pm}_{\mathrm{tDiff}}(\mathbb{C}P^{8}) \cong A \oplus \mathbb{Z}_2\{[\left(\overline{(\kappa)_{14} + (\overline{(\epsilon)_{8}})_{14}}\right)_{16}]\},
    \] 
    and \(A = \{ [\mathbb{C}P^{8}], [(\overline{(\sigma^{2})_{14}})_{16}], [2(\overline{(\sigma^{2})_{14}})_{16}] \}\). Here, the element \(2(\overline{(\sigma^{2})_{14}})_{16}\) represents the unoriented diffeomorphism class of \([\mathbb{C}P^{8} \# \Sigma]\) for the unique exotic \(16\)-sphere \(\Sigma \in \Theta_{16}\), and \((\overline{(\sigma^{2})_{14}})_{16} \cong -(\overline{(\sigma^{2})_{14}})_{16}\).
\end{itemize}
\end{theorem}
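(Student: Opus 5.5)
The plan is to compute $\mathcal{M}^{\pm}_{\mathrm{tDiff}}(\mathbb{C}P^{m})$ as the orbit set $\mathcal{S}^t_{\mathrm{Diff}}(\mathbb{C}P^{m})/\epsilon^{t}(\mathbb{C}P^{m})$, where $\epsilon^{t}(\mathbb{C}P^{m})=\{[Id],[C]\}$. First, $\mathcal{S}^t_{\mathrm{Diff}}(\mathbb{C}P^{m})$ is identified with a subgroup of $[\mathbb{C}P^{m},SF]$ via $\eta^{t}$, using Proposition~\ref{lowbru}(ii) for $m=4,5,6,8$ and Theorem~\ref{main5}(i) for $m=7$. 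By \eqref{act2}, the action of $[C]$ becomes $C^{*}$, and Theorem~\ref{main6} gives $C^{*}$ explicitly in each case. Then $\widetilde{E}$ is computed through \eqref{normalinva}: the composite $\eta^{t}\circ E$ is $\psi_{*}$, which is injective by \eqref{equ2}. So $E$ identifies $\mathcal{C}(\mathbb{C}P^{m})\cong[\mathbb{C}P^{m},\operatorname{Top}/O]\cong[\mathbb{C}P^{m},PL]$ with the image of $J_{PL}$ in $[\mathbb{C}P^{m},SF]$, and this is compatible with $C^{*}$ by Remark~\ref{pl-top} and diagram \eqref{cpn-pl}.

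For $m=4$, $[\mathbb{C}P^{4},SF]=\mathbb{Z}_4\{(\overline{(\nu^{2})_{6}})_{8}\}$ and $C^{*}=-\mathrm{Id}$ by Theorem~\ref{main6}(i). The orbits are therefore $\{0\}$, $\{\pm(\overline{(\nu^{2})_{6}})_{8}\}$ and $\{2(\overline{(\nu^{2})_{6}})_{8}\}$, which gives the stated relation. Since $[\mathbb{C}P^{4},\operatorname{Top}/O]\cong\mathbb{Z}_2\{(\epsilon)_8\}$ injects, its image is the unique order-two subgroup. Lemma~\ref{lem:Bru-seq}(iii) gives $2\overline{(\nu^{2})_{6}}=(x)_8$ with $x\in\{\epsilon,\epsilon+\eta\circ\sigma\}$, and these agree in $[\mathbb{C}P^4,F/O]$ because $\eta\circ\sigma\in\operatorname{Im}J$. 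Hence the image of $E$ is $\mathbb{Z}_2\{(\epsilon)_8\}=\{[\mathbb{C}P^4\#\Sigma^8]\}$.

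For $m=5,6$, $J_{PL}$ is an isomorphism, as shown in the proof of Theorem~\ref{main6}. For $m=7$, the image of $\eta^{t}$ is $\mathbb{Z}_2\{(\kappa)_{14}\}\oplus\mathbb{Z}_2\{(\overline{(\epsilon)_8})_{14}\}$, which by \eqref{cp7-pl} equals the image of $J_{PL}$. In all three cases $E$ is therefore a bijection, and it is $C^{*}$-equivariant. Passing to orbits, the descriptions follow from Theorem~\ref{main4} and Theorem~\ref{main3}(ii), with the orientation remarks carried over verbatim, so $\widetilde{E}$ is a bijection.

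For $m=8$, $[\mathbb{C}P^{8},SF]=\mathbb{Z}_4\{a\}\oplus\mathbb{Z}_2\{b\}$ with $a=(\overline{(\sigma^{2})_{14}})_{16}$ and $b=(\overline{(\kappa)_{14}+(\overline{(\epsilon)_8})_{14}})_{16}$, and $C^{*}(x,y)=(-x,y)$ by Theorem~\ref{main6}(v). The orbit set is $\{0,\pm a,2a\}\times\{0,b\}$, which is $A\oplus\mathbb{Z}_2\{[b]\}$. By \eqref{pl8}, the image of $J_{PL}$, and hence of $E$, is $\mathbb{Z}_2\{2a\}\oplus\mathbb{Z}_2\{b\}$. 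Here $2a=(z)_{16}$ is the image of $(\eta^{*})_{16}=f^{*}_{\mathbb{C}P^8}(\Sigma^{16})$, since $z$ and $\eta^{*}$ differ by $\eta\circ\rho\in\operatorname{Im}J$. This identifies $2a$ with $[\mathbb{C}P^8\#\Sigma]$.

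The main obstacle is this bookkeeping: making sure that $E$ carries the $\operatorname{Top}/O$ generators named in Remark~\ref{group} to exactly the stated $SF$-generators, in particular $(\eta^{*})_{16}\mapsto 2a$ and $(\epsilon)_8\mapsto2(\overline{(\nu^{2})_{6}})_{8}$. These rest on the $J$-image ambiguities vanishing in $F/O$ and on the injectivity of $\phi_{*}$ from \eqref{equ1}.
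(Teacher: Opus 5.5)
Your proposal is correct and follows the same route as the paper. You identify $\mathcal{S}^t_{\mathrm{Diff}}(\mathbb{C}P^{m})$ with a subgroup of $[\mathbb{C}P^{m},SF]$, compute the $\epsilon^{t}(\mathbb{C}P^{m})$-orbits via \eqref{act2} and Theorem~\ref{main6}, and read off the image of $\widetilde{E}$ from its injectivity. The only difference is that you make explicit, through \eqref{normalinva}, \eqref{cp7-pl} and \eqref{pl8}, where the image of $E$ sits inside $[\mathbb{C}P^{m},SF]$ (for instance $(\eta^{*})_{16}\mapsto 2(\overline{(\sigma^{2})_{14}})_{16}$, with the $J$-image ambiguity removed by injectivity of $\phi_{*}$), which the paper's short proof leaves implicit.
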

\begin{proof}
For $4 \leq m \leq 8$, it follows from Proposition~\ref{lowbru}, Proposition~\ref{sur-obst}, and Theorem~\ref{main5} that the set $\mathcal{M}^{\pm}_{\mathrm{tDiff}}(\mathbb{C}P^{m})$ is obtained by computing the action defined in \eqref{act2}. Therefore, the computations of the conjugation action
\[
C^{*}:[\mathbb{C}P^{m},SF]\to [\mathbb{C}P^{m},SF]
\]
given in Theorem~\ref{main6} immediately imply the structure of the set $\mathcal{M}^{\pm}_{\mathrm{tDiff}}(\mathbb{C}P^{m})$ as stated in the theorem.

On the other hand, the image of the map
\[
\widetilde{E} \colon \mathcal{M}^{\pm}_{\mathrm{Diff}}(\mathbb{C}P^{m}) \longrightarrow \mathcal{M}^{\pm}_{\mathrm{tDiff}}(\mathbb{C}P^{m})
\]
is determined using the definition and injectivity of $\widetilde{E}$ as in \eqref{forgettan}, together with the computations of $\mathcal{M}^{\pm}_{\mathrm{Diff}}(\mathbb{C}P^{m})$ given in Theorem~\ref{main4}.
\end{proof}
Recall from Proposition~\ref{lowbru}(i) that every closed smooth $6$-manifold $M$ that is tangentially homotopy equivalent to $\mathbb{C}P^{3}$ is diffeomorphic to $\mathbb{C}P^{3}$. For $4 \leq m \leq 8$, by combining Theorem~\ref{difftang}, Theorem~\ref{main4}, and Theorem~\ref{main3}, we obtain the following :
\begin{corollary}\label{TE}
\begin{enumerate}
    \item[(1)] For $m=5, 6, 7$, a closed smooth manifold $M$ is homeomorphic to $\mathbb{C}P^{m}$ if and only if $M$ is tangentially homotopy equivalent to $\mathbb{C}P^{m}$. 
    \item[(2)] For $m=4$, there exists a unique smooth manifold (up to diffeomorphism) that is tangentially homotopy equivalent to $\mathbb{C}P^{4}$ but is not homeomorphic to $\mathbb{C}P^{4}$.
    \item[(3)] For $m=8$, there are exactly two smooth manifolds (up to diffeomorphism) that are tangentially homotopy equivalent to $\mathbb{C}P^{8}$ but are not homeomorphic to $\mathbb{C}P^{8}$.
\end{enumerate}
\end{corollary}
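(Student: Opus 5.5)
The plan is to compare, for each $m$, the two orbit sets computed above via the injective map $\widetilde{E}\colon \mathcal{M}^{\pm}_{\mathrm{Diff}}(\mathbb{C}P^{m})\to \mathcal{M}^{\pm}_{\mathrm{tDiff}}(\mathbb{C}P^{m})$ of \eqref{forgettan}, and to count the complement of its image. First I will check that this map really detects the two notions in the statement. By Lemma~\ref{pont}, a homeomorphism $M\to\mathbb{C}P^m$ is a tangential homotopy equivalence, so every $M$ homeomorphic to $\mathbb{C}P^m$ defines a class in $\mathcal{M}^{\pm}_{\mathrm{tDiff}}(\mathbb{C}P^{m})$. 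By construction $\widetilde{E}$ sends the diffeomorphism class of $N$ to the diffeomorphism class of $N$, so its image is exactly the set of diffeomorphism classes, within the tangential homotopy type, of manifolds homeomorphic to $\mathbb{C}P^m$. The complement of the image therefore consists precisely of the diffeomorphism classes of manifolds that are tangentially homotopy equivalent to $\mathbb{C}P^m$ but not homeomorphic to it.

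For part (1), Theorem~\ref{difftang}(2) says $\widetilde{E}$ is a bijection for $m=5,6,7$. Surjectivity gives that every tangential homotopy $\mathbb{C}P^m$ is diffeomorphic to a manifold homeomorphic to $\mathbb{C}P^m$, hence is itself homeomorphic to $\mathbb{C}P^m$. The converse direction is Lemma~\ref{pont}.

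For part (2), Theorem~\ref{difftang}(1) gives $\mathcal{M}^{\pm}_{\mathrm{tDiff}}(\mathbb{C}P^{4})=\mathbb{Z}_4\{g\}/(g\sim -g)$, where $g=(\overline{(\nu^2)_6})_8$. Its orbits are $\{0\}$, $\{g,-g\}$ and $\{2g\}$, so the set has three elements. The image of $\widetilde{E}$ is $\{0,2g=(\epsilon)_8\}$, which consists of $\mathbb{C}P^4$ and $\mathbb{C}P^4\#\Sigma^8$. Exactly one class is left over, namely $[g]$, and this is the required unique manifold.

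For part (3), Theorem~\ref{difftang}(3) gives $\mathcal{M}^{\pm}_{\mathrm{tDiff}}(\mathbb{C}P^{8})\cong A\times\mathbb{Z}_2\{[y]\}$ with $|A|=3$, hence six classes; here $y=(\overline{(\kappa)_{14}+(\overline{(\epsilon)_8})_{14}})_{16}$. This uses the action $C^*(x,y)=(-x,y)$ from Theorem~\ref{main6}(v), whose orbits on $\mathbb{Z}_4\oplus\mathbb{Z}_2$ are the pairs $\{(x,y),(-x,y)\}$, giving $3\cdot 2=6$. The image of $\widetilde{E}$ is the four classes $\{0,2g\}\times\{0,y\}$, with $g=(\overline{(\sigma^2)_{14}})_{16}$. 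This matches $|\mathcal{M}^{\pm}_{\mathrm{Diff}}(\mathbb{C}P^8)|=4$ from Theorems~\ref{main3}(iii) and \ref{main4}(i). The complement consists of the two classes $[g]$ and $[g+y]$.

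The main obstacle is the bookkeeping of orbits, and in particular making sure that $\widetilde{E}$ is compatible with the orbit identifications, so that points of the image are never identified with points outside it. I would verify this directly. Under \eqref{normalinva}, the image of $[\mathbb{C}P^m,\operatorname{Top}/O]$ in $[\mathbb{C}P^m,SF]$ is the subgroup $J_{PL}([\mathbb{C}P^m,PL])$, namely $\{0,2g\}\times\{0,y\}$ for $m=8$ and $\{0,2g\}$ for $m=4$. This subgroup is $C^*$-invariant, so its complement is a union of whole $C^*$-orbits. That yields exactly one extra class for $m=4$ and two for $m=8$.
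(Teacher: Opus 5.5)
Your proposal is correct and follows the paper's route: the corollary is read off from the image of the injective map $\widetilde{E}$ in Theorem~\ref{difftang}, compared with the classifications in Theorems~\ref{main3} and~\ref{main4}, and Lemma~\ref{pont} supplies the direction ``homeomorphic $\Rightarrow$ tangentially homotopy equivalent.'' Your explicit orbit counts ($3-2=1$ for $m=4$, $6-4=2$ for $m=8$) and your check that $\operatorname{Im}(\eta^{t}\circ E)=J_{PL}([\mathbb{C}P^m,PL])$ is $C^{*}$-invariant simply make explicit the bookkeeping that the paper leaves implicit.
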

Recall that the conjugation actions given in Theorem~\ref{act2} reverse the signs on the $\mathbb{Z}_{3}$-summands for $m=5, 6$ and the $\mathbb{Z}_{4}$-summand for $m=4,8$, while the remaining summands are fixed. Consequently, the generator notations in $\mathcal{M}^{\pm}_{\mathrm{tDiff}}(\mathbb{C}P^{m})$ provided by Theorem~\ref{difftang} represent the tangential normal invariants corresponding to the diffeomorphism classes of those generators up to a sign. Therefore, the diffeomorphism classes are determined by their normal invariants up to a sign, leading to the following result.
\begin{theorem}\label{Main7}
Let $4 \le m \le 8$ and let 
$\eta^{t} \colon \mathcal{S}^t_{\mathrm{Diff}}(\mathbb{C}P^{m}) \longrightarrow [\mathbb{C}P^{m}, SF]$ 
be the tangential normal invariant map. Suppose that 
$[M, f, \widehat{f}]$ and $[N, g, \widehat{g}] \in 
\mathcal{S}^t_{\mathrm{Diff}}(\mathbb{C}P^{m})$.
\begin{itemize}
\item[(1)] For $m=4, 5, 6, 8$, the smooth manifolds $M$ and $N$ are diffeomorphic if and only if 
$\eta^{t}([M, f, \widehat{f}]) = \pm \eta^{t}([N, g, \widehat{g}])$.
\item[(2)] For $m=7$, the smooth manifolds $M$ and $N$ are diffeomorphic if and only if 
$\eta^{t}([M, f, \widehat{f}]) = \eta^{t}([N, g, \widehat{g}])$.
\end{itemize}
\end{theorem}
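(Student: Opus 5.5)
The approach is to reduce the diffeomorphism question to the orbit structure of $\epsilon^{t}(\mathbb{C}P^{m})\cong\{[Id],[C]\}$ acting on $\mathcal{S}^t_{\mathrm{Diff}}(\mathbb{C}P^{m})$. First I would show that $M$ and $N$ are diffeomorphic if and only if their classes lie in the same orbit. If $\phi\colon M\to N$ is a diffeomorphism, then $g\circ\phi\colon M\to\mathbb{C}P^m$ is a homotopy equivalence covered by the bundle map $\widehat g\circ d\phi$, so $[M,g\circ\phi,\cdot]=[N,g,\widehat g]$ in $\mathcal{S}^t_{\mathrm{Diff}}(\mathbb{C}P^m)$. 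The self-equivalence $g\circ\phi\circ f^{-1}$ of $\mathbb{C}P^m$ is homotopic to $Id$ or $C$ by \cite[Theorem~8]{Sul67}. Its tangential lift is determined up to the action of $\epsilon^{t}(\mathbb{C}P^m)\cong\mathrm{haut}(\mathbb{C}P^m)$, so $[N,g,\widehat g]=[M,f,\widehat f]$ or $[N,g,\widehat g]=[C\circ f,\widehat C\circ\widehat f]$. The converse direction is immediate from the definitions. This is exactly the bijection $\mathcal{S}^t_{\mathrm{Diff}}/\epsilon^{t}\cong\mathcal{M}^{\pm}_{\mathrm{tDiff}}$ recalled before Theorem~\ref{main6}.

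Second, I would transport the orbit relation through $\eta^{t}$. By Proposition~\ref{lowbru}(ii) and Theorem~\ref{main5}, $\eta^t$ is injective for $4\le m\le 8$, and by \eqref{act2} it intertwines the action of $[C]$ with $C^{*}$ on $[\mathbb{C}P^m,SF]$. Hence $M\cong N$ if and only if $\eta^t(M)=\eta^t(N)$ or $\eta^t(M)=C^{*}\eta^t(N)$. Note that $C^{*}$ preserves $\operatorname{Im}\eta^t$ in the case $m=7$, since that image is fixed pointwise.

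Third, I would insert the formulas of Theorem~\ref{main6}. For $m=7$, $C^*=\mathrm{Id}$, so the condition reduces to equality, which gives (2). For $m=4,5,6,8$, $C^{*}$ acts by $-1$ on the $\mathbb{Z}_4$ or $\mathbb{Z}_3$ summand and trivially on the others. The remaining summands all have exponent $2$, so on each of them $-\mathrm{Id}=\mathrm{Id}$, and therefore $C^{*}=-\mathrm{Id}$ on the whole group. Explicitly, $C^*(x,y,z)=(x,-y,z)=-(x,y,z)$ when $x,z$ are $2$-torsion, and likewise $C^*(x,y)=(-x,y)=-(x,y)$ when $y$ is $2$-torsion. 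The orbit condition then becomes $\eta^t(M)=\pm\eta^t(N)$, which gives (1).

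I expect the main obstacle to be the first step: making precise that post-composition by a diffeomorphism and by an element of $\epsilon^t$ accounts for every identification. Concretely, one has to check that the bundle-map data can be absorbed by the isomorphism $\epsilon^{t}(\mathbb{C}P^m)\cong\mathrm{haut}(\mathbb{C}P^m)$ of \cite{Bro65}. I would handle this by citing that isomorphism directly rather than reproving it.
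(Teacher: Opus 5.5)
Your proposal is correct and follows the paper's route. Diffeomorphism classes are the orbits of $\epsilon^{t}(\mathbb{C}P^{m})=\{[Id],[C]\}$ on $\mathcal{S}^t_{\mathrm{Diff}}(\mathbb{C}P^{m})$; the injective map $\eta^{t}$ carries these orbits to those of $C^{*}$ via \eqref{act2}; and Theorem~\ref{main6} supplies $C^{*}$. Your observation that $C^{*}=-\mathrm{Id}$ on all of $[\mathbb{C}P^{m},SF]$ for $m=4,5,6,8$, because the summands fixed by $C^{*}$ are $2$-torsion, makes explicit the paper's rather informal ``up to a sign'' step.
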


\end{document}